\newtheorem{theorem}{Theorem}[section]
\newtheorem{lemma}[theorem]{Lemma}
\newtheorem{corollary}[theorem]{Corollary}
\newtheorem{proposition}[theorem]{Proposition}
\theoremstyle{definition}
\newtheorem{definition}[theorem]{Definition}
\newtheorem{example}[theorem]{Example}
\newtheorem{conjecture}[theorem]{Conjecture}
\newtheorem{algorithm}[theorem]{Algorithm}
\theoremstyle{remark}
\newtheorem{remark}[theorem]{Remark}
\newtheorem{convention}[theorem]{Convention}
\newtheorem{warning}[theorem]{Warning}
\newtheorem{question}[theorem]{Question}
\numberwithin{equation}{section}
\DeclareMathOperator{\Aut}{Aut}
\DeclareMathOperator{\coker}{coker}
\DeclareMathOperator{\E}{E}
\DeclareMathOperator{\C}{\mathtt{C}}
\DeclareMathOperator{\End}{End}
\DeclareMathOperator{\Ext}{Ext}
\DeclareMathOperator{\ext}{ext}
\DeclareMathOperator{\Hom}{Hom}
\DeclareMathOperator{\Img}{Im}
\DeclareMathOperator{\ind}{ind}
\DeclareMathOperator{\Ind}{Ind}
\DeclareMathOperator{\Ker}{Ker}
\DeclareMathOperator{\PHom}{PHom}
\DeclareMathOperator{\IHom}{IHom}
\DeclareMathOperator{\rep}{rep}
\DeclareMathOperator{\st}{st}
\DeclareMathOperator{\sgn}{sgn}
\DeclareMathOperator{\dv}{\underline{\dim}}
\DeclareMathOperator{\Gen}{{Gen}}
\DeclareMathOperator{\hatj}{\hat{\jmath}}
\newcommand{\op}[1]{\operatorname{#1}}
\newcommand{\Gr}{\operatorname{Gr}}
\newcommand{\mb}[1]{\mathbb{#1}}
\newcommand{\mc}[1]{\mathcal{#1}}
\newcommand{\mf}[1]{\mathfrak{#1}}
\newcommand{\bs}[1]{\boldsymbol{#1}}
\renewcommand{\b}[1]{\bold{#1}}
\newcommand{\e}{\op{e}}
\newcommand{\eu}{{\sf e}}
\newcommand{\g}{{\sf g}}
\newcommand{\N}{{\sf N}}
\renewcommand{\S}{{\mc{S}}}
\newcommand{\ep}{{\epsilon}}
\newcommand{\proj}{\operatorname{proj}\text{-}}
\newcommand{\br}[1]{\overline{#1}}
\newcommand{\innerprod}[1]{\langle#1\rangle}
\newcommand{\sm}[1]{\left(\begin{smallmatrix}#1\end{smallmatrix}\right)}
\renewcommand{\t}{{\top}}
\newcommand{\wtd}[1]{\widetilde{#1}}
\newcommand{\lwtd}[1]{ \raisebox{-0.26ex}{$\widetilde{\phantom{#1}}$}\mkern-12mu#1 }
\newcommand{\sgnc}{{\check{\sgn}}}
\newcommand{\drep}{{rep}}
\newcommand{\CQ}{\mc{C}_{Q,\S}}
\newcommand{\eQS}{{_e(Q,\S)}}
\newcommand{\dc}{{\check{d}}}
\newcommand{\ec}{{\check{e}}}
\newcommand{\fc}{{\check{f}}}
\newcommand{\tc}{{\check{t}}}
\newcommand{\Nc}{{\check{\sf N}}}
\newcommand{\dtc}{{\check{\delta}}}
\newcommand{\epc}{{\check{\ep}}}
\newcommand{\Ec}{{\check{\E}}}
\newcommand{\M}{\mc{M}}
\renewcommand{\L}{\mc{L}}
\newcommand{\mub}{\mu_{\b{k}}}
\newcommand{\riota}{\reflectbox{\rotatebox[origin=c]{210}{$\iota$}}}
\newcommand{\Rcirc}{R_{\circlearrowleft}}
\newcommand{\lperp}{\mathrel{\mathpalette\@lperp\relax}}
\newcommand{\@lperp}[2]{%
	\clipbox{{.47\width} 0pt 0pt 0pt}{$\m@th#1\perp$}%
}
\newcommand{\rperp}{\mathrel{\mathpalette\@rperp\relax}}
\newcommand{\@rperp}[2]{%
	\clipbox{0pt 0pt {.51\width} 0pt}{$\m@th#1\perp$}
}
\newcommand{\eo}[1]{{_{#1}\!\rperp\,}}
\newcommand{\eot}[1]{{_{#1}\lwtd{\rperp}\,}}
\newcommand{\eor}[1]{{\lperp_{#1}}}
\newcommand{\eort}[1]{{\,\lwtd{\lperp}_{#1}}}
\newcommand{\heo}[1]{{_{#1}\!\perp}}
\newcommand{\comp}[1]{{{\scriptstyle \lfloor}{#1} {\scriptstyle\rfloor}}}%
\newcommand{\Markovext}{
	\vcenter{\xymatrix@R=7ex@C=7ex{
			3 \ar@<0.3ex>[r] \ar@<-0.3ex>[r] \ar[d] & 4 \ar@<-0.75ex>[d] \ar@<-0.25ex>[d]\ar@<0.25ex>[d]\ar@<0.75ex>[d]   \\
			1 \ar@<0.3ex>[r] \ar@<-0.3ex>[r] & 2 \ar@<0.5ex>[ul] \ar[ul] \ar@<-0.5ex>[ul] \\
		}
}}
\begin{document}

\title{On the Orthogonal Projections}
\author{Jiarui Fei}
\address{School of Mathematical Sciences, Shanghai Jiao Tong University}
\email{jiarui@sjtu.edu.cn}
\thanks{The author was supported in part by National Natural Science Foundation of China (No. 12131015 and No. 12571038)}

\subjclass[2020]{Primary 16G10; Secondary 13F60}
\date{}
\dedicatory{}
\keywords{Orthogonal Projection, Quivers with Potentials, Mutation, Schur Reduction}

\begin{abstract} For any rigid presentation $e$, we construct an orthogonal projection functor to $\rep(e^\perp)$ left adjoint to the natural embedding.
We establish a bijection between presentations in $\rep(e^\perp)$ and presentations compatible with $e$.
For quivers with potentials, we show that $\rep(e^\perp)$ forms a module category of another quiver with potential.
We derive mutation formulas for the $\delta$-vectors of positive and negative complements and the dimension vectors of simple modules in $\rep(e^\perp)$, enabling an algorithm to find the projected quiver with potential.
Additionally, we introduce a modified projection for quivers with potentials that preserves general presentations.
For applications to cluster algebras, we establish a connection to the stabilization functors.
\end{abstract}

\maketitle

\section{Introduction}
\subsection{The Work of Geigle-Lenzing, Schofield, and Jasso} 
Geigle-Lenzing systematically studied various perpendicular categories (aka orthogonal categories) in their work \cite{GL}.
One important case in their study is the orthogonal category 
$$E^{\perp_1} := \{M\in\rep(A) \mid \Hom(E,M)=\Ext^1(E,M)=0 \},$$ 
for some $\Ext^1$-rigid representation $E\in\rep(A)$.
From the author's standpoint, one fundamental construction in \cite{GL} is the projection functor 
$l_E: \rep(A) \to E^{\perp_1}$ under certain additional conditions.
The name ``projection" reflects the fact that it has the natural embedding functor as its right adjoint.

Building on the work of Derksen, Weyman, and Zelevinsky, we introduced $\E$-invariants for arbitrary finite-dimensional algebras \cite{DF}.
It is natural to replace the $\Ext^1$-vanishing condition with the $\E$-vanishing to define 
\begin{equation}\label{eq:eperp} \rep(e^\perp) = \{M\in\rep(A) \mid \Hom(e,M)=\E(e,M)=0 \}. \end{equation}
As explained in \cite{DWr} (see also \cite{Fm2,Ftf}) this category is equivalent to the category of $\ep$-semi-stable representations in the sense of King \cite{Ki}.
G. Jasso \cite{Ja} studied this category in detail, demonstrating that $\rep(e^\perp)$ is equivalent to $\rep(A')$ for another algebra $A'$, which can be explicitly constructed from the Bongartz completion of $e$. However, the aforementioned projection functor was not explicitly given.

For a more complete historical context, we note that Schofield studied $E^{\perp_1}$ in the hereditary setting 
and successfully applied this theory to investigate the semi-invariants of quiver representations \cite{S1}.
In a similar vein, the author applied a generalized version of this theory to study the moduli space of quiver representations \cite{Fm2}.
Most of Sections \ref{S:OP} and \ref{S:ereg} of the current paper were taken directly from \cite{Fm2},
which is no longer intended for publication.
The author also applied this idea to examine the cluster structure of semi-invariant algebras of quiver representations \cite{Fs2}.
Indeed, an important motivation for this work is that it serves as a foundational component in \cite{Fsj}.

\subsection{Projections from Rigid Presentations}
Let $e$ be a rigid presentation of weight $\ep$ in $\rep(A)$. Recall the $\E$-truncating functor $\eo{e}$ defined by the following canonical triangle
\begin{equation*}  h_1e[-1]\xrightarrow{can} d \to \eot{e}(d) \to h_1e. \end{equation*}
We define the right orthogonal projection $L_e: \rep(A)\to \rep(e^\perp)$ by 
$$M\mapsto f_{\ep}(\coker(\,\eot{e}(d_M))),$$
where $d_M$ is the minimal presentation of $M$ and $f_{\ep}$ is the torsion-free functor associated to $\ep$.
We will recall the functors $\eot{e}$ and $f_{\ep}$ in Section \ref{ss:TF} and \ref{ss:PNC} respectively.

\begin{theorem}[Theorem \ref{T:L_e}]\label{intro:T:L_e} The functor $L_e: \rep(A)\to \rep(e^\perp)$ is left adjoint to the inclusion functor $\rep(e^\perp) \to \rep(A)$.
\end{theorem}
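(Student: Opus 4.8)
The plan is to exhibit $L_e$ as a composite of two functors, each left adjoint to an inclusion, and then compose the two adjunctions. Write $C_e(M):=\coker(\eot{e}(d_M))$ for the $\E$-truncation step, so that $L_e=f_{\ep}\circ C_e$. I first fix the two intermediate targets: let $\mathcal{F}_{\ep}$ be the torsion-free class of the torsion pair attached to $\ep$ (recalled in Section \ref{ss:PNC}), and let $\mathcal{E}:=\{M : \E(e,M)=0\}$ be the $\E$-vanishing subcategory. Since the condition $\Hom(e,-)=0$ forces torsion-freeness, we have $\rep(e^\perp)\subseteq\mathcal{F}_{\ep}$, and $\rep(e^\perp)=\mathcal{E}\cap\mathcal{F}_{\ep}$. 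The two things to establish are: (i) $C_e$ lands in $\mathcal{E}$ and is left adjoint to the inclusion $\mathcal{E}\hookrightarrow\rep(A)$; and (ii) $f_{\ep}$ restricts to a functor $\mathcal{E}\to\rep(e^\perp)$ and realizes the torsion-free adjunction against objects of $\rep(e^\perp)$.

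For step (ii), recall from Section \ref{ss:PNC} that $f_{\ep}$ is left adjoint to the inclusion $\mathcal{F}_{\ep}\hookrightarrow\rep(A)$, so $\Hom(f_{\ep}X,F)\cong\Hom(X,F)$ for every $X$ and every $F\in\mathcal{F}_{\ep}$. As $\rep(e^\perp)\subseteq\mathcal{F}_{\ep}$, this already gives $\Hom(f_{\ep}C_eM,N)\cong\Hom(C_eM,N)$ for all $N\in\rep(e^\perp)$. What remains is to check that $f_{\ep}$ does not destroy the $\E$-vanishing, i.e. $f_{\ep}(\mathcal{E})\subseteq\rep(e^\perp)$; this amounts to showing that the torsion submodule $t_{\ep}(C_eM)$ absorbs all of $\Hom(e,-)$ while contributing nothing to $\E(e,-)$, which I would read off from the compatibility of the torsion pair with $e$ set up in Section \ref{ss:PNC}.

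Step (i) is the heart of the argument. The defining triangle $h_1e[-1]\xrightarrow{can} d_M \to \eot{e}(d_M)\to h_1e$ yields, upon applying $H^0=\coker$, a natural morphism $\eta_M\colon M\to C_eM$, the candidate unit. Applying $\Hom(-,N)$ to this triangle for $N\in\mathcal{E}$ gives a long exact sequence, and I want $\eta_M^\ast\colon\Hom(C_eM,N)\to\Hom(M,N)$ to be an isomorphism. This reduces to the vanishing of the $\Hom$- and $\E$-groups contributed by the homology term $h_1e$ when paired with an object satisfying $\E(e,N)=0$ (and $\Hom(e,N)=0$): these are exactly the obstruction and corepresenting groups that the truncation $\eot{e}$ was designed to annihilate. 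The $\E$-rigidity of $e$ enters precisely here, to guarantee that $can$ is the genuine universal map so that the cone computes the reflection of $M$ into $\mathcal{E}$. Combining the two steps produces
\[
\Hom_{\rep(e^\perp)}(L_eM,N)=\Hom(f_{\ep}C_eM,N)\cong\Hom(C_eM,N)\cong\Hom_{\rep(A)}(M,N),
\]
with naturality in $M$ and $N$ inherited from the two constituent adjunctions.

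I expect the main obstacle to be step (i): pinning down the map $can$ and the term $h_1e$ in terms of the $\E$-invariant of Derksen--Fei, and then proving that $\Hom(-,N)$ applied to the triangle kills every contribution of $h_1e$ exactly when $\E(e,N)=0$. A secondary subtlety is verifying $\rep(e^\perp)\subseteq\mathcal{F}_{\ep}$ together with $f_{\ep}(\mathcal{E})\subseteq\rep(e^\perp)$, so that the composite genuinely lands in $\rep(e^\perp)$ rather than merely in $\mathcal{F}_{\ep}$.
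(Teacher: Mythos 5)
Your proof is correct and follows essentially the same route as the paper's: both arguments obtain $\Hom(\eot{e}(d_M),N)\cong\Hom(d_M,N)$ from the long exact sequence of the defining triangle, using that $\Hom(h_1e,N)=0$ and $\E(h_1e,N)=0$ for $N\in\rep(e^\perp)$, and then obtain $\Hom(f_{\ep}(\br{M}),N)\cong\Hom(\br{M},N)$ from the canonical sequence $0\to t_{\ep}(\br{M})\to\br{M}\to f_{\ep}(\br{M})\to 0$, since $t_{\ep}(\br{M})\in\Gen(E)$ and $\Hom(E,N)=0$. One small caveat: your intermediate claim that $C_e$ is left adjoint to the inclusion of $\mathcal{E}=\{N:\E(e,N)=0\}$ is too strong --- the map $\Hom(C_eM,N)\to\Hom(M,N)$ can fail to be injective when $\Hom(e,N)\neq 0$ (e.g.\ for $N=E$ and $M$ with $\E(e,M)>0$, the surjection $\br{M}\twoheadrightarrow h_1E$ gives a nonzero element of the kernel) --- but this does not affect the theorem, since the composite is only ever tested against $N\in\rep(e^\perp)$, where both vanishings hold, exactly as in the paper.
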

Similarly, if $\ec$ is an injective presentation, then we can define the left orthogonal projection $R_\ec$ as 
$R_\ec = \tc_{\ep}\, \eort{\ec}$
and show that $R_\ec$ is right adjoint to the embedding functor $\rep(^\perp\ec) \hookrightarrow \rep(A)$.

\begin{corollary}[Corollaries \ref{C:equi} and \ref{C:bimod}]\label{intro:T:adj} 
The category $\rep(e^\perp)$ is equivalent to $\rep({_e}A)$ where ${_e}A = \End(\bigoplus \Ind(L_e(A)))$.	
Moreover, we have the isomorphisms $L_e(M) \cong \Hom(M, R_{\nu e}(A^*))^*$ and $R_{\ec}(M) \cong \Hom(L_{\nu^{-1} \ec}(A), M)$.
\end{corollary}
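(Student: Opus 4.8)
The plan is to deduce both corollaries formally from the adjunction of Theorem~\ref{intro:T:L_e}. For the equivalence, I would first show that $L_e(A)$ is a projective generator of $\rep(e^\perp)$, where $A$ denotes the regular representation. Since $\rep(e^\perp)$ is an exact abelian subcategory, the inclusion $\iota\colon\rep(e^\perp)\hookrightarrow\rep(A)$ is exact, so its left adjoint $L_e$ is right exact and preserves projectivity; in particular $L_e(A)$ is projective. Because $\iota$ is fully faithful, the counit $L_e\iota\to\Id$ is invertible, whence $L_e(M)\cong M$ for $M\in\rep(e^\perp)$; applying the right exact functor $L_e$ to a surjection $A^{n}\twoheadrightarrow M$ then yields $L_e(A)^{n}\twoheadrightarrow M$, so $L_e(A)$ generates. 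The classical theorem that a length abelian category with a projective generator $P$ is equivalent to $\rep(\End P)$ now gives $\rep(e^\perp)\simeq\rep(\End(L_e(A)))$, and passing to the basic algebra replaces $L_e(A)$ by $\bigoplus\Ind(L_e(A))$, yielding $\rep(e^\perp)\simeq\rep({_e}A)$.

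For the first bimodule formula I would combine the Eilenberg--Watts description with tensor--hom duality. As a left adjoint, $L_e$ is right exact and additive, hence isomorphic to $L_e(A)\otimes_A(-)$, where $L_e(A)$ carries the $(A,A)$-bimodule structure inherited from $A={}_AA_A$. Tensor--hom duality then supplies a natural isomorphism $L_e(A)\otimes_A M\cong\Hom_A(M,(L_e A)^{*})^{*}$ for finite-dimensional $M$, with $(-)^{*}=\Hom_{\k}(-,\k)$; here the second $A$-action on $L_e(A)$ makes $(L_e A)^{*}$ an $(A,A)$-bimodule, along one side of which the outer $\Hom_A$ and dual are formed, leaving the other side to supply the left $A$-structure of the result.

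Thus the first formula reduces to identifying the bimodule $(L_e A)^{*}$ with $R_{\nu e}(A^{*})$, where $A^{*}$ is the injective cogenerator of $\rep(A)$. This is where the duality between the two projections enters: dualizing the defining triangle~\eqref{eq:e0}, the functor $(-)^{*}$ converts the $\E$-truncation $\eot{e}$ into the injective truncation $\eort{\nu e}$, the torsion-free functor $f_{\ep}$ into the torsion functor $\tc_{\ep}$, and cokernels into kernels, so that $(L_e A)^{*}=\bigl(f_{\ep}\coker\eot{e}(d_A)\bigr)^{*}\cong\tc_{\ep}\,\eort{\nu e}(A^{*})=R_{\nu e}(A^{*})$, the Nakayama functor $\nu$ recording the passage from the projective presentation $e$ to the injective presentation $\nu e$. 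The second formula $R_{\ec}(M)\cong\Hom(L_{\nu^{-1}\ec}(A),M)$ is the mirror statement: as a right adjoint $R_{\ec}$ is left exact, hence of the form $\Hom_A(Y,-)$ for a bimodule $Y$, and the same duality, now with the roles of $\nu$ and $\nu^{-1}$ and of $A$ and $A^{*}$ interchanged and with the injective cogenerator $A^{*}$ tested in place of the projective generator $A$, identifies $Y\cong L_{\nu^{-1}\ec}(A)$.

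The main obstacle is precisely the bimodule identification $(L_e A)^{*}\cong R_{\nu e}(A^{*})$ of the third paragraph: one must verify at the level of presentations, and not merely of objects, that $(-)^{*}$ intertwines $\eot{e}$ with $\eort{\nu e}$ and $f_{\ep}$ with $\tc_{\ep}$, while keeping careful track of the two commuting $A$-actions and of how $\nu$ and $\nu^{-1}$ act on presentations and weights. Everything else is formal: the equivalence is the routine projective-generator argument, and once the bimodule identifications are established the two formulas follow from Eilenberg--Watts, tensor--hom duality, and $\Hom_A(A,-)\cong\Id$.
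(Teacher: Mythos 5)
Your first paragraph reproduces the paper's proof of Corollary \ref{C:equi} almost verbatim, just with more detail: the inclusion is exact, so its left adjoint $L_e$ is right exact and preserves projectives, $L_e(A)$ is a progenerator, and the equivalence with $\rep(\End(\bigoplus\Ind(L_e(A))))$ follows. That half is fine.

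For Corollary \ref{C:bimod} you take a genuinely different and much heavier route, and the step you yourself flag as ``the main obstacle'' --- identifying the bimodule $(L_eA)^*$ with $R_{\nu e}(A^*)$ --- is not actually carried out, and the sketch you give of it does not typecheck as written. The plain $K$-dual $(-)^*$ sends the right $A$-module $f_\ep(\coker\eot{e}(d_A))$ to a \emph{left} $A$-module, whereas $R_{\nu e}(A^*)$ is a right $A$-module; the identification only makes sense after transporting the second (left) $A$-action on $L_e(A)$ through every functor in the composition $f_\ep\circ\coker\circ\eot{e}$ and matching it against $\tc_\ep\circ\ker\circ\eort{\nu e}$ applied to $A^*=D({}_AA)$ --- this is exactly where the Nakayama functor (as opposed to the bare duality $D$) enters, and it is the entire content of the statement. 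Worse, this identification is essentially equivalent to the formula you are trying to prove: specializing $L_e(M)\cong\Hom(M,R_{\nu e}(A^*))^*$ to $M=A$ recovers it, so your plan front-loads the whole difficulty into the one step you leave as a sketch. The paper avoids Eilenberg--Watts and the bimodule bookkeeping entirely with a three-step adjunction chain: $L_e(M)\cong\Hom(L_e(M),A^*)^*$ by the standard duality against the injective cogenerator; $\cong\Hom(L_e(M),R_{\nu e}(A^*))^*$ because $L_e(M)\in\rep(e^\perp)=\rep({}^\perp\nu e)$ and $R_{\nu e}$ is right adjoint to that inclusion; $\cong\Hom(M,R_{\nu e}(A^*))^*$ by Theorem \ref{T:L_e}, since $R_{\nu e}(A^*)\in\rep(e^\perp)$. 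The identification $({}_eA)^*=R_{\nu e}(A^*)$ then falls out as a corollary rather than serving as an input. I would either complete the dualization argument with the left/right module structures and the role of $\nu$ versus $D$ made explicit (Lemma \ref{L:tau_comp} is the model for such a computation), or, better, switch to the adjunction chain.
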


We also study the decomposition numbers in $\eot{e}(P_i)$ and $L_e(P_i)$, which are naturally related to the $C$-matrices \cite{Fu, As, Tr}. 
One interesting discovery is the relation between the multiplicity $[\eort{e}(P_i[1]) : e]$ and the dimension vector of the Schur reduction of $e$ (see Lemma \ref{L:Smulti} and Proposition \ref{P:Schred}).
The so-called {\em Schur reduction} (Definition \ref{D:Schred}) of $e$ is a mildly new concept that plays an interesting role in the cluster algebra theory \cite{Frs}.

\subsection{The Bijection between $\drep(e^\perp)$ and $\drep(\comp{e})$}
A presentation $d$ is called {\em compatible} with $e$ if $\e(d,e)=\e(e,d)=0$.
Let $\drep(\comp{e})$ be the set of all presentations without summands in $\op{add}(e)$ that are compatible with $e$.
Based on the work of Jasso, Buan and Marsh \cite{BM1,BM2} constructed a bijection between the rigid presentations in $\rep(e^\perp)$ and in $\rep(\comp{e})$ (see \cite[Proposition 4.5]{BM1}, \cite[Section 2]{BM2}).
We will show that this bijection can be extended to all presentations, not just rigid ones.

After a very light modification, the functor $L_e$ can be extended to the category of decorated representations, or equivalently to the category of presentations. We denote the modified functor by $\L_e$ (see Definition \ref{D:Lext}).
Let $\drep({^{\,\lperp\!}}e)$ be the set of left $\E$-vanishing presentations, that is, all presentations satisfying $\e(d,e)=0$.
\begin{theorem}[Theorems \ref{T:bijection} and \ref{T:bijection_space}] \label{intro:T:bijection} 
$\L_e$ sends a general (resp. rigid) presentation in $\drep({^{\,\lperp\!}}e)$ to a general (resp. rigid) presentation in $\drep(e^\perp)$.	
The restriction of $\L_e$ to $\drep(\comp{e})$ is bijective and its inverse is $\eor{e}$.	
\end{theorem}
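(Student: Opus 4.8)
The plan is to reduce the two component theorems to three verifications and then treat them in order of increasing difficulty: (a) \emph{well-definedness}, that $\L_e$ carries $\drep({^{\,\lperp\!}}e)$ into $\drep(e^\perp)$ and that $\eor{e}$ carries $\drep(e^\perp)$ into $\drep(\comp{e})$; (b) \emph{mutual inverseness} of $\L_e$ and $\eor{e}$ on these subsets; and (c) \emph{preservation of the rigid and the general members}. Throughout I would work at the level of two-term presentations (equivalently decorated representations), exploiting the canonical truncation triangle (\ref{eq:e0}) defining $\eot{e}$ together with its left analogue defining $\eor{e}$, and I would feed in the already-known Buan--Marsh bijection on \emph{rigid} presentations so that only the general members carry genuinely new content.

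For (a) I would apply $\Hom(e,-)$ and $\E(e,-)$ to the triangle (\ref{eq:e0}): it is arranged so that the image of the $h_1e$-term is exactly the obstruction to right-orthogonality, whence passing to the cone annihilates $\E(e,-)$, while the torsion-free functor $f_\ep$ folded into $\L_e$ removes the residual $\Hom(e,-)$-part. The hypothesis $\e(d,e)=0$ guarantees that the construction preserves left $\E$-vanishing and creates no new obstruction, so that $\L_e(d)\in\drep(e^\perp)$. The dual computation for the left truncation $\eor{e}$, which by construction introduces no $\op{add}(e)$-summand, shows $\eor{e}(\drep(e^\perp))\subseteq\drep(\comp{e})$.

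For (b) I would compose the two triangles. Starting from $d\in\drep(\comp{e})$, the compatibility $\e(d,e)=\e(e,d)=0$ together with the absence of an $\op{add}(e)$-summand forces the canonical map truncating by $h_1e$ and the canonical map re-attaching the $e$-part to be mutually inverse up to the identification of the cones, giving $\eor{e}\L_e(d)\cong d$; the reverse composite $\L_e\eor{e}$ on $\drep(e^\perp)$ is handled symmetrically. Rigidity is then inherited: writing $\E(\L_e d,\L_e d)$ via the long exact sequences attached to the triangles, every correction term is a summand of $\E(d,e)$, $\E(e,d)$ or $\E(e,e)$, all of which vanish by compatibility and by the rigidity of $e$; hence $\E(\L_e d,\L_e d)$ reduces to $\E(d,d)$, and in particular a rigid presentation is sent to a rigid one.

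The hard part is the preservation of \emph{general} presentations (Theorem \ref{T:bijection_space}), since generality is a geometric rather than a homological condition and cannot be checked object-by-object. Here I would first show that $\L_e$ induces on $\delta$-vectors precisely the linear mutation map recorded earlier (the formula governing $L_e(P_i)$ and the simples of $\rep(e^\perp)$), which restricts to a bijection between the relevant sublattices. I would then prove that the \emph{generic} value of the $\E$-invariant is unchanged, $\br{\E}(\delta,\delta)=\br{\E}(\L_e\delta,\L_e\delta)$, by combining upper-semicontinuity of $\E$ with the bijection on rigid presentations: a general presentation degenerates with the smallest possible $\E$-self-extension, and the functoriality of $\L_e$ in flat families, together with the existence of the inverse $\eor{e}$, forces the image to attain this same minimum and hence to be general. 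Arranging $\L_e$ in families so that the image of the generic point is the generic point of the image is the step I expect to require the most care.
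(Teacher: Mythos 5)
Your outline of parts (a) and (b) points in the right direction but is too thin exactly where the paper has to work. For $d\in\drep(\comp{e})$ the truncation triangle \eqref{eq:e0} is \emph{trivial} ($h_1=\e(e,d)=0$), so ``composing the two triangles'' carries no information there: the entire content of injectivity of $\L_e$ on $\drep(\comp{e})$ is that the torsion-free functor $f_\ep$ buried inside $\L_e$ can be undone, and your sketch never engages with $f_\ep$ at all. The paper does not prove $\eor{e}\L_e\cong\op{id}$ directly; it proves the composite $\L_e\eor{e}=\op{id}$ on $\drep(e^\perp)$ (by showing $\eort{e}(N)/N\in\mc{T}(\ep)$) and separately proves injectivity of $\L_e$ by reducing to injectivity of $f_\ep$ on $\{M\notin\Gen(E)\mid\E(M,E)=0\}$ (Lemma \ref{L:fep}, an extension of a Buan--Marsh argument requiring $\Ext^1(M,t_\ep(N))\subset\E(M,hE)=0$), together with the computation $L_e^{-1}(0)\cap\rep(\comp{e})=\op{add}(E\oplus E_c^-)$ (Lemma \ref{L:0fibre}). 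Your rigidity argument also silently assumes $\e(e,d)=0$, which fails on $\drep({^{\,\lperp\!}}e)$; the paper first truncates by $\eot{e}$ and uses the surjection $\E(d,d)\twoheadrightarrow\E(\L_e(d),\L_e(d))$ of Lemmas \ref{L:e0reg} and \ref{L:isoE}.

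The genuine gap is in part (c). Your criterion --- that the generic value of the self-$\E$-invariant is preserved --- does not imply that a general presentation maps to a general one. By upper semicontinuity, the locus where $\e(d',d')$ attains its minimum is one particular dense open subset of $\PHom(\delta')$; landing in it is a single generic property, not all of them. Whenever $\e(\delta',\delta')>0$ the quotient of that locus by $\Aut(\delta')$ is positive-dimensional, and nothing in your argument rules out that the image of $\L_e$ is a proper closed subvariety of it; the set-theoretic bijection with $\drep(\comp{e})$ does not help, because the $\eor{e}$-preimages of weight-$\delta'$ presentations need not all have weight $\delta$ (the correction term $a\ep$ in Proposition \ref{P:delta} varies with $d'$). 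What is actually needed is dominance (in fact openness) of the induced morphism $\lambda_e\colon U/\Aut(\delta)\to U'/\Aut(\delta')$, and the paper obtains this by identifying the tangent map of $\lambda_e$ at $d$ with the surjection $\E(d,d)\twoheadrightarrow\E(\L_e(d),\L_e(d))$ (Lemma \ref{L:morphismGIT}), then invoking the local criterion for flatness and openness of flat maps (Lemma \ref{L:localflat}). Your argument as stated only recovers the rigid case, which is already Buan--Marsh; to close the gap you need the tangent-map computation, i.e.\ the surjectivity statements of Lemmas \ref{L:e0reg} and \ref{L:isoE} promoted to a statement about the differential of the map of presentation spaces.
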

\noindent Furthermore, we give explicit formulas to calculate the $\delta$-vectors under this bijection (see Lemma \ref{L:ereg} and Proposition \ref{P:delta}). 

In view of the ``$\Hom$" adjunction, it is natural to compare $\E(d, \iota_e(N))$ with $\E(\L_e(d), N)$ for $N\in \rep(e^\perp)$. In general, they are not isomorphic. 
One should attribute this discrepancy to the fact that $\L_e(d)$ and $L_e(d)$ are not necessarily homotopy equivalent.
We call a presentation $d$ {\em $e$-regular} if $\L_e(d)$ and $L_e(d)$ are homotopy equivalent.
One may think of $e$-regular presentations as those that behave well under the orthogonal projection $\L_e$.
We provide several equivalent characterizations of $e$-regular presentations (see Lemma \ref{L:ereg} and Corollary \ref{C:e^-reg}).
In proving Theorem \ref{intro:T:bijection}, the $e$-regularity plays an important role.
It turns out that $\drep({^{\,\lperp\!}}e)$ consists of $e$-regular presentations (Lemma \ref{L:e0reg}).

\subsection{The Case of Quivers with Potentials}
We hope that Schofield's induction can be performed in the categories of quivers with potentials (QP for short) \cite{DWZ1}.
This at least requires that the category $\rep(e^\perp)$ is equivalent to the category of representations for another quiver with potential.
We show this is indeed the case and give algorithms to find the projected QP $\eQS$. Let $e_c^\pm$ be the positive and negative complements of $e$ (see Section \ref{ss:PNC} for the precise definition).

\begin{theorem}[Theorem \ref{T:perpQP}] \label{intro:T:perpQP} Assume that $e_c^\pm$ is extended-reachable.
The category $\rep(e^\perp)$ is equivalent to the module category of $\eQS$.	
Moreover, if the QP $(Q,\S)$ is nondegenerate (resp. rigid), then so is $\eQS$.
\end{theorem}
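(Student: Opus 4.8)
The plan is to promote the abstract equivalence $\rep(e^\perp)\simeq\rep({_e}A)$ of Corollary~\ref{intro:T:adj} to an equivalence with the module category of a \emph{genuine} quiver with potential, by realizing the projected QP $\eQS$ through a sequence of QP mutations supplied by the extended-reachability hypothesis. The argument rests on three ingredients: a base case in which $\rep(e^\perp)$ is transparently the module category of a QP; a compatibility statement asserting that forming $\rep(e^\perp)$ commutes with a single mutation; and the invariance of nondegeneracy and rigidity under QP mutation from \cite{DWZ1}. Granting the first two, the third delivers the ``moreover'' clause almost for free.

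First I would isolate the base case. Since $e_c^{\pm}$ is extended-reachable, there is a mutation sequence $\mu=\mu_{k_t}\cdots\mu_{k_1}$ (carried out, if necessary, in an extended QP that records the presentation $e$ on auxiliary vertices) after which the transported presentation takes a standard form, say $\bigoplus_{k\in J}P_k$ for some $J\subseteq Q_0$. For such an $e$ one computes directly $\Hom(P_k,M)\cong M_k$ and $\E(P_k,M)=0$, so that $\rep(e^\perp)$ is precisely the full subcategory of modules supported on $Q_0\setminus J$; this is the module category of the QP obtained by restricting $(Q,\S)$ to $Q_0\setminus J$. One must check that this restriction is again a QP and inherits nondegeneracy (resp. rigidity); in the extended set-up this is arranged by letting $J$ consist of auxiliary vertices that split off cleanly, so that the restriction amounts to deleting such vertices. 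This fixes the initial projected QP and settles the theorem in the base case.

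The heart of the proof is to show that the formation of the projected QP commutes with a single mutation $\mu_k$, so that the result can be transported from the base case back along $\mu^{-1}$. Here I would invoke the mutation formulas already derived for the $\delta$-vectors of the positive and negative complements $e_c^{\pm}$ and for the dimension vectors of the simple modules of $\rep(e^\perp)$. These formulas pin down the $\delta$- and dimension data of the projected QP after mutating $(Q,\S)$, and the task is to match them with the data produced by applying QP mutation directly to the projected QP. Translating this numerical match into an equivalence of categories uses the bijection between $\drep(\comp{e})$ and $\drep(e^\perp)$ of Theorem~\ref{intro:T:bijection}, together with the mutation-compatible transformation of the projective generators $\bigoplus\Ind(L_e(A))$ that define ${_e}A$. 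I expect to treat two cases separately: when $k$ is a surviving vertex its mutation descends directly to a mutation of $\eQS$, whereas when $k$ lies in the support of the complement its mutation reshapes $e_c^{\pm}$ while leaving $\rep(e^\perp)$ unchanged up to the bookkeeping recorded by the mutation formulas.

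Granting the compatibility step, the theorem follows by induction on the length of $\mu$: $\eQS$ is exhibited as an iterated QP mutation of the base-case QP, and \cite{DWZ1} guarantees that each mutation preserves nondegeneracy and rigidity, so these properties propagate to $\eQS$ exactly when $(Q,\S)$ enjoys them. The main obstacle is the compatibility step, and within it the control of the \emph{potential} rather than merely of the underlying quiver or algebra: QP mutation involves the reduction that cancels trivial $2$-cycles, and one must verify that the potential obtained by projecting the mutated QP agrees, up to right-equivalence, with the one obtained by mutating the projected QP. Matching these reductions, and checking that the two mutation operations — on the complements $e_c^{\pm}$ and on the projected QP — are genuinely intertwined by $\L_e$, is the delicate point on which the whole argument turns.
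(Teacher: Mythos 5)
Your overall strategy---induct along the mutation sequence, with a base case where $e$ is a sum of $P_k$'s and a step asserting that ``project then mutate'' agrees with ``mutate then project''---is genuinely different from the paper's proof, and the difference matters: the compatibility step you rely on is exactly where the gap lies, and you do not close it. The mutation formulas for $\ep_c^\pm$ (Theorem \ref{T:mucomp}) and for the simples (Theorem \ref{T:musimple}) only control $\delta$-vectors and dimension vectors; together with the bijection $\drep(\comp{e})\leftrightarrow\drep(e^\perp)$ of Theorem \ref{T:bijection} this pins down at best the $B$-matrix of the projected quiver, not the potential up to right-equivalence, and not even the algebra structure of $\End(\bigoplus\Ind(L_e(A)))$. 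Two QPs with the same quiver and the same numerical invariants can have non-isomorphic Jacobian algebras, so ``translating the numerical match into an equivalence of categories'' is not a step one can grant. Worse, the commutation you want is essentially Corollary \ref{C:mulift} of the paper, whose proof goes through Theorem \ref{T:L_e^-} and hence through Theorem \ref{T:perpQP} itself; taking it as an input here is circular unless you supply an independent argument, and you explicitly flag the matching of potentials and reductions as open.

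The paper sidesteps the commutation problem entirely. It fixes the sign, sets $C=\coker(e_c^+)$, and computes the endomorphism algebra of the projective generator $f_\ep(C)$ of $\rep(e^\perp)$ directly: Lemma \ref{L:factor} identifies $\End(f_\ep(C))$ with $\End(C)$ modulo maps factoring through $E$. On the other side, Theorem \ref{T:mualg} (built on the extended-mutation invariance of $\br{\Hom}$, Lemma \ref{L:Hombar}) identifies $J(\mu_{e^+}(Q,\S))$ with $\br{\End}(e\oplus e_c^+)=\End(e\oplus e_c^+)\oplus\E(e\oplus e_c^+,\tau^{-1}(e\oplus e_c^+))$ as an $R$-algebra respecting the vertex idempotents, and Lemma \ref{L:epi} realizes $J\eQS$ as the quotient by all paths through the $e$-vertices. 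The remaining point---that the $\E$-part contributes nothing to the quotient---is Lemma \ref{L:factor1}, which shows every element of $\E(e_c^+,d)$ factors through $e$. This single computation, performed once at the level of the generator rather than propagated through every mutation, is what actually controls the potential; the ``moreover'' clause then falls out of Lemma \ref{L:epi} on deformation spaces. If you want to salvage your induction, you would need to first prove an analogue of Lemma \ref{L:Hombar}/Theorem \ref{T:mualg} to transport the algebra (not just the numerics) across each mutation, at which point you would have reconstructed the paper's argument.
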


The algorithm involves finding the negative or positive complement of $e$ or the simple representations in $\rep(e^\perp)$. 
We derive the following mutation formula for the negative and positive complements, enabling efficient computation of these complements.
\begin{theorem}[Theorem \ref{T:mucomp}] \label{intro:T:mucomp}
	We have the following mutation rule for the positive and negative complements $\ep_c^\pm$ of $\ep$:
\begin{align*} \mu_k(\ep)_j^\pm = \begin{cases}\mu_k(\ep_j^\pm) & \text{if $\ep(k)\neq 0$} \\
		\mu_k(\ep_j^\pm)' & \text{if $\ep(k)=0$ and $\ep_j^\pm(k)=\pm 1$ ($\exists !\ j$)}   
	\end{cases}
\end{align*}
where $j$ is the unique index as in Lemma \ref{L:existj}, and $\mu_k(\ep_j^\pm)'$ is the $j$-th exchange of $\mu_k(\Delta_{e^\pm})$.
\end{theorem}

By the tropical duality, we obtain a mutation formula for the dimension vectors of simples in $\rep(e^\perp)$. We put these dimension vectors as columns in a matrix $\C_{e^\perp}$, which is contained in the $C$-matrices $\C_{e^\pm}$ of the positive and negative completions.
The formula is simpler in the sense that it avoids referencing the additional exchange in Theorem \ref{intro:T:mucomp}. We will use the standard notation in cluster algebra theory: 
for a real vector $a$, denote $[a]_+:=\max(a,0)$ where $\max$ is taken entry-wise.  
\begin{theorem}[Theorem \ref{T:musimple}]\label{intro:T:musimple} We have the following mutation formula for the matrix $\C_{e^\pm}$. The matrix $\C_{\mu_k(e)^\pm}$ only changes at the $k$-th row, and
	\begin{equation*} \gamma_{i}'(k) = \begin{cases} 		
			[\sgn(\ep(k))b_k]_+\gamma_{i} - \gamma_{i}(k) &  \ep(k)\neq 0 \\
			\max_{\mp}([-b_k]_+\gamma_{i},\ [b_k]_+\gamma_{i} )-\gamma_{i}(k) &  \ep(k)=0 \text{ and } i\neq j\\
			\pm	1 & \ep(k)=0 \text{ and } i=j,
	\end{cases} \end{equation*}
	where $\gamma_i'$ is the $i$-th column of the matrix $\C_{\mu_k(e)^\pm}$ and $j$ is as in Lemma \ref{L:existj}.
\end{theorem}

\begin{algorithm}[simplified version of Algorithm \ref{a:Qep}] 
	Find a sequence of mutations $\mub$ such that $\mub(\ep)$ is negative. Using Theorem \ref{intro:T:musimple}, find the matrix $\C_{e^\perp}$.
Then the $B$-matrix of the projected quiver $_eQ$ is given by $\C_{e^\perp}^{\top} B(Q) \C_{e^\perp}$.
\end{algorithm}

In general, the functor $\L_e$ does not preserve general presentations in the full $\drep(Q,\S)$.
However, for quivers with potentials we find a variation $\L_e^\pm$ of $\L_e$ which preserves general presentations.
The definition of $\L_e^\pm$ involves another two projection maps $\sqcup_\ep^\pm: \drep(Q,\S) \to \drep(\comp{e})$ (see Definition \ref{D:sqproj}).
The {\em modified projection} $\L_e^{\pm}$ is the composition $\L_{e} \circ \sqcup_e^{\pm}$. 
We write $L_e^\pm(M) := \coker \L_e^\pm(d_M)$, and denote the modified projection $\L_{\eQS} \circ \sqcup_e^{\pm}$ to the QP $\eQS$ by $\L_{(e)}^\pm$.
Also recall from \cite{Fcf} the stabilization functor $\heo{\ep} := f_{\ep}\tc_{\ep}: \rep(A) \to \rep(e^\perp)$.
Although the definitions of $L_e^+$ and $\heo{\ep}$ are of very different nature,
they turn out to be equal.

\begin{theorem}[Theorem \ref{T:Lstab}] \label{intro:T:Lstab} We have that $L_e^+ = \heo{\ep}$.
\end{theorem}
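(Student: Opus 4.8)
The plan is to prove the identity by unwinding both functors on the minimal presentation $d_M$ and then factoring out the common torsion-free reflection $f_\ep$. On the one hand, by the definition of the modified projection we have $L_e^+(M) = \coker\L_e^+(d_M) = \coker\L_e(\sqcup_e^+ d_M)$, and since $\L_e$ is the presentation-level lift of $L_e = f_\ep \circ \coker \circ \eot{e}$, the cokernel $\coker\L_e(d)$ computes $f_\ep\bigl(\coker\eot{e}(d)\bigr)$ for any presentation $d$ by construction of $\L_e$; applied to $d = \sqcup_e^+ d_M$ this gives $L_e^+(M) = f_\ep\bigl(\coker\eot{e}(\sqcup_e^+ d_M)\bigr)$. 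On the other hand, $\heo{\ep}(M) = f_\ep(\tc_\ep M)$ by definition. Thus both expressions are of the form $f_\ep(-)$, and since $f_\ep$ is the reflection onto the $\ep$-torsion-free part, it suffices to exhibit a \emph{natural} morphism between $\coker\eot{e}(\sqcup_e^+ d_M)$ and $\tc_\ep M$ whose $\Ker$ and $\coker$ lie in the $\ep$-torsion class; such a morphism becomes an isomorphism after applying $f_\ep$.

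The next step is to make the two intermediate objects explicit. For $\tc_\ep M$ I would use the torsion-pair description of the $\ep$-stability torsion pair that underlies the pair $(f_\ep,\tc_\ep)$: the functor $\tc_\ep$ extracts the canonical torsion part, arranged so that $f_\ep\tc_\ep M$ is the maximal $\ep$-semistable quotient, i.e.\ the maximal quotient lying in $\rep(e^\perp)$. For $\coker\eot{e}(\sqcup_e^+ d_M)$ I would unpack Definition \ref{D:sqproj}: the positive projection $\sqcup_e^+$ replaces the part of $d_M$ incompatible with $e$ by a presentation built from the positive complement $e_c^+$, producing a presentation in $\drep(\comp{e})$, after which the truncation triangle \eqref{eq:e0} for $\eot{e}$ and the ensuing cokernel can be read off directly. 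The goal is then to identify $\coker\eot{e}(\sqcup_e^+ d_M)$ with $\tc_\ep M$ up to $\ep$-torsion, using \eqref{eq:e0} together with the short exact sequence produced by the torsion pair.

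The comparison itself is where the two ``different natures'' must be reconciled, and this is the step I expect to be the main obstacle. Concretely, I must show that the combinatorial, presentation-level operation $\sqcup_e^+$ (a specific choice of positive complement making the output compatible with $e$) matches the homological, torsion-theoretic operation $\tc_\ep$ (the torsion part of the $\ep$-stability torsion pair) once $\ep$-torsion is killed. A clean way to organize this is to track the canonical map $M \to \coker\eot{e}(\sqcup_e^+ d_M)$ and compare it with the torsion projection $M \to \tc_\ep M$, showing that the two differ by a morphism with $\ep$-torsion $\Ker$ and $\coker$; here the inverse $\eor{e}$ from the bijection of Theorem \ref{intro:T:bijection} controls how a compatible presentation such as $\sqcup_e^+ d_M$ sits inside $\rep(e^\perp)$ and is the natural device for relating the presentation-side construction to the module-side torsion pair. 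The sign bookkeeping — that it is $\tc_\ep$, and not its mirror, that matches the $+$ projection — is forced by the orientation of the truncation triangle \eqref{eq:e0} and by the choice of the positive (rather than negative) complement; I would verify this orientation by testing the claimed equality on the projectives $P_i$, where both $L_e^+(P_i)$ and $\heo{\ep}(P_i)$ are computable from the $C$-matrix data analyzed earlier in the paper.

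Finally, once the two intermediate objects are shown to coincide modulo $\ep$-torsion, applying $f_\ep$ collapses the discrepancy and yields
$L_e^+(M) = f_\ep\bigl(\coker\eot{e}(\sqcup_e^+ d_M)\bigr) = f_\ep(\tc_\ep M) = \heo{\ep}(M)$.
Naturality is automatic, since each construction in the chain — the minimal presentation $d_M$, the projection $\sqcup_e^+$, the truncation $\eot{e}$, the cokernel, and the reflection $f_\ep$ — is functorial; assembling these compatibilities produces the natural isomorphism $L_e^+ \cong \heo{\ep}$ and completes the proof.
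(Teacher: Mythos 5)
Your reduction of both sides to the torsion-free reflection is consistent with the paper: the identity $L_e^+(M) = f_\ep\bigl(\coker\,\sqcup_e^+(d_M)\bigr)$ is exactly \eqref{eq:Lepm}, and $\heo{\ep}(M) = f_\ep\tc_\ep(M)$ is the definition. But the step you flag as ``the main obstacle'' --- producing a comparison between $\coker\,\sqcup_e^+(d_M)$ and $\tc_\ep(M)$ --- is the entire content of the theorem, and your proposal contains no method for it. The difficulty is that $\sqcup_e^+$ is \emph{not} a presentation-level operation you can ``read off directly'': by Definition \ref{D:sqproj} it is $\mu_+^{-1}\,\eort{P}\,\mu_+$ (more generally $\mub^{-1}\,\eort{e'}\,\mub$ by Lemma \ref{L:relex}), i.e.\ the truncation triangle is formed in a \emph{mutated} QP and conjugated back. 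Your paraphrase --- that $\sqcup_e^+$ replaces the incompatible part of $d_M$ by something built from $e_c^+$ --- is not the definition, and the effect of $\sqcup_e^+$ on cokernels is opaque without transporting the triangle. The paper's proof does precisely this: it lifts the triangle $h_1 e' \to \eort{e'}(d') \to d' \to h_1 e'[1]$ to the cluster category, applies the triangle equivalence back to $\CQ$ and the cohomological functor $F_{Q,\S}$ to obtain the long exact sequence $h_1E \to M' \xrightarrow{f} M \xrightarrow{g} h_1\tau E$, and then uses the functorial decomposition $\E(d',e') \cong \E(\b{d},\b{e}) \oplus \Hom(d,\tau e)$ to show that the components of $g$ span $\Hom(M,\tau E)$, so that $\Img(g)=\fc_\ep(M)$ and hence $\Img(f)=\tc_\ep(M)$. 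None of this machinery, nor any substitute for it, appears in your outline; testing the formula on the $P_i$ is a sanity check, not a proof of the identification.

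A secondary issue: your closing principle --- that a morphism with kernel and cokernel in the torsion class becomes an isomorphism after $f_\ep$ --- is not justified as stated. Since $f_\ep$ is only right exact, a monomorphism with torsion cokernel need not become an isomorphism after applying $f_\ep$ (the kernel of $f_\ep(X)\to f_\ep(Y)$ is a subobject of a torsion object, which need not be torsion). The paper sidesteps this by arranging the comparison as a \emph{surjection} $M' \twoheadrightarrow \tc_\ep(M)$ whose kernel is a quotient of $h_1E$, and then applying the right-exact functor $L_e$, which kills $E$. If you pursue your strategy, you must likewise produce a surjection in one definite direction, not merely a map with torsion kernel and cokernel.
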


\begin{theorem}[Theorem \ref{T:L_e^-}] \label{intro:T:L_e^-} Assume that $\mu_{e^\pm}(e\oplus e_c^\pm) = \pm(P\oplus P_c)$. Then
$\L_{(e)}^\pm(d)$ is the restriction of $\mu_{e^\pm}(d)$ to the subquiver corresponding to $\pm P_c$. 
In particular, $\L_{(e)}^\pm$ preserves general presentations.
\end{theorem}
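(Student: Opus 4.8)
The plan is to transport the entire statement into the mutated frame $\mu_{e^\pm}(Q,\S)$, where the hypothesis $\mu_{e^\pm}(e\oplus e_c^\pm)=\pm(P\oplus P_c)$ makes the orthogonal projection completely transparent. This hypothesis is exactly the (extended-)reachability needed for Theorem~\ref{intro:T:perpQP}, so I may freely use the equivalence $\rep(e^\perp)\cong\Mod(\eQS)$. In the mutated frame the presentation $e$ becomes the (shifted) projective $\pm P$ supported on the $P$-vertices, while the complement $e_c^\pm$ becomes $\pm P_c$ on the $P_c$-vertices. Working with the positive case first, since $e$ is now the genuine projective $P$, the orthogonality conditions collapse: one has $\E(P_i,-)=0$ automatically and $\Hom(P_i,M)=M_i$, so $\rep(e^\perp)$ is identified inside $\rep(\mu_{e^\pm}(Q,\S))$ with the representations supported away from the $P$-vertices. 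Comparing this with the mutation formulas of Theorems~\ref{intro:T:mucomp} and~\ref{intro:T:musimple} identifies $\eQS$ with the full subquiver of $\mu_{e^\pm}(Q,\S)$ on the $\pm P_c$-vertices, equipped with its induced potential. The negative case is dual, with $e$ becoming a shifted projective and the orthogonality read off through the corresponding injective description underlying $R_\ec$.

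With this identification in hand, the second step is to show that the abstract projection becomes \emph{literal} restriction in the mutated frame. The inclusion $\rep(e^\perp)\hookrightarrow\rep(\mu_{e^\pm}(Q,\S))$ is extension by zero along the $P$-vertices, and I would verify that its left adjoint, transported to the level of presentations, is the restriction of the underlying map of projectives to the $\pm P_c$-subquiver; here the potential structure is essential, since it guarantees that the restricted presentation is again minimal over $\eQS$. The remaining point is to absorb the pre-projection $\sqcup_e^\pm$ occurring in $\L_{(e)}^\pm=\L_{\eQS}\circ\sqcup_e^\pm$: I would argue that the compound mutation $\mu_{e^\pm}$ already performs the compatibilization effected by $\sqcup_e^\pm$, so that restricting $\mu_{e^\pm}(d)$ reproduces $\L_{\eQS}(\sqcup_e^\pm(d))$. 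For the positive case this can be anchored by Theorem~\ref{intro:T:Lstab}, which identifies $L_e^+$ with the stabilization functor $\heo{\ep}=f_\ep\tc_\ep$ and thereby furnishes an independent description of the target to match against the restriction.

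The main obstacle is precisely this matching of the modified projection $\L_{\eQS}\circ\sqcup_e^\pm$ with the mutate-then-restrict recipe, since a priori $\sqcup_e^\pm$ and the mutation $\mu_{e^\pm}$ are defined by unrelated procedures. The crux is to show that $\sqcup_e^\pm$ selects exactly the positive (resp. negative) complement data that $\mu_{e^\pm}$ trivializes, so that the two projections agree termwise on the underlying maps of projectives. I expect this to require careful bookkeeping of how the explicit mutation rule on decorated representations acts on the $\op{add}(e)$-summands versus the complementary summands, distinguishing the $\ep(k)\neq 0$ and $\ep(k)=0$ cases exactly as in the complement formula of Theorem~\ref{intro:T:mucomp}.

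Finally, the ``in particular'' assertion follows formally once the identity is established. Mutation of quivers with potentials preserves general presentations, so $\mu_{e^\pm}(d)$ is general whenever $d$ is; and restriction to the full $\pm P_c$-subquiver preserves generality, because the generic presentation of $\eQS$ is the restriction of the generic presentation of $\mu_{e^\pm}(Q,\S)$, the generic rank and $\E$-vanishing conditions restricting compatibly to a full subquiver. Composing these two facts with the established identity shows that $\L_{(e)}^\pm$ preserves general presentations, completing the proof.
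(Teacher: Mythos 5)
Your high-level picture agrees with the paper's: mutate so that $e\oplus e_c^\pm$ becomes $\pm(P\oplus P_c)$, observe that in that frame the projection is literal restriction to the $P_c$-subquiver, and deduce preservation of general presentations from the fact that both mutation and restriction preserve them (the latter is the result of \cite{GLFSa} that the paper cites; it is not as automatic as your phrase ``restricting compatibly to a full subquiver'' suggests, but invoking it is fine). The problem is that the step you yourself flag as the ``main obstacle''---matching $\L_{\eQS}\circ\sqcup_e^\pm$ with mutate-then-restrict---is left unresolved, and the route you sketch for it (casework on the explicit mutation rule for decorated representations, splitting $\ep(k)\neq 0$ from $\ep(k)=0$ as in the complement formula) is not how the identification actually goes and would be very hard to push through: mutation of decorated representations is not an equivalence of abelian categories, so one cannot simply track how $f_\ep\,\eot{e}$ transforms summand by summand. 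Moreover your claim that ``the compound mutation $\mu_{e^\pm}$ already performs the compatibilization effected by $\sqcup_e^\pm$'' is not right as stated: what holds is the commutation $\mu_{e^-}(\sqcup_e^-(d))=\sqcup_{P[1]}^-(\mu_{e^-}(d))$ of \eqref{eq:sqmu} (itself a nontrivial cluster-category argument, Lemma \ref{L:relex}); the residual compatibilization $\sqcup_{P[1]}^-$ survives the mutation and is only then absorbed because $\L_{(P[1])}^-=\L_{_{P[1]}(Q,\S)}\circ\sqcup_{P[1]}^-$ is restriction (Lemma \ref{L:LP[1]}).

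The actual mechanism the paper uses to cross the mutation is entirely different from bookkeeping: it realizes the projection via the adjunction as $\Hom(-,(\nu e)_c^+)^*$ (Corollary \ref{C:Le=fe}), notes $\mu_{e^-}((\nu e)_c^+)=I_c$ by Lemma \ref{L:tau_comp}, and then transports $\Hom(d,(\nu e)_c^+)\cong\Hom(\mu_{e^-}(d),I_c)$ across the mutation using the invariance of $\br{\Hom}$ (Lemma \ref{L:Hombar}). The substance lies in checking that the correction terms vanish on both sides so that $\Hom$ equals $\br{\Hom}$: on the source side this needs $\Ec(\tau d,(\nu e)_c^+)=0$, which follows from $d\in\drep({^{\,\lperp\!}}e)$ together with the relative injectivity of $\ker((\nu e)_c^+)$ in $\mc{F}(\ep)$ (Remark \ref{r:Ec}.(1)). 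None of this appears in your proposal, and without some substitute for it the identity $\L_{(e)}^\pm=\L_{(\pm P)}^\pm\mu_{e^\pm}$ is not established; everything downstream (including the ``in particular'') depends on it.
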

\noindent An important corollary (Corollary \ref{C:mulift}) states that mutation sequences can be pushed down and lifted, which can be useful in certain induction procedure.
$$\begin{xymatrix}{  \drep(Q,\S)  \ar[r]^{\mub} \ar[d]^{\L_{(e)}^\pm } & \drep(Q,\S)' \ar[d]^{\L_{(e')}^\pm} \\
	\drep \eQS \ar[r]^{\mub^e} & \drep{_{e'}(Q,\S)'} }
\end{xymatrix}$$
Another interesting corollary of Theorems \ref{intro:T:Lstab} and \ref{intro:T:L_e^-} is related to the cluster algebra theory.  It roughly says the following.
\begin{corollary}[Corollary \ref{C:faces}] \label{intro:C:faces} Let $F$ be the $F$-polynomial of a general representation of $(Q,\S)$, and ${\sf \Lambda}$ be a facet of the Newton polytope of $F$.
Assume that the outer normal vector $\ep$ of ${\sf \Lambda}$ is extended-reachable.
Then up to an explicit monomial change of variables and an explicit shift, 
the restriction of $F$ to ${\sf \Lambda}$ is the $F$-polynomial of a general representation of $\eQS$.
\end{corollary}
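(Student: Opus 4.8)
The plan is to translate the geometry of the facet ${\sf \Lambda}$ into the representation-theoretic language of $\ep$-semistability and then transport it across the projection functor. First I would recall the standard description of the $F$-polynomial as a generating function of Euler characteristics of quiver Grassmannians, $F = \sum_{U\subseteq M} \chi(\Gr_{\dv U}(M))\, y^{\dv U}$, where $M$ is a general representation of $(Q,\S)$ and $U$ ranges over subrepresentations. Under this description the Newton polytope of $F$ is the convex hull of the occurring dimension vectors $\dv U$, and the restriction of $F$ to ${\sf \Lambda}$ is the sum of those terms whose exponents lie on ${\sf \Lambda}$, i.e.\ the terms coming from subrepresentations for which $\innerprod{\ep, \dv U}$ attains its extremal value. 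These extremal subrepresentations are precisely the $\ep$-semistable ones in the sense of King, and by the equivalence recalled in the introduction they are governed by the category $\rep(e^\perp)$.

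Next I would apply the projection. Let $d_M$ be the minimal presentation of $M$ and consider the modified projection $\L_{(e)}^+(d_M)$ to $\eQS$. By Theorem \ref{intro:T:Lstab} the functor $L_e^+$ coincides with the stabilization functor $\heo{\ep}=f_{\ep}\tc_{\ep}$, which is exactly the device extracting the $\ep$-semistable part; and by Theorem \ref{intro:T:L_e^-} the modified projection $\L_{(e)}^+$ preserves general presentations, so $\L_{(e)}^+(d_M)$ is the minimal presentation of a general representation $N$ of $\eQS$. The extended-reachability of $\ep$ guarantees, via Theorem \ref{intro:T:perpQP}, that $\eQS$ is a genuine nondegenerate QP, so that both $F_N$ and the notion of a general representation of $\eQS$ make sense.

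With both sides now attached to honest general representations, I would pin down the change of variables and the shift. A monomial $y'^{\dv V}$ of $F_N$, indexed by the vertices of $_eQ$ (equivalently by the simple objects $S_j'$ of $\rep(e^\perp)$), should be matched with $\prod_i y_i^{(\C_{e^\perp}\dv V)_i}$, since the embedding $\iota_e$ sends the dimension vector of $S_j'$ to the $j$-th column of $\C_{e^\perp}$ by Theorem \ref{intro:T:musimple}; this is the promised explicit monomial change of variables. The shift is a monomial $y^{\dv U_0}$, where $\dv U_0$ is the dimension vector of the extremal subrepresentation corresponding to the zero subrepresentation on the projected side, and it accounts for the fact that ${\sf \Lambda}$ does not pass through the origin.

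The main obstacle is the comparison of Euler characteristics. Writing the extremal exponents on ${\sf \Lambda}$ as $\dv W = \dv U_0 + \C_{e^\perp}\,\dv V$, I must show $\chi(\Gr_{\dv W}(M)) = \chi(\Gr_{\dv V}(N))$ for every such $\dv V$. The natural approach is to produce, using the exactness of the projection on the relevant subcategory together with the explicit description in Theorem \ref{intro:T:L_e^-} of $\L_{(e)}^+$ as a restriction of the mutation $\mu_{e^+}$, a morphism $\Gr_{\dv W}(M)\to\Gr_{\dv V}(N)$ whose fibers are affine spaces; since affine spaces have Euler characteristic one, the two characteristics then agree stratum by stratum. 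Summing over all extremal $\dv V$ and applying the substitution $y'_j\mapsto\prod_i y_i^{(\C_{e^\perp})_{ij}}$ and the shift $y^{\dv U_0}$ yields the stated identity. I expect this fibration argument, rather than the bookkeeping of variables, to be the delicate point, since it requires the projection to be sufficiently well-behaved on subobjects; this is where the nondegeneracy hypothesis and the $e$-regularity theory developed earlier in the paper are essential.
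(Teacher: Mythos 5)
The paper's own proof is essentially two sentences: the restriction formula itself is quoted wholesale from \cite[Theorem 7.8]{Fcf}, and the only new content is (i) the identification of the map $\pi_\ep$ appearing there with $l_e\,\heo{\ep}$, and (ii) the chain $l_e(\heo{\ep}(M)) = l_e(L_e^+(M)) = \coker(\L_{(e)}^+(d_M))$ via Theorem \ref{T:Lstab}, followed by Theorem \ref{T:L_e^-} to conclude genericity. Your second and third paragraphs reproduce exactly this new content correctly: the use of $L_e^+=\heo{\ep}$, the preservation of general presentations by $\L_{(e)}^+$, the change of variables through $\C_{e^\perp}$, and the shift by $\dv t_\ep(M)$ (your $\dv U_0$) all match the paper. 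Where you diverge is in attempting to \emph{reprove} the restriction formula from scratch rather than citing it; that is a legitimate, more self-contained route, and it is in the spirit of how such statements are proved.

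However, as written that route has a genuine gap at its center, and a misstatement at its start. The misstatement: the monomials of $F_M$ lying on ${\sf \Lambda}$ come from the subrepresentations $U$ maximizing $\ep\cdot\dv U$; these maximizers form an interval $[U_{\min},U_{\max}]$ in the lattice of subrepresentations (with $U_{\min}=t_\ep(M)$ and $U_{\max}/U_{\min}$ essentially $\heo{\ep}(M)$), and the individual $U$ are \emph{not} the $\ep$-semistable subrepresentations in King's sense --- it is the subquotient $U/U_{\min}$ that lives in $\rep(e^\perp)$. The gap: the equality $\chi(\Gr_{\dv W}(M))=\chi(\Gr_{\dv V}(N))$ is the entire content of the cited theorem, and you only gesture at ``a morphism whose fibers are affine spaces'' while explicitly deferring its construction. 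Nothing in the $e$-regularity theory or the nondegeneracy hypothesis hands you such a fibration; what one actually needs is the correspondence $U\mapsto (U\cap \tc_\ep(M)+t_\ep(M))/t_\ep(M)$ between extremal subrepresentations of $M$ and subrepresentations of $\heo{\ep}(M)$, together with an argument that its fibers are Euler-characteristic one --- and this is precisely what \cite[Theorem 7.8]{Fcf} supplies and what your proposal does not. So either cite that result (as the paper does), or supply the fibration; as it stands the central step is asserted rather than proved.
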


\subsection{Organization}
In Section \ref{S:Prelim} we briefly review some basic constructions and results in the theory of general presentations, following \cite{DF,Fcf}.
In Section \ref{S:OP} we construct the orthogonal projection functor $L_e$ and prove its adjoint property (Theorem \ref{T:L_e} and Corollaries \ref{C:equi}, \ref{C:bimod}).
At the end, we connect this to Jasso's work \cite{Ja}.
In Section \ref{S:SS} we briefly review the work of Asai and Treffinger \cite{As,Tr}.
We connect certain decomposition numbers to the dimensions of simple modules in Lemma \ref{L:Smulti} and Proposition \ref{P:Schred}.
In Section \ref{S:ereg} we introduce the modified projection $\L_e$ (Definition \ref{D:Lext}) and the notion of $e$-regularity (Definition \ref{D:ereg}). 
In Section \ref{S:bijection} we show in Theorem \ref{T:bijection} that when restricted to $\drep(\comp{e})$, $\L_e$ is a bijection to $\drep(e^\perp)$,
and show in Theorem \ref{T:bijection_space} that when restricted to $\drep({^{\,\lperp\!}}e)$, $\L_e$ preserves general presentations.

From Section \ref{S:QP} onward, we focus on the case of quivers with potentials.
We prove in Theorem \ref{T:perpQP} that the orthogonal subcategory $\rep(e^\perp)$ is equivalent to a module category of another QP.
In Section \ref{S:inmu} we prove two mutation formulas---one on the positive and negative complements (Theorem \ref{T:mucomp}), the other on the corresponding $C$-matrix (Theorem \ref{T:musimple}).
Based on these, Algorithm \ref{a:Qep} enables us to find the projected QP.
In Section \ref{S:OPmod} we define some modified projections (Definitions \ref{D:sqproj} and \ref{D:Lpm}).
We show in Theorem \ref{T:Lstab} that they coincide with the stabilization functors, and provide an explicit description of these projections in Theorem \ref{T:L_e^-}. In the end, we mention an application to the cluster algebra theory (Corollary \ref{C:faces}).

\subsection{Notations}
Let $K$ denote an algebraically closed field of characteristic zero, and $A\cong KQ/I$ be a basic finite-dimensional $K$-algebra.
All modules are right modules, and all vectors are row vectors unless otherwise stated.
All representations are finite-dimensional. The Greek letters $\delta$ and $\ep$ are the weight vector of presentations $d$ and $e$, the letter $\gamma$ is used for $\dv \rho$.

For the direct sum of $n$ copies of $M$, we write $nM$ instead of the traditional $M^{\oplus n}$.
We write $\hom,\ext$ and $\e$ for $\dim\Hom, \dim\Ext$, and $\dim \E$. The superscript $*$ is the trivial dual for vector spaces.
In the table below, if we replace $\rep$ by $\drep$, then we switch from the category of representations to the category of decorated representations.
{\allowdisplaybreaks
\begin{align*}
	& \rep(A) && \text{the category of representations of $A$} &\\
	& S_i && \text{the simple representation supported on the vertex $i$} &\\
	& P_i,\ I_i && \text{the projective cover and the injective envelope of $S_i$} &\\	
	& P[1], -P && \text{both are used for the negative presentation $P\to 0$} & \\
	& \delta_{\M},\ \dtc_{\M} &&\text{the $\delta$ and $\dtc$-vectors of $\M$} & \text{before Definition \ref{D:HomE} } \\
	& \E(\M,\mc{N}),\ \Ec(\M,\mc{N}) && \text{the $\E$-invariant and its dual} & \text{Definition \ref{D:HomE} }\\
	& {\sf e}_i && \text{the unit vector supported on the $i$-th coordinate}\\
	& e && \text{A rigid presentation as a projector} \\
	& \Ind(e) && \text{the indecomposable summands of $e$}\\	
	& \rep(e^{\lperp}),\ \rep(e^{\rperp\,}) && \text{the right $\Hom$-half and $\E$-half orthogonal category} & \eqref{eq:F},\eqref{eq:Tc} \\
	& \rep(e^\perp),\ \rep({^\perp}e) && \text{the right and left orthogonal categories of $e$} & \eqref{eq:eperp}\\
	& \rep(\comp{e}) && \text{the set of representations compatible with $e$} &\text{before Lemma \ref{L:0fibre} }\\	
	& L_e,\ R_{\ec},  && \text{the right and left orthogonal projections} &  \text{after \eqref{eq:tf}} \\	
	& \L_e,\ \mc{R}_{\ec},  && \text{the modified orthogonal projections} & \text{Definition \ref{D:Lext}}  \\
	& \L_e^\pm,\ \mc{R}_{\ec}^\pm,  && \text{the modified orthogonal projections for QPs} & \text{Definition \ref{D:Lpm}}  \\
	& \iota_e,\ \riota_e,  && \text{the embedding functors of $e^\perp$ and $^\perp e$} \\		
	& t_{\ep},\ f_{\ep} && \text{the torsion and torsion-free functors attached to $\ep$} & \text{Section \ref{ss:TF}}\\
	& \eo{e},\ \eor{e} && \text{the reduced right and left $\E$-truncation functors} & \text{Section \ref{ss:PNC}}\\
	& \heo{\ep} && \text{the right stabilization functor associated to $\ep$} & \text{after \eqref{eq:Fcontain}} \\	
	& \sqcup_e^\pm && \text{the compatibilization map for QPs} & \text{Definition \ref{D:sqproj}} \\	
	& e_c^\pm = \bigoplus_i e_i^\pm && \text{the positive and negative complements of $e$} & \text{after \eqref{eq:e0r} } \\
	& {_eA} && \text{the basic algebra with $\rep({_eA}) \cong \rep(e^\perp)$} & \text{Corollary \ref{C:equi}} \\	
	& B(Q)  && \text{the skew-symmetric matrix of $Q$ }\\	
	& f_{M} &&\text{the tropical $F$-polynomial of $M$} & \text{Definition \ref{D:Ftrop} } \\	
	& \rho_{\ep_{\hatj}-\st}(e),\ \rho_\pm(e) && \text{the $\ep_{\hatj}$-stable and the $\pm$-Schur reduction of $e$} & \text{Definition \ref{D:Schred}, \ref{D:Schredbr} } \\		
    & \Delta_{\br{e}},\ \C_{\br{e}} &&  \text{the $\Delta$-matrix and the $C$-matrix of $\br{e}$} & \text{Definition \ref{D:Delta} }\\
    & \C_{e^\perp} && \text{the matrix of $\dv$-vectors of simples in $\rep(e^\perp)$} & \text{before Lemma \ref{L:Cmat}}\\
\end{align*}
}
\noindent {\bf Sign Convention for $\pm$}. Throughout the paper, the symbol $\pm$ denotes two parallel cases: 
the upper sign $+$ corresponds to the positive version (e.g., positive complement $e_c^+$, positive reduction $\rho_+$), and the lower sign $-$ corresponds to the negative version (e.g., negative complement $e_c^-$, negative reduction $\rho_-$). In expressions or statements containing $\pm$ (e.g., $e_c^\pm$ or $\rho_\pm$), choose the same sign consistently for all instances within that expression, unless explicitly stated otherwise.

\section{Preliminary} \label{S:Prelim}
\subsection{The Category of Presentations}
Following \cite{DF} we call a homomorphism between two projective representations, a {\em projective presentation} (or presentation in short). 
As a full subcategory of the category of complexes in $\rep A$, the category $K^{[-1,0]}(\proj A)$ of projective presentations is Krull-Schmidt as well.
Sometimes it is convenient to view a presentation $P_-\to P_+$ as an element in the homotopy category $K^b(\proj A)$ of bounded complexes of projective representations of $A$.
Our convention is that $P_-$ sits in degree $-1$ and $P_+$ sits in degree $0$.

A presentation $d$ is called {\em negative} if $P_+=0$ but $P_-\neq 0$;
is called {\em neutral} if $P_-=P_+$ and $d$ is the identity map;
is called {\em minimal} if $d$ is a minimal presentation of $\coker(d)$.

\begin{lemma}\label{L:decmnn} Every presentation is a direct sum of a minimal, a negative, and a neutral presentation. In fact, up to homotopy equivalence, it decomposes into just a minimal and a negative presentation.
\end{lemma}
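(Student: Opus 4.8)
The plan is to prove the two assertions by producing, inside the category $K^{[-1,0]}(\proj A)$ of presentations, an explicit direct-sum decomposition of $d\colon P_-\to P_+$: I peel off first a neutral summand and then a negative summand, so that the remainder is forced to be minimal. Throughout I write $\operatorname{rad}$ for the Jacobson radical and use that, for the basic finite-dimensional algebra $A$, a presentation $d\colon P_-\to P_+$ is minimal precisely when $\Img(d)\subseteq\operatorname{rad}(P_+)$ (so that $P_+\twoheadrightarrow\coker(d)$ is a projective cover) and $\Ker(d)\subseteq\operatorname{rad}(P_-)$ (so that $P_-\twoheadrightarrow\Img(d)$ is a projective cover). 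The two possible defects---a non-radical image and a projective summand of $P_-$ lying in the kernel---are exactly what the neutral and negative summands will absorb.

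First I would extract the neutral part from the induced map on tops $\bar d\colon\operatorname{top}(P_-)\to\operatorname{top}(P_+)$ over the semisimple quotient $A/\operatorname{rad}A$. Splitting this map of semisimple modules gives $\operatorname{top}(P_-)=T_-^0\oplus T_-^1$ and $\operatorname{top}(P_+)=T_+^0\oplus T_+^1$ with $\bar d|_{T_-^1}\colon T_-^1\xrightarrow{\sim}T_+^1$ an isomorphism, $\bar d(T_-^0)=0$, and $\Img(\bar d)\cap T_+^0=0$. Lifting the corresponding idempotents along $\End(P_\pm)\to\End(\operatorname{top}P_\pm)$ lifts these to $P_-=Q_-\oplus R_-$ and $P_+=Q_+\oplus R_+$, with respect to which $d$ is a matrix $\left(\begin{smallmatrix}a&b\\c&e\end{smallmatrix}\right)$ whose corner $a\colon Q_-\to Q_+$ is an isomorphism on tops while $b,c,e$ all have image in the radical. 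Since a map of projectives inducing an isomorphism on tops is itself an isomorphism (Nakayama's lemma, as $Q_-\cong Q_+$ have the same top), $a$ is invertible, and I clear the off-diagonal entries by Gaussian elimination---left- and right-multiplying by the elementary isomorphisms $\left(\begin{smallmatrix}1&0\\-ca^{-1}&1\end{smallmatrix}\right)$ and $\left(\begin{smallmatrix}1&-a^{-1}b\\0&1\end{smallmatrix}\right)$, which are changes of basis and hence isomorphisms of presentations. This gives $d\cong(Q_-\xrightarrow{a}Q_+)\oplus(R_-\xrightarrow{d'}R_+)$ with $d'=e-ca^{-1}b$; the first summand is neutral (up to isomorphism it is $Q_+\xrightarrow{\Id}Q_+$), and since $a^{-1}$ preserves the radical, $\Img(d')\subseteq\operatorname{rad}(R_+)$.

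Next I would split the negative part off $d'$. Set $M=\coker(d')=\coker(d)$; because $\Img(d')\subseteq\operatorname{rad}(R_+)$, the map $R_+\twoheadrightarrow M$ is already a projective cover, and $\Img(d')$ equals the kernel of $R_+\twoheadrightarrow M$. Let $\pi\colon P_1\twoheadrightarrow\Img(d')$ be a projective cover; lifting the surjection $d'\colon R_-\twoheadrightarrow\Img(d')$ through $\pi$ (using that $R_-$ is projective) yields $s\colon R_-\to P_1$ with $\pi s=d'$, and a Nakayama argument via $\Img(s)+\Ker\pi=P_1$ and $\Ker\pi\subseteq\operatorname{rad}P_1$ shows $s$ is surjective, hence split. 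Writing $R_-=R_-^1\oplus R_-^0$ with $s|_{R_-^1}$ an isomorphism onto $P_1$ and $R_-^0=\Ker s\subseteq\Ker d'$, I obtain $d'\cong(R_-^1\xrightarrow{d_{\min}}R_+)\oplus(R_-^0\to 0)$. The second summand is negative, while the first satisfies $\Img(d_{\min})=\Img(d')\subseteq\operatorname{rad}(R_+)$ and $\Ker(d_{\min})=(s|_{R_-^1})^{-1}(\Ker\pi)\subseteq\operatorname{rad}(R_-^1)$, so $d_{\min}$ is minimal. Assembling the two steps proves the first assertion.

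For the second assertion, the neutral summand $Q_+\xrightarrow{\Id}Q_+$ is an acyclic bounded complex of projectives, hence contractible, i.e.\ isomorphic to the zero object in the homotopy category $K^{[-1,0]}(\proj A)$; dropping it leaves $d\simeq d_{\min}\oplus(R_-^0\to 0)$, a minimal plus a negative presentation. I expect the only genuinely delicate points to be the bookkeeping in the first step---checking that the top-level splitting lifts to honest projective-summand decompositions (idempotent lifting) and that the elementary operations are legitimate over the noncommutative ring $A$ (they are, since they use only that $a$ is invertible)---and, in the second step, confirming that $s$ is actually surjective rather than merely of dense image; both are handled by Nakayama's lemma and the projective-cover property. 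Everything else is formal.
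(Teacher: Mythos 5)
Your proof is correct. The paper states this lemma without proof (it is quoted as a standard fact from the theory of presentations in \cite{DF}), and your argument is the standard one: peel off the summand on which $d$ induces an isomorphism on tops (hence is itself an isomorphism, by Nakayama) via idempotent lifting and Gaussian elimination, then split the projective cover of $\operatorname{Im}(d)$ off the domain to isolate the kernel summand, leaving a minimal presentation; the neutral summand is contractible in $K^{[-1,0]}(\operatorname{proj} A)$, which gives the second assertion. All the delicate points you flag (idempotent lifting, invertibility of the corner entry, surjectivity and splitting of $s$) are handled correctly.
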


Let $\drep(A)$ be the set of {\em decorated representations} $\M=(M,M^-)$ of $A$ up to isomorphism. There is a bijection between the additive categories $\drep(A)$ and $K^{[-1,0]}(\proj A)$ mapping any representation $M$ to its minimal presentation $d_M$ in $\rep A$, and the simple representation $S_u^-$ of $k^{Q_0}$ to $P_u\to 0$.

Let $\nu$ be the Nakayama functor $\Hom(-,A)^*$.
There is a map still denoted by $\nu$ sending a projective presentation to an injective one
$$P_-\to P_+\ \mapsto\ \nu(P_-) \to \nu(P_+).$$
Note that if there is no direct summand of the form $P_i\to 0$, then $\ker(\nu d) = \tau\coker(d)$ where $\tau$ is the classical Auslander-Reiten translation. 
Now we can naturally extend the classical AR-translation to decorated representations as in \cite{DF}:
\begin{equation}\label{eq:taunu} \xymatrix{\M \ar[r]\ar@{<->}[d] & \tau \M \ar@{<->}[d] \\ d_{\M} \ar[r] & \nu(d_{\M})} \end{equation}
\begin{convention} (1).In this paper, we will freely identify $M$ with $(M,0)$ or with $d_M$ if the context is clear.
	We may also identify a decorated representation $\M$ with its corresponding presentation $d_{\M}$.\\
(2). Throughout, we use $\tau$ in this extended sense, if it is applied to the cokernel of a presentation.
For example, if $E=\coker(e)$, then $\tau E = \coker(\tau e) = \ker(\nu e)$.\\
(3). A negative presentation $P\to 0$ is also denoted by $P[1]$ or $-P$;
while the presentation $0\to P$ is also denoted by $P[0]$ or $+P$ (add $[0]$ to emphasize that it is viewed as a presentation rather than a representation).
\end{convention}

We denote by $P_u$ (resp. $I_u$) the indecomposable projective (resp. injective) representation of $A$ corresponding to the vertex $u$ of $Q$. 
For $\beta \in \mb{Z}_{\geq 0}^{Q_0}$ we write $P(\beta)$ for $\bigoplus_{u\in Q_0} \beta(u)P_u$. 
The {\em $\delta$-vector} \footnote{The $\delta$-vector is the same one defined in \cite{DF}, but is the negative of the $\g$-vector defined in \cite{DWZ2}. }
(or {\em weight vector}) of a presentation 
$d: P(\beta_-)\to P(\beta_+)$
is the difference $\beta_+-\beta_- \in \mb{Z}^{Q_0}$.
The $\delta$-vector is just the corresponding element in the Grothendieck group of $K^b(\proj A)$.

\begin{definition}[{\cite{DWZ2,DF}}] \label{D:HomE} Given any projective presentation $d: P_-\to P_+$ and any $N\in \rep(A)$, we define $\Hom(d,N)$ and $\E(d,N)$ to be the kernel and cokernel of the induced map:
	\begin{equation} \label{eq:HE} 0\to \Hom(d,N)\to \Hom(P_+,N) \xrightarrow{} \Hom(P_-,N) \to \E(d, N)\to 0.
	\end{equation}
It follows from \eqref{eq:HE} that 
\begin{equation}\label{eq:h-e} \hom(d,N)- \e(d, N) = \delta\cdot \dv N  \end{equation}
	Similarly for an injective presentation $\dc: I_+\to I_-$, we define $\Hom(M,\dc)$ and $\Ec(M,\dc)$ to be the kernel and cokernel of the induced map $\Hom(M,I_+) \xrightarrow{} \Hom(M,I_-)$.
	It is clear that 
	$$\Hom(d,N) = \Hom(\coker(d),N)\ \text{ and }\ \Hom(M,\dc) = \Hom(M,\ker(\dc)).$$
	We set $\Hom(\M,\mc{N}):=\Hom(d_{\M},N)=\Hom(M,\dc_{\mc{N}})$,\  
	$\E(\M,\mc{N}) := \E(d_{\M},N)$ and $\Ec(\M,\mc{N}) := \Ec(M,\dc_{\mc{N}})$.
\end{definition}	
\noindent Note that according to this definition, we have that $\Hom(\M,\mc{N}) = \Hom(M,N)$.\footnote{This definition is slightly different from the one in \cite{DWZ2}, which involves the decorated part.}
We also set $\E(d_{\M},d_{\mc{N}}) = \E(\M,\mc{N})$ and $\Ec(\dc_{\M},\dc_{\mc{N}}) = \Ec(\M,\mc{N})$.
We refer readers to \cite{DF} for an interpretation of $\E(\M,\mc{N})$ in terms of the presentations $d_{\M}$ and $d_{\mc{N}}$.
We also have the following equalities:
\begin{equation}\label{eq:H2E} \E(\M,-)=\Hom(-,\tau\M)^* \text{ and }\ \check{\E}(-,\M)=\Hom(\tau^{-1}\M,-)^*. \end{equation}


\subsection{General Presentations}
By a {\em general presentation} in $\Hom(P_-,P_+)$, we mean a presentation in some open (and thus dense) subset of $\Hom(P_-,P_+)$.
Any $\delta\in \mb{Z}^{Q_0}$ can be written as $\delta = \delta_+ - \delta_-$ where $\delta_+=\max(\delta,0)$ and $\delta_- = \max(-\delta,0)$. We put 
$$\PHom(\delta):=\Hom(P(\delta_-),P(\delta_+)).$$
It is well known that a general presentation in $\Hom(P(\beta_-),P(\beta_+))$ is homotopy equivalent to a general presentation in $\PHom(\beta_+-\beta_-)$ for any $\beta_-,\beta_+ \in\mb{Z}_{\geq 0}^{Q_0}$. 

There is some open subset $U$ of $\PHom(\delta)$ such that for any $d\in U$, $\Hom(d,M)$ has constant dimension for a fixed $M\in \rep(A)$. This implies that $\E(d,M)$ has constant dimension as well,
and that $\coker(d)$ has a constant dimension vector, denoted $\dv(\delta)$.
A general representation of weight $\delta$ is defined as the cokernel of a general presentation in $\PHom(\delta)$. All general representations in this article are of this form.
\begin{definition} \label{D:home} We denote by $\hom(\delta,M)$ and $\e(\delta,M)$ the value of 
	$\hom(d,M)$ and $\e(d,M)$ for a general presentation $d\in \PHom(\delta)$.
	$\hom(M,\dtc)$ and $\check{\e}(M,\dtc)$ are defined analogously.
\end{definition}
\noindent Recall the isomorphism $\Hom(P_i,P_j) \cong \Hom(I_i, I_j) = \Hom(\nu P_i, \nu P_j)$.
If $d$ is general in $\PHom(\delta)$, then $\nu d$ is general in $\IHom(-\delta)$.
We obtain the obvious relations  
\begin{equation}\label{eq:hedual} \hom(\delta,M)= \check{\e}(M,-\delta)\ \text{ and }\ \e(\delta,M)= \hom(M,-\delta).
\end{equation}  

\begin{definition}[\cite{DF}] A weight vector $\delta\in\mathbb{Z}^{Q_0}$ is called {\em indecomposable} if a general presentation in $\PHom(\delta)$ is indecomposable. We call $\delta=\bigoplus_{i=1}^s \delta_i$ a {\em decomposition} of $\delta$ if a general element $d$ in $\PHom(\delta)$ decomposes into $\bigoplus_{i=1}^s d_i$ with each $d_i\in \PHom(\delta_i)$.
It is called the {\em canonical decomposition} of $\delta$ if each $d_i$ is indecomposable.
\end{definition}
\noindent As a trivial remark, we mention that $\delta_i$'s in the canonical decomposition of $\delta$ are {\em sign-coherent}, that is, for fixed $k\in Q_0$ either $\delta_i(k)\leq 0$ for all $i$ or $\delta_i(k)\geq 0$ for all $i$.

The function $\dim \E(-,-)$ is upper semi-continuous on $\PHom(\delta_1)\times \PHom(\delta_2)$.
We denote by $\e(\delta_1,\delta_2)$ the minimal value of $\dim \E(-,-)$ on $\PHom(\delta_1)\times \PHom(\delta_2)$.
One of the motivations for introducing the space $\E$ is the following theorem.
\begin{theorem}[{\cite[Theorem 4.4]{DF}}] \label{T:CDPHom} $\delta=\delta_1\oplus \delta_2\oplus\cdots\oplus\delta_s$ is the canonical decomposition of $\delta$ if and only if $\delta_1,\cdots,\delta_s$ are indecomposable, and $\e(\delta_i,\delta_j)=0$ for $i\neq j$.
\end{theorem}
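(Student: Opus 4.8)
The plan is to prove this as the presentation-theoretic analogue of the Schofield--Kac description of the canonical decomposition of a dimension vector, with the generic $\E$-invariant $\e(-,-)$ playing the role of $\ext(-,-)$. Write $\delta=\delta_+-\delta_-$, set $P_\pm=P(\delta_\pm)$ so that $\PHom(\delta)=\Hom(P_-,P_+)$, and let $G=\Aut(P_-)\times\Aut(P_+)$ act by $(g_-,g_+)\cdot d=g_+\,d\,g_-^{-1}$. The engine is the orbit-codimension identity
\begin{equation}\label{eq:orbitcodim}
\dim\PHom(\delta)-\dim(G\cdot d)=\e(d,d)\qquad(d\in\PHom(\delta)),
\end{equation}
which I would obtain by identifying the cokernel of the infinitesimal action $\End(P_-)\oplus\End(P_+)\to\Hom(P_-,P_+)$, $(\phi_-,\phi_+)\mapsto\phi_+d-d\phi_-$, with $\Hom_{K^b(\proj A)}(d,d[1])\cong\E(d,d)$; since $\PHom(\delta)$ is a smooth affine space and $G\cdot d$ a smooth subvariety, its codimension is exactly $\dim\E(d,d)$. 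Two further inputs are the additivity $\E(\bigoplus_i d_i,\bigoplus_j d_j')=\bigoplus_{i,j}\E(d_i,d_j')$, immediate from \eqref{eq:HE}, and the fact (from lower semicontinuity of $d\mapsto\dim(G\cdot d)$ together with \eqref{eq:orbitcodim}) that a general presentation of weight $\delta$ has the minimal self-$\E$, equal to $\e(\delta,\delta)$. Throughout I will use the block-diagonal ``direct sum and conjugate'' morphism $\Phi\colon G\times\prod_i\PHom(\delta_i)\to\PHom(\delta)$, $(g,(d_i'))\mapsto g\cdot\bigoplus_i d_i'$; since the general locus $U\subseteq\PHom(\delta)$ is $G$-stable and dense, dominance of $\Phi$ forces a jointly general block sum $\bigoplus_i d_i$ to lie in $U$, i.e. to be itself a general presentation.

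For the implication $(\Leftarrow)$, assume each $\delta_i$ is indecomposable and $\e(\delta_i,\delta_j)=0$ for $i\neq j$. Choose $d_i$ general in $\PHom(\delta_i)$ with $\E(d_i,d_j)=0$ for $i\neq j$ and put $d^\circ=\bigoplus_i d_i$. I would compute the differential of $\Phi$ at $(\Id,(d_i))$: its image is $T_{d^\circ}(G\cdot d^\circ)+\iota\big(\prod_i\PHom(\delta_i)\big)$, where $\iota$ is the block-diagonal inclusion. By \eqref{eq:orbitcodim} the quotient $\PHom(\delta)/T_{d^\circ}(G\cdot d^\circ)$ is $\E(d^\circ,d^\circ)$, and by additivity together with the vanishing of the off-diagonal $\E(d_i,d_j)$ this equals $\bigoplus_i\E(d_i,d_i)$; since a block-$i$ deformation surjects onto the $i$-th factor $\E(d_i,d_i)$, the second summand covers it. Hence $d\Phi$ is surjective, so $\Phi$ is dominant (we are in characteristic zero), and therefore $d^\circ$ is a general presentation. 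As each $\delta_i$ is indecomposable, this is precisely the canonical decomposition.

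For $(\Rightarrow)$, suppose $\delta=\bigoplus_i\delta_i$ is the canonical decomposition, so a general $d$ equals $\bigoplus_i d_i$ after a choice of isomorphism; then $d\in\operatorname{im}\Phi$, so $\Phi$ is dominant and, by the observation of the first paragraph, a jointly general block sum $\bigoplus_i d_i$ is a general presentation, whence $\e(d_i,d_j)=\e(\delta_i,\delta_j)$ for all $i,j$. Assume for contradiction that $\e(\delta_a,\delta_b)>0$ for some $a\neq b$. Since the pair $(d_a,d_b)$ is now generic, $\E(d_a,d_b)\neq 0$; choosing a nonzero class and forming the nonsplit triangle $d_b\to E\to d_a\to d_b[1]$ yields a presentation $E$ of weight $\delta_a+\delta_b$ for which $d_a\oplus d_b$ is a proper degeneration (scale the extension class by $\Gm$). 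Setting $d'=E\oplus\bigoplus_{k\neq a,b}d_k$, the general presentation $\bigoplus_i d_i$ is then a proper degeneration of $d'$, so by \eqref{eq:orbitcodim} $\e(d',d')<\e\big(\bigoplus_i d_i,\bigoplus_i d_i\big)=\e(\delta,\delta)$. This contradicts the minimality of $\e(\delta,\delta)$ over $\PHom(\delta)$. Hence $\e(\delta_i,\delta_j)=0$ for all $i\neq j$, and each $\delta_i$ is indecomposable by definition of the canonical decomposition.

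The hard part will be the two geometric inputs that I have used as black boxes. First, the identity \eqref{eq:orbitcodim} requires the clean identification of the cokernel of the infinitesimal $G$-action with $\Hom_{K^b(\proj A)}(d,d[1])\cong\E(d,d)$, which I would justify from the definition \eqref{eq:HE} and the Hom-formula \eqref{eq:H2E}; the bookkeeping of homotopies (the kernel of the action) must be handled so that the codimension count is exact. Second, and more delicate, is the degeneration used in $(\Rightarrow)$: one must realize the triangle $d_b\to E\to d_a\to d_b[1]$ together with its $\Gm$-scalings as an honest family inside a single $\PHom$, reduce each member to a presentation of weight $\delta_a+\delta_b$, and verify that the flat limit is $d_a\oplus d_b$ up to homotopy. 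I expect this extension-degeneration lemma in the homotopy category $K^b(\proj A)$ to be the main obstacle; once it and \eqref{eq:orbitcodim} are in place, the non-rigid (``imaginary'') case where $\e(d_i,d_i)>0$ causes no extra trouble, because the argument never assumes that the orbits $G\cdot d$ are dense.
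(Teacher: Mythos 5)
This statement is imported by the paper from \cite[Theorem 4.4]{DF} with no proof given here, so the only meaningful comparison is with the argument in \cite{DF} --- and your plan is essentially a reconstruction of it: both rest on identifying $\E(d,d)$ with the normal space to the $G$-orbit (asserted verbatim at the end of Section 2.2 of this paper) and on the (non-)dominance of the block-sum map $\Phi$. Your two flagged black boxes are indeed the only things to check, and both are fine. The orbit-codimension identity comes from the four-term exact sequence $0\to\Hom_{C}(d,d)\to\End(P_-)\oplus\End(P_+)\to\Hom(P_-,P_+)\to\E(d,d)\to0$, whose kernel is the algebra of \emph{chain} (not homotopy) endomorphisms; since only the cokernel enters the codimension of the (smooth) orbit, no homotopy bookkeeping is actually required. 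The extension-degeneration step you worry about is shorter than you fear: writing $E_h=\sm{d_a&0\\h&d_b}$ for a degree-one map $h$ representing $0\neq\eta\in\E(d_a,d_b)\cong\Hom_{K^b(\proj A)}(d_a,d_b[1])$, conjugation by $\op{diag}(1,t)$ rescales $h$, so $d_a\oplus d_b\in\overline{G\cdot E_h}$, while applying $\Hom_{K^b(\proj A)}(d_a,-)$ to the triangle shows $\op{id}_{d_a}$ does not lift to $E_h$, whence $\hom(d_a,E_h)<\hom(d_a,d_a\oplus d_b)$ and the two orbits are distinct; Krull--Schmidt cancellation in $K^{[-1,0]}(\proj A)$ then makes $\bigoplus_i d_i$ a \emph{proper} degeneration of $d'$. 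Two cosmetic repairs: the minimality used in the forward direction is that of the self-invariant $d\mapsto\dim\E(d,d)$ on $\PHom(\delta)$ (which is what semicontinuity of orbit dimension gives), not a priori the pair-generic value $\e(\delta,\delta)$, and that is all the contradiction needs; and in the converse direction, to see that the summands of a general $d$ are themselves general (hence indecomposable) you should restrict the source of $\Phi$ to $G\times\prod_i U_i$ with $U_i$ the general loci before invoking dominance.
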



The group $\Aut_A(P_-)\times\Aut_A(P_+)$ acts on $\Hom(P_-,P_+)$ by 
$(g_-,g_+)d =g_+ d g_-^{-1}$.
The space $\E(d,d)$ can be interpreted as the normal space to the orbit of $d$ in $\Hom(P_-, P_+)$.

\subsection{Rigid Presentations}
\begin{definition}
	A presentation $d$ is called {\em rigid} 
	if $\E(d,d)=0$ ($\Ec(\dc,\dc)=0$ for an injective presentation $\dc$).
	A representation $M$ is called {\em $\E$-rigid} \footnote{Due to the equation $\E(M,M)=\Hom(M,\tau M)^*$, it is also called {\em $\tau$-rigid} in \cite{AIR}.} (resp. $\Ec$-rigid) if $\E(M,M)=0$ (resp. $\Ec(M,M)=0$).
\end{definition}
\noindent The orbit of such a presentation is thus dense in its ambient space.
By \eqref{eq:H2E} and \eqref{eq:taunu} we have that $\Ec(M,\dc) \cong \Hom(\coker(\nu^{-1} \dc) ,M)^*$.
So we have that 
\begin{equation}\label{eq:eec} \E(d,d) \cong \Hom(\coker(d),\Ker(\nu d))^* \cong \Ec(\nu d, \nu d). 
\end{equation}
This implies that $d$ is rigid if and only if $\nu d$ is rigid.

One can always complete a rigid presentation $d$ to a {\em maximal rigid} one $\br{d}$, in the sense that 
$\E(\br{d}\oplus d', \br{d}\oplus d') \neq 0$ for any indecomposable $d'\notin \ind(d)$.
Here we denote by $\ind(d)$ the set of nonisomorphic indecomposable direct summands of $d$.
The maximal rigid presentation can be characterized as follows.
\begin{theorem}[{\cite[Theorem 5.4]{DF}}, \cite{AIR}] \label{T:maxrigid} The following are equivalent for a rigid presentation $d$. \begin{enumerate}
		\item	$d$ is maximal rigid; 
		\item	$|\ind(d)|=|Q_0|$; 
		\item 	$\ind(d)$ generates $K^b(\proj A)$. 
	\end{enumerate}
\end{theorem}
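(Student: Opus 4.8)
The plan is to reduce everything to a single lemma: \emph{for any rigid presentation $d$, the $\delta$-vectors of the pairwise non-isomorphic indecomposable summands in $\ind(d)$ are $\mb{Z}$-linearly independent}. Since the $\delta$-vector identifies the Grothendieck group of $K^b(\proj A)$ with $\mb{Z}^{Q_0}$, this lemma forces $|\ind(d)|\le|Q_0|$ for every rigid $d$. Granting it, the three conditions close up into the cycle $(2)\Rightarrow(1)\Rightarrow(3)\Rightarrow(2)$, in which only the passage $(1)\Rightarrow(3)$ carries real content.

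First I would establish the linear independence using the description of $\E(d,d)$ as the normal space to the $\Aut$-orbit of $d$. Writing $\ind(d)=\{d_1,\dots,d_s\}$, suppose $\sum_i m_i\delta_{d_i}=0$ with the $m_i\in\mb{Z}$ not all zero, and split the indices into $I=\{i:m_i>0\}$ and $J=\{i:m_i<0\}$. Then $d_I:=\bigoplus_{i\in I}m_id_i$ and $d_J:=\bigoplus_{j\in J}(-m_j)d_j$ are direct sums of summands of $d$, hence rigid by additivity of $\E$, and they carry the same $\delta$-vector $\delta$. Rigidity makes each orbit dense, so $d_I$ and $d_J$ are both homotopy equivalent to the general presentation of $\PHom(\delta)$, whence $d_I\simeq d_J$. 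Because $K^b(\proj A)$ is Krull--Schmidt this matches their indecomposable summands; but $\{d_i\}_{i\in I}$ and $\{d_j\}_{j\in J}$ are disjoint families of distinct indecomposables, so both must be empty. When one of $I,J$ is empty the relation reads $\delta=0$ for a nonzero rigid sum of indecomposables, which is then homotopy equivalent to the zero presentation of $\PHom(0)$---again absurd. This proves the lemma.

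The two easy edges of the cycle now follow. For $(2)\Rightarrow(1)$: if $|\ind(d)|=|Q_0|$ but $d$ were not maximal, we could adjoin an indecomposable $d'\notin\ind(d)$ with $d\oplus d'$ rigid, producing $|Q_0|+1$ distinct indecomposable summands with linearly independent $\delta$-vectors inside $\mb{Z}^{Q_0}$, which is impossible; hence $d$ is maximal. For $(3)\Rightarrow(2)$: if $\ind(d)$ generates $K^b(\proj A)$ then, since triangles and direct summands only express classes as integer combinations of the generators, the $\delta_{d_i}$ span $\mb{Z}^{Q_0}$, forcing $|\ind(d)|\ge|Q_0|$; together with the lemma this gives equality.

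The remaining implication $(1)\Rightarrow(3)$ is where the difficulty lies, and I expect it to be the main obstacle. I must show that a maximal rigid $d$ generates $K^b(\proj A)$, equivalently that a rigid presentation failing to generate always admits an indecomposable rigid complement, so that such a presentation is never maximal. This is a Bongartz-type completion statement: under the dictionary between rigid presentations, $\tau$-rigid pairs, and $2$-term presilting complexes, it is precisely the fact that every $2$-term presilting object completes to a $2$-term silting object, the latter generating $K^b(\proj A)$ with exactly $|Q_0|$ summands. The point cannot be read off from the Grothendieck group alone, since generation is not detected by $K_0$; I would obtain the complement either by reducing to \cite{AIR}, or by a direct construction inside the right perpendicular category of $d$ along the lines of \cite{DF}---taking care to phrase it through the general theory rather than the orthogonal-projection functors of the later sections, so as not to introduce circularity.
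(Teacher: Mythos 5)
The paper does not actually prove this statement---it is quoted from \cite{DF} and \cite{AIR}---so your argument can only be judged on its own terms. Your key lemma (linear independence of the $\delta$-vectors of the distinct indecomposable summands of a rigid presentation) and its proof via dense orbits and Krull--Schmidt are correct and essentially the standard argument, modulo the usual convention that presentations carry no neutral summands $P\xrightarrow{\sim}P$ (without which both the lemma and the theorem fail, since $\bigoplus_i(P_i\to P_i)$ is rigid with $|Q_0|$ indecomposable summands). The step $(2)\Rightarrow(1)$ follows cleanly, and deferring $(1)\Rightarrow(3)$ to the Bongartz-type completion of \cite{AIR} is reasonable given that the paper itself only cites these sources for the whole theorem.

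The genuine gap is in $(3)\Rightarrow(2)$. You assert that if $\ind(d)$ generates $K^b(\proj A)$ then the $\delta_{d_i}$ span $\mb{Z}^{Q_0}$ ``since triangles and direct summands only express classes as integer combinations of the generators.'' This is false for direct summands: if $Y\oplus Z$ lies in the triangulated subcategory generated by the $d_i$, then $[Y]+[Z]$ lies in the subgroup $H=\langle[d_i]\rangle$, but $[Y]$ itself need not. Thomason's classification of dense subcategories says precisely that thick generation imposes no constraint on $H$: the cone-and-shift closure of the $d_i$ is dense in its thick closure whatever $H$ is (already in $D^b(k)$ the object $k\oplus k$ generates thickly while its class generates only $2\mb{Z}$). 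So generation does not formally force the $\delta_{d_i}$ to span, and your cycle does not close. The standard repair is to prove $(3)\Rightarrow(1)$ directly: rigidity plus generation make $d$ a two-term silting object, and for such a $d$ any rigid extension $d\oplus d'$ forces $d'\in\op{add}(d)$ (resolve $d'$ through $\op{add}(d)$ using generation and kill the connecting maps by rigidity --- the same mechanism as in Remark \ref{r:maxrigid} of the paper and in Aihara--Iyama's silting theory); alternatively, invoke the theorem that the classes of the indecomposable summands of a silting object form a basis of $K_0$. Either input is substantive, and it is exactly what your $K_0$ shortcut tries to bypass.
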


\begin{remark}\label{r:maxrigid} Let $d$ be a maximal rigid presentation and $e$ be any presentation. 
If $\E(e,d)=\E(d,e)=0$, then a standard argument by the uniqueness of the canonical decomposition as in \cite{DF} shows that $e\in \op{add}(d)$.
\end{remark}

\begin{definition}
If $d$ is maximal rigid, then we call both $\ind(d)$ and $d$ a {\em cluster} of presentations.
	We also call the weight vectors of presentations in $\ind(d)$ a {\em cluster} of $\delta$-vectors.
A rigid presentation $d$ is called {\em almost complete} if $|\ind(d)|=|Q_0|-1$.
\end{definition}

\begin{theorem}[{\cite[Proposition 5.7]{DF}}, \cite{AIR}] \label{T:+-} An almost complete rigid presentation $d$ has exactly two complements $d_-$ and $d_+$. They are related by the triangle 
$$d_+\to d'\to h_1d_-\to d_+[1]\ \text{ and }\ h_1d_+\to d''\to d_-\to h_1d_+[1],$$
where $h_1=\dim\E(d_-,d_+)$. Moreover, both $d'\oplus d_-$ and $d''\oplus d_+$ are rigid and $\E(d_+,d_-)=\E(d_+,d')=\E(d'',d_-)=0$. In particular, $h_1=1$ if and only if  $d'=d''$ belongs to the subcategory generated by $\ind(d)$.
\end{theorem}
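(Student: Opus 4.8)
The plan is to realize the two complements as the source and target of the silting mutation of $d\oplus d_+$ at its missing summand: one complement comes for free from the completion statement, the second is produced by a minimal approximation, the two exchange triangles $(*)$ and $(**)$ are the universal extensions relating them, and the count ``exactly two'' is extracted from a dichotomy on the vanishing of $\E(d_0,d_+)$ versus $\E(d_+,d_0)$. Throughout I would work in the triangulated category $K^b(\proj A)$, in which a presentation $P_-\to P_+$ is a two-term complex and, as one checks directly from Definition~\ref{D:HomE} (negative summands contributing nothing on the right), $\E(d,e)=\Hom_{K^b(\proj A)}(d,e[1])$. By the completion statement preceding Theorem~\ref{T:maxrigid}, the almost complete rigid $d$ admits at least one complement; fix one and call it $d_+$, so that $d\oplus d_+$ is maximal rigid and, by Theorem~\ref{T:maxrigid}, $\ind(d\oplus d_+)$ has $|Q_0|$ elements. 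Since $d$ already supplies $|Q_0|-1$ of them, every complement is a single indecomposable outside $\op{add}(d)$.

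To produce the second complement I would take a minimal left $\op{add}(d)$-approximation $f\colon d_+\to B$ with $B\in\op{add}(d)$ and complete it to a triangle $d_+\xrightarrow{f}B\to d_-\to d_+[1]$. Feeding the approximation property into the long exact sequences obtained by applying $\Hom_{K^b(\proj A)}(-,d)$ and $\Hom_{K^b(\proj A)}(-,d_+)$, together with the rigidity of $d\oplus d_+$, forces $\E(d,d_-)=\E(d_-,d)=\E(d_-,d_-)=0$ and $d_-\notin\op{add}(d)$; thus $d\oplus d_-$ is again maximal rigid by Theorem~\ref{T:maxrigid}, with $d_-$ indecomposable and distinct from $d_+$. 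Setting $h_1:=\dim\E(d_-,d_+)=\dim\Hom_{K^b(\proj A)}(d_-,d_+[1])$, the triangle $(**)$ is the universal co-extension of $d_-$ along a basis of $\E(d_-,d_+)$, and $(*)$ is its dual. Applying $\Hom_{K^b(\proj A)}(d_+,-)$ to $(*)$ and $\Hom_{K^b(\proj A)}(-,d_-)$ to $(**)$, and using $\E(d_+,d_+)=\E(d_-,d_-)=0$ and $\E(d_+,d_-)=0$, yields at once $\E(d_+,d')=0$ and $\E(d'',d_-)=0$; a parallel diagram chase on the two triangles gives the rigidity of $d'\oplus d_-$ and of $d''\oplus d_+$.

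For the count, let $d_0$ be an arbitrary complement. Since $d\oplus d_0$ and $d\oplus d_+$ are both maximal rigid with $\E(d,d_0)=\E(d_0,d)=0$, Remark~\ref{r:maxrigid} forces either $d_0\in\op{add}(d\oplus d_+)$, whence $d_0=d_+$, or at least one of $\E(d_+,d_0),\E(d_0,d_+)$ is nonzero. I would then show these two groups cannot both be nonzero; choosing the labelling so that $\E(d_+,d_-)=0$, a complement $d_0\neq d_+$ must satisfy $\E(d_+,d_0)=0$ and $\E(d_0,d_+)\neq0$, and the universal property of the extension $(**)$ identifies every such $d_0$ with the $d_-$ constructed above. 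Hence the complements are precisely $d_+$ and $d_-$. Finally, when $h_1=1$ the triangles $(*)$ and $(**)$ become $d_+\to d'\to d_-\to d_+[1]$ and $d_+\to d''\to d_-\to d_+[1]$ with connecting maps spanning the one-dimensional $\E(d_-,d_+)$; uniqueness of the cone gives $d'\cong d''$, which lies in $\op{add}(d)\subseteq\innerprod{\ind(d)}$, and conversely $d'=d''$ inside $\innerprod{\ind(d)}$ forces the two exchange triangles to coincide, so a rank count on the middle term yields $h_1=1$.

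The main obstacle is the exactness of the count rather than the construction. Producing $d_+$ and $d_-$ and verifying the triangles and vanishings is routine homological algebra once $\E$ is read as $\Hom_{K^b(\proj A)}(-,-[1])$; the genuine difficulty is proving that $\E(d_+,d_0)$ and $\E(d_0,d_+)$ never vanish simultaneously for a complement $d_0\neq d_+$, and that the ``$d_-$-side'' complement is unique, thereby excluding a third complement realized by a non-isomorphic exchange triangle. This is exactly where the partial order on maximal rigid presentations (equivalently the support $\tau$-tilting combinatorics of \cite{AIR}, or the uniqueness of the canonical decomposition à la \cite{DF} underlying Remark~\ref{r:maxrigid}) must be invoked.
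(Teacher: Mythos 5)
First, a point of order: the paper does not prove Theorem~\ref{T:+-} at all --- it is imported from \cite[Proposition 5.7]{DF} and \cite{AIR} --- so there is no in-paper argument to compare yours against, and I can only judge the sketch on its own terms. Your skeleton is the standard one and its routine parts are fine: the identification $\E(d_1,d_2)\cong\Hom_{K^b(\proj A)}(d_1,d_2[1])$ is correct, the long-exact-sequence derivations of $\E(d_+,d')=\E(d'',d_-)=0$ and of the rigidity of $d'\oplus d_-$, $d''\oplus d_+$ from the universal (co)extension triangles go through, and the $\delta$-vector count for the ``$h_1=1$'' equivalence works because a rigid presentation is determined by its $\delta$-vector.

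There are, however, two genuine gaps. (a) Your construction of the second complement as the cone of a minimal left $\op{add}(d)$-approximation $f\colon d_+\to B$ tacitly assumes this cone is again a two-term complex, which fails in general: already for $A=K(1\to 2)$ with $d=P_2[1]$ and the complement $P_1[1]$, one has $\Hom_{K^b(\proj A)}(P_1[1],P_2[1])=\Hom(P_1,P_2)=0$, so the minimal left approximation is $P_1[1]\to 0$ and its cone is $P_1[2]$, not a presentation; the actual second complement $(P_2\to P_1)$ only appears as the \emph{cocone} of the right approximation $P_2[1]\to P_1[1]$. So the recipe must branch on which of the two silting mutations stays two-term, and proving that exactly one of them does (depending on whether your chosen complement is the ``$d_+$'' or the ``$d_-$'' of the statement) is itself part of the content. (b) The count. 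You reduce ``exactly two'' to the claims that $\E(d_+,d_0)$ and $\E(d_0,d_+)$ cannot both be nonzero for a complement $d_0\neq d_+$, and that the complement on the other side is unique; but you give no argument for either, and your final paragraph explicitly defers them to the canonical-decomposition machinery of \cite{DF} and the support $\tau$-tilting combinatorics of \cite{AIR} --- that is, to the very sources in which this theorem is proved. A self-contained argument needs a genuine global input here, e.g.\ that the $\delta$-vectors of $\ind(d)$ span a corank-one unimodular sublattice (Theorem~\ref{T:maxrigid}) so that the $\delta$-vector of any complement is pinned down to two possibilities, combined with sign-coherence and the determination of rigid presentations by their $\delta$-vectors. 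As written, the proposal is an accurate road map of the known proof with its hardest stretch left unpaved.
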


\begin{definition} We call the above pair $(d_-,d_+)$ an {\em exchange pair} of presentations.
	If $h_1=1$, the exchange pair is called {\em regular}.
	The two clusters $\{d_-\}\cup \ind(d)$ and $\{d_+\}\cup \ind(d)$ are called {\em adjacent} to each other.
If the cluster is ordered and $d_{\pm}$ is the $j$-th element, then $d\oplus d_\mp$ is called the $j$-th exchange of $d\oplus d_\pm$, denoted by $\sigma_j(d\oplus d_\pm)$.
\end{definition}

\subsection{The $\E$-truncating Functors and Complements} \label{ss:PNC}
We also review the following standard construction in homological algebra.
Let $e$ be a (not necessarily indecomposable) rigid presentation. 
We start with any presentation $d\in K^{[-1,0]}(\proj A)$. 
Consider the triangle 
\begin{equation} \label{eq:e0} h_1e[-1]\xrightarrow{can} d \to \br{d} \to h_1e, \end{equation}
where $h_1=\dim\E(e, d)$ and $can$ is the canonical map.
Apply $\Hom(e,-)$ to the triangle \eqref{eq:e0}, we get
$$\Hom(e, h_1e)\xrightarrow{\partial} \Hom(e,d[1]) \to \Hom(e, \br{d}[1]) \to \Hom(e, h_1e[1])=0. $$
By construction $\partial$ is surjective so we have that $\E(e,\br{d})=0$. 
We denote the map $d \mapsto \br{d}$ by $\eot{e}$, which is called the right {\em $\E$-truncating} functor.
\begin{remark} \label{r:general}
Note that if $d$ is a general presentation of weight $\delta$, then $\br{d}$ is a general presentation if and only if $\e(\delta, \ep)=0$ or $\e(\ep, \delta)=0$ by \cite[Theorem 3.9]{DF}. Also note that if $\hom(e,e)=1$, then $\partial$ is an isomorphism so that $\Hom(e, d)\cong \Hom(e,\br{d})$.
\end{remark}
This functor has a reduced version $\eo{e}$. By definition $\eo{e}(d)$ is obtained from $\eot{e}(d)$ by removing all summands isomorphic to $e$.
If we start with a representation $M$, then we can apply $\eo{e}$ (or $\eot{e}$) to its minimal presentation $d_M$.
Let $\br{M}$ be the cokernel of $\eo{e}(d_M)$.
We denote the map $M \mapsto \br{M}$ still by $\eo{e} $.

In \cite{DF} we use this functor to construct the {\em positive complement} $\eo{e}(A[0])$ to any rigid presentation $e$. The positive complement can be viewed as an analogue of the Bongartz complement for modules \cite{ASS}.
We denote the direct sum of $\op{Ind}(\eo{e}(A[0]))$ by $e_c^+$, and call it the basic positive complement of $e$.

Similarly, we have the left $\E$-truncating functor $\eort{e}$ defined by
\begin{equation} \label{eq:e0r} h_1'e\to \eort{e}(d) \to d \xrightarrow{can} h_1' e[1], 
\end{equation}
where $h_1'=\dim\E(d, e)$ and $can$ is the canonical map.
We have that $\E(\eort{e}(d), e)=0$.
We also denote by $\eor{e}$ the reduced version of $\eort{e}$.
The presentation $\eor{e}(A[1])$ is called the {\em negative complement} of $e$.
We denote the direct sum of $\op{Ind}(\eor{e}(A[1]))$ by $e_c^-$, and its cokernel by $E_c^-$.
We shall denote the indecomposable summands in $e_c^\pm$ by $e_i^\pm$.

If $\ec$ is an injective presentation, then we can similarly define the left and right $\Ec$-truncating functors and their reduced versions for injective presentations:
\begin{align}\label{eq:ec0}  &\check{h}_1\check{e}[-1]\xrightarrow{can} \dc \to \eot{\check{e}}(\dc) \to  \check{h}_1\check{e}\\
\label{eq:ec0r}	&\check{h}_1'\check{e}\to \eort{\check{e}}(\dc) \to \dc \xrightarrow{can} \check{h}_1'\check{e}[1]
\end{align}
satisfying
$\Ec(\ec, \eo{\ec}(\dc))=0$ and $\Ec(\eor{\ec}(\dc), \ec)=0$.
We denote the direct sum of $\Ind(\eor{\ec}(A^*[0]))$ and $\Ind(\eo{\ec}(A^*[-1]))$ by $\ec_c^+$ and $\ec_c^-$ respectively.

\begin{lemma}\label{L:tau_comp} $(\nu e)_c^+ \cong \nu (e_c^-)$.
\end{lemma}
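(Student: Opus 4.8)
The plan is to deduce the identity formally from the fact that the Nakayama functor $\nu$ induces a triangle equivalence between the homotopy category $K^b(\proj A)$ of projective presentations and its injective counterpart, carrying direct sums to direct sums and indecomposables to indecomposables. The one piece of homological input I would record first is the bilinear refinement of \eqref{eq:eec}, namely $\E(d_1,d_2)\cong\Ec(\nu d_1,\nu d_2)$ for all presentations $d_1,d_2$; this is proved exactly as the diagonal case \eqref{eq:eec}, using \eqref{eq:H2E} and the extended translation \eqref{eq:taunu}. In particular $\dim\E(d,e)=\dim\Ec(\nu d,\nu e)$.

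Next I would apply $\nu$ to the defining triangle \eqref{eq:e0r} of the left $\E$-truncation,
$$h_1'e\to\eort{e}(d)\to d\xrightarrow{can}h_1'e[1],\qquad h_1'=\dim\E(d,e).$$
Since $\nu$ is a triangle functor, its image is a triangle $h_1'(\nu e)\to\nu(\eort{e}(d))\to\nu d\to h_1'(\nu e)[1]$, and by the previous step the multiplicity equals $\check h_1'=\dim\Ec(\nu d,\nu e)$. As $\nu$ sends the canonical (universal) map to the canonical map, this triangle is exactly the one \eqref{eq:ec0r} defining the injective left truncation $\eort{\nu e}(\nu d)$, so $\nu\circ\eort{e}\cong\eort{\nu e}\circ\nu$. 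Because $\nu$ is an equivalence it carries the summands of $\eort{e}(d)$ isomorphic to $e$ bijectively onto the summands isomorphic to $\nu e$, so deleting them commutes with $\nu$ and the reduced identity $\nu\circ\eor{e}\cong\eor{\nu e}\circ\nu$ follows.

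It then remains only to match the objects fed into the two constructions, i.e. $\nu(A[1])=A^*[0]$, which is immediate from $\nu A=A^*$ and the compatibility of $\nu$ with the shift. Combining everything,
$$(\nu e)_c^+=\bigoplus\Ind(\eor{\nu e}(A^*[0]))=\bigoplus\Ind(\eor{\nu e}(\nu(A[1])))\cong\bigoplus\Ind(\nu\,\eor{e}(A[1]))=\nu(\bigoplus\Ind(\eor{e}(A[1])))=\nu(e_c^-),$$
where the middle isomorphism is the reduced intertwining identity and the last two equalities use that $\nu$ commutes with $\Ind$ and $\bigoplus$.

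The main obstacle I anticipate is bookkeeping rather than conceptual. One must check that $\nu$ respects the precise shift and sign conventions so that $\nu(A[1])$ is literally the object $A^*[0]$ feeding the injective positive complement---an off-by-one shift here would silently break the statement---and that both the multiplicity $h_1'$ and the canonical map are genuinely preserved, so that the $\nu$-image of \eqref{eq:e0r} coincides with \eqref{eq:ec0r} on the nose rather than merely agreeing in its outer terms.
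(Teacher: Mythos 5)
Your proposal is correct and follows essentially the same route as the paper: apply $\nu$ to the defining triangle \eqref{eq:e0r}, match it against \eqref{eq:ec0r}, check that the multiplicities agree, and conclude that the middle terms are isomorphic. The only cosmetic differences are that you prove the intertwining $\nu\circ\eort{e}\cong\eort{\nu e}\circ\nu$ for a general $d$ before specializing to $d=A[1]$, and you obtain the equality of multiplicities from the bilinear duality $\E(d_1,d_2)\cong\Ec(\nu d_1,\nu d_2)$, whereas the paper computes $h_1=\e(A[1],e)=\hom(A,E)=\hom(E,A^*)=\check{\e}(A^*,\nu e)=\check{h}_1'$ directly.
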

\begin{proof} We apply the Nakayama functor to the triangle \eqref{eq:e0r} for $d=A[1]$:
	$$h_1e \to \eort{e}(A[1]) \to A[1] \xrightarrow{can} h_1e[1]  $$
and compare with the triangle \eqref{eq:ec0r} for $\dc=A^*[0]$ (the first row below)
\begin{align*} \xymatrix{ \check{h}_1' \nu e \ar[r]\ar@{=}[d] &  \eort{\nu e}(A^*[0]) \ar[r] \ar@{}[d]|{\rotatebox{90}{\scalebox{2}[1]{$\cong$}}} & A^*[0] \ar[r]^{can} \ar@{=}[d] & \check{h}_1' \nu e[1]  \ar@{=}[d]\\
 \nu h_1 e \ar[r] & \nu (\eort{e}(A[1]))  \ar[r] &  \nu A[1] \ar[r]^{can} & \nu h_1 e[1] }
\end{align*}
Note that $h_1 = \e(A[1], e) = \hom(A,E) = \hom(E,A^*) = \check{\e}(A^*, \nu e) = \check{h}_1'$.
By the axioms of a triangulated category we conclude that $\eort{\nu e}(A^*[0]) \cong \nu (\eort{e}(A[1]))$.
In particular, they have the same indecomposable summands.
\end{proof}


\subsection{Torsion Theory} \label{ss:TF}
Recall that a {\em torsion class} in $\rep(A)$ is a full subcategory of $\rep(A)$ which is closed under images, direct sums, and extensions, 
and a {\em torsion-free class} in $\rep(A)$ is a full subcategory of $\rep(A)$ which is closed under subrepresentations, direct sums, and extensions.
Consider the torsion free class
\begin{align}\label{eq:F} {\mc{F}}(\delta) = \{N\in \rep(A)\mid \hom(\delta,N) =0 \} := \rep(\delta^{\,\lperp})
\intertext{and the torsion class}
\label{eq:Tc} \check{\mc{T}}(\delta) = \{L\in \rep(A) \mid \e(\delta, L) =0 \}:= \rep(\delta^{\rperp\,}).
\end{align}
We denote their corresponding torsion class and torsion-free class by $\mc{T}(\delta)$ and $\check{\mc{F}}(\delta)$ respectively, and their associated pairs of functors by $(t_\delta,f_\delta)$ and $(\tc_\delta,\fc_\delta)$ (see \cite[Proposition VI.1.4]{ASS}).
To be more explicit, for any representation $M$,
$t_\delta(M)$ is the smallest subrepresentation $L$ of $M$ such that $\hom(\delta,M/L)=0$, and
$\tc_\delta(M)$ is the largest subrepresentation $L$ of $M$ such that $\e(\delta,L)=0$ (see \cite[the proof of Proposition VI.1.4]{ASS}).

In this paper, we are mainly concerned with the case when $\delta=\ep$ is rigid.
Let $e$ be a general presentation of weight $\ep$ and $E$ be its cokernel.
In this case, the general results in \cite{Fcf, Ftf} specialize to the following containments
\begin{align}\label{eq:T} 	\Gen(E) =	\mc{T}(\ep) &\subseteq \check{\mc{T}}(\ep) = \rep(\ep^{\rperp\,}) \\
	\label{eq:Fcontain} \op{Cogen}(\tau E) = \check{\mc{F}}(\ep) &\subseteq {\mc{F}}(\ep) = \rep(\ep^{\,\lperp}). 
\end{align}
If $\ep$ is maximal rigid, then the above two containments become equalities. 
In \cite{Fcf} we defined the {\em stabilization functor} $\heo{\ep} = \tc_{\ep} f_{\ep} = f_{\ep}\tc_{\ep}$, which is different from the orthogonal projection $L_e$ to be defined in Section \ref{S:OP}.


A representation $M\in \mc{C} \subset \rep(A)$ is called $\E$-projective in $\mc{C}$ if $\E(M,N)=0$ for any $N\in \mc{C}$.
Similarly, we can define $\Ec$-injective representations in $\mc{C}$.
\begin{lemma}\label{L:Eproj} We have that $\Gen(E) = \Gen(E\oplus E_c^-)$ and $\check{\mc{T}}(\ep) = \check{\mc{T}}(\ep\oplus \ep_c^+)$.
Moreover, the $\E$-projective representations in $\op{Gen}(E)$ and $\check{\mc{T}}(\ep)$ are $\op{add}(E \oplus E_c^-)$ and $\op{add}(E \oplus E_c^+)$ respectively.
\end{lemma}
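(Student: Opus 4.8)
The plan is to separate the statement into the two generation identities and the two descriptions of the $\E$-projectives, and to observe that the $\check{\mc{T}}(\ep)$-half and the $\Gen(E)$-half are formally Nakayama-dual through Lemma~\ref{L:tau_comp} (which gives $(\nu e)_c^+\cong\nu(e_c^-)$). I would nevertheless prove both halves by hand, since each reduces to feeding the defining truncation triangles \eqref{eq:e0} and \eqref{eq:e0r} of $e_c^\pm$ into a $\Hom$-functor, together with the torsion-pair orthogonality $\Hom(\mc{T}(\ep),\mc{F}(\ep))=0$ supplied by \eqref{eq:T}--\eqref{eq:F} and the identity $\E(\M,-)=\Hom(-,\tau\M)^*$ of \eqref{eq:H2E}.

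For $\check{\mc{T}}(\ep)=\check{\mc{T}}(\ep\oplus\ep_c^+)$ only ``$\subseteq$'' is at issue, i.e. $\E(e,L)=0\Rightarrow\E(e_c^+,L)=0$. Taking $d=A[0]$ in \eqref{eq:e0}, so that $e_c^+$ is a direct summand of $\eot{e}(A[0])$, and applying $\Hom_{K^b}(-,L[1])$, the two outer terms $\Hom_{K^b}(h_1e,L[1])=\E(e,L)^{\oplus h_1}$ and $\Hom_{K^b}(A[0],L[1])=\Ext^1(A,L)$ both vanish, forcing $\E(\eot{e}(A[0]),L)=0$ and hence $\E(e_c^+,L)=0$. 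For $\Gen(E)=\Gen(E\oplus E_c^-)$ it is enough to place $E_c^-$ in $\Gen(E)=\mc{T}(\ep)$, which by the torsion pair $(\mc{T}(\ep),\mc{F}(\ep))$ means $\Hom(E_c^-,N)=0$ for every $N\in\mc{F}(\ep)=\rep(\ep^{\,\lperp})$. Here I would take $d=A[1]$ in \eqref{eq:e0r}, so that $e_c^-$ is a summand of $\eort{e}(A[1])$, and apply $\Hom_{K^b}(-,N)$; using $\Hom(E,N)=0$ and $\Hom_{K^b}(A[1],N)=\Ext^{-1}(A,N)=0$ squeezes $\Hom(E_c^-,N)=\Hom(\eort{e}(A[1]),N)$ between two zeros.

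To identify the $\E$-projectives I would first check the easy inclusion $\op{add}(E\oplus E_c^\pm)\subseteq\{\E\text{-projectives}\}$. Indeed $\E(E,N)=0$ for $N\in\check{\mc{T}}(\ep)$ holds by definition, while for $N\in\Gen(E)$ it follows from $\E(E,N)=\Hom(N,\tau E)^*$ and $\tau E\in\check{\mc{F}}(\ep)\subseteq\mc{F}(\ep)$; the claim for $e_c^+$ is precisely the first identity proved above, and for $e_c^-$ it comes from $\E(E_c^-,N)=\Hom(N,\tau E_c^-)^*$ with $\tau E_c^-\in\mc{F}(\ep)$, the latter read off from $\E(e_c^-,e)=0$ via \eqref{eq:H2E}. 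For the reverse inclusion, let $M$ be $\E$-projective in the relevant subcategory. The decisive input is that $e\oplus e_c^\pm$ is maximal rigid (the Bongartz-type property for $e_c^+$; for $e_c^-$ this follows from Lemma~\ref{L:tau_comp} since $\nu$ preserves maximal rigidity), so by Remark~\ref{r:maxrigid} I only need $\E(d_M,e\oplus e_c^\pm)=0$ and $\E(e\oplus e_c^\pm,d_M)=0$. The first vanishing comes from $\E$-projectivity of $M$ (noting $E,E_c^\pm$ lie in the subcategory), and the second from $M$ lying in the torsion class, via the containment \eqref{eq:T} and, for the $e_c^-$ summand, the orthogonality $\Hom(\mc{T}(\ep),\mc{F}(\ep))=0$ with $\tau E_c^-\in\mc{F}(\ep)$. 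Since $d_M$ is minimal it has no negative summands, so $d_M\in\op{add}(e\oplus e_c^\pm)$ descends to $M\in\op{add}(E\oplus E_c^\pm)$ after passing to cokernels.

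The main obstacle I expect is the bookkeeping in this last converse step: one must correctly pair the two $\E$-vanishing hypotheses of Remark~\ref{r:maxrigid} with the two available sources, keep track that $\E$ of two presentations depends only on the underlying modules (so the decorated/negative summands of $e\oplus e_c^\pm$ are invisible on one side), and use minimality of $d_M$ to discard exactly those negative summands when passing to cokernels. The two places where the argument is not purely formal are verifying $\tau E_c^-\in\mc{F}(\ep)$ and the maximal rigidity of $e\oplus e_c^-$, both of which rest on Lemma~\ref{L:tau_comp} and the duality \eqref{eq:H2E}.
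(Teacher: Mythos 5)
Your argument is correct, but it is considerably more thorough than what the paper actually does, and it takes a different route in two places. For the generation identities the paper simply takes the homology of the triangle \eqref{eq:e0r} with $d=A[1]$, which exhibits $E_c^-$ directly as a quotient of $h_1E$, hence in $\Gen(E)$; you instead apply $\Hom(-,N)$ (resp.\ $\Hom(-,L[1])$) to the defining triangles and squeeze the middle term between two zeros. Both work; the paper's homology argument is one line, while yours has the advantage of proving the $\check{\mc{T}}$-statement by literally the same mechanism rather than by an appeal to ``similarly''. The bigger difference is in the ``moreover'' part: the paper only remarks that $\op{add}(E\oplus E_c^\pm)$ \emph{are} $\E$-projective (``obvious'') and never proves that they exhaust the $\E$-projectives --- that converse is implicitly imported from the Adachi--Iyama--Reiten/Jasso theory. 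You supply it honestly, via maximal rigidity of $e\oplus e_c^\pm$ and Remark \ref{r:maxrigid}, which is exactly the right tool and buys a self-contained proof. One bookkeeping point to tighten in that converse: to invoke Remark \ref{r:maxrigid} you need $\E(e\oplus e_c^-,d_M)=0$ as presentations, and a negative summand $P_i[1]$ of $e_c^-$ contributes $\Hom(P_i,M)=M_i$ to this space, which is not covered by your $\Hom(M,\tau E_c^-)=0$ argument; it does vanish, but for the separate reason that $P_i[1]\in\op{add}(e_c^-)$ forces $E_i=0$ and hence $M_i=0$ for $M\in\Gen(E)$. With that one line added, your proof is complete and in fact closes a gap the paper glosses over.
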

\begin{proof}
By taking the homology of \eqref{eq:e0r}, we see that $E_c^- \in \Gen(E)$. So $\Gen(E) = \Gen(E\oplus E_c^-)$.
That $\op{add}(E \oplus E_c^-)$ are $\E$-projectives is obvious. 
The ``moreover'' part follows from Remark \ref{r:maxrigid}. The argument for $\check{\mc{T}}(\ep)$ is similar.
\end{proof}

\begin{remark} \label{r:Ec}
(1). Let us state the $\nu$-dual of Lemma \ref{L:Eproj}. 
The $\Ec$-injective representations in 
$\op{Cogen}(\tau E)$ and $\mc{F}(\ep)$ are $\op{add}(\tau E \oplus (\tau E)_c^-)$ and $\op{add}(\tau E \oplus (\tau E)_c^+)$ respectively.
	
(2). It is not difficult to see that $E_c^+ \notin \Gen(E)$ unless $E$ is maximal rigid. 
In particular, no summand of $e_c^+$ is negative.
Assume for contradiction that $E_c^+ \in \Gen(E)$. Then $\Gen(E \oplus E_c^+) = \Gen(E)$. 
But $\Gen(E \oplus E_c^+) = \check{\mc{T}}(\ep\oplus \ep_c^+) = \check{\mc{T}}(\ep)$ by Lemma \ref{L:Eproj}, and this implies $\Gen(E) = \check{\mc{T}}(\ep)$.
This only holds for $E$ maximal rigid.
\end{remark}

\section{The Orthogonal Projection Functor} \label{S:OP}
Let $e$ be a rigid presentation in $\rep(A)$ with $\coker(e)=E$. In this subsection, we will construct an orthogonal projection functor $L_e: \rep(A)\to \rep(e^\perp)$.
We start with a representation $M\in \rep(A)$. Recall the right $\E$-truncating functor $\eot{e}$ and we set $\br{M} = \eot{e}(M)$.
Then we take the canonical sequence
\begin{equation} \label{eq:tf} 0\to t_{\ep}(\br{M}) \to \br{M} \to f_{\ep}(\br{M}) \to 0. \end{equation}
We define the map $L_e: \rep(A)\to \rep(e^\perp)$ as $M\mapsto f_{\ep}(\br{M})$
\footnote{Since $\E(e, \br{M})=0$, we have that $\tc_{\ep}(\br{M}) = \br{M}$, and thus $f_{\ep}(\br{M})$ is nothing but $\heo{\ep}(\br{M})$.}. In other words, $L_e = f_\ep\,\eot{e}$.
Note that we get the same result if we replace $\eot{e}$ by $\eo{e}$ for $\br{M}$.

We actually get a homomorphism $M\to L_e(M)$ as indicated by the following diagram:
	$$\xymatrix{& t_\ep(\br{M}) \ar@{_(-}[d] \\  M\ar[r] \ar[dr] & \br{M} \ar@{->>}[r]\ar@{->>}[d] & h_1E \\ & L_e(M)}$$
where the horizontal sequence is obtained from \eqref{eq:e0} by taking the homology.
	Thus we get a map $\Hom(M,N)\to \Hom(M,L_e(N)) \cong \Hom(L_e(M),L_e(N))$. The latter isomorphism is due to Theorem \ref{T:L_e} below.
In this way, $L_e$ becomes a functor, which is called the {\em right orthogonal projection} functor.

\begin{remark}\label{r:Lneg} When $e$ is the negative presentation $P_i[1]$, the functor $t_{\ep}$ is trivial and $L_e(M)$ can be simply described as follows. Add minimal copies of $P_i[1]$ to $d_M$ to cover the kernel of $P_+\to M$ such that $\coker(d_M)$ is not supported on $i$. Then this new cokernel is $L_e(M)$.
\end{remark}

\begin{theorem}\label{T:L_e} The functor $L_e: \rep(A)\to \rep(e^\perp)$ is left adjoint to the inclusion functor $\iota_e: \rep(e^\perp) \to \rep(A)$.
\end{theorem}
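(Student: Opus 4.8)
The plan is to exploit the factorization $L_e=f_{\ep}\,\eot{e}$ and to establish the adjunction isomorphism $\Hom_{\rep(e^\perp)}(L_e(M),N)\cong\Hom_A(M,\iota_e(N))$ one factor at a time. Since $\iota_e$ is the inclusion of a full subcategory, the left-hand side is just $\Hom_A(L_e(M),N)$, so it suffices to produce, for every $M\in\rep(A)$ and every $N\in\rep(e^\perp)$, a natural isomorphism $\Hom_A(L_e(M),N)\cong\Hom_A(M,N)$ and to check it is realized by precomposition with the canonical map $\eta_M\colon M\to L_e(M)$ from the diagram after Remark~\ref{r:Lneg}. That map factors as $\eta_M=\pi_M\circ\alpha_M$, where $\alpha_M\colon M\to\br{M}$ is induced on cokernels by the morphism $d_M\to\eot{e}(d_M)$ of \eqref{eq:e0} and $\pi_M\colon\br{M}\to f_{\ep}(\br{M})=L_e(M)$ is the quotient in \eqref{eq:tf}. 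I will show each of $\alpha_M^*$ and $\pi_M^*$ is an isomorphism on $\Hom(-,N)$.

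\emph{Stage 1 (the $\E$-truncation).} Apply the contravariant cohomological functor $\Hom_{K^b(\proj A)}(-,N)$ to the triangle \eqref{eq:e0}, obtaining the exact sequence $\Hom(h_1e,N)\to\Hom(\eot{e}(d_M),N)\xrightarrow{\alpha_M^*}\Hom(d_M,N)\to\Hom(h_1e[-1],N)$. Using Definition~\ref{D:HomE} I identify the homotopy Hom-groups: $\Hom(d_M,N)=\Hom(M,N)$ and $\Hom(\eot{e}(d_M),N)=\Hom(\br{M},N)$, while $\Hom(h_1e,N)=\Hom(E,N)^{\oplus h_1}$ and $\Hom(h_1e[-1],N)=\Hom(e,N[1])^{\oplus h_1}=\E(e,N)^{\oplus h_1}$. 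Both end terms vanish precisely because $N\in\rep(e^\perp)$ satisfies $\Hom(e,N)=0$ and $\E(e,N)=0$; hence $\alpha_M^*\colon\Hom(\br{M},N)\xrightarrow{\sim}\Hom(M,N)$.

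\emph{Stage 2 (the torsion-free reflection).} Since $N\in\rep(e^\perp)\subseteq\rep(e^{\lperp})=\F(\ep)$ is torsion-free for the torsion pair $(\mc{T}(\ep),\F(\ep))$, while $t_{\ep}(\br{M})\in\mc{T}(\ep)$ is torsion, there are no nonzero maps $t_{\ep}(\br{M})\to N$. Applying $\Hom(-,N)$ to the short exact sequence \eqref{eq:tf} therefore gives $\pi_M^*\colon\Hom(L_e(M),N)\xrightarrow{\sim}\Hom(\br{M},N)$. Composing the two stages yields $\eta_M^*=\alpha_M^*\circ\pi_M^*\colon\Hom(L_e(M),N)\xrightarrow{\sim}\Hom(M,N)$, which is precomposition with the unit $\eta_M=\pi_M\circ\alpha_M$. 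Both \eqref{eq:e0} and \eqref{eq:tf} are functorial in $M$, so $\eta$ is a natural transformation $\mathrm{id}\Rightarrow\iota_eL_e$, and naturality in $N$ is automatic; this is exactly the unit data exhibiting $L_e\dashv\iota_e$. For well-definedness one also records that $L_e(M)\in\rep(e^\perp)$: we have $\br{M}\in\check{\mc{T}}(\ep)=\rep(e^{\rperp})$ since $\E(e,\br{M})=0$ by construction of $\eot{e}$, this torsion class is closed under quotients so $f_{\ep}(\br{M})\in\check{\mc{T}}(\ep)$, and $f_{\ep}(\br{M})\in\F(\ep)$ by definition of the torsion-free functor.

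The main technical care concentrates in Stage~1: correctly matching the Hom-groups in $K^b(\proj A)$ with the invariants of Definition~\ref{D:HomE}—in particular the identity $\Hom(e,N[1])=\E(e,N)$—and confirming that the middle connecting map of the long exact sequence is genuinely $\alpha_M^*$ rather than merely some abstract isomorphism, so that the assembled bijection is truly precomposition with the unit. Everything else is the formal behaviour of triangles and torsion pairs.
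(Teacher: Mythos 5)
Your proof is correct and follows essentially the same route as the paper: apply $\Hom(-,N)$ to the truncation triangle \eqref{eq:e0} (both boundary terms vanish because $N\in\rep(e^\perp)$), then kill $\Hom(t_{\ep}(\br{M}),N)$ via the torsion pair to pass to $f_{\ep}(\br{M})$. The extra bookkeeping you supply about the unit $\eta_M$ and naturality is a harmless refinement of what the paper leaves implicit.
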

\begin{proof} Let $N$ be any representation in $\rep(e^\perp)$. To verify the adjoint property 
\begin{equation}\label{eq:adj} \Hom(L_e(M),N)\cong \Hom(M, \iota_e(N)), \end{equation}
we apply $\Hom(-,N)$ to the triangle \eqref{eq:e0} for $d=d_M$, and get the exact sequence 
$$0=\Hom(h_1 e, N)\to \Hom(\br{d},N)\to \Hom(d_M,N)\to \E(h_1 e, N)=0$$
with $\coker(\br{d})=\br{M}$.
So $\Hom(\br{M},N)\cong \Hom(d_M,N)$.
Since $t_{\ep}(\br{M}) \in \Gen(E)$, it follows that $\Hom(t_{\ep}(\br{M}),N)=0$. Then from the long exact sequence
	$$0 \to \Hom(f_{\ep}(\br{M}), N) \to \Hom(\br{M},N) \to \Hom(t_{\ep}(\br{M}), N)=0,$$
	we have $\Hom(f_{\ep}(\br{M}), N) \cong \Hom(\br{M},N)$.
	Finally, we have that 
$$\Hom(M,N)\cong \Hom(d_M,N) \cong\Hom(\br{M},N) \cong \Hom(f_{\ep}(\br{M}),N).$$
\end{proof}
\noindent Note that we also have that $\E(M,N)= \E(d_M,N) \cong\E(\br{d},N)$, but in general $\E(\br{d},N)\ncong \E(f_\ep(\br{M}),N)$. We will discuss this in more detail in Section \ref{ss:ereg}.

\begin{corollary}\label{C:equi} If $P_- \to P_+\to M\to 0$ is a projective presentation of $M$, then
	$L_e(P_-) \to L_e(P_+)\to L_{e}(M)\to 0$ is a projective presentation of $L_e(M)$.
In particular, $\rep(e^\perp)$ is equivalent to $\rep({_e}A)$ where ${_e}A = \End(\bigoplus \Ind(L_e(A)))$.
\end{corollary}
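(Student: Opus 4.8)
The plan is to derive both assertions from the adjunction $L_e\dashv\iota_e$ of Theorem~\ref{T:L_e}. A left adjoint preserves cokernels, so applying $L_e$ to the right-exact sequence $P_-\to P_+\to M\to 0$ (which presents $M$ as a cokernel in $\rep(A)$) immediately gives an exact sequence $L_e(P_-)\to L_e(P_+)\to L_e(M)\to 0$ in $\rep(e^\perp)$. The real content is therefore that each $L_e(P_i)$ is a \emph{projective} object of $\rep(e^\perp)$: granting this, $L_e(A)=\bigoplus_i L_e(P_i)$ will be a projective generator and the equivalence $\rep(e^\perp)\cong\rep({_e}A)$ follows by Morita theory with ${_e}A=\End(\bigoplus\Ind(L_e(A)))$.

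I would prove projectivity through the adjunction isomorphism $\Hom_{\rep(e^\perp)}(L_e(P_i),-)\cong\Hom_A(P_i,\iota_e(-))$. Since $\Hom_A(P_i,-)$ is exact on $\rep(A)$, it suffices to know that $\iota_e$ is exact, i.e.\ that $\rep(e^\perp)$ is closed under the cokernels formed in $\rep(A)$; closure under kernels is automatic because $\iota_e$ is a right adjoint, and closure under extensions holds since $\rep(e^\perp)=\mc{F}(\ep)\cap\check{\mc{T}}(\ep)$ is an intersection of extension-closed classes by \eqref{eq:F} and \eqref{eq:Tc}. This exactness is the main obstacle, and I expect it to be the crux of the whole corollary. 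I would settle it by pairing the two-term complex $e=[P_-\to P_+]$ with a short exact sequence $0\to I\to N''\to C\to 0$ attached to a morphism $g\colon N\to N''$ of $\rep(e^\perp)$, where $I=\Img(g)$ and $C=\coker(g)$ are formed in $\rep(A)$. Using that $\E(e,-)$ is the first cohomology of $\Hom(e,-)$ (see \eqref{eq:HE}), this yields the six-term exact sequence
\[
0\to\Hom(e,I)\to\Hom(e,N'')\to\Hom(e,C)\to\E(e,I)\to\E(e,N'')\to\E(e,C)\to0.
\]
Now $N''\in\mc{F}(\ep)$ forces $\Hom(e,N'')=0$, while $I$, being a quotient of the object $N$ of the torsion class $\check{\mc{T}}(\ep)$, satisfies $\E(e,I)=0$; hence the middle collapses to $0\to\Hom(e,C)\to0$ and $\Hom(e,C)=0$, so $C\in\mc{F}(\ep)$. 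Since $C$ is also a quotient of $N''\in\check{\mc{T}}(\ep)$ it lies in $\check{\mc{T}}(\ep)$, whence $C\in\rep(e^\perp)$. Thus $\iota_e$ is exact and each $L_e(P_i)$ is projective.

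For the equivalence I would finish as follows. The counit $L_e\iota_e\Rightarrow\Id$ is an isomorphism because $\iota_e$ is fully faithful, so $L_e(M)\cong M$ for every $M\in\rep(e^\perp)$; applying the right-exactness of the first part to a projective presentation of $M$ exhibits $M$ as an $\rep(e^\perp)$-quotient of $L_e(P_+)\in\op{add}(L_e(A))$, which shows that $L_e(A)$ generates. Being a projective generator, $\bigoplus\Ind(L_e(A))$ then gives the Morita equivalence $\rep(e^\perp)\cong\rep({_e}A)$ with ${_e}A=\End(\bigoplus\Ind(L_e(A)))$.
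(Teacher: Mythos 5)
Your proof is correct and follows the same route as the paper's: the paper's entire argument is the one-line observation that the left adjoint of an exact functor is right exact and preserves projectives, which is exactly your skeleton ending in the Morita equivalence via the progenerator $L_e(A)$. The only difference is that you explicitly verify the exactness of $\iota_e$ (closure of $\rep(e^\perp)$ under cokernels via the six-term sequence in $\Hom(e,-)$ and $\E(e,-)$, plus the torsion/torsion-free closure properties), a fact the paper treats as well known; your verification is sound.
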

\begin{proof} It is well-known that the left adjoint of an exact functor is right exact, and preserves projective presentations. 
In particular, $L_e(A)$ is a progenerator of $\rep(e^\perp)$, and $\rep(e^\perp) \cong \rep({_e}A)$.
\end{proof}
\noindent Then we naturally extend the functor $L_e$ to presentations by setting $L_e(P_i[1]) = L_e(P_i)[1]$.

To understand the indecomposable projective representations in $\rep(e^\perp)$, we need to examine the action of $L_e$ on projective representations.
Recall that $f_\ep(M)=0$ if and only if $M\in \Gen(E) = \mc{T}(\ep)$.
The following lemma generalizes results from \cite[Lemmas 4.6 and 4.7]{BM1}. The proof follows essentially the same arguments; we therefore omit the full details and only highlight the necessary modifications.
\begin{lemma}[{{\em cf.} \cite[Lemma 4.6, 4.7]{BM1}}] \label{L:fep} 
	Let $\mc{R}= \{M\in \rep(A) \setminus \Gen(E) \mid \E(M,E)=0 \}$.
	Then the functor $f_\ep$ is injective on $\mc{R}$, and preserves indecomposable modules.
\end{lemma}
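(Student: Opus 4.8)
The plan is to reconstruct each $M\in\mc{R}$ from its torsion-free quotient $f_\ep(M)$ by a universal extension, so that both injectivity and the preservation of indecomposables become formal consequences.

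\smallskip

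Fix $M\in\mc{R}$ and set $N:=f_\ep(M)$. Since $M\notin\Gen(E)=\mc{T}(\ep)$, the torsion part $t:=t_\ep(M)\in\Gen(E)$ is a proper subrepresentation and $N\neq 0$, sitting in the canonical exact sequence $0\to t\to M\to N\to 0$ with $t\in\mc{T}(\ep)$ and $N\in\mc{F}(\ep)$. For honest representations $\E(-,-)=\Ext^1(-,-)$, as one sees by applying \eqref{eq:HE} to a minimal presentation, so applying $\Hom(-,E)$ to this sequence yields the exact sequence $\Hom(M,E)\to\Hom(t,E)\xrightarrow{\ \partial\ }\E(N,E)\to\E(M,E)=0$. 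Thus the defining hypothesis $\E(M,E)=0$ is \emph{equivalent} to the connecting map $\partial$ being surjective, and this is the only place the hypothesis enters.

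\smallskip

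I would read surjectivity of $\partial$ as saying that the class of $0\to t\to M\to N\to 0$ is versal among extensions of $N$ by the $\E$-projective objects of $\Gen(E)$, which by Lemma \ref{L:Eproj} are exactly $\op{add}(E\oplus E_c^-)$. Let $\mathcal{U}(N)$ denote the corresponding universal extension $0\to T_N\to\mathcal{U}(N)\to N\to 0$, with $T_N$ built from a generating set of $\E(N,E)$ and satisfying $\E(\mathcal{U}(N),E)=0$. The central claim is that $M\cong\mathcal{U}(N)$, naturally in $N$. This reduces to two points: (i) the torsion part $t$ is $\E$-projective in $\Gen(E)$, hence lies in $\op{add}(E\oplus E_c^-)$ by Lemma \ref{L:Eproj}; and (ii) the versal class then forces $t\cong T_N$ and identifies the two extensions. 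Both follow the arguments of \cite[Lemmas 4.6, 4.7]{BM1}; the single modification is that $\E$-projectivity of $t$ must be extracted from the $\E$-theoretic long exact sequence together with Lemma \ref{L:Eproj}, rather than from naive $\Ext$-vanishing, so that the non-hereditary case is covered.

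\smallskip

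The hard part is precisely step (i)--(ii): pinning $t$ down from $N$. Establishing that $t$ is $\E$-projective requires controlling $\E(t,X)$ for all $X\in\Gen(E)$, which heredity would trivialize but which in general one must handle by using that $t\in\Gen(E)$ and $M\in\mc{R}$ to reduce to $\E(t,E)=0$ and then invoke the characterization of $\E$-projectives in Lemma \ref{L:Eproj}. I expect this to be the most delicate point, and it is presumably why the author only highlights the necessary modifications.

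\smallskip

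Granting $M\cong\mathcal{U}(f_\ep M)$, both conclusions are immediate. Injectivity: if $f_\ep(M)\cong f_\ep(M')$ for $M,M'\in\mc{R}$, then $M\cong\mathcal{U}(f_\ep M)\cong\mathcal{U}(f_\ep M')\cong M'$. Preservation of indecomposables: $\mathcal{U}$ is additive, since the universal extension of a direct sum by $\op{add}(E\oplus E_c^-)$ is the direct sum of the universal extensions; moreover $f_\ep(\mathcal{U}(N_i))=N_i$ because the defining sequence of $\mathcal{U}(N_i)$ is already its canonical torsion sequence. Hence a nontrivial splitting $f_\ep(M)=N_1\oplus N_2$ would give $M\cong\mathcal{U}(N_1)\oplus\mathcal{U}(N_2)$ with both summands nonzero, contradicting the indecomposability of $M$. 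Therefore $f_\ep$ is injective on $\mc{R}$ and preserves indecomposable modules.
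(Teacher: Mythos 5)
Your route is genuinely different from the paper's, and it has a fatal gap at its center. The paper does not reconstruct $M$ from $f_\ep(M)$; it runs the Buan--Marsh \emph{Hom-functor} argument verbatim, observing that the only place rigidity of $M\oplus E$ was used is to ensure surjectivity of $\Hom(M,N)\twoheadrightarrow\Hom(M,f_\ep(N))$, and that this already follows from $\Ext^1(M,t_\ep(N))\subseteq\E(M,t_\ep(N))\subseteq\E(M,hE)=0$ (note: only the \emph{inclusion} $\Ext^1\subseteq\E$ is used --- your blanket claim that $\E(-,-)=\Ext^1(-,-)$ for honest representations is false outside the hereditary case; the quotient $\E(M,N)/\Ext^1(M,N)$ is dual to the maps $N\to\tau M$ factoring through injectives). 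With that surjectivity one lifts an isomorphism $f_\ep(X)\cong f_\ep(Y)$ to maps $X\to Y$ and $Y\to X$ and concludes by locality of endomorphism rings of indecomposables; no object-level reconstruction is needed.

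Your central claim $M\cong\mathcal{U}(f_\ep M)$ is not just unproven (you defer exactly steps (i)--(ii)) --- it is false. Take $M=E\oplus N$ with $N\in\mc{F}(\ep)$ and $\E(N,E)=0$: then $M\in\mc{R}$, $t_\ep(M)=E$, $f_\ep(M)=N$, but $\E(N,E)=0$ forces $\mathcal{U}(N)=N\neq M$. (This also shows literal injectivity on all of $\mc{R}$ fails; the lemma must be read, as in \cite{BM1} and as it is used in Corollary \ref{C:indP}, on indecomposables --- and your injectivity argument never invokes indecomposability.) The underlying problem is that surjectivity of the connecting map is a \emph{versality} condition, and versality does not determine an extension: one can always enlarge $t$ by split summands, and even for fixed $t$ distinct classes in $\Ext^1(N,t)$ with the same pushout span can yield non-isomorphic middle terms. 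Ironically, the part you flag as hardest, namely (i), is the part that is actually true: the horseshoe triangle $d_{t}\to d'\to d_{N}\to d_t[1]$ gives an exact sequence $\E(M,E)\to\E(t_\ep(M),E)\to 0$, so $\E(t_\ep(M),E)=0$ and hence $t_\ep(M)\in\op{add}(E\oplus E_c^-)$ by Lemma \ref{L:Eproj}. But (ii) cannot be repaired, so the proposal does not prove the lemma; you should fall back on the functorial argument.
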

\begin{proof} In \cite{BM1}, assuming the $\E$-rigidity of $M\oplus E$, the preservation of indecomposability is proved in Lemma 4.6, and injectivity is proved in Lemma 4.7. Below we take Lemma 4.7 as an example, where the roles of $U, X$ and $Y$ are played by our $E, M$ and $N$.
	
	We do not require $M\oplus E$ to be $\E$-rigid, but rather, we only need the vanishing of $\Ext^1(M, t_\ep(N))$ to ensure the surjectivity of $\Hom(M,N)\twoheadrightarrow \Hom(M,f_\ep(N))$.
	But this follows from the inclusion that $\Ext^1(M, t_\ep(N)) \subset \E(M, t_\ep(N)) \subset \E(M, hE)=0$.
	Then the rest of the proof goes through.
\end{proof}

\begin{corollary}\label{C:indP} The indecomposable projective representations in $\rep(e^\perp)$ are precisely given by $P_{i,e^\perp} = f_{\ep}(E_i^+)$.
\end{corollary}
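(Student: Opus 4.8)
The plan is to identify the indecomposable projectives of $\rep(e^\perp)$ with the indecomposable summands of the progenerator $L_e(A)$ supplied by Corollary \ref{C:equi}, and then to compute $L_e(A)$ explicitly. The minimal presentation of the regular representation $A$ is $A[0]$, so by the definition of the reduced $\E$-truncating functor we have $\eot{e}(A[0]) = \eo{e}(A[0]) \oplus (\text{copies of } e)$, where $\eo{e}(A[0]) \in \op{add}(e_c^+)$ by the very definition of the positive complement. Passing to cokernels gives $\coker(\eot{e}(A[0])) \in \op{add}(E_c^+ \oplus E)$, and since $E \in \Gen(E) = \mc{T}(\ep)$ we have $f_{\ep}(E) = 0$. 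As $f_{\ep}$ is additive and $E_c^+ = \bigoplus_i E_i^+$, this yields $\op{add}(L_e(A)) = \op{add}(f_{\ep}(E_c^+)) = \op{add}\big(\bigoplus_i f_{\ep}(E_i^+)\big)$.

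Next I would verify that each $E_i^+$ lies in the set $\mc{R} = \{M \in \rep(A)\setminus\Gen(E) \mid \E(M,E) = 0\}$ of Lemma \ref{L:fep}. For the vanishing $\E(E_i^+, E) = 0$: the positive complement completes $e$ to a maximal rigid presentation $e \oplus e_c^+$, so $\E(e_i^+, e) = 0$; since no summand of $e_c^+$ is negative by Remark \ref{r:Ec}, the presentation $e_i^+$ is the minimal presentation of $E_i^+$, whence $\E(E_i^+, E) = \E(e_i^+, e) = 0$. For $E_i^+ \notin \Gen(E)$: if instead $E_i^+ \in \Gen(E)$, then since $E_i^+$ is $\E$-projective in $\check{\mc{T}}(\ep) \supseteq \Gen(E)$ by Lemma \ref{L:Eproj}, it would already be $\E$-projective in $\Gen(E)$, forcing $E_i^+ \in \op{add}(E \oplus E_c^-)$; combined with $E_i^+ \in \op{add}(E_c^+)$ and $e_i^+ \notin \op{add}(e)$ (maximal rigidity) this is a contradiction. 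With $E_i^+ \in \mc{R}$ in hand, Lemma \ref{L:fep} shows each $f_{\ep}(E_i^+)$ is indecomposable and that the $f_{\ep}(E_i^+)$ are pairwise non-isomorphic.

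Finally, Corollary \ref{C:equi} identifies $L_e(A)$ as a progenerator of $\rep(e^\perp) \cong \rep({_e}A)$, so its indecomposable summands are exactly the indecomposable projective objects. Together with the first paragraph these are precisely the modules $f_{\ep}(E_i^+)$, which we then name $P_{i,e^\perp}$. I expect the main obstacle to be the second paragraph, and specifically the assertion $E_i^+ \notin \Gen(E)$: one must rule out that the torsion-free functor $f_{\ep}$ annihilates a positive-complement summand, and this rests on the $\E$-projective dichotomy of Lemma \ref{L:Eproj} together with the fact that the positive and negative complements of the maximal rigid $e$ share no indecomposable summand outside $\op{add}(e)$. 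The remaining steps---the additivity computation and the progenerator argument---are routine given Corollary \ref{C:equi} and Lemma \ref{L:fep}.
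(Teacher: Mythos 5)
Your overall architecture coincides with the paper's: reduce via Corollary \ref{C:equi} to computing the indecomposable summands of the progenerator $L_e(A)=f_\ep(\coker\eo{e}(A[0]))$, observe that up to multiplicity these cokernels are exactly the $E_i^+$, and then invoke Lemma \ref{L:fep} to obtain indecomposability and pairwise non-isomorphism of the $f_\ep(E_i^+)$. The additivity computation in your first paragraph and the verification $\E(E_i^+,E)=0$ are both fine.

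The gap is exactly where you predicted it: the claim $E_i^+\notin\Gen(E)$. You run it through the $\E$-projective classification of Lemma \ref{L:Eproj} and land on $E_i^+\in\op{add}(E\oplus E_c^-)$, which you then declare contradictory. The case where $E_i^+$ is a summand of $E$ is indeed excluded (minimal presentations force $e_i^+\in\op{add}(e)$, against the reduction built into the definition of $e_c^+$), but the case $E_i^+\cong E_j^-$ is not: you need $\op{add}(e_c^+)\cap\op{add}(e_c^-)=\emptyset$, and you assert this without proof. In the paper that disjointness is Corollary \ref{C:e^-reg}, which is proved \emph{after} and \emph{using} Corollary \ref{C:indP} (via $\L_e(e_i^+)=P_{i,e^\perp}$), so citing it here would be circular; nothing available at this point of the paper delivers it. Note moreover that $E_j^-\in\Gen(E)$ always holds (see the proof of Lemma \ref{L:Eproj}), so the hypothetical $E_i^+\cong E_j^-$ is internally consistent with your assumption $E_i^+\in\Gen(E)$ and produces no contradiction by itself. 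The paper sidesteps all of this by quoting Remark \ref{r:Ec}.(2), whose content is a direct torsion-class computation: if $E_c^+\in\Gen(E)$ then $\Gen(E)=\Gen(E\oplus E_c^+)=\check{\mc{T}}(\ep\oplus\ep_c^+)=\check{\mc{T}}(\ep)$, which forces $E$ to be maximal rigid. Replacing your non-membership argument with that remark repairs the proof and makes it essentially identical to the paper's.
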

\begin{proof} By Corollary \ref{C:equi} the projective representations in $\rep(e^\perp)$ are obtained from $L_e(A) = f_\ep(\eo{e}(A))$. Recall that $e_c^+ = \Ind(\eo{e}(A[0]))$.
By Remark \ref{r:Ec}.(2), $E_c^+ \notin \Gen(E)$.
By Lemma \ref{L:fep} indecomposable ones are precisely $P_{i,e^\perp}=f_\ep(E_i^+)$ and $P_{i,e^\perp} \ncong P_{j,e^\perp}$ for $i\neq j$. 
\end{proof}

\begin{remark} (1) If $\ec$ is an injective presentation, then we can define the left orthogonal projection $R_\ec$ as 
$$R_\ec = \tc_{-\epc}\, \eort{\ec}.$$	
Similarly we can show that $R_\ec$ is right adjoint to the embedding functor $\rep(^\perp\ec) \hookrightarrow \rep(A)$.
	
(2). If $e$ is rigid, then the injective presentation $\nu e$ is $\Ec$-rigid.
We have $\rep(e^\perp) \cong \rep({^\perp}\nu e)$ by the Auslander-duality.
Hence, $R_{\nu e}$ also projects to $\rep(e^\perp)$, and similarly $L_{\nu^{-1} \ec}$ projects to $\rep({^\perp \ec})$.
\end{remark}	

	
\begin{corollary}\label{C:bimod} We have the isomorphisms 
$$L_e(M) \cong \Hom(M, R_{\nu e}(A^*))^*\ \text{ and }\ R_{\ec}(M) \cong \Hom(L_{\nu^{-1} \ec}(A), M).$$
In particular, $(_eA)^* = R_{\nu e}(A^*)$.
\end{corollary}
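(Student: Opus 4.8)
The plan is to treat the two displayed isomorphisms separately and read off the identification $(_eA)^{*}=R_{\nu e}(A^{*})$ as a by-product. I would begin with the second ($R$-)formula, which is the more transparent one. Write $\riota_{\ec}$ for the embedding $\rep({}^\perp\ec)\hookrightarrow\rep(A)$, to which $R_{\ec}$ is right adjoint. By the Remark after Corollary \ref{C:indP} the functor $L_{\nu^{-1}\ec}$ lands in $\rep({}^\perp\ec)$, and the analogue of Corollary \ref{C:equi} for $L_{\nu^{-1}\ec}$ shows that $T':=L_{\nu^{-1}\ec}(A)$ is a progenerator of $\rep({}^\perp\ec)$, so that the equivalence $\rep({}^\perp\ec)\xrightarrow{\sim}\rep(\End T')$ is realized by $\Hom(T',-)$. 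For any $M\in\rep(A)$ the adjunction then gives $\Hom(T',R_{\ec}(M))\cong\Hom(\riota_{\ec}T',M)=\Hom(T',M)$. Since the left-hand side is precisely the image of $R_{\ec}(M)$ under the equivalence $\Hom(T',-)$, this identifies $R_{\ec}(M)$ with $\Hom(L_{\nu^{-1}\ec}(A),M)$, which is the second formula.

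Applying this with $\ec=\nu e$ (so $\nu^{-1}\ec=e$, and by the same Remark $\rep({}^\perp\nu e)\cong\rep(e^\perp)$ with $R_{\nu e}$ projecting into $\rep(e^\perp)$) yields $R_{\nu e}(M)\cong\Hom(L_e(A),M)$ inside $\rep(e^\perp)\cong\rep({_eA})$. Specializing to $M=A^{*}$ and using the standard duality $\Hom(X,A^{*})\cong X^{*}$, natural in $X$, I obtain $R_{\nu e}(A^{*})\cong L_e(A)^{*}$. Because $L_e(A)$ is the progenerator corresponding to the regular representation of $_eA$ under $\rep(e^\perp)\cong\rep({_eA})$ (Corollary \ref{C:equi}), its dual corresponds to $(_eA)^{*}$; this already establishes the final assertion $(_eA)^{*}=R_{\nu e}(A^{*})$.

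It remains to prove the first formula $L_e(M)\cong\Hom(M,R_{\nu e}(A^{*}))^{*}$. Set $W:=R_{\nu e}(A^{*})$, so $W\cong L_e(A)^{*}$ by the previous step. I would argue that both $L_e$ and $G:=(-)^{*}\circ\Hom(-,W)$ are right-exact additive functors $\rep(A)\to\rep(e^\perp)$: the former because it is a left adjoint (Theorem \ref{T:L_e}), the latter because $\Hom(-,W)$ is contravariant left exact and $(-)^{*}$ is an exact contravariant duality, so their composite is covariant and right exact. By Eilenberg--Watts such a functor is determined up to natural isomorphism by its value on $A$ together with the $\End(A)$-action, i.e. by the bimodule it produces; equivalently one computes either functor on a projective presentation $P_-\to P_+\to M\to 0$ and takes cokernels. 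On $A$ we have $L_e(A)$ and $G(A)=\Hom(A,W)^{*}\cong W^{*}\cong L_e(A)^{**}\cong L_e(A)$, and this chain respects the module structure since it is assembled from the natural isomorphisms $\Hom(A,W)\cong W$, $\Hom(X,A^{*})\cong X^{*}$, and the involutivity of $(-)^{*}$. Thus $L_e\cong (-)\otimes_A L_e(A)$ and $G\cong(-)\otimes_A W^{*}$ have isomorphic defining bimodules $L_e(A)\cong W^{*}$, and the two functors agree on all of $\rep(A)$.

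The main obstacle is the bookkeeping of left/right module structures: one must verify that each isomorphism used ($\Hom(X,A^{*})\cong X^{*}$, the progenerator identification, and $W\cong L_e(A)^{*}$) is not merely an isomorphism of vector spaces but is equivariant for the $_eA$-action, so that the statements hold in $\rep({_eA})$ rather than $\rep(K)$. Organizing the argument through Eilenberg--Watts is convenient precisely because it reduces the entire claim to the single bimodule isomorphism $L_e(A)\cong W^{*}$ already produced; the only remaining technical points are then the right-exactness of $G$ and the $\End(X)$-equivariance of the duality $\Hom(X,A^{*})\cong X^{*}$, both of which are routine.
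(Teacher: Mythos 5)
Your proof is correct, but for the first isomorphism it takes a genuinely different and longer route than the paper. The paper establishes $L_e(M)\cong \Hom(M,R_{\nu e}(A^*))^*$ for arbitrary $M$ in one stroke by the chain
$$L_e(M)\ \cong\ \Hom(L_e(M),A^*)^*\ \cong\ \Hom(L_e(M),R_{\nu e}(A^*))^*\ \cong\ \Hom(M,R_{\nu e}(A^*))^*,$$
using in turn the standard duality $X\cong\Hom(X,A^*)^*$, the adjunction for $R_{\nu e}$ (legitimate since $L_e(M)\in\rep(e^\perp)\cong\rep({^\perp}\nu e)$), and the adjunction of Theorem \ref{T:L_e}; the $R_{\ec}$-formula follows from the dual chain, which is essentially what your progenerator argument amounts to. You instead prove the $R$-formula first, specialize it to obtain $R_{\nu e}(A^*)\cong L_e(A)^*$, and then propagate from $M=A$ to all $M$ via right-exactness and Eilenberg--Watts. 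That strategy is sound (for finite-dimensional modules, right exactness plus additivity does reduce everything to the value on $A$ with its $\End(A)$-action), but it obliges you to track the left $A$-module structures on $L_e(A)$, $W$, and $\Hom(A,W)^*$ and to check equivariance of each identification --- exactly the bookkeeping you flag as the main obstacle. The paper's chain avoids all of this because $M$ stays arbitrary throughout and every step is a natural isomorphism of right $A$-modules, so no reduction to a bimodule is needed. One small point in your $R$-argument: the adjunction gives $\Hom(T',R_{\ec}(M))\cong\Hom(T',M)$, but to conclude $R_{\ec}(M)\cong\Hom(T',M)$ as $A$-modules (rather than merely as $\End(T')$-modules under the progenerator equivalence) you should add that $\Hom(T',N)=\Hom(L_{\nu^{-1}\ec}(A),N)\cong\Hom(A,N)\cong N$ for every $N\in\rep({^\perp}\ec)$, again by adjunction --- which is precisely the paper's one-line version of this half of the statement.
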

\begin{proof} By the adjoint property, we have that 
$$L_e(M) \cong \Hom(L_e(M), A^*)^* = \Hom(L_e(M), R_{\nu e}(A^*))^* = \Hom(M, R_{\nu e}(A^*))^*.$$
The proof for $R_{\ec}(M)$ is similar.
\end{proof}

\begin{remark}
	(1). We thus recover a result of Jasso (\cite[Theorem 1.4]{Ja}) in a slightly different statement.
	Jasso proved that $\rep({_e}A) \cong B/\innerprod{\underline{e}}$ where $B=\End(E^+)$ and $\underline{e}$ is the idempotent corresponding to $e$.
	The relationship between the two statements will be revealed in Lemma \ref{L:factor}.
	
	(2). Motivated by the work \cite{IT}, Buan-Marsh defined a category $\mf{W}_A$ associated to any basic algebra $A$ (see \cite[Theorem 0.2]{BM2}).
	In view of Corollary \ref{C:bimod} the category $\mf{W}_A$ is just a special case (subcategory) of the standard category of algebras and bimodules.
\end{remark}

\noindent We denote by $L_{_eA}$ the functor $L_e$ followed by the equivalence $\rep(e^\perp) \cong \rep({_e}A)$.
Equivalently, $L_{_eA} \cong \Hom(M, \bigoplus \Ind(R_{\nu e}(A^*)))^*$. We call it the right orthogonal projection to $_eA$.
Similarly, we have $R_{A_\ec}$ the left orthogonal projection to $A_\ec$.

\begin{corollary}\label{C:Le=fe} We have that $L_{_eA}(M) \cong \Hom(M, (\nu e)_c^+)^*$ for $M\in \rep(e^{\rperp\,})$, and dually $R_{A_{\ec}}(M) \cong \Hom((\nu^{-1}\ec)_c^+, M)$ for $M\in \rep({^{\,\lperp}}\ec)$.
\end{corollary}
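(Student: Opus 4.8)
The plan is to begin from the identity $L_{_eA}(M) \cong \Hom(M, \bigoplus\Ind(R_{\nu e}(A^*)))^*$ recorded just before the statement (a repackaging of Corollary \ref{C:bimod}), and to prove that, after pairing with any $M \in \rep(e^{\rperp\,})$, the basic object $\bigoplus\Ind(R_{\nu e}(A^*))$ may be replaced by $(\nu e)_c^+$. First I would unwind the right-hand object. Using the definition $R_\ec = \tc_{-\epc}\,\eor{\ec}$ together with the weight identification $\epc = -\ep$ for the injective presentation $\ec = \nu e$ (recall $\nu e$ is general in $\IHom(-\ep)$), we obtain $R_{\nu e} = \tc_\ep\,\eor{\nu e}$, so that $R_{\nu e}(A^*) = \tc_\ep\bigl(\ker \eor{\nu e}(A^*[0])\bigr)$. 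By the very construction of the positive complement for injective presentations, $\eor{\nu e}(A^*[0])$ is the injective presentation whose reduced form is $(\nu e)_c^+ = \bigoplus\Ind(\eor{\nu e}(A^*[0]))$.

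Next I would pass to basic objects. Writing $\eor{\nu e}(A^*[0]) = \bigoplus_i m_i(\nu e)_i^+$, additivity of $\tc_\ep$ gives $R_{\nu e}(A^*) = \bigoplus_i m_i\,\tc_\ep(\ker((\nu e)_i^+))$. Invoking the $\nu$-dual of Lemma \ref{L:fep}---that $\tc_\ep$ is injective and preserves indecomposability on the $\Ec$-analogue of the set $\mc{R}$, the summands $\ker((\nu e)_i^+)$ lying in this set by the $\nu$-dual of Remark \ref{r:Ec}.(2) (so that they avoid $\op{Cogen}(\tau E)$)---the objects $\tc_\ep(\ker((\nu e)_i^+))$ are pairwise non-isomorphic indecomposables. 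Hence stripping multiplicities yields $\bigoplus\Ind(R_{\nu e}(A^*)) = \tc_\ep(\ker((\nu e)_c^+))$; set $J := \ker((\nu e)_c^+)$.

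The heart of the argument is then a one-line torsion-pair computation. For $M \in \rep(e^{\rperp\,}) = \check{\mc{T}}(\ep)$, apply $\Hom(M,-)$ to the canonical sequence $0 \to \tc_\ep(J) \to J \to \fc_\ep(J) \to 0$. Since $\fc_\ep(J) \in \check{\mc{F}}(\ep)$ and the defining orthogonality $\Hom(\check{\mc{T}}(\ep), \check{\mc{F}}(\ep)) = 0$ of the torsion pair in \eqref{eq:Tc} forces $\Hom(M, \fc_\ep(J)) = 0$, the inclusion induces an isomorphism $\Hom(M, \tc_\ep(J)) \cong \Hom(M, J)$. Combined with $\Hom(M, J) = \Hom(M, (\nu e)_c^+)$ from Definition \ref{D:HomE}, this gives $L_{_eA}(M) \cong \Hom(M, (\nu e)_c^+)^*$ for every $M \in \rep(e^{\rperp\,})$. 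The second formula follows by the Auslander $\nu$-duality $\rep(e^\perp) \cong \rep({^\perp}\nu e)$, which interchanges $L$ with $R$, projectives with injectives, and $\Hom(M,-)$ with $\Hom(-,M)$; I would dualize each step.

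I expect the main obstacle to lie in the second paragraph: ensuring that stripping multiplicities commutes with $\tc_\ep$, i.e.\ that $\bigoplus\Ind(R_{\nu e}(A^*))$ is genuinely $\tc_\ep(J)$ and not $\tc_\ep$ of a presentation carrying repeated summands. This needs the full force of the $\nu$-dual of Lemma \ref{L:fep} (both injectivity and preservation of indecomposables), together with a careful verification that the kernels $\ker((\nu e)_i^+)$ indeed lie outside $\op{Cogen}(\tau E)$ so that the lemma applies. By contrast the torsion-pair step is routine once the objects are named correctly, and the only other point demanding care is the sign bookkeeping $\epc = -\ep$.
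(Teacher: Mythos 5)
Your proposal is correct and follows essentially the same route as the paper: both start from Corollary \ref{C:bimod}, apply the canonical sequence $0\to \tc_\ep(J)\to J\to \fc_\ep(J)\to 0$ to the kernel of the positive complement of $\nu e$, and kill the last term via $\Hom(M,\check{\mc{F}}(\ep))=0$ for $M\in\rep(e^{\rperp\,})$. The only difference is that you make explicit the multiplicity/basic-object bookkeeping (via the dual of Lemma \ref{L:fep}) that the paper absorbs into its opening ``it suffices to show'' reduction.
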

\begin{proof} It suffices to show that $L_e(M) \cong \Hom(M,\,\eor{\nu e}(A^*))^*$.
	Since $M\in \rep(e^{\rperp\,})$, we have that $\Hom(M,\,\eor{\nu e}(A^*)) \cong \Hom(M,\,\eort{\nu e}(A^*))$.
	Recall the exact sequence 
	$$0\to R_{\nu e}(A^*) \to \eort{\nu e}(A^*)\to \fc_{\ep}(\eort{\nu e}(A^*)) \to 0.$$
	The last term in the induced long exact sequence:
	$$0\to \Hom(M,\,R_{\nu e}(A^*)) \to \Hom(M,\,\eort{\nu e}(A^*)) \to \Hom(M,\,\fc_{\ep}(\eort{\nu e}(A^*))) = 0 $$
	vanishes because $\Hom(M, \nu e)=0$ and $\fc_{\ep}(\eort{\nu e}(A^*)) \in \op{Cogen}(\ker(\nu e))$.
	By Corollary \ref{C:bimod}, we have that
	$L_e(M) \cong \Hom(M,\,R_{\nu e}(A^*))^* \cong \Hom(M,\,\eort{\nu e}(A^*))^*$ as desired.
\end{proof}

For the rest of this subsection, we let $E=\coker(e)$ and denote $C:=E_c^+=\coker(e_c^+)$.
\begin{lemma}\label{L:eck0} Let $K$ be the kernel of the universal map $hE\to C$. We have that $\E(C,K)=0$.
\end{lemma}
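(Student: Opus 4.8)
The plan is to reduce the claim to the $\E$-projectivity of $C = E_c^+$ inside the torsion class $\check{\mc{T}}(\ep) = \rep(\ep^{\rperp\,})$. By Lemma \ref{L:Eproj} the $\E$-projective representations in $\check{\mc{T}}(\ep)$ are exactly $\op{add}(E\oplus E_c^+) = \op{add}(E\oplus C)$, so $\E(C,N)=0$ for every $N\in\check{\mc{T}}(\ep)$. Thus it suffices to prove that $K\in\check{\mc{T}}(\ep)$, i.e.\ that $\e(\ep,K)=\E(e,K)=0$ (since $e$ is rigid, it is the general presentation of its weight $\ep$, so these coincide).

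To establish $\E(e,K)=0$ I would work with the short exact sequence
\[ 0\to K\to hE\xrightarrow{u} C'\to 0, \]
where $u$ is the universal map corestricted to its image $C'=\Img(u)=\tr_E(C)$. First note $\E(e,E)=\E(e,e)=0$ by rigidity of $e$, hence $\E(e,hE)=0$. Applying the six-term long exact sequence attached to $\E(e,-)$ from \eqref{eq:HE} to the displayed sequence, and using $\Hom(e,N)=\Hom(E,N)$, gives
\[ \Hom(E,hE)\to \Hom(E,C')\to \E(e,K)\to \E(e,hE)=0, \]
so it remains only to see that the left-hand map is surjective. This follows from the universal property: every homomorphism $E\to C$ has image in the trace $\tr_E(C)=C'$, whence $\Hom(E,C')=\Hom(E,C)$; moreover, restricting $u$ to the $i$-th copy of $E$ recovers the $i$-th basis element $\phi_i$ of $\Hom(E,C)$, so the map $\beta\mapsto u\beta$ already hits a spanning set. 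Therefore the connecting map vanishes, $\E(e,K)=0$, and $K\in\check{\mc{T}}(\ep)$, which yields $\E(C,K)=0$.

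The step requiring the most care is the identification $C'=\tr_E(C)$ together with the resulting equality $\Hom(E,C')=\Hom(E,C)$; this is precisely what forces the connecting homomorphism to be zero. Note that $C=E_c^+\notin\Gen(E)$ by Remark \ref{r:Ec}.(2), so $u$ is genuinely non-surjective and the intermediate object $C'$ cannot be dispensed with, but the non-surjectivity plays no role beyond explaining its presence. Everything else is a routine application of the long exact sequence for $\E(e,-)$ and of Lemma \ref{L:Eproj}.
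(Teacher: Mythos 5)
Your proof is correct and follows essentially the same route as the paper's: the decisive step --- forcing the connecting map to vanish via the surjection $\Hom(E,hE)\twoheadrightarrow\Hom(E,t_{\ep}(C))$ supplied by universality of $hE\to C$, applied to the short exact sequence $0\to K\to hE\to t_{\ep}(C)\to 0$ --- is identical. The only difference is cosmetic: you reduce from $\E(C,K)$ to the torsion-class membership $K\in\check{\mc{T}}(\ep)$ by citing the $\E$-projectivity of $E_c^+$ from Lemma \ref{L:Eproj}, whereas the paper performs that same reduction by hand using the triangle $h_1e[-1]\to A[0]\to \eot{e}(A[0])\to h_1e$, which is exactly how the cited half of Lemma \ref{L:Eproj} is proved anyway.
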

\begin{proof} Recall the triangle $h_1e[-1] \to A[0] \to \eot{e}(A[0]) \to h_1e$.
Apply $\Hom(-,K)$ and we get 
	$$\Hom(A, K)\to \E(h_1E, K)\to \E(\eot{e}(A),K)\to \E(A,K)=0.$$
Since $\E(C,K)\subseteq \E(\eot{e}(A),K)$, it suffices to show that $\E(E,K)=0$.
	From the exact sequence 
	\begin{equation} \label{eq:KEC} 0\to K\to hE\to t_{\ep}(C)\to 0, \end{equation}
	we get
	$$\Hom(E,hE)\to \Hom(E,t_{\ep}(C))\to \E(E,K)\to \E(E,hE)=0.$$
	Since the map $hE\to C$ is universal, we have the surjection $\Hom(E,hE)\twoheadrightarrow \Hom(E,C)$, and thus the surjection $\Hom(E,hE)\twoheadrightarrow \Hom(E,t_{\ep}(C))$.
	We conclude that $\E(E,K)=0$ as desired.
\end{proof}

\begin{lemma}\label{L:factor} There is a linear surjection $\Hom(C,C) \twoheadrightarrow \Hom(C, f_{\ep}(C))\cong \Hom(f_\ep(C), f_{\ep}(C))$ with kernel consisting of homomorphisms in $\Hom(C, C)$ which can factor through $E$. 
\end{lemma}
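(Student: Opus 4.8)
The plan is to extract everything from the canonical torsion sequence
$$0\to t_{\ep}(C)\xrightarrow{\iota} C\xrightarrow{\pi} f_{\ep}(C)\to 0$$
attached to the torsion pair whose torsion class is $\mc{T}(\ep)=\Gen(E)$, as in \eqref{eq:T}. First I would apply $\Hom(-,f_{\ep}(C))$ to this sequence: since $t_{\ep}(C)\in\Gen(E)=\mc{T}(\ep)$ while $f_{\ep}(C)\in\mc{F}(\ep)$, the torsion-pair orthogonality $\Hom(\mc{T}(\ep),\mc{F}(\ep))=0$ forces $\Hom(t_{\ep}(C),f_{\ep}(C))=0$, so that $\pi^{*}\colon\Hom(f_{\ep}(C),f_{\ep}(C))\xrightarrow{\ \sim\ }\Hom(C,f_{\ep}(C))$ is an isomorphism. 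This already gives the asserted identification $\Hom(C,f_{\ep}(C))\cong\Hom(f_{\ep}(C),f_{\ep}(C))$.

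Next I would produce the surjection $\pi_{*}\colon\Hom(C,C)\twoheadrightarrow\Hom(C,f_{\ep}(C))$ by applying $\Hom(C,-)$ to the same sequence, which gives the exact piece $\Hom(C,C)\xrightarrow{\pi_{*}}\Hom(C,f_{\ep}(C))\to\Ext^{1}(C,t_{\ep}(C))$. It therefore suffices to show $\Ext^{1}(C,t_{\ep}(C))=0$, and I would deduce this from the inclusion $\Ext^{1}(C,t_{\ep}(C))\subseteq\E(C,t_{\ep}(C))$ exactly as in Lemma \ref{L:fep}. To see $\E(C,t_{\ep}(C))=0$, recall that $e\oplus e_{c}^{+}$ is rigid (the positive complement is a Bongartz-type completion), whence $\E(C,E)=\E(e_{c}^{+},e)=0$; applying the right-exact functor $\E(C,-)=\Hom(-,\tau C)^{*}$ of \eqref{eq:H2E} to the universal sequence \eqref{eq:KEC}, namely $0\to K\to hE\to t_{\ep}(C)\to 0$, yields a surjection $0=\E(C,hE)\twoheadrightarrow\E(C,t_{\ep}(C))$. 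Composing $\pi_{*}$ with $(\pi^{*})^{-1}$ then gives the desired surjection $\Hom(C,C)\twoheadrightarrow\Hom(f_{\ep}(C),f_{\ep}(C))$, whose kernel equals $\ker(\pi_{*})$.

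It remains to identify $\ker(\pi_{*})$, which from the long exact sequence is the image of $\iota_{*}\colon\Hom(C,t_{\ep}(C))\to\Hom(C,C)$, i.e.\ the homomorphisms $C\to C$ factoring through $\iota\colon t_{\ep}(C)\hookrightarrow C$. One inclusion is immediate: if $\phi\colon C\to C$ factors through an object of $\op{add}(E)$, then the image of the second factor is a quotient of a sum of copies of $E$, hence lies in $\Gen(E)$ and is a subrepresentation of $C$, so it sits inside the largest such subrepresentation $t_{\ep}(C)$; thus $\phi$ factors through $t_{\ep}(C)$.

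The converse is the main point, and this is where Lemma \ref{L:eck0} enters. Given $\phi=\iota\circ\psi$ with $\psi\colon C\to t_{\ep}(C)$, I would lift $\psi$ through the universal surjection $p\colon hE\to t_{\ep}(C)$ of \eqref{eq:KEC}. Applying $\Hom(C,-)$ to $0\to K\to hE\xrightarrow{p}t_{\ep}(C)\to 0$ shows that the obstruction to lifting lies in $\Ext^{1}(C,K)\subseteq\E(C,K)$, which vanishes by Lemma \ref{L:eck0}. Hence there is $\widetilde{\psi}\colon C\to hE$ with $p\widetilde{\psi}=\psi$, and then $\phi=\iota p\widetilde{\psi}$ factors as $C\xrightarrow{\widetilde{\psi}}hE\xrightarrow{\iota p}C$ through $hE\in\op{add}(E)$. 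This matches $\ker(\pi_{*})$ with the homomorphisms factoring through $E$ and finishes the proof. The one genuine obstacle is this lifting step; everything else is a formal consequence of the torsion pair, and it is precisely the vanishing $\E(C,K)=0$ of Lemma \ref{L:eck0} that makes the two notions of factoring coincide.
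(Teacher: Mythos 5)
Your proof is correct and follows essentially the same route as the paper: the canonical torsion sequence for $C$, the vanishing $\E(C,t_{\ep}(C))=0$ to get the surjection, and the lifting through $hE\to t_{\ep}(C)$ via $\E(C,K)=0$ from Lemma \ref{L:eck0} to identify the kernel with maps factoring through $E$. The only cosmetic differences are that you obtain $\Hom(C,f_{\ep}(C))\cong\Hom(f_{\ep}(C),f_{\ep}(C))$ from torsion-pair orthogonality rather than the adjunction, and you justify $\E(C,t_{\ep}(C))=0$ by right-exactness of $\E(C,-)$ applied to \eqref{eq:KEC} rather than citing \eqref{eq:T} --- both are fine, and your version is if anything slightly more explicit.
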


\begin{proof} The isomorphism $\Hom(C, f_{\ep}(C))\cong \Hom(f_\ep(C), f_{\ep}(C))$ is due to the adjunction (Theorem \ref{T:L_e}).
	From the exact sequence \eqref{eq:tf}, we have that 
	$$0\to \Hom(C,t_{\ep}(C))\to \Hom(C,C) \to \Hom(C, f_{\ep}(C))\to \E(C,t_{\ep}(C))=0,$$
	where the vanishing of $\E(C,t_{\ep}(C))$ is due to \eqref{eq:T}.
	The long exact sequence shows that the kernel of $\Hom(C,C) \twoheadrightarrow \Hom(C, f_\ep(C))$ can be identified with $\Hom(C,t_{\ep}(C))$.
	Consider the following induced exact sequence from \eqref{eq:KEC}
	$$\Hom(C,K)\to \Hom(C,hE)\to \Hom(C,t_{\ep}(C)) \to \E(C,K)=0.$$
	By Lemma \ref{L:eck0} we have the surjection $h\Hom(C,E)\twoheadrightarrow \Hom(C,t_{\ep}(C))$.
	This shows that the homomorphism in $\Hom(C,C)$ coming from $\Hom(C,t_{\ep}(C))$ must factor through $E$.
	Conversely, since $\Hom(E, f_\ep(C))=0$, any homomorphism in $\Hom(C, C)$ factoring through $E$ vanishes after composing with $C\to f_\ep(C)$.
\end{proof}

\section{Stable and Schur Reductions} \label{S:SS}
\subsection{Simple Objects and the Two Matrices}  \label{ss:simple}
In this subsection, we assume that $e$ is indecomposable.
\begin{definition}\label{D:Delta} For a maximal rigid presentation $\br{e} = \bigoplus_{i} e_i$, the {\em $\Delta$-matrix} of $\br{e}$ is the matrix $\Delta_{\br{e}}$ with rows given by $\ep_i$.
Its inverse is called the {\em $C$-matrix} of $\br{e}$ denoted by $\C_{\br{e}}$.
\end{definition}

Theorem \ref{T:maxrigid} implies that the matrix $\Delta_{\br{e}}$ is unimodular, so $\C_{\br{e}}$ is an integral matrix.
\begin{theorem}[\cite{As,Tr}] \label{T:Cmat} Let $\br{e}=\bigoplus e_i$ be a maximal rigid presentation, and $e_{\hatj} = \bigoplus_{i\neq j} e_i$. 
Then there exists a unique $\ep_{\hatj}$-stable representation $S_j$,
whose dimension vector is given by the $j$-th column of $\pm \C_{\br{e}}$, 
where $+$ (resp. $-$) is picked if $S_j\in \Gen(E)$ (resp. $S_j \in \rep(e^{\,\lperp})$).
\end{theorem}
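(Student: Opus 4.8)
The plan is to reduce the statement to the rank-one orthogonal category attached to $e_{\hatj}$ and then read off the dimension vector from the unimodularity of $\Delta_{\br e}$, deferring the sign to a separate analysis of complements.

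First I would establish existence and uniqueness of $S_j$. Since $\br e$ is maximal rigid, Theorem \ref{T:maxrigid} gives $|\ind(\br e)| = |Q_0|$, so $e_{\hatj} = \bigoplus_{i\neq j} e_i$ is almost complete rigid with $|\ind(e_{\hatj})| = |Q_0|-1$. By Corollary \ref{C:indP} the category $\rep(e_{\hatj}^\perp)$ has exactly one indecomposable projective, namely $f_{\ep_{\hatj}}(E_c^+)$ where $e_c^+$ is the (indecomposable) positive complement of $e_{\hatj}$ and $E_c^+ = \coker(e_c^+)$; hence by Corollary \ref{C:equi} it is equivalent to $\rep(B)$ for the one-vertex basic algebra $B = {_{e_{\hatj}}A}$. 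Such a $B$ has a unique simple module, which is one-dimensional over its top and is a brick. Under the identification of $\rep(e_{\hatj}^\perp)$ with the category of semistable representations, its simple objects are precisely the $\ep_{\hatj}$-stable representations, so there is a unique $\ep_{\hatj}$-stable representation $S_j$, namely the simple top of the unique indecomposable projective.

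Next I would pin down $\dv S_j$ up to a scalar. Because $S_j \in \rep(e_{\hatj}^\perp)$, we have $\Hom(e_i, S_j) = \E(e_i, S_j) = 0$ for every $i \neq j$, so \eqref{eq:h-e} gives $\ep_i \cdot \dv S_j = \hom(e_i,S_j) - \e(e_i,S_j) = 0$ for all $i \neq j$. Since $\Delta_{\br e}$ is unimodular (Theorem \ref{T:maxrigid}), its rows $\ep_1,\dots,\ep_{|Q_0|}$ form a $\mb Z$-basis whose dual basis consists of the columns $\gamma_k$ of $\C_{\br e} = \Delta_{\br e}^{-1}$, characterized by $\ep_i\cdot\gamma_k = \delta_{ik}$. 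The sublattice orthogonal to $\{\ep_i : i \neq j\}$ is therefore $\mb Z\gamma_j$, and $\gamma_j$ is primitive since $\ep_j\cdot\gamma_j = 1$. Thus $\dv S_j = c\,\gamma_j$ for a unique integer $c$.

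It remains to prove $c = \pm 1$ and to identify the sign, which I expect to be the main obstacle. Pairing with $\ep_j$ yields $c = \ep_j\cdot\dv S_j = \hom(e_j, S_j) - \e(e_j, S_j)$, so I would compute these two numbers according to whether the $j$-th summand $e_j$ of $\br e$ is the positive or negative complement of $e_{\hatj}$. In the positive case $e_j = e_c^+$: the adjunction of Theorem \ref{T:L_e}, transported along $\rep(e_{\hatj}^\perp)\cong\rep(B)$, identifies $\Hom(E_c^+, S_j)$ with $\Hom_B(P_{j,e^\perp}, S_j) = \End_B(S_j) = K$, so $\hom(e_j, S_j) = 1$; moreover $E_c^+$ is $\E$-projective in $\check{\mc T}(\ep_{\hatj})$ by Lemma \ref{L:Eproj}, and since $S_j \in \rep(e_{\hatj}^\perp) \subseteq \check{\mc T}(\ep_{\hatj})$ this forces $\e(e_j, S_j) = \E(E_c^+, S_j) = 0$, whence $c = 1$ and $\dv S_j = \gamma_j$. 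The negative case $e_j = e_c^-$ follows by the $\nu$-dual argument, using the $\Ec$-injectives of Remark \ref{r:Ec} together with the left projection $R_{\nu e}$, and gives $c = -1$. Finally I would match this with the stated dichotomy: since $\dv S_j \geq 0$ and $\gamma_j$ is primitive, the value $c = \pm 1$ is forced together with the sign-coherence of $\gamma_j$, and the $+$ case (where $e_j$ is the positive complement and $\gamma_j \geq 0$) is exactly where $S_j$ lies on the generation side $\Gen(E)$, while the $-$ case places $S_j$ in the torsion-free class $\rep(e^{\,\lperp})$. Carefully tracking which torsion pair governs this last identification is the delicate point, and I would verify it directly from the two exchange triangles of Theorem \ref{T:+-}.
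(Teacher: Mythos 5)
The paper does not actually prove this statement: Theorem \ref{T:Cmat} is imported wholesale from Asai and Treffinger (\cite{As,Tr}), with only the follow-up Theorem \ref{T:Schur_rad} getting a pointer to specific results in \cite{As}. So there is no in-paper proof to match yours against; what can be said is that your argument is a legitimate self-contained derivation from the paper's own machinery, and in fact your positive-complement computation ($\hom(e_j,S_j)=1$, $\e(e_j,S_j)=0$ via the adjunction of Theorem \ref{T:L_e} and the $\E$-projectivity from Lemma \ref{L:Eproj}) is exactly the computation the paper itself records as \eqref{eq:homecS} in the proof of Lemma \ref{L:Cmat} — there used as a consequence of Theorem \ref{T:Cmat}, whereas you run it in the other direction. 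Your reduction is sound: $e_{\hatj}$ is almost complete, so by Corollaries \ref{C:equi} and \ref{C:indP} the category $\rep(e_{\hatj}^\perp)$ is the module category of a local algebra, giving existence and uniqueness of the simple $=$ stable object; unimodularity of $\Delta_{\br{e}}$ pins $\dv S_j$ to $\mb{Z}\gamma_j$; and since an almost complete rigid presentation has exactly two complements (Theorem \ref{T:+-}) which by Corollary \ref{C:e^-reg} are precisely the positive and the negative complement of $e_{\hatj}$, your case split is exhaustive.

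Two soft spots. First, the negative case is only asserted ``by the $\nu$-dual argument.'' It does go through — $E_j\in\Gen(E_{\hatj})=\mc{T}(\ep_{\hatj})$ and $S_j\in\mc{F}(\ep_{\hatj})$ force $\hom(e_j,S_j)=0$, while Lemma \ref{L:tau_comp} identifies $\tau E_j=\ker((\nu e_{\hatj})_c^+)$ and the dual adjunction gives $\e(e_j,S_j)=\hom(S_j,\tau E_j)=1$ — but you must also handle the degenerate subcase where the negative complement is a shifted projective $P_i[1]$, for which $\coker(e_j)=0$ and $\e(e_j,S_j)=\dim S_j(i)$; the conclusion $c=-1$ still needs the injective-side adjunction there, not just the torsion-pair vanishing. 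Second, the sentence claiming that $c=\pm1$ is ``forced'' by $\dv S_j\ge 0$ together with primitivity of $\gamma_j$ is not a valid inference (nothing a priori rules out $\dv S_j=2\gamma_j$); it is harmless only because you have already computed $c$ via $\hom-\e$, so you should simply delete that remark rather than lean on it for the sign dichotomy, which your two cases already establish: positive complement $\Rightarrow S_j$ is a quotient of $E_j$, hence in $\Gen(E_j)$; negative complement $\Rightarrow \Hom(E_j,S_j)=0$, i.e.\ $S_j\in\rep(e_j^{\,\lperp})$, and these are mutually exclusive.
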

\noindent We will denote the sign in above theorem by $\sgnc(j,\br{e})$, or simply $\sgnc_j$ if $\br{e}$ is clear from the context.
Note that in particular, $S_j$ is a Schur representation in $\rep(e_{\hatj}^\perp)$. In fact, it is 
also the unique Schur representation in $\rep(e_{\hatj}^\perp)$ \cite{As,Tr}. By combining several results in \cite{As}, we can strengthen the above theorem as follows.
\begin{theorem}[\cite{As}]\label{T:Schur_rad} The above $\ep_{\hatj}$-stable representation $S_j$ can be realized as 
$$\begin{cases}
	E_j / \sum_{f\in \op{rad}(E, E_j)} \Img(f) & \text{if $\sgnc_j=+$} \\
	\bigcap_{f\in \op{rad}(\tau E_j, \tau E)} \Ker(f) & \text{if $\sgnc_j=-$.}
\end{cases}$$
\end{theorem}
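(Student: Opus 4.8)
The plan is to derive both realizations from Theorem~\ref{T:Cmat} together with the fact that $\rep(e_{\hatj}^\perp)$ is the module category of a \emph{local} algebra. Indeed, $e_{\hatj}$ is almost complete rigid, so by Corollary~\ref{C:equi} and Theorem~\ref{T:maxrigid} the algebra ${}_{e_{\hatj}}A$ has a single simple module; this simple is the unique brick, hence equals $S_j$. Here $E=\coker(\br e)=\bigoplus_i E_i$ and $E_j=\coker(e_j)$, so that $\op{rad}(E,E_j)=\bigoplus_i\op{rad}(E_i,E_j)$. It therefore suffices to identify each of the two displayed modules with the simple module of ${}_{e_{\hatj}}A$, on the side dictated by $\sgnc_j$, after which its dimension vector is read off as the stated column of $\pm\C_{\br e}$ via Theorem~\ref{T:Cmat}.

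First I would treat $\sgnc_j=+$, where $S_j\in\Gen(E)$. Set $B=E_j/R_j$ with $R_j=\sum_{f\in\op{rad}(E,E_j)}\Img(f)$. Since the torsion class $\Gen(E)$ is closed under quotients, $R_j$ and $B$ lie in $\Gen(E)$, and each summand $E_i$ of $E$ is $\E$-projective there by Lemma~\ref{L:Eproj}. This $\E$-projectivity, together with $R_j\in\Gen(E)$, lets me lift any $E_i\to B$ (with $i\neq j$) along $E_j\twoheadrightarrow B$ to a radical map $E_i\to E_j$, which dies in $B$ by construction; the same vanishing yields $\E(E_i,B)=0$. Consequently $\Hom(e_{\hatj},B)=\E(e_{\hatj},B)=0$, i.e.\ $B\in\rep(e_{\hatj}^\perp)$. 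Finally, the surjection $E_j\twoheadrightarrow B$ factors through the projection $L_{e_{\hatj}}(E_j)$, which by Corollary~\ref{C:indP} is the indecomposable projective of $\rep(e_{\hatj}^\perp)$; since $B$ kills the entire radical, it is the simple top of this projective, hence the unique simple module of ${}_{e_{\hatj}}A$, so $B\cong S_j$.

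For $\sgnc_j=-$, where $S_j\in\op{Cogen}(\tau E)$, I would not repeat the argument but pass to the dual picture. Using the Auslander duality $\rep(e^\perp)\cong\rep({}^\perp\nu e)$ and the formula $\E(M,-)=\Hom(-,\tau M)^*$ of \eqref{eq:H2E}, together with the $\nu$-dual of Lemma~\ref{L:Eproj} recorded in Remark~\ref{r:Ec}(1), the roles of ``projective, quotient, $\Img$'' are replaced by ``injective, subobject, $\Ker$'', and the $\E$-projective $E_j\in\Gen(E)$ is replaced by the $\Ec$-injective $\tau E_j\in\op{Cogen}(\tau E)$. Carrying the $+$ case through this duality realizes $S_j$ as the simple socle of the indecomposable injective of $\rep(e_{\hatj}^\perp)$, namely $\bigcap_{f\in\op{rad}(\tau E_j,\tau E)}\Ker(f)$. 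The main obstacle I anticipate is the precise identification of $\ker(E_j\twoheadrightarrow S_j)$ with $R_j$ --- equivalently, that $B$ is genuinely the \emph{simple} top and not a larger quotient --- which is exactly the brick-top computation borrowed from Asai~\cite{As}; a secondary difficulty is bookkeeping the signs and the appearance of $\tau$ through the duality so that the negative case lands on $\tau E_j,\tau E$ rather than on $E_j,E$.
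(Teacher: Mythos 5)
The paper offers no argument for this statement---its ``proof'' is a bare citation to \cite{As}---so your proposal is a genuinely independent reconstruction, and its skeleton is correct: there is a single simple object in $\rep(e_{\hatj}^\perp)$, the quotient $B:=E_j/R_j$ (with $R_j=\sum_{f\in\op{rad}(E,E_j)}\Img(f)$) lies in $\rep(e_{\hatj}^\perp)$ by the $\E$-projectivity of $\op{add}(E)$ in $\Gen(E)$, and $E_j\twoheadrightarrow B$ factors through $f_{\ep_{\hatj}}(E_j)$. Two steps are missing; both can be filled from the paper's own toolkit rather than from \cite{As}. First, Corollary~\ref{C:indP} identifies the indecomposable projective of $\rep(e_{\hatj}^\perp)$ as $f_{\ep_{\hatj}}$ applied to the cokernel of the \emph{positive} complement of $e_{\hatj}$, so you must show that $\sgnc_j=+$ forces $e_j=e_{\hatj,c}^+$; you never make this link. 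It is short: the two complements of the almost complete $e_{\hatj}$ are $e_{\hatj,c}^{\pm}$, with $E_{\hatj,c}^-\in\Gen(E_{\hatj})$ but $E_{\hatj,c}^+\notin\Gen(E_{\hatj})$ (Lemma~\ref{L:Eproj} and Remark~\ref{r:Ec}(2)); if $e_j$ were the negative complement, then $S_j$ would be a quotient of $E_{\hatj,c}^+$ with $\Hom(E_{\hatj},S_j)=0$, hence $S_j\notin\Gen(E_{\hatj})=\Gen(E)$, contradicting $\sgnc_j=+$ (and moreover $t_{\ep_{\hatj}}(E_j)=E_j$ would give $B=0$).

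Second, the step you defer to Asai---that $B$ is exactly the simple top, and in particular nonzero---is also available internally, so the appeal to \cite{As} at the crux is unnecessary. Writing $E_{\hatj}=\coker(e_{\hatj})$ and $P=f_{\ep_{\hatj}}(E_j)$, one has $R_j=t_{\ep_{\hatj}}(E_j)+\Rcirc(E_j)E_j$, because $\op{rad}(E_i,E_j)=\Hom(E_i,E_j)$ for $i\neq j$ and the sum of those images is the trace of $\Gen(E_{\hatj})$ in $E_j$. By Lemma~\ref{L:factor}, $\End(P)$ is the quotient of the local ring $\End(E_j)$ by the maps factoring through $E_{\hatj}$, so $\op{rad}\End(P)$ is the image of $\Rcirc(E_j)$ and $\op{rad}(P)$ is the image of $\Rcirc(E_j)E_j$; hence $B\cong P/\op{rad}(P)$ on the nose, which is simple and nonzero since $P\neq 0$ by Remark~\ref{r:Ec}(2). (Note that the danger you flag points in the wrong direction: killing the radical already rules out a \emph{larger} quotient; what must be ruled out is $B=0$.) Two minor points: your use of Lemma~\ref{L:Eproj} to get $\E(e_i,B)=0$ does not cover negative summands $e_i=P_u[1]$ of $\br{e}$, for which one should instead use $\Gen(E)=\check{\mc{T}}(\ep)$ (valid since $\br{e}$ is maximal rigid); and the dualization for $\sgnc_j=-$ should indeed go through as you describe, using Remark~\ref{r:Ec}(1) and Lemma~\ref{L:tau_comp} to land on $\tau E_j$ and $\tau E$.
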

\begin{proof} This follows from \cite[Definition 1.14, Proposition 1.17, and Theorems 2.3, 2.17]{As}.
\end{proof}

\begin{definition}\label{D:Schred}
The $\ep_{\hatj}$-stable representation $S_j$ in Theorem \ref{T:Cmat} is called {\em $\ep_{\hatj}$-stable reduction} of $e$,
denoted by $\rho_{\ep_{\hatj}-\st}(e)$. It is called positive (resp. negative) if $\sgnc_j=+$ (resp. $\sgnc_j=-$).
When $e_{\hatj}=e_c^-$ (resp. $e_c^+$), the representation $S_j$ is called the {\em $+$-stable reduction} of $e$ (resp. $-$-stable reduction of $\tau e$), denoted by $\rho_{+\st}(e)$ (resp. $\rho_{-\st}(\tau e)$).
\end{definition}



In this article, we will mainly deal with $\br{e}=e^\pm = e\oplus e_c^\pm$, the $\pm$-completion of $e$.
As a convention, we always put $\ep$ as the last row of $\Delta_{e^\pm}$, and denote the submatrix of the first $n-1$ rows by $\Delta_{e_c^\pm}$.
We will denote by $\C_{e^\perp}$ the matrix formed by the first $n-1$ columns of $\C_{e^+}$,
or equivalently formed by the first $n-1$ columns of $-\C_{e^-}$ (see Lemma \ref{L:Cmat}.(1)).
Throughout we write $\gamma:=\dv\rho$ to ease the notation. For example, the $\gamma_{+\st}(e)$ below means $\dv\rho_{+\st}(e)$.

\begin{lemma}\label{L:Cmat} We have the following interpretation for the columns of $\C_{e^\pm}$:  \begin{enumerate}
\item 	The first $n-1$ columns of both $\C_{e^+}$ and $-\C_{e^-}$  are given by the dimension vectors of all simple representations in $\rep(e^\perp)$.
\item The last columns of $\C_{e^-}$ and $-\C_{e^+}$ are $\gamma_{+\st}(e)$ and $\gamma_{-\st}(\tau e)$ respectively. 
\end{enumerate}
\end{lemma}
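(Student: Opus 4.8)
The plan is to derive both statements from Theorem~\ref{T:Cmat} applied to the two maximal rigid completions $\br e=e^\pm$, reading off the signs $\sgnc_j$ from the position of the stable bricks $S_j$ in the torsion pairs attached to $\ep$. Order the $n$ summands of $e^\pm$ so that $e=e_n$ carries the weight $\ep$ (the last row of $\Delta_{e^\pm}$) and $e_j^\pm$ for $j\le n-1$ are the summands of the complement $e_c^\pm$; deleting $e_j^\pm$ leaves the almost complete $e_{\hatj}^\pm$. Theorem~\ref{T:Cmat} produces, for each $j$, the unique $\ep_{\hatj}^\pm$-stable brick $S_j$ with $\dv S_j=\sgnc_j\,\gamma_j^\pm$, where $\gamma_j^\pm$ is the $j$-th column of $\C_{e^\pm}$. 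The identity $\Delta_{e^\pm}\C_{e^\pm}=I$ reads $\ep_i^\pm\cdot\gamma_j^\pm=\delta_{ij}$, and \eqref{eq:h-e} gives $\ep_i^\pm\cdot\dv S_j=\hom(\ep_i^\pm,S_j)-\e(\ep_i^\pm,S_j)$; taking $i=j$ reduces the determination of each sign to deciding which of $\hom(\ep_j^\pm,S_j)$, $\e(\ep_j^\pm,S_j)$ vanishes.

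For $j\le n-1$ I locate $S_j$ using the torsion theory. On the $e^+$-side, $e_{\hatj}^+\supseteq e$ forces $S_j\in\rep(e^\perp)\subseteq\check{\mc{T}}(\ep)$, and Lemma~\ref{L:Eproj} gives $\check{\mc{T}}(\ep)=\check{\mc{T}}(\ep\oplus\ep_c^+)\subseteq\check{\mc{T}}(\ep_j^+)$, so $\e(\ep_j^+,S_j)=0$; since then $\sgnc_j=\ep_j^+\cdot\dv S_j=\hom(\ep_j^+,S_j)\ge 0$, we get $\sgnc_j=+$ and $\dv S_j=\gamma_j^+$. On the $e^-$-side I first check $\mc{F}(\ep^-)=\mc{F}(\ep)$: clearly $\mc{F}(\ep^-)=\mc{F}(\ep)\cap\mc{F}(\ep_c^-)\subseteq\mc{F}(\ep)$, while conversely $E_c^-\in\Gen(E)$ (Lemma~\ref{L:Eproj}) shows every nonzero $\coker(e_j^-)$ is a quotient of a power of $E$ and the negative summands of $e_c^-$ contribute no homomorphisms, so $\hom(E,N)=0$ implies $\hom(\ep_c^-,N)=0$; hence $\mc{F}(\ep^-)=\mc{F}(\ep)$. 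Therefore $S_j\in\rep(e^\perp)\subseteq\mc{F}(\ep)=\mc{F}(\ep^-)\subseteq\mc{F}(\ep_j^-)$, giving $\hom(\ep_j^-,S_j)=0$, so $\sgnc_j=-\e(\ep_j^-,S_j)\le 0$, i.e. $\sgnc_j=-$ and $\dv S_j=-\gamma_j^-$.

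The crux is to show that, as $j$ runs over $1,\dots,n-1$, the bricks $S_j$ are exactly the simple objects of $\rep(e^\perp)$. By Corollaries~\ref{C:equi} and~\ref{C:indP} the category $\rep(e^\perp)\cong\rep({_e}A)$ has exactly $n-1$ simple objects, the $P_{i,e^\perp}=f_\ep(E_i^+)$ being $n-1$ pairwise non-isomorphic indecomposable projectives. Each $S_j$ lies in $\rep(e^\perp)$, and the $S_j$ are pairwise distinct because $\{\dv S_j\}_{j\le n-1}$ are the first $n-1$ columns of the invertible matrix $\pm\C_{e^\pm}$, hence linearly independent. The remaining point---that each $S_j$ is actually a \emph{simple} object of $\rep(e^\perp)$, not merely the unique brick of the smaller rank-one subcategory $\rep((e_{\hatj}^\pm)^\perp)$---is the main obstacle; I would supply it through the Asai--Treffinger correspondence \cite{As,Tr}, under which the wall-labelling bricks of a maximal rigid object are precisely the relative simples of the associated wide subcategories. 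Granting this, $n-1$ distinct simples in a category with exactly $n-1$ simples must exhaust them, so the first $n-1$ columns of both $\C_{e^+}$ and $-\C_{e^-}$ list the dimension vectors of all simples of $\rep(e^\perp)$; ordering the complements $e_c^\pm$ compatibly with a fixed ordering of these simples makes the two truncations literally equal, which proves (1).

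For part (2) I specialize to $j=n$, where $e$ itself is deleted, so $e_{\hat n}^-=e_c^-$ and $e_{\hat n}^+=e_c^+$, and Definition~\ref{D:Schred} identifies $S_n$ with $\rho_{+\st}(e)$ for $\br e=e^-$ and with $\rho_{-\st}(\tau e)$ for $\br e=e^+$. Theorem~\ref{T:Schur_rad} realizes the positive reduction as a quotient of $E$, so $\rho_{+\st}(e)\in\Gen(E)\subseteq\check{\mc{T}}(\ep)$, whence $\e(\ep,S_n)=0$ and, exactly as above, $\sgnc_n=\ep\cdot\dv S_n=\hom(\ep,S_n)\ge 0$ gives $\sgnc_n=+$; thus the last column $\gamma_n^-$ of $\C_{e^-}$ equals $\gamma_{+\st}(e)$. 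Symmetrically, the negative reduction is realized as a subobject of $\tau E$, so $\rho_{-\st}(\tau e)\in\op{Cogen}(\tau E)\subseteq\mc{F}(\ep)$, giving $\hom(\ep,S_n)=0$ and $\sgnc_n=-$, so the last column of $-\C_{e^+}$ equals $\gamma_{-\st}(\tau e)$. The delicate ingredient here is once more a sign---that the stable reductions sit on the prescribed sides, $\Gen(E)$ versus $\op{Cogen}(\tau E)$---which I take from the realization in Theorem~\ref{T:Schur_rad}.
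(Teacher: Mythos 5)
Your route is genuinely different from the paper's: you start from Theorem~\ref{T:Cmat}, take the abstract $\ep_{\hatj}$-stable bricks $S_j$ attached to the two completions, pin down the signs $\sgnc_j$ via the torsion pairs, and then try to match the $S_j$ with the simples of $\rep(e^\perp)$. The sign computations for $j\le n-1$ are correct (the observation $\mc{F}(\ep^-)=\mc{F}(\ep)$ via $E_c^-\in\Gen(E)$ is fine). But the step you yourself flag as ``the main obstacle'' is a genuine gap, not a routine citation: $S_j$ is only known to be the unique brick of the rank-one wide subcategory $\rep((e_{\hatj}^\pm)^\perp)\subseteq\rep(e^\perp)$, and a simple object of a smaller wide subcategory need not be simple in a larger one. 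The assertion that these $n-1$ bricks are exactly the simples of $\rep(e^\perp)$ is essentially the content of the lemma, so deferring it wholesale to an unspecified statement in \cite{As,Tr} leaves the proof incomplete. The paper avoids the issue entirely by running the argument in the opposite direction: it takes the simples $S_{j,e^\perp}$ of $\rep(e^\perp)$ (the tops of the $n-1$ indecomposable projectives $P_{j,e^\perp}=f_\ep(e_j^+)$ from Corollary~\ref{C:indP}), uses the adjunction to get $\hom(e_i^+,S_{j,e^\perp})=\delta_{ij}$ and $\e(e_i^+,S_{j,e^\perp})=0$, and reads off the columns of $\C_{e^+}$ from \eqref{eq:h-e}. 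If you want to salvage your direction, you can close the gap with the paper's own tools: writing $\dv S_j=\sum_l m_{jl}\,\dv T_l$ over the composition factors $T_l$ of $S_j$ in $\rep(e^\perp)$ and pairing against $\ep_i^+$ gives $\delta_{ij}=\sum_l m_{jl}\hom(P_{i,e^\perp},T_l)$ with all terms nonnegative, which forces $S_j$ to have a single composition factor and hence to be simple.

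A second, smaller problem is in part (2): you determine $\sgnc_n$ by invoking Theorem~\ref{T:Schur_rad} to realize $\rho_{+\st}(e)$ as a quotient of $E$, but the two realizations in that theorem are each conditional on the sign $\sgnc_j$ already being known, so using the ``$+$'' case to conclude $S_n\in\Gen(E)$ and hence $\sgnc_n=+$ is circular. Unlike the case $j\le n-1$, the pairing $\ep\cdot\dv S_n=\sgnc_n$ is consistent with either sign (both $\hom(e,S_n)=1,\ \e(e,S_n)=0$ and $\hom(e,S_n)=0,\ \e(e,S_n)=1$ are a priori possible), so the sign here needs an independent input --- in the paper this is supplied by the explicit construction $\rho_+(e)=\coker(\Rcirc E\to E)$ of Theorem~\ref{T:Schred}, whose proof does not presuppose the sign.
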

\begin{proof} 
Recall from Corollary \ref{C:indP} that $P_{i,e^\perp}=f_\ep(e_i^+)$. Let $S_{j,e^\perp}$ be the simple representation in $\rep(e^\perp)$ covered by $P_{j,e^\perp}$. Then \begin{align} 
\label{eq:homecS} \hom(f_\ep(e_i^+), S_{j, e^\perp}) &= \hom(e_i^+, S_{j, e^\perp}) = {\delta}_{ij} \\
\notag \e(f_\ep(e_i^+), S_{j, e^\perp}) &= \e(e_i^+, S_{j, e^\perp}) = 0.
\end{align}
We reach the first statement from \eqref{eq:h-e} and Definition \ref{D:Delta}.
The second statement follows from Definition \ref{D:Schred}.
\end{proof}

\begin{convention}[Ordering]\label{c:OC} It seems that there is no canonical ordering on the row vectors of ${\Delta}_{e^\pm}$. But thanks to Lemma \ref{L:Cmat}.(1), it is always possible to rearrange their rows such that the first $n-1$ columns of $\C_{e^+}$ and $-\C_{e^-}$ are identical.		
Throughout we will always assume this ordering convention.
\end{convention}

The following lemma says that the sign pattern in Lemma \ref{L:Cmat} also characterizes the positive complement.
\begin{lemma}[{\cite[Theorem 4.5]{CGY}}] \label{L:Bon} 
For indecomposable $\ep$ and some complement $\bigoplus \delta_i$ of $\ep$, we form the $\Delta$-matrix $\Delta_{\delta,\ep}$ by juxtaposing $\delta_i$'s and $\ep$. 
Then $\bigoplus_i \delta_i$ is the positive complement of $\ep$ if and only if the columns in $\Delta_{\delta,\ep}^{-1}$ corresponding to $\delta_i$'s are all nonnegative.
\end{lemma}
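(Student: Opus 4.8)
The plan is to prove the two implications separately, with the backward one carrying all the content. Throughout write $\br e = \ep\oplus\bigoplus_{i=1}^{n-1}\delta_i$, place $\ep$ as the last row of $\Delta_{\delta,\ep}$ following Convention \ref{c:OC}, and let $\C := \Delta_{\delta,\ep}^{-1}$ have columns $\gamma_1,\dots,\gamma_{n-1},\gamma_n$, so that the columns ``corresponding to the $\delta_i$'s'' are precisely $\gamma_1,\dots,\gamma_{n-1}$. For the forward implication I would simply invoke Lemma \ref{L:Cmat}.(1): if $\bigoplus_i\delta_i = e_c^+$ then $\Delta_{\delta,\ep}=\Delta_{e^+}$ and $\C=\C_{e^+}$, whose first $n-1$ columns are the dimension vectors of the simple representations of $\rep(e^\perp)$ and hence nonnegative. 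This direction is immediate.

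For the backward implication I would first convert positivity into a statement about signs. By Theorem \ref{T:Cmat} each column factors as $\gamma_j = \sgnc_j\,\dv S_j$, where $S_j$ is the nonzero $\ep_{\hatj}$-stable brick; since $\dv S_j$ is the dimension vector of a nonzero module and $\gamma_j\geq 0$, this forces $\sgnc_j = +$ and $\gamma_j = \dv S_j$ for every $j\leq n-1$. Because $\ep$ is one of the summands of $e_{\hatj}$ for each such $j$, we have $\rep(e_{\hatj}^\perp)\subseteq\rep(e^\perp)$, so all the $S_j$ lie in $\rep(e^\perp)$, giving $n-1$ distinguished bricks there in agreement with the count of simples from Corollary \ref{C:indP}.

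It then remains to upgrade the pointwise sign condition $\sgnc_1=\cdots=\sgnc_{n-1}=+$ to the global identification $\br e = e^+$, and here I would argue through torsion classes. Recall from Lemma \ref{L:Eproj} and Remark \ref{r:Ec} that $e^+$ is the Bongartz completion, whose torsion class $\Gen(E\oplus E_c^+) = \check{\mc{T}}(\ep)$ is the maximal torsion class in which $E=\coker(e)$ is $\E$-projective, whereas an arbitrary completion $\br e$ produces the possibly smaller $\mc{T}(\br e) = \Gen(\coker\br e)$. Since $\coker(\delta_j)$ is a summand of $\coker\br e$, the sign condition $\sgnc_j=+$, i.e. $S_j\in\Gen(\coker\delta_j)$, gives $S_j\in\mc{T}(\br e)$ for every $j\leq n-1$. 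In the Asai--Treffinger brick labeling underlying Theorem \ref{T:Schur_rad}, $S_j$ labels the $j$-th cover of $\mc{T}(\br e)$, with $S_j\in\mc{T}(\br e)$ exactly when that mutation decreases the torsion class. Thus all $\sgnc_j=+$ says that every $\ep$-compatible mutation out of $\mc{T}(\br e)$ decreases it, so $\mc{T}(\br e)$ is the maximal $\ep$-compatible torsion class $\check{\mc{T}}(\ep)$; by uniqueness of this maximum and of the $\E$-projectives it realizes, $\br e = e^+$, hence $\bigoplus_i\delta_i = e_c^+$.

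The main obstacle is exactly this last step: matching pointwise positivity of the $c$-vectors with maximality of the associated torsion class, or equivalently showing that the assignment $\br e\mapsto(\sgnc_1,\dots,\sgnc_{n-1})$ separates the positive complement from every other completion. This is the genuinely nontrivial sign-coherence input behind \cite[Theorem 4.5]{CGY}. A less self-contained but technically lighter alternative would be to transport the $\gamma_j$ to the $C$-matrix of the corresponding rigid object in $\rep(e^\perp)\cong\rep({_e}A)$ and invoke the standard tropical-duality fact that a nonnegative $C$-matrix forces the projective (initial) cluster, which again corresponds to $e_c^+$.
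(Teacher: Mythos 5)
The paper does not prove this lemma at all: it is imported wholesale from \cite[Theorem 4.5]{CGY}, so there is no internal argument to compare yours against. Judged on its own terms, your forward implication is correct and is exactly what Lemma \ref{L:Cmat}.(1) already records (and it is non-circular, since Lemma \ref{L:Cmat} is established before and independently of Lemma \ref{L:Bon}). Your reduction of the converse to sign data is also sound: nonnegativity of $\gamma_j$ together with Theorem \ref{T:Cmat} does force $\sgnc_j=+$, i.e. $S_j\in\Gen(\coker\br{e})$, for every $j\le n-1$.

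The gap --- which you correctly flag yourself --- is the decisive step: passing from ``every mutation of $\br{e}$ that fixes $\ep$ decreases the torsion class $\Gen(\coker\br{e})$'' to ``$\Gen(\coker\br{e})$ is the maximum $\check{\mc{T}}(\ep)$ of the relevant interval, hence $\br{e}=e^+$''. Nothing proved in this paper delivers that implication; it requires (i) the Jasso reduction identifying the interval $[\Gen(E),\check{\mc{T}}(\ep)]$ order-isomorphically with the lattice of functorially finite torsion classes of ${_e}A$, (ii) the Adachi--Iyama--Reiten criterion that a support $\tau$-tilting module all of whose mutations are left mutations is the maximal one, and (iii) the compatibility of the brick labels $S_j$ with the Hasse arrows (so that $S_j\in\mc{T}$ really does detect the downward direction). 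All of these are true and standard, but as written your argument asserts rather than proves the key implication, and your proposed alternative (``a nonnegative $C$-matrix forces the initial cluster'' in $\rep({_e}A)$) is the same nontrivial fact in different clothing. So the proposal is a reasonable reconstruction of the intended proof, but it is not self-contained at exactly the point where the paper itself chose to cite \cite{CGY} rather than argue.
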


\begin{lemma}\label{L:Smulti} The matrix entry ${\C}_{e^\perp}(i,j) = \dim {S}_{j,e^\perp}(i)$ equals the multiplicity $m_{i,j}= [e_j^+ : \eo{e}(P_i)]$, which is also the multiplicity $[P_{j,e^\perp}: L_e(P_i)]$.
\end{lemma}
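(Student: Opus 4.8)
The plan is to deduce all three equalities from a single decomposition of $L_e(P_i)$ into the indecomposable projectives of $\rep(e^\perp)$, together with the adjunction of Theorem \ref{T:L_e}. Since Lemma \ref{L:Cmat}.(1) already identifies $\C_{e^\perp}(i,j)$ with $\dim S_{j,e^\perp}(i)$, the actual content is to match both of these with the multiplicity $m_{i,j} = [e_j^+ : \eo{e}(P_i)]$, and this is where the two displayed multiplicities enter.

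First I would record the projective decomposition. Because $\op{Ind}(\eo{e}(A[0])) = e_c^+ = \bigoplus_j e_j^+$ and $\eo{e}$ is additive, each $\eo{e}(P_i)$ is a direct sum of the $e_j^+$, say $\eo{e}(P_i) = \bigoplus_j m_{i,j}\, e_j^+$ with $m_{i,j} = [e_j^+ : \eo{e}(P_i)]$. Passing to cokernels and then applying the torsion-free functor $f_\ep$, both additive, and using $L_e = f_\ep\,\eo{e}$ together with $P_{j,e^\perp} = f_\ep(E_j^+)$ from Corollary \ref{C:indP}, I obtain
$$ L_e(P_i) = f_\ep\Big(\bigoplus_j m_{i,j}\, E_j^+\Big) = \bigoplus_j m_{i,j}\, P_{j,e^\perp}. $$
Since Lemma \ref{L:fep} ensures the $P_{j,e^\perp}$ are pairwise non-isomorphic indecomposables, this at once gives $[P_{j,e^\perp} : L_e(P_i)] = m_{i,j}$, the second asserted equality.

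Next I would compute $\dim S_{j,e^\perp}(i)$. Viewing $S_{j,e^\perp}$ as an $A$-module lying in $\rep(e^\perp)$, I have $\dim S_{j,e^\perp}(i) = \hom(P_i, S_{j,e^\perp})$. Applying the adjunction of Theorem \ref{T:L_e} with $M = P_i$ and $N = S_{j,e^\perp}$ and substituting the decomposition above,
$$ \hom(P_i, S_{j,e^\perp}) \cong \hom(L_e(P_i), S_{j,e^\perp}) = \sum_k m_{i,k}\, \hom(P_{k,e^\perp}, S_{j,e^\perp}). $$
As $P_{k,e^\perp}$ is the projective cover of the simple $S_{k,e^\perp}$ in $\rep(e^\perp) \cong \rep({_e}A)$, the pairing $\hom(P_{k,e^\perp}, S_{j,e^\perp}) = \delta_{kj}$ collapses the sum to $m_{i,j}$. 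Combined with Lemma \ref{L:Cmat}.(1), this yields $\C_{e^\perp}(i,j) = \dim S_{j,e^\perp}(i) = m_{i,j}$, completing all the identifications.

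The only delicate point is bookkeeping rather than a genuine obstacle: I must check that the two notions of multiplicity agree, i.e.\ that the additive functors $\coker$ and $f_\ep$ neither merge nor split indecomposable summands when passing from $[e_j^+ : \eo{e}(P_i)]$ to $[P_{j,e^\perp} : L_e(P_i)]$. This is exactly what Lemma \ref{L:fep} supplies, since the $E_j^+$ lie in $\mc{R}$ (they avoid $\Gen(E)$ by Remark \ref{r:Ec}.(2), and $\E(E_j^+, E) = 0$ by rigidity of the completion), so $f_\ep$ is injective and indecomposability-preserving on them. Everything else is the standard projective-cover pairing and the adjunction already in hand.
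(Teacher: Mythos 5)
Your proof is correct and takes essentially the same approach as the paper: where the paper applies $\Hom(-,S_{j,e^\perp})$ directly to the triangle \eqref{eq:e0} defining $\eot{e}(P_i)$, you route the same computation through the adjunction of Theorem \ref{T:L_e} (which was itself proved by that triangle), and both arguments then reduce to the pairing \eqref{eq:homecS} and the same appeal to Lemma \ref{L:fep} (or Corollary \ref{C:indP}) for the identification $[P_{j,e^\perp}:L_e(P_i)]=m_{i,j}$.
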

\begin{proof} Apply $\Hom(-, {S}_{j,e^\perp})$ to the triangle of presentations 
	$$h_1e[-1]\to P_i \to \eot{e}(P_i) = \bigoplus_k m_{i,k}e_k^+ \to h_1e.$$
	We have that 
	$$0= \Hom(h_1e, S_{j,e^\perp}) \to \Hom(\bigoplus_k m_{i,k} e_k^+, S_{j,e^\perp})\to  \Hom(P_i, S_{j,e^\perp})\to \E(h_1e, S_{j,e^\perp})=0.$$
	We find by \eqref{eq:homecS} that $\dim S_{j,e^\perp}(i) = m_{i,j}$.
To show $m_{i,j}$ also equals to $[P_{j,e^\perp}: L_e(P_i)]$,
we remain to show that if $e_i^+ \neq e_j^+$, then $f_\ep(e_i^+)\neq f_\ep(e_j^+)$, and both are indecomposable.
But this follows from Lemma \ref{L:fep} or Corollary \ref{C:indP}.
\end{proof}

\subsection{Schur Reductions}
This subsection is quite independent of the rest of this paper. One can safely skip this subsection to reach the main results.
After the paper was completed, we found that Theorem \ref{T:Schred} was already implied in \cite{As} (see Theorem \ref{T:Schur_rad}).
But we keep the proof here for readers' convenience.

For an indecomposable $\E$-rigid representation $E$, let $\Rcirc:=\Rcirc(E)$ be the radical of $\End(E)$.
By the Krull-Remak-Schmidt theorem (e.g., \cite{ASS}), $\Rcirc$ is just the nilpotent subspace of $\End(E)$.
\begin{definition} \label{D:Schredbr}
The cokernel of $R_{\circlearrowleft} E \xrightarrow{can} E$ is called $+$-Schur reduction of $E$; 
the intersection of the kernels $\bigcap_{f \in R_{\circ}} \ker(f)$ is called $-$-Schur reduction of $E$.
We denote them by ${\rho}_+(E)$ and ${\rho}_-(E)$ respectively.
\end{definition}
\noindent If $E$ is the cokernel of some presentation $e$, then we also write $\rho_{\pm}(e)$ for $\rho_{\pm}(E)$. 
The following lemma is a special case of \cite[Lemma 1.5]{As}.

\begin{lemma}\label{L:rhobr} The Schur reductions $\rho_\pm(E)$ are Schur representations satisfying $\hom(E,\rho_\pm(E))=1$, $\e(E,\rho_+(E))=0$ and $\check{\e}(\rho_-(E), E)=0$.
\end{lemma}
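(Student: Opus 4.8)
The plan is to prove that $\rho_\pm(E)$ are Schur representations with the stated homological vanishing, working with the $+$-case first and obtaining the $-$-case by $\nu$-duality (or by a symmetric argument). Recall $\rho_+(E) = \coker(\Rcirc E \xrightarrow{can} E)$, where $\Rcirc = \Rcirc(E)$ is the radical (equivalently, nilpotent subspace) of $\End(E)$, and the canonical map sends $(f, x) \mapsto f(x)$, i.e.\ it is the sum of all $f(E)$ with $f$ nilpotent. Since $E$ is indecomposable, $\End(E)$ is local with residue field $K$, so $\End(E) = K \cdot \Id \oplus \Rcirc$. The key structural observation is that $\sum_{f \in \Rcirc} \Img(f)$ is precisely the largest subrepresentation through which every nonisomorphism $E \to E$ factors, so the quotient $\rho_+(E)$ should kill all endomorphisms except scalars.

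First I would establish $\hom(E, \rho_+(E)) = 1$. Applying $\Hom(E,-)$ to the short exact sequence
\begin{equation*}
0 \to \sum_{f\in\Rcirc}\Img(f) \to E \xrightarrow{\pi} \rho_+(E) \to 0
\end{equation*}
gives a left-exact sequence, and the point is that the composite $E \xrightarrow{g} E \xrightarrow{\pi} \rho_+(E)$ vanishes exactly when $g \in \Rcirc$, since $\Img(g) \subseteq \sum_f \Img(f)$ for nilpotent $g$, while $\pi$ itself (coming from $\Id$) survives. Thus the image of $\Hom(E,E) = K\Id \oplus \Rcirc$ in $\Hom(E, \rho_+(E))$ is one-dimensional, spanned by $\pi$. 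I then need that $\Hom(E,\pi)$ is surjective, i.e.\ that $\Hom(E, E) \to \Hom(E, \rho_+(E))$ hits everything; this follows because any map $E \to \rho_+(E)$ lifts to $E \to E$ as $E$ is projective relative to this surjection in a suitable sense — more carefully, one uses that $\Ext^1(E, \sum_f \Img f)$ controls the obstruction, and here I would invoke $\E$-rigidity of $E$ to kill it, or argue directly that $\rho_+(E)$ being a quotient of $E$ forces every map out of the indecomposable $E$ to factor. This yields $\hom(E,\rho_+(E)) = 1$.

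Next I would show $\rho_+(E)$ is Schur, i.e.\ $\End(\rho_+(E)) = K$. Any endomorphism $\phi$ of $\rho_+(E)$ composed with $\pi$ gives a map $E \to \rho_+(E)$, which by the dimension count above is a scalar multiple of $\pi$; since $\pi$ is surjective this forces $\phi$ to be that same scalar. Finally, for $\e(E, \rho_+(E)) = 0$, I would use the interpretation $\E(E, \rho_+(E)) = \Hom(\rho_+(E), \tau E)^*$ from \eqref{eq:H2E}, together with the fact that $E$ is $\E$-rigid so $\Hom(E, \tau E) = 0$; the surjection $\pi: E \twoheadrightarrow \rho_+(E)$ then induces an injection $\Hom(\rho_+(E), \tau E) \hookrightarrow \Hom(E, \tau E) = 0$. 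The $-$-case statements $\hom(E, \rho_-(E)) = 1$ and $\check{\e}(\rho_-(E), E) = 0$ follow by the dual argument applied to $\tau E$, or directly via $\nu$-duality, using $\rho_-(E) = \ker(E \to \Rcirc E)$ and $\check{\E}(\rho_-(E), E) = \Hom(\tau^{-1} E, \rho_-(E))^*$.

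The main obstacle I anticipate is the lifting/surjectivity step in computing $\hom(E, \rho_+(E))$: establishing that $\Hom(E,E) \twoheadrightarrow \Hom(E, \rho_+(E))$ is surjective requires ruling out the $\Ext^1$ obstruction, and the cleanest route is to cite \cite[Lemma 1.5]{As} (of which this is stated to be a special case) rather than reprove it. Since the excerpt explicitly frames Lemma~\ref{L:rhobr} as a special case of that result and notes these statements are also implied by Theorem~\ref{T:Schur_rad}, I would keep the argument short and lean on $\E$-rigidity together with the locality of $\End(E)$, deferring the finer homological surjectivity to the cited source.
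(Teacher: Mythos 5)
Your proposal follows essentially the same route as the paper's proof: apply $\Hom(E,-)$ to the defining short exact sequence, identify the kernel of $\Hom(E,E)\to\Hom(E,\rho_+(E))$ with $\Rcirc$ (using that $\End(E)$ is local of residue field $K$), kill the obstruction term via $\E$-rigidity together with $\Rcirc E\in\Gen(E)$, deduce Schurity from the injection $\End(\rho_+(E))\hookrightarrow\Hom(E,\rho_+(E))$, get $\e(E,\rho_+(E))=0$ because $\rho_+(E)$ is a quotient of $E$, and dualize for the $-$-case. The only soft spot is your alternative surjectivity argument (``$\rho_+(E)$ being a quotient forces every map to factor''), which is not valid on its own, but your primary argument via the vanishing of $\E(E,\Rcirc E)$ is exactly what the paper uses.
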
	
\begin{proof} Recall that $R_{\circlearrowleft}$ is just the nilpotent subspace of $\End(E)$, which is $1$-dimension lower than $\End(E)$.
	Consider the exact sequence 
	\begin{equation} \label{eq:RE}	0\to R_{\circlearrowleft}E \to E \to \rho_+(E) \to 0.\end{equation}
	We apply $\Hom(E,-)$ to \eqref{eq:RE} and get
	$$0\to \Hom(E,\Rcirc E) \to \Hom(E,E) \to \Hom(E,\rho_+(E)) \to \E(E,\Rcirc E)=0.$$
It is trivial that every homomorphism in $\Rcirc$ factors through $\Rcirc E$, so $\Hom(E,\Rcirc E)$ must surject to $\Rcirc \subset \Hom(E,E)$.
	The last term $\E(E,\Rcirc E)$ vanishes due to the $\E$-rigidity of $E$ and the fact that $\Rcirc E\in \Gen(E)$. 
	Hence $\hom(E, \rho_+(E))=1$. We also have $\e(E, \rho_+(E))=0$ as $\rho_+(E)$ is a quotient of $E$.
Since $\Hom(\rho_+(E), \rho_+(E)) \subseteq \Hom(E, \rho_+(E))$, we conclude that $\End(\rho_+(E))=K$, that is, $\rho_+(E)$ is Schur.
The statement for $\rho_-(E)$ can be proved by the dual arguments.
\end{proof}

\begin{theorem}\label{T:Schred} We have that $\rho_\pm(e)\cong \rho_{\pm\st}(e)$.
\end{theorem}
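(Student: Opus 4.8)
The plan is to prove the two identifications $\rho_+(e) \cong \rho_{+\st}(e)$ and $\rho_-(e) \cong \rho_{-\st}(e)$ by matching each Schur reduction against the characterization of the stable reduction in Theorem \ref{T:Schur_rad}, and I will focus on the positive case since the negative case follows by the dual (Nakayama) argument. Recall that by Definition \ref{D:Schred}, $\rho_{+\st}(e)$ is the $\ep_{\hatj}$-stable reduction associated to the completion $e^+ = e \oplus e_c^+$, where $e$ occupies the last index $j=n$ and $e_{\hatj} = e_c^-$ in the negative case (and $e_c^+$ controls the positive case). Theorem \ref{T:Schur_rad} realizes this stable representation, when $\sgnc_j = +$, as the quotient $E_j / \sum_{f \in \op{rad}(E,E_j)} \Img(f)$. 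Since here the relevant indecomposable $\E$-rigid module is $E = \coker(e)$ itself, the target is $E / \sum_{f \in \op{rad}(E,E)} \Img(f)$, so the entire problem reduces to comparing this quotient with $\coker(\Rcirc E \to E) = \rho_+(E)$ from Definition \ref{D:Schredbr}.

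\emph{The key step} is therefore purely a statement about the endomorphism algebra of the indecomposable $\E$-rigid module $E$. I would argue that the subrepresentation $\Rcirc E = \sum_{f \in \Rcirc} \Img(f)$ (the sum of images of all nilpotent, hence non-invertible, endomorphisms) coincides with $\sum_{f \in \op{rad}(E,E)} \Img(f)$. For an indecomposable module over a finite-dimensional algebra, $\End(E)$ is local by Fitting's lemma, its radical is exactly the set of non-isomorphisms, and since $E$ is indecomposable this radical equals the nilpotent subspace $\Rcirc$ used in the definition of the Schur reduction. Thus $\op{rad}(E,E) = \Rcirc$ as subspaces of $\End(E)$, and consequently the two subrepresentations of $E$ that one quotients by are literally the same. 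This gives $\rho_+(E) \cong \rho_{+\st}(e)$ directly. For the correctness of the sign, I would invoke Lemma \ref{L:rhobr}, which shows $\rho_+(E)$ is a Schur quotient of $E$ lying in $\Gen(E)$ (being a quotient of $E$), matching the $\sgnc_j = +$ branch of Theorem \ref{T:Cmat}; the uniqueness of the $\ep_{\hatj}$-stable representation then forces the isomorphism.

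For the negative reduction, I would run the dual argument: $\rho_-(E) = \ker(E \xrightarrow{can} \Rcirc E)$ is by Lemma \ref{L:rhobr} a Schur subrepresentation with $\check{\e}(\rho_-(E), E) = 0$, hence it lies in $\op{Cogen}(\tau E) = \check{\mc{F}}(\ep) \subseteq \rep(e^{\,\lperp})$, matching the $\sgnc_j = -$ branch realized in Theorem \ref{T:Schur_rad} as $\bigcap_{f \in \op{rad}(\tau E_j, \tau E)} \Ker(f)$ for $\tau E$. The radical-equals-nilpotent-subspace identity applied to $\End(\tau E) \cong \End(E)^{\op}$ again collapses the intersection over all radical maps to the kernel defining $\rho_-$.

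\emph{The main obstacle} I anticipate is not the algebra of local rings — that part is standard — but rather being careful about \emph{which} realization in Theorem \ref{T:Schur_rad} is phrased in terms of $E$ versus $\tau E$, and ensuring the indices $j=n$ and the completions $e^\pm$ are lined up so that $E_j = E$ and the radical maps $\op{rad}(E, E_j)$ genuinely run over $\End(E)$ rather than over homomorphisms to a different summand. Once the bookkeeping confirms that the ambient module in both definitions is the single indecomposable $E = \coker(e)$, the identity $\op{rad}(E,E) = \Rcirc(E)$ closes the argument immediately, and invoking the uniqueness clause of Theorem \ref{T:Cmat} removes any residual ambiguity about the sign and hence the identification.
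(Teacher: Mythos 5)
Your overall strategy---deducing the identification from Asai's realization in Theorem \ref{T:Schur_rad} together with the uniqueness clause of Theorem \ref{T:Cmat}---is legitimate in principle (the paper itself notes that Theorem \ref{T:Schred} is implied by \cite{As}), but your execution has a genuine gap at precisely the step you dismiss as bookkeeping. In Theorem \ref{T:Schur_rad} the module $E$ is the cokernel of the \emph{whole} maximal rigid presentation $\br{e}=\bigoplus_i e_i$, so for $\rho_{+\st}(e)$ (which, by Definition \ref{D:Schred}, is attached to the \emph{negative} completion $e^-=e\oplus e_c^-$, not to $e^+$ as you state) the realization is
\[
E\Big/\sum_{f\in\op{rad}(E\oplus E_c^-,\,E)}\Img(f),
\]
with the radical running over all homomorphisms from $E\oplus E_c^-$ into $E$, not just over $\End(E)$. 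Your assertion that ``the bookkeeping confirms that the ambient module in both definitions is the single indecomposable $E$'' is false: the component $\Hom(E_c^-,E)$ is genuinely present, and the (correct but standard) identification $\op{rad}(E,E)=\Rcirc$ does nothing to control it. The substantive content of the theorem is exactly the claim that every $f\in\Hom(E_c^-,E)$ has image inside $\Rcirc E$. This does not follow from Fitting's lemma or the locality of $\End(E)$; the paper proves it by lifting $f$ through a surjection $hE\twoheadrightarrow E_c^-$ (using $E_c^-\in\Gen(E)$), observing that otherwise some component $g\in\End(E)$ would be a scalar, hence $f$ would be a split surjection by $\E$-rigidity and $E$ would be a direct summand of $E_c^-$---a contradiction. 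Nothing in your write-up supplies this argument, and the same omission recurs dually in your treatment of $\rho_-$.

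For comparison, the paper's own proof does not pass through Theorem \ref{T:Schur_rad} at all: it uses the factorization claim above to show $\Hom(E_c^-,\rho_+(e))=0$, adds $\E(E_c^-,\rho_+(e))=0$ to place $\rho_+(e)$ in $\rep({e_c^-}^\perp)$, notes $\ep\cdot\dv\rho_+(e)=1$ from Lemma \ref{L:rhobr}, and then invokes the uniqueness of the $\ep_c^-$-stable (equivalently, the unique Schur) representation in Theorem \ref{T:Cmat}. If you wish to keep your route via Theorem \ref{T:Schur_rad}, you must prove the equality $\sum_{f\in\op{rad}(E\oplus E_c^-,E)}\Img(f)=\Rcirc E$; either way the splitting argument is unavoidable, so the gap is not one of mere index-matching.
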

\begin{proof} We treat the $+$-reduction only. 
	The isomorphism is trivial if $e$ is negative, so let us assume $e$ is not negative.
	We first claim that every $f\in\Hom(E_c^-, E)$ has image in $\Rcirc E$.
	Recall that $E_c^- \in \Gen(E)$, so $f$ gives rise to a homomorphism $hE \stackrel{p}{\twoheadrightarrow} E_c^- \xrightarrow{f} E$.
	If $\Img(f)$ is not contained in $\Rcirc E$, then there is at least one component $E \to E$ of the composition $f \circ p$ whose image is not contained in $\Rcirc E$. We thus obtain a homomorphism $g \in \End(E)$ with $\Img(g)$ not contained in $\Rcirc E$.
	But $E$ is indecomposable, its endomorphism ring is local. Thus, any endomorphism $g$ not in $\Rcirc$ must be an automorphism of $E$. 
	This invertible component provides an explicit right inverse (section) for $f$. Therefore, $f$ is a split epimorphism, forcing $E$ to be a direct summand of $E_c^-$. 
	This is a contradiction since $E$ and $E_c^-$ share no indecomposable summands by the definition of the reduced left truncating functor.
	
	We have just proved that the first map in the long exact sequence 
	$$0\to \Hom(E_c^-, \Rcirc E) \to \Hom(E_c^-,E)\to \Hom(E_c^-, \rho_+(e)) \to \E(E_c^-, \Rcirc E) =0$$
	is an isomorphism. (The last term vanishes because $\Rcirc E \in \Gen(E)$ by definition, and $E_c^-$ is $\E$-projective in $\Gen(E)$ by Lemma \ref{L:Eproj}). It follows that $\Hom(E_c^-, \rho_+(e))=0$.
	We also have that $\E(E_c^-, \rho_+(e))=0$ for the exact same reason, as $\rho_+(e)$ is a quotient of $E$ and thus belongs to $\Gen(E)$. So $\rho_+(e) \in \rep({e_c^-}^\perp)$.
	Moreover, $\ep(\gamma_+(e))=1$ by Lemma \ref{L:rhobr}.
	Hence, $\rho_+(e)$ must be the unique $\ep_c^-$-stable representation,
	which must be isomorphic to $\rho_{+\st}(e)$ according to Theorem \ref{T:Cmat}.
\end{proof}

The following corollary is an easy consequence of Theorem \ref{T:Schur_rad}.
\begin{corollary}\label{C:max} Any positive stable reduction of $e$ is a quotient representation of $\rho_+(e)$.
Any negative stable reduction of $e$ is a subrepresentation of $\rho_-(e)$.
\end{corollary}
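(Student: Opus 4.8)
The plan is to read both assertions off the explicit realizations of the stable reductions provided by Theorem \ref{T:Schur_rad}, and to compare the space of radical homomorphisms appearing there with the endomorphism radical $\Rcirc$ that defines $\rho_\pm(e)$ in Definition \ref{D:Schredbr}.

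I would begin with the positive case. By Theorem \ref{T:Schur_rad} a positive stable reduction $S$ of $e$ is realized as $S=\coker(e)/\sum_{f}\Img(f)$, where $f$ ranges over the radical homomorphisms into $\coker(e)$ that occur in that realization (these originate from the summands of the ambient maximal rigid presentation, of which $e$ is one). The point to isolate is that this collection already contains every radical endomorphism of $\coker(e)$: if $\pi$ denotes the split projection from the ambient cokernel onto the summand $\coker(e)$, then for $g\in\op{rad}\End(\coker(e))$ the composite $g\pi$ is again radical (the radical is a two-sided ideal) and satisfies $\Img(g\pi)=\Img(g)$ since $\pi$ is surjective. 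Summing over $g$ gives $\Rcirc\coker(e)=\sum_{g}\Img(g)\subseteq\sum_{f}\Img(f)$, so $S$ is a quotient of $\coker(e)/\Rcirc\coker(e)=\rho_+(e)$, as claimed.

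The negative case is the Auslander--Reiten dual of the positive one, and I would run the identical comparison with images replaced by kernels. By Theorem \ref{T:Schur_rad} a negative stable reduction is $S=\bigcap_{f}\Ker(f)$, an intersection over a family of radical homomorphisms out of a summand that contains, via the split inclusion, all radical endomorphisms of that summand; postcomposing with the inclusion preserves kernels, and enlarging the indexing family can only shrink the intersection, so $S\subseteq\bigcap_{g}\Ker(g)=\rho_-(e)$, whence $S$ is a subrepresentation of $\rho_-(e)$. I expect the substantive work to be bookkeeping rather than new ideas: verifying in the positive case that $\coker(e)$ is genuinely a direct summand of the ambient module supplying the maps (immediate once $e$ lies in the maximal rigid presentation defining $S$), and, in the negative case, matching the $\tau$-twisted module carrying the kernel description in Theorem \ref{T:Schur_rad} with the module whose endomorphism radical defines $\rho_-(e)$ in Definition \ref{D:Schredbr}. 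Once these identifications are in place, both containments follow solely from $\op{rad}$ being an ideal together with the splitting of the relevant projection or inclusion.
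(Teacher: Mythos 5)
Your proof is correct and follows the paper's intended route exactly: the paper offers no written proof beyond the remark that the corollary is an easy consequence of Theorem \ref{T:Schur_rad}, and your argument---that $\op{rad}(E,E_j)$ contains $\op{rad}\End(E_j)$ via the split projection (resp.\ inclusion), so the sum of images can only grow and the intersection of kernels can only shrink relative to those defining $\rho_\pm$ in Definition \ref{D:Schredbr}---is precisely that consequence spelled out. Your closing remark correctly identifies the only bookkeeping point, namely that the negative case lives on the $\tau$-translated module, consistent with the naming convention in Definition \ref{D:Schred}.
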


We summarize all the equivalent descriptions for the vectors $\gamma_+(e)$ and $\gamma_-(\tau e)$ so far.
\begin{proposition}\label{P:Schred} We have the following equivalent descriptions for $\gamma_+(e)$ and $\gamma_-(\tau e)$.
\begin{enumerate}
\item The dimension vectors of the simple objects in ${e_c^-}^\perp$ and ${e_c^+}^\perp$.	
\item The last column in $\C_{e^-}$ is $\gamma_+(e)$, and in $-\C_{e^+}$ is $\gamma_-(\tau e)$. 
\item $\gamma_+(e) = \max_{\sgnc_j=+}\{\gamma_{\ep_{\hatj}-\st}(e)\}$ and $\gamma_-(\tau e) = \max_{\sgnc_j=-}\{\gamma_{\ep_{\hatj}-\st}(\tau e)\}$.
\item ${\gamma}_+(e) = \dv(e)-([\eort{e}(P_i[1]) : e])_i $ and ${\gamma}_-(\tau e) = \dv(\tau e) - ([\eot{e}(P_i) : e])_i$.  
\end{enumerate}
\end{proposition}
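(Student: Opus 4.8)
The plan is to show that each of the four descriptions computes $\gamma_+(e)=\dv\rho_+(e)$ and, dually, $\gamma_-(\tau e)=\dv\rho_-(\tau e)$, assembling the already-established results for items (1)--(3) and reserving the genuine calculation for item (4). For item (1), I would observe that since $e$ is indecomposable, $e_c^-$ is an almost complete rigid presentation with $n-1$ indecomposable summands, so $\rep({e_c^-}^\perp)$ has rank one and its single simple object coincides with the unique Schur representation there. By Theorem \ref{T:Cmat} applied with $e_{\hatj}=e_c^-$, this Schur representation is $\rho_{+\st}(e)$, and Theorem \ref{T:Schred} identifies $\rho_{+\st}(e)\cong\rho_+(e)$; the statement for $\gamma_-(\tau e)$ via the simple of $\rep({e_c^+}^\perp)$ is the $\nu$-dual. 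Item (2) is then immediate from Lemma \ref{L:Cmat}.(2), which gives the last column of $\C_{e^-}$ as $\gamma_{+\st}(e)$ and the last column of $-\C_{e^+}$ as $\gamma_{-\st}(\tau e)$, with Theorem \ref{T:Schred} rewriting the stable reductions as Schur reductions.

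For item (3), I would invoke Corollary \ref{C:max}: every positive stable reduction $\rho_{\ep_{\hatj}-\st}(e)$ (those with $\sgnc_j=+$) is a quotient of $\rho_+(e)$, so $\gamma_{\ep_{\hatj}-\st}(e)\le\gamma_+(e)$ entrywise. Since $\rho_+(e)$ is itself realized as the positive stable reduction arising from the completion $e\oplus e_c^-$, the bound is attained, and hence the entrywise maximum is exactly $\gamma_+(e)$. The negative statement is dual, using that negative stable reductions of $\tau e$ are subrepresentations of $\rho_-(\tau e)$.

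The real work is item (4). First I would record that $\eort{e}$ is additive and that $\eort{e}(A[1])$ decomposes into copies of $e$ and of the summands of $e_c^-$, by the very definition of the negative complement; hence each $\eort{e}(P_i[1])=\bigoplus_j n_{ij}e_j^-\oplus m_i\,e$ with $m_i=[\eort{e}(P_i[1]):e]$. Passing to $\delta$-vectors and reading the triangle \eqref{eq:e0r} yields $\delta(\eort{e}(P_i[1]))=h_1'\ep-\eu_i$ with $h_1'=\e(P_i[1],e)$, so that $-\eu_i=\sum_j n_{ij}\ep_j^-+(m_i-h_1')\ep$. Because $\Delta_{e^-}$ is unimodular (Theorem \ref{T:maxrigid}), its rows are a $\mb{Z}$-basis with $\ep$ placed last (Convention \ref{c:OC}); the coordinate vector of $-\eu_i$ in this basis is $-\eu_i\C_{e^-}$, whose last entry is $-\C_{e^-}(i,n)=-\gamma_+(e)(i)$ by Lemma \ref{L:Cmat}.(2). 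Comparing gives $m_i-h_1'=-\gamma_+(e)(i)$, and a direct computation with Definition \ref{D:HomE} gives $h_1'=\dim\E(P_i[1],E)=\dim\Hom(P_i,E)=\dv(e)(i)$, whence $\gamma_+(e)(i)=\dv(e)(i)-m_i$. The negative version is identical after replacing $\eort{e}(P_i[1])$ by $\eot{e}(P_i)$, using $\delta(\eot{e}(P_i))=\eu_i+h_1\ep$ with $h_1=\e(e,P_i)$, the last column of $-\C_{e^+}$ from Lemma \ref{L:Cmat}.(2), and the identity $h_1=\dim\E(e,P_i)=\dim\Hom(P_i,\tau E)=\dv(\tau e)(i)$ coming from \eqref{eq:H2E}.

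The main obstacle is the careful bookkeeping in (4): one must track the sign and position conventions so that the coefficient of $\ep$ in the $\delta$-expansion is exactly the multiplicity $m_i$, which relies on $\ep$ being a basis vector disjoint from the $\ep_j^-$, and one must correctly identify the truncation counts $h_1'=\e(P_i[1],e)$ and $h_1=\e(e,P_i)$ with $\dim E(i)$ and $\dim(\tau E)(i)$ through the $\E$-$\Hom$ duality \eqref{eq:H2E}. Once these identifications are in place, everything reduces to linear algebra over the unimodular matrix $\Delta_{e^\pm}$ together with the already-proved Theorems \ref{T:Cmat} and \ref{T:Schred}.
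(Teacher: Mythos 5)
Your proposal is correct, and for parts (1)--(3) it follows the same path as the paper: these are assembled from Theorem \ref{T:Cmat}, Theorem \ref{T:Schred}, Lemma \ref{L:Cmat}.(2), and Corollary \ref{C:max} exactly as the paper intends (the paper simply cites these results without the expansion you give). For part (4), however, you take a genuinely different route. The paper argues homologically: it applies $\Hom(-,\rho_+(e))$ to the triangle $h_0 e \to \eort{e}(P_i[1]) \to P_i[1]\to h_0e[1]$ and extracts the multiplicity $n_i=[\eort{e}(P_i[1]):e]$ from the resulting exact sequence using $\hom(e,\rho_+(e))=1$, $\hom(e_k^-,\rho_+(e))=0$ and $\e(\eort{e}(P_i[1]),\rho_+(e))=0$ (Lemma \ref{L:rhobr} and Theorem \ref{T:Cmat}), which gives $\gamma_+(e)(i)=h_0-n_i$ directly. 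You instead work in the Grothendieck group: you read the $\delta$-vector of $\eort{e}(P_i[1])$ off the triangle \eqref{eq:e0r}, expand $-\eu_i$ in the unimodular basis given by the rows of $\Delta_{e^-}$, and identify the coefficient of $\ep$ as $-\gamma_+(e)(i)$ via Lemma \ref{L:Cmat}.(2), together with $h_1'=\e(P_i[1],e)=\dv(e)(i)$. Both arguments are valid; yours is purely linear-algebraic but logically depends on item (2) (hence on Theorem \ref{T:Schred} and the $C$-matrix description), whereas the paper's homological computation establishes (4) from Lemma \ref{L:rhobr} more directly and would survive even without first pinning down the last column of $\C_{e^-}$. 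Your identifications $h_1'=\dim E(i)$ and $h_1=\dim(\tau E)(i)$ via \eqref{eq:H2E}, and the implicit use of additivity of $\eort{e}$ to write $\eort{e}(P_i[1])=\bigoplus_j n_{ij}e_j^-\oplus m_i e$, are all sound.
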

\begin{proof} The description (1), (2) and (3) were established in Theorem \ref{T:Schred}, Theorem \ref{T:Cmat}, and Corollary \ref{C:max}. For (4), let us apply $\Hom(-, \rho_+(e))$ to the triangle of presentations 
$$h_0 e \to \eort{e}(P_i[1]) = n_i e \oplus \bigoplus_k n_{i,k}e_k^- \to P_i[1]\to h_0 e[1].$$
We have that 
$$0\to \Hom(\eort{e}(P_i[1]), \rho_+(e))\to  \Hom(h_0e, \rho_+(e)) \to \Hom(P_i, \rho_+(e)) \to \E(\eort{e}(P_i[1]), \rho_+(e)).$$
Keep Theorem \ref{T:Cmat} and Lemma \ref{L:rhobr} in mind, and we see that
$$\e(\eort{e}(P_i[1]), \rho_+(e))=0; \quad \hom(\eort{e}(P_i[1]), \rho_+(e))=n_i;\ \text{ and } \ \hom(h_0e, \rho_+(e)) = h_0.$$
Hence, $n_i = \dv(e)_i- \gamma_+(e)_i$. The other statement can be proved similarly.
\end{proof}

\section{Modifying $L_e$ and $e$-regularity} \label{S:ereg}
\subsection{Modifying $L_e$} \label{ss:extOP}
In this subsection, we slightly modify the definition of $L_e$.
There are several motivations for doing this. Let us mention two of them.
We hope to show in the next section that $L_e$ and $\eor{e}$ are inverse to each other when restricting them to $\drep(\comp{e})$ and $\drep(e^\perp)$ respectively.
Here, $\drep(\comp{e})$ is the set of all presentations without summands in $\op{add}(e)$ that are compatible with $e$.
However, $L_e$ has nontrivial kernel on $\rep(\comp{e})$ (see Lemma \ref{L:0fibre}).
Hence, this cannot be true without appropriately modifying the definition of $L_e$.
Secondly, in Section \ref{ss:ereg} we will introduce an important class of presentations, called $e$-regular presentations.
It is more convenient to define the $e$-regularity using this modified version.

\begin{lemma}\label{L:0fibre} $L_e^{-1}(0) \cap \rep({e}^{\rperp\,}) =  \Gen(E)$ and 
$L_e^{-1}(0) \cap \rep(\comp{e}) = \op{add}(E\oplus E_c^- )$.
\end{lemma}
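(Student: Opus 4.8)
The plan is to prove the two equalities in turn, using the first to bootstrap the second. For the first equality, the key observation is that the functor $\eot{e}$ acts trivially on $\rep(e^{\rperp\,})$: if $M\in\rep(e^{\rperp\,})$ then $\E(e,M)=0$, so in the defining triangle \eqref{eq:e0} for $d=d_M$ the term $h_1e$ vanishes, giving $\eot{e}(M)=M$ and hence $L_e(M)=f_\ep(M)$. Since $f_\ep(M)=0$ if and only if $M\in\mc{T}(\ep)=\Gen(E)$, and since $\Gen(E)=\mc{T}(\ep)\subseteq\check{\mc{T}}(\ep)=\rep(e^{\rperp\,})$ by \eqref{eq:T}, I would conclude $L_e^{-1}(0)\cap\rep(e^{\rperp\,})=\Gen(E)$ immediately.

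For the second equality, I would first note that $\rep(\comp{e})\subseteq\rep(e^{\rperp\,})$ (compatibility includes $\E(e,M)=0$), so by the first part $L_e^{-1}(0)\cap\rep(\comp{e})=\Gen(E)\cap\rep(\comp{e})$. It then remains to identify this intersection with $\op{add}(E\oplus E_c^-)$. For the inclusion $\supseteq$, I would check that $E$ and $E_c^-$ lie in $\Gen(E)$ (the latter by Lemma \ref{L:Eproj}) and are compatible with $e$: the condition $\E(e,-)=0$ holds on all of $\Gen(E)$ since $\Gen(E)\subseteq\rep(e^{\rperp\,})$; for the other side, $\E(E,e)=\E(d_E,e)\subseteq\E(e,e)=0$ because $d_E$ is (up to homotopy) a summand of $e$, while $\E(E_c^-,e)=\E(e_c^-,e)=0$ by the very construction of the negative complement ($\E(\eort{e}(d),e)=0$).

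The substance is the reverse inclusion $\subseteq$, which I expect to be the main obstacle. Here I would take $M\in\Gen(E)$ with $\E(M,e)=0$ and aim to show $M$ is $\E$-projective in $\Gen(E)$, so that Lemma \ref{L:Eproj} forces $M\in\op{add}(E\oplus E_c^-)$. The crux is to upgrade the single vanishing $\E(M,e)=0$ to vanishing against all of $\Gen(E)$. Using \eqref{eq:H2E} I would rewrite $\E(M,e)=\Hom(E,\tau M)^*$, so the hypothesis gives $\Hom(E,\tau M)=0$; then for any $N\in\Gen(E)$, choosing a surjection $E^{h}\twoheadrightarrow N$ and applying the left-exact functor $\Hom(-,\tau M)$ yields an injection $\Hom(N,\tau M)\hookrightarrow\Hom(E,\tau M)^{h}=0$, whence $\E(M,N)=\Hom(N,\tau M)^*=0$. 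This establishes the $\E$-projectivity of $M$ and completes the argument. The delicate points to handle carefully are the bookkeeping identifications $\E(M,e)=\E(M,E)=\Hom(E,\tau M)^*$ for a genuine representation $M$ (keeping track of decorated parts, which do not affect $\Hom$) and the reduction $\E(E,e)=\E(d_E,e)$ via Lemma \ref{L:decmnn}.
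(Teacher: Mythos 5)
Your proof is correct. The first equality and the easy inclusion of the second are argued exactly as in the paper: on $\rep(e^{\rperp})$ the truncation $\eot{e}$ is trivial, so $L_e=f_\ep$ there and the kernel is $\mc{T}(\ep)=\Gen(E)$; and $E\oplus E_c^-$ visibly lies in $\Gen(E)\cap\rep(\comp{e})$. Even your "upgrading" step is half-present in the paper, which also passes from $\E(M,E)=0$ to $\E(M,E_c^-)=0$ using $E_c^-\in\Gen(E)$.

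Where you genuinely diverge is the final identification of $\Gen(E)\cap\rep(\comp{e})$ with $\op{add}(E\oplus E_c^-)$. The paper establishes the two vanishings $\E(e\oplus e_c^-, d_M)=0$ (from $M\in\Gen(E)=\Gen(E\oplus E_c^-)$) and $\E(d_M, e\oplus e_c^-)=0$, and then invokes Remark \ref{r:maxrigid}: since $e\oplus e_c^-$ is maximal rigid, mutual $\E$-vanishing forces $d_M\in\op{add}(e\oplus e_c^-)$ by uniqueness of the canonical decomposition. You instead push the vanishing all the way to $\E(M,N)=0$ for every $N\in\Gen(E)$ via $\E(M,-)=\Hom(-,\tau M)^*$ and left-exactness, and then quote the classification of $\E$-projectives in $\Gen(E)$ from Lemma \ref{L:Eproj}. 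Both are valid; note only that the direction of Lemma \ref{L:Eproj} you need (every $\E$-projective in $\Gen(E)$ lies in $\op{add}(E\oplus E_c^-)$) is precisely the direction whose proof the paper leaves implicit (it is a standard Adachi--Iyama--Reiten-type fact), whereas the paper's route through Remark \ref{r:maxrigid} is self-contained within the text. Your bookkeeping points ($\E(M,e)=\E(M,E)=\Hom(E,\tau M)^*$ and $\E(E,e)=\E(d_E,e)$ via Lemma \ref{L:decmnn}) are handled correctly and are indeed the places where one must be careful.
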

\begin{proof} Clearly $\Gen(E)\subseteq L_e^{-1}(0)\cap \rep({e}^{\rperp\,})$.
Conversely, if $L_e(M)=0$ and $\E(E,M)=0$, then $0=L_e(M) = f_{\ep}(M)$. This implies $t_{\ep}(M)=M$ and thus $M\in \Gen(E)$ by \eqref{eq:T}.
	
For the second equality, we have that $E\oplus E_c^- \in L_e^{-1}(0)$ because $E\oplus E_c^- \in \Gen(E)$.
It is trivial that $E\oplus E_c^-\in \rep(\comp{e})$. Hence, $\op{add}(E\oplus E_c^- ) \subseteq L_e^{-1}(0) \cap \rep(\comp{e})$.
	
Conversely, if $M\in L_e^{-1}(0) \cap \rep(\comp{e})$, then $M\in \Gen(E) \cap \rep(\comp{e})$. 
Since $E\oplus E_c^- \in \Gen(E)$, we have $M \in \Gen(E\oplus E_c^-)$, and thus $\E(E\oplus E_c^-, M)=0$.
On the other hand, we have that $\E(M,E)=0$, and thus $\E(M, E\oplus E_c^-)=0$ since $E_c^- \in \Gen(E)$.
Therefore, we must have that $M\in \op{add}(E\oplus E_c^-)$ by Remark \ref{r:maxrigid}.
\end{proof}

\begin{definition}\label{D:Lext} We modify and extend the definition of $L_e$ by additively setting
\begin{align*}
\L_e(d_M) &= d_{L_e(M)} && \text{for indecomposable $M\notin \op{Ind}(E_c^-)$}, \text{ and }\\
\L_e(e_i^-) &= P_{i,e^\perp}[1]. 
\end{align*}
Due to Lemma \ref{L:Lext}.(1) below, the definition is well-defined (it does not contradict that $L_e(P_i[1])=L_e(P_i)[1]$).
We denote this modified $L_e$ by $\L_e$. 
We also define $\eor{e}(P_{i,e^\perp}[1]) = e_i^-$.
\end{definition}
Analogously we modify the definition of $R_{\ec}$ as follows 
\begin{align*}
	\mc{R}_\ec(\dc_M) &= d_{R_{\ec}(M)} && \text{for indecomposable $M\notin \op{Ind}(\check{E}_c^-)$}, \text{ and }\\
	\mc{R}_\ec(\ec_i^-) &= I_{i,{^\perp}\ec}[1]. 
\end{align*}

The following lemma justifies why the above modification is reasonable in certain sense.
\begin{lemma}\label{L:Lext} \ \begin{enumerate}
\item If $P_i[1] = e_j^-$ is an indecomposable summand in $e_c^-$, then $L_e(P_i) = P_{j,e^\perp}$.
\item $\L_e(d)$ presents $L_e(\coker(d))$.
\end{enumerate}
\end{lemma}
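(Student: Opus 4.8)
The plan is to establish part (1) first---this is exactly the content that makes the two clauses of Definition \ref{D:Lext} consistent---and then obtain part (2) by additivity bookkeeping. Throughout write $M=\coker(d)$.

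For part (1), suppose $P_i[1]=e_j^-$, so the weight of the $j$-th negative complement summand is $\ep_j^-=-\eu_i$. First I would compute $L_e(P_i)$ from the decomposition $\eot{e}(P_i)=\bigoplus_k m_{i,k}\,e_k^+$ used in the proof of Lemma \ref{L:Smulti}: passing to cokernels, applying $f_\ep$, and invoking Corollary \ref{C:indP} gives
$$L_e(P_i)=f_\ep\Big(\bigoplus_k m_{i,k}\,E_k^+\Big)=\bigoplus_k m_{i,k}\,P_{k,e^\perp}.$$
So it remains to show that $(m_{i,k})_k$ is the standard basis vector supported at $j$.

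The key step is a tropical-duality computation. By Lemma \ref{L:Smulti} we have $m_{i,k}=\C_{e^\perp}(i,k)$, and by Lemma \ref{L:Cmat} together with Convention \ref{c:OC} the matrix $\C_{e^\perp}$ agrees with $-\C_{e^-}$ on these columns, so $m_{i,k}=-\C_{e^-}(i,k)$. Reading off the $(j,k)$ entry of the identity $\Delta_{e^-}\C_{e^-}=I$ (Definition \ref{D:Delta}), whose $j$-th row is $\ep_j^-=-\eu_i$, yields
$$\delta_{jk}=(\Delta_{e^-}\C_{e^-})_{jk}=\ep_j^-\cdot(\text{$k$-th column of }\C_{e^-})=-\C_{e^-}(i,k)=m_{i,k}.$$
Hence $L_e(P_i)=P_{j,e^\perp}$, which also shows that the prescription $\L_e(e_j^-)=P_{j,e^\perp}[1]$ agrees with the extension $\L_e(P_i[1])=L_e(P_i)[1]$, so $\L_e$ is well defined.

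For part (2), I would decompose $d$ up to homotopy as $d_M\oplus\big(\bigoplus_\ell P_{c_\ell}[1]\big)$ (Lemma \ref{L:decmnn}) and use that $\L_e$, $\coker$, and $L_e$ (a left adjoint, hence additive by Theorem \ref{T:L_e}) all respect direct sums. Each negative summand contributes $0$ on both sides: $\L_e(P_{c_\ell}[1])$ is a negative presentation with zero cokernel, while $L_e(\coker P_{c_\ell}[1])=L_e(0)=0$. Decomposing $M=\bigoplus_b M_b$ into indecomposables, for $M_b\notin\op{Ind}(E_c^-)$ the definition gives $\coker\L_e(d_{M_b})=\coker d_{L_e(M_b)}=L_e(M_b)$; and for $M_b=E_s^-\in\op{Ind}(E_c^-)$ the minimal presentation is $e_s^-$, so $\L_e(d_{M_b})=P_{s,e^\perp}[1]$ has zero cokernel, matching $L_e(E_s^-)=0$---the latter by Lemma \ref{L:0fibre}, since $E_s^-\in\op{add}(E\oplus E_c^-)=L_e^{-1}(0)\cap\rep(\comp{e})$. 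Summing, $\coker\L_e(d)=\bigoplus_b L_e(M_b)=L_e(M)=L_e(\coker d)$.

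The main obstacle is part (1): one must show that $L_e(P_i)$ is a single indecomposable projective rather than a proper direct sum, and identify it correctly. Tropical duality forces this, but the argument hinges on the indexing being coherent---that the label $j$ of the negative complement summand $e_j^-$ coincides with the label of $S_{j,e^\perp}$ and $P_{j,e^\perp}$---which is precisely what Convention \ref{c:OC} arranges. I would also flag that Lemmas \ref{L:Smulti} and \ref{L:Cmat}, stated there for indecomposable $e$, are used here in their evident extension to a general rigid $e$, where $\C_{e^\perp}$ records the simples of $\rep(e^\perp)$ and $\Delta_{e^-}$ is unimodular by Theorem \ref{T:maxrigid}.
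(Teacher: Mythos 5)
Your proof is correct and follows essentially the same route as the paper's: part (1) reduces, via Lemma \ref{L:Smulti}, Corollary \ref{C:indP} and Convention \ref{c:OC}, to showing that the $i$-th row of $\C_{e^\perp}$ is the unit vector $\eu_j$, and part (2) is the same additivity bookkeeping over the decomposition of Lemma \ref{L:decmnn}, with the $\op{add}(E_c^-)$-summands killed on both sides. The only (harmless) divergence is in how the unit-row claim is extracted: you read it off directly from the $(j,k)$ entries of $\Delta_{e^-}\C_{e^-}=I$ using that the $j$-th row of $\Delta_{e^-}$ is $-\eu_i$, whereas the paper reads $\C_{e^-}\Delta_{e^-}=I$ at row $i$ and finishes with the sign coherence of $\Delta_{e^-}$ --- your version is slightly more direct, and your explicit remark that Lemmas \ref{L:Smulti} and \ref{L:Cmat} must be invoked in their evident extension beyond indecomposable $e$ is a point the paper leaves implicit.
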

\begin{proof} (1). $P_i[1]$ is a summand of $e_c^-$ if and only if $E=\coker(e)$ is not supported on $i$. 	
In this case, $\eor{e}(P_i[1]) = P_i[1]$. Then by Lemma \ref{L:Cmat}, we must have that $S_{j,e^\perp}=\eu_i$ (assuming $P_i[1]=e_j^-$).
Next we claim that $S_{k,e^\perp}(i)=0$ for $k\neq j$.
As $\Delta_{e^-}$ is nondegenerate, the $k$-th row of $\Delta_{e^-}$ is not zero or a multiple of $\eu_i$.
Note that $\sum_{k} \C(i,k)\Delta(k,l) = 0$ for $l\neq i$.
Then the sign coherence of $\Delta_{e^-}$ forces $\dim S_{k,e^\perp}(i)=-C(i,k)=0$.
By Lemma \ref{L:Smulti} and our ordering convention (Convention \ref{c:OC}), this implies $L_e(P_i) = P_{j,e^\perp}$.

(2). In view of Lemma \ref{L:decmnn}, we may assume that
\begin{equation} \label{eq:d} d \simeq d_M \oplus d_{M^-} \oplus P[1] \end{equation}
where $M^-$ is a direct sum of all summands of $\coker(d)$ in $\op{add}(E_c^-)$. 
Then $\coker(d) = M\oplus M^-$ and $L_e(\coker(d)) = L_e(M)$ as $L_e(M^-)=0$.
By definition we have that
\begin{equation}\label{eq:Led} \L_e(d) = \L_e(d_M) \oplus \L_e(d_{M^-}) \oplus \L_e(P[1]) = d_{L_e(M)} \oplus P^-[1] \oplus L_e(P)[1], \end{equation}
where $P^-[1] =\L_e(d_{M^-})$ is some shifted projective module in $\rep(e^\perp)$. 
Hence, $\L_e(d)$ presents $L_e(\coker(d))$.
\end{proof}
\noindent Note that due to (2) and Theorem \ref{T:L_e}, we have that 
\begin{equation}\label{eq:adjpr} \Hom(d,N) \cong \Hom(\L_e(d),N) \end{equation}
for any $N\in \rep(e^\perp)$.
In this sense, this modified $L_e$ respects the adjoint property \eqref{eq:adj}.

\subsection{$e$-regular Presentations} \label{ss:ereg}
In view of the adjunction \eqref{eq:adjpr}, it is natural to compare $\E(d, \iota_e(N))$ with $\E(\L_e(d), N)$ for $N\in \rep(e^\perp)$.
In general, they are not isomorphic. In fact, we always have the inequality $\E(d,\iota_e(N)) \supseteq \E(\L_e(d), N)$. 
One can attribute this to the fact that $\L_e(d)$ and $L_e(d)$ are not necessarily homotopy equivalent.
\begin{definition}\label{D:ereg} A presentation $d$ is called {\em $e$-regular} if $\L_e(d)$ and $L_e(d)$ are homotopy equivalent.
	A $\delta$-vector is called $e$-regular if a general presentation of weight $\delta$ is $e$-regular.
A representation $M$ is called $e$-regular if $d_M$ is $e$-regular.
\end{definition}

\begin{lemma} \label{L:ereg} The following are equivalent \begin{enumerate}
	\item The presentation $d$ is $e$-regular.
	\item The $\delta$-vector of $\L_e(d)$ is given by $\delta \C_{e^\perp}$.
	\item $L_e(d_M) \simeq d_{L_e(M)}$ where $d_M$ is as in the decomposition \eqref{eq:d}.
	\item $\e(d, \iota_e(N)) = \e(\L_e(d), N)$ for any $N\in \rep(e^\perp)$.
	\item $\nu_{e^\perp}\L_e(d) \cong \mc{R}_{\nu e}(\nu d)$.
\end{enumerate}
\end{lemma}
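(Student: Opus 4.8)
The plan is to show that all five conditions are equivalent to the vanishing of a single projective $Q$ in $\rep(e^\perp)$ which measures the failure of $L_e$ to preserve minimality on the minimal part of $d$.

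First I would record two facts that hold for \emph{every} presentation $d$, independent of $e$-regularity. Write $d\simeq d_M\oplus d_{M^-}\oplus P[1]$ as in \eqref{eq:d}. By Lemma \ref{L:Lext}.(2) both $\L_e(d)$ and the unmodified $L_e(d)=(L_e(P_-)\to L_e(P_+))$ present $L_e(M)$, and by Corollary \ref{C:equi} the latter is a genuine projective presentation in $\rep(e^\perp)$. Using $[L_e(P_j)]=\sum_k m_{j,k}[P_{k,e^\perp}]$ with $m_{j,k}=\C_{e^\perp}(j,k)$ from Lemma \ref{L:Smulti}, a Grothendieck-group computation shows that the $\delta$-vector of $L_e(d)$ is always $\delta\,\C_{e^\perp}$. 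The same computation applied to a summand $e_i^-$ of $e_c^-$ gives that its image has $\delta$-vector $\ep_i\C_{e^\perp}$, which one checks coincides with the $\delta$-vector of $\L_e(e_i^-)=P_{i,e^\perp}[1]$; since $L_e(d_{E_i^-})$ presents $0$, it is therefore homotopy equivalent to $P_{i,e^\perp}[1]$. Hence the $d_{M^-}$ and $P[1]$ parts of $L_e(d)$ and $\L_e(d)$ agree up to homotopy, and the only discrepancy lies between $L_e(d_M)$ and its minimal model $d_{L_e(M)}=\L_e(d_M)$. Writing $L_e(d_M)\simeq d_{L_e(M)}\oplus Q[1]$ via Lemma \ref{L:decmnn}, I obtain the key structural identity
\[ L_e(d)\simeq \L_e(d)\oplus Q[1]. \]
The second fact is that $\E(L_e(d),N)\cong \E(d,\iota_e(N))$ for all $N\in\rep(e^\perp)$: the adjunction of Theorem \ref{T:L_e} identifies $\Hom_{e^\perp}(L_e(P_\pm),N)\cong\Hom_A(P_\pm,\iota_e(N))$ naturally, hence compatibly with precomposition by $d$, so the two cokernels in \eqref{eq:HE} coincide.

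Granting these, each condition reduces to $Q=0$, and this is where the bulk of the argument is routine. Conditions (1) and (3) are immediate from the structural identity. For (2), the identity gives that the $\delta$-vector of $\L_e(d)$ equals $\delta\,\C_{e^\perp}$ minus the $\delta$-vector of $Q[1]$, so (2) holds iff $Q=0$. For (4), the identity and the second fact yield $\E(L_e(d),N)=\E(\L_e(d),N)\oplus\Hom_{e^\perp}(Q,N)$, whence $\e(d,\iota_e(N))=\e(\L_e(d),N)$ for all $N$ iff $\Hom_{e^\perp}(Q,N)=0$ for all $N$, i.e. iff $Q=0$. This simultaneously explains the inclusion $\E(d,\iota_e(N))\supseteq \E(\L_e(d),N)$ noted before Definition \ref{D:ereg}, with cokernel $\Hom_{e^\perp}(Q,N)$, and avoids any dévissage.

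The remaining condition (5) is where I expect the real work. I would first establish the presentation-level refinement $\nu_{e^\perp}L_e(d)\cong \mc{R}_{\nu e}(\nu d)$ for the \emph{unmodified} projections, by lifting the bimodule duality of Corollary \ref{C:bimod} from objects to projective presentations (legitimate since $L_e$ and $R_{\nu e}$ preserve presentations). As $\nu_{e^\perp}$ is an equivalence on the homotopy category of projective presentations, it detects homotopy equivalence, so the structural identity propagates through $\nu_{e^\perp}$; the point then becomes that $\nu_{e^\perp}\L_e(d)\cong\mc{R}_{\nu e}(\nu d)$ holds exactly when the $\nu$-side modification is also trivial. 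The main obstacle is therefore to verify that $d$ is $e$-regular if and only if $\nu d$ is $\nu e$-regular, which I would deduce from the symmetry of characterization (4) under the dualities \eqref{eq:H2E} and \eqref{eq:hedual}, exchanging $\E(d,\iota_e(-))$ with $\Ec(-,\nu d)$ and $\L_e$ with $\mc{R}_{\nu e}$. The delicate step is making the modified (decorated) parts match under Nakayama—not merely the underlying modules—so the bookkeeping of the shifted projectives $P_{i,e^\perp}[1]$ against their injective counterparts is where I would concentrate the care.
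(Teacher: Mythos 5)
Your treatment of the equivalences (1)--(4) is correct, and it is a genuinely cleaner packaging than the paper's. The paper runs a cycle $(2)\Rightarrow(1)$, $(1)\Leftrightarrow(3)$, $(1)\Rightarrow(4)$, $(4)\Rightarrow(2)$, invoking Bongartz's theorem at the step $(4)\Rightarrow(2)$ (from $\Hom(\L_e(d),N)\cong\Hom(L_e(d),N)$ and $\E(\L_e(d),N)\cong\E(L_e(d),N)$ for all $N$ it concludes homotopy equivalence). Your structural identity $L_e(d)\simeq \L_e(d)\oplus Q[1]$ is valid: by Lemma \ref{L:Smulti} the unmodified $L_e(d)$ always has $\delta$-vector $\delta\C_{e^\perp}$, both presentations have cokernel $L_e(\coker d)$, the $d_{M^-}$ and $P[1]$ parts agree up to homotopy (this is exactly the computation $\ep_i^-\C_{e^\perp}=-\eu_i$ that the paper uses in $(1)\Leftrightarrow(3)$ and Corollary \ref{C:e^-reg}), so the whole discrepancy is the negative part $Q[1]$ of $L_e(d_M)$. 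Reducing (1), (2), (3), (4) each to $Q=0$ then works exactly as you say, and your identification of the cokernel of $\E(\L_e(d),N)\hookrightarrow\E(d,\iota_e(N))$ with $\Hom(Q,N)$ is a nice bonus the paper does not state. Your ``second fact'' is the same naturality-of-adjunction argument the paper uses for $(1)\Rightarrow(4)$.

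The gap is in (5), and it is real. Your plan rests on two unproven inputs: the presentation-level identity $\nu_{e^\perp}L_e(d)\cong\mc{R}_{\nu e}(\nu d)$ for the \emph{unmodified} projections, and the claim that $d$ is $e$-regular iff $\nu d$ is $\nu e$-regular. Even granting the first, the second is not a bookkeeping issue: matching the two identities only shows that (5) holds iff the projective-side discrepancy $Q[1]$ corresponds under $\nu$ to the injective-side discrepancy of $\mc{R}_{\nu e}$, not iff $Q=0$; and your proposed derivation of the symmetry ``from the symmetry of characterization (4) under the dualities'' would require comparing $\Ec(N,\mc{R}_{\nu e}(\nu d))$ with $\E(\L_e(d),N)$, which is essentially the content of (5) itself --- the argument is circular as set up. The paper avoids all of this: by Bongartz in $\rep(e^\perp)$, (5) is equivalent to $\Hom(N,\nu_{e^\perp}\L_e(d))\cong\Hom(N,\mc{R}_{\nu e}(\nu d))$ for all $N\in\rep(e^\perp)$; the left-hand side is $\E(\L_e(d),N)^*$ by the AR-duality \eqref{eq:H2E} applied inside $\rep(e^\perp)$, and the right-hand side is $\Hom(N,\nu d)=\E(d,N)^*$ by the adjunction for $\mc{R}_{\nu e}$ together with \eqref{eq:H2E} in $\rep(A)$. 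So $(5)\Leftrightarrow(4)$ in two lines, with no need for the unmodified Nakayama identity or any $\nu$-symmetry of regularity. You should replace your sketch for (5) with this reduction.
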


\begin{proof} 	
$(2) \Rightarrow (1)$: Suppose that the $\delta$-vector of $\L_e(d)$ is given by $\delta \C_{e^\perp}$.
Since both $\L_e(d)$ and $L_e(d)$ presents $L_e(\coker(d))$ and they have the same $\delta$-vector, they must have the same homology, and hence are homotopy equivalent  in $K^{[-1,0]}(\proj e^\perp)$.

$(1) \Rightarrow (4)$: Due to (1), $\E(\L_e(d),N)$ is the cokernel of $\Hom(L_e(P_+),N) \xrightarrow{} \Hom(L_e(P_-),N)$. By the adjoint property, this is also the cokernel of $\Hom(P_+,N) \xrightarrow{} \Hom(P_-,N)$, which is $\E(d,N)$. 

$(4) \Rightarrow (2)$. We have just seen that $\E(d,N)\cong \E(L_e(d),N)$ for any $N\in \rep(e^\perp)$. So $\E(\L_e(d),N)\cong \E(L_e(d),N)$.
We also have that $\Hom(\L_e(d),N)\cong \Hom(L_e(d),N)$ by \eqref{eq:adjpr} and the adjunction.
By Bongartz's theorem \cite{Bo}, the projective presentations $\L_e(d)$ and $L_e(d)$ have the same homologies, and hence are homotopy equivalent.

$(1) \Leftrightarrow (3)$: Recall that $\L_e(e_i^-) = P_{i,e^\perp}[1]$. Since $\ep_i^- \C_{e^\perp} = -\eu_i$, it follows from (2) that $e_i^-$ are all $e$-regular. Consider the decomposition \eqref{eq:d}: $d=d_M\oplus d_{M^-}\oplus P[1]$.
Hence, $d$ is $e$-regular if and only if $d_M$ is $e$-regular, and by definition this is equivalent to that $L_e(d_M) \simeq d_{L_e(M)}$.
	
$(4) \Leftrightarrow (5)$: By Bongartz's theorem \cite{Bo}, (5) is equivalent to that $\Hom(N, \nu_{\ep^{\perp}}\L_e(d) ) \cong \Hom(N, \mc{R}_{\nu e}(\nu d))$ for any $N\in \rep(e^\perp)$. We have that $\Hom(N, \nu_{\ep^{\perp}}\L_e(d) )^* \cong \E(\L_e(d),N)$ and $\Hom(N, \mc{R}_{\nu e}(\nu d))^*=\Hom(N, \nu d)^* \cong \E(d,N)$.
\end{proof}

\begin{corollary} \label{C:ereg} For an $e$-regular representation $M$, we have one more equivalent condition:
	$\e(M,S_{i,e^\perp}) = \ext^1(L_e(M),S_{i,e^\perp})$ for each $i$.	 
\end{corollary}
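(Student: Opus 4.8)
The plan is to derive Corollary \ref{C:ereg} as a specialization of the general equivalence $(1)\Leftrightarrow(4)$ in Lemma \ref{L:ereg}, applied to the particular test objects $N = S_{i,e^\perp}$, the simple representations in $\rep(e^\perp)$. Since $M$ is already assumed to be $e$-regular, the content of the corollary is not a new characterization but rather the identification of the two $\E$-groups appearing in condition $(4)$ with the more concrete invariants $\e(M,S_{i,e^\perp})$ and $\ext^1(L_e(M),S_{i,e^\perp})$. So the task splits into two matching steps: identify the left-hand side of $(4)$ with $\e(M,S_{i,e^\perp})$, and identify the right-hand side with $\ext^1(L_e(M),S_{i,e^\perp})$.

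First I would handle the left-hand side. By definition $\E(d,\iota_e(N)) = \E(d_M, N)$, and since $N = S_{i,e^\perp}$ is a genuine representation in $\rep(e^\perp) \subseteq \rep(A)$, this is just $\E(M, S_{i,e^\perp})$ in the decorated sense, whose dimension is $\e(M,S_{i,e^\perp})$. This is essentially a bookkeeping step using Definition \ref{D:HomE} together with the identification of $M$ with $d_M$. Next I would handle the right-hand side: condition $(4)$ gives $\e(\L_e(d), S_{i,e^\perp})$, and since $M$ is $e$-regular we may by equivalence $(1)\Leftrightarrow(3)$ replace $\L_e(d_M)$ by the genuine minimal presentation $d_{L_e(M)}$ up to homotopy. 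Then $\E(d_{L_e(M)}, S_{i,e^\perp}) = \E(L_e(M), S_{i,e^\perp})$, again by Definition \ref{D:HomE}. The remaining point is that inside the category $\rep(e^\perp) \cong \rep({_e}A)$, the $\E$-invariant computed by the minimal presentation coincides with the ordinary $\Ext^1$; since $S_{i,e^\perp}$ is simple and $L_e(M) \in \rep(e^\perp)$, and the $\E$-space over a finite-dimensional algebra agrees with $\Ext^1$ when the relevant second projective term contributes nothing beyond $\Ext^1$, we get $\e(L_e(M),S_{i,e^\perp}) = \ext^1(L_e(M),S_{i,e^\perp})$.

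Concretely, I would argue as follows. Writing $d = d_M$ in the normal form \eqref{eq:d} and invoking $e$-regularity through Lemma \ref{L:ereg}.(3), the presentation $\L_e(d)$ is homotopy equivalent to the minimal presentation $d_{L_e(M)}$ of $L_e(M)$ in $\rep(e^\perp)$. Therefore
\begin{equation*}
\E(\L_e(d), S_{i,e^\perp}) \cong \E(d_{L_e(M)}, S_{i,e^\perp}) = \E(L_e(M), S_{i,e^\perp}),
\end{equation*}
the last equality being Definition \ref{D:HomE} applied inside $\rep(e^\perp)$. Substituting both identifications into Lemma \ref{L:ereg}.(4), which holds because $M$ is $e$-regular, yields
\begin{equation*}
\e(M, S_{i,e^\perp}) = \e(\L_e(d), S_{i,e^\perp}) = \e(L_e(M), S_{i,e^\perp}),
\end{equation*}
and it remains only to replace the rightmost $\e$ by $\ext^1$.

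The main obstacle I anticipate is precisely this last replacement: justifying $\e(L_e(M),S_{i,e^\perp}) = \ext^1(L_e(M),S_{i,e^\perp})$ rather than merely $\e \geq \ext^1$. In general the $\E$-invariant of a presentation exceeds $\Ext^1$ of its cokernel by a contribution coming from the negative (shifted-projective) part of the presentation. Here the key is that $d_{L_e(M)}$ is the \emph{minimal} presentation of an honest module $L_e(M) \in \rep(e^\perp)$, so it has no negative summand; for a module presented by its minimal (hence two-term, non-negative) projective presentation, the $\E$-space reduces to $\Ext^1$ of the cokernel. I would verify this by running the defining four-term sequence \eqref{eq:HE} for $d_{L_e(M)}$ against the injective/projective resolution of $S_{i,e^\perp}$ and comparing with the standard computation of $\Ext^1$; the absence of a $P\to 0$ summand is what forces equality. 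This is a routine but essential check, and it is the one place where one must be careful not to conflate the presentation-theoretic $\E$ with the module-theoretic $\Ext^1$.
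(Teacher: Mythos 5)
Your route is workable and, once unwound, amounts to essentially the same computation the paper performs: the paper writes down the two Euler-form identities $\hom(M,S_{i,e^\perp})-\ext^1(L_e(M),S_{i,e^\perp})=\delta_{L_e(M)}(i)$ (adjunction for $\Hom$, plus the fact that the multiplicities of $P_{i,e^\perp}$ in the two terms of the minimal presentation of $L_e(M)$ inside $\rep(e^\perp)$ are $\hom(-,S_{i,e^\perp})$ and $\ext^1(-,S_{i,e^\perp})$) and $\hom(M,S_{i,e^\perp})-\e(M,S_{i,e^\perp})=\delta_M\cdot\dv S_{i,e^\perp}$ from \eqref{eq:h-e}, and concludes that the asserted equality for all $i$ is exactly $\delta_{L_e(M)}=\delta_M\C_{e^\perp}$, i.e.\ condition (2) of Lemma \ref{L:ereg}. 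Your specialization of condition (4) to the simple objects is a repackaging of the same data. Note, though, that the paper's two identities give the equivalence in both directions, whereas you only derive the equality from $e$-regularity; since the corollary is stated as ``one more equivalent condition,'' the converse implication should also be addressed.

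The genuine flaw is in the justification of your pivotal identification $\e(L_e(M),S_{i,e^\perp})=\ext^1(L_e(M),S_{i,e^\perp})$. The principle you invoke --- that for a module presented by its minimal, non-negative two-term presentation the $\E$-space equals $\Ext^1$ of the cokernel --- is false. By \eqref{eq:H2E} one has $\e(d_X,N)=\hom(N,\tau X)$, which exceeds $\ext^1(X,N)$ by the dimension of the space of maps $N\to\tau X$ factoring through injectives; for instance $A=K[x]/(x^2)$, $X=S$, $N=A$ gives $\e(d_S,A)=1$ while $\ext^1(S,A)=0$, even though $d_S$ is minimal with no negative summand. What saves the step here is not minimality or the absence of a $P\to 0$ summand but the simplicity of the test object: for a minimal presentation $P_1\to P_0\to X\to 0$ the image of $P_1$ lies in $\op{rad}P_0$, so every map $P_0\to S_{i,e^\perp}$ kills it, the induced map $\Hom(P_0,S_{i,e^\perp})\to\Hom(P_1,S_{i,e^\perp})$ vanishes, and $\E(d_X,S_{i,e^\perp})=\Hom(P_1,S_{i,e^\perp})$, whose dimension is the multiplicity of $P_{i,e^\perp}$ in $P_1$, i.e.\ $\ext^1(X,S_{i,e^\perp})$. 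You mention simplicity in passing but then attribute the equality to the wrong feature, so the ``routine check'' you defer would not go through as described.
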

\begin{proof} By the adjoint property, we have that $\hom(M,S_{i,e^\perp}) - \ext^1(L_e(M),S_{i,e^\perp}) = \delta_{L_e(M)}(i)$.
	In the meanwhile, $\hom(M,S_{i,e^\perp}) - \e(M,S_{i,e^\perp}) = \delta_M \cdot \dv S_{i,e^\perp}$.
	This is equivalent to that $\delta_{L_e(M)} = \delta_M \C_{e^\perp}$.
\end{proof}

\begin{corollary}\label{C:e^-reg} Each $e_i^\pm$ is $e$-regular.
In particular, $\op{add}(e_c^-)\cap \op{add}(e_c^+) = \emptyset$.
\end{corollary}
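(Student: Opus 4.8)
The plan is to verify the numerical criterion of Lemma \ref{L:ereg}.(2) in each of the two cases: a presentation $d$ of weight $\delta$ is $e$-regular as soon as the $\delta$-vector of $\L_e(d)$ equals $\delta\,\C_{e^\perp}$. The arithmetic backbone is the defining identity $\Delta_{e^\pm}\C_{e^\pm}=\op{Id}$ from Definition \ref{D:Delta}: reading off the $i$-th row gives $\ep_i^\pm\C_{e^\pm}=\eu_i$, and restricting to the $n-1$ columns that make up $\C_{e^\perp}$ (Lemma \ref{L:Cmat} and Convention \ref{c:OC}) yields $\ep_i^+\C_{e^\perp}=\eu_i$ and $\ep_i^-\C_{e^\perp}=-\eu_i$. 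Thus it suffices to compute the $\delta$-vector of $\L_e(e_i^\pm)$ and match it against $\pm\eu_i$.

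For the negative complements this is immediate and was in fact already recorded inside the proof of Lemma \ref{L:ereg}: by Definition \ref{D:Lext} we have $\L_e(e_i^-)=P_{i,e^\perp}[1]$, whose $\delta$-vector is $-\eu_i=\ep_i^-\C_{e^\perp}$, so Lemma \ref{L:ereg}.(2) applies. For the positive complements I would first note, via Remark \ref{r:Ec}.(2) and Lemma \ref{L:decmnn}, that $e_i^+$ is not negative and hence is the minimal presentation $d_{E_i^+}$; then, using $\E(e,E_i^+)=0$ (as $e\oplus e_c^+$ is rigid) one gets $\eot{e}(E_i^+)=E_i^+$ and therefore $L_e(E_i^+)=f_\ep(E_i^+)=P_{i,e^\perp}$ by Corollary \ref{C:indP}, a presentation of $\delta$-vector $\eu_i=\ep_i^+\C_{e^\perp}$.

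The step I expect to be the main obstacle is making sure that $\L_e(e_i^+)$ really is computed by the \emph{first} branch of Definition \ref{D:Lext} rather than the special branch reserved for summands of $e_c^-$; equivalently, that $E_i^+\notin\op{Ind}(E_c^-)$. Were this to fail, $\L_e(e_i^+)$ would be a shifted projective with \emph{negative} $\delta$-vector and $e$-regularity would break. I would resolve this by a $\Gen(E)$-separation: Corollary \ref{C:indP} forces $f_\ep(E_i^+)=P_{i,e^\perp}\neq 0$, so $E_i^+\notin\Gen(E)$, whereas $E_c^-\in\Gen(E)$ by Lemma \ref{L:Eproj} and $\Gen(E)$ is closed under direct summands; hence $E_i^+\notin\op{Ind}(E_c^-)$, the first branch applies, and $e_i^+$ is $e$-regular. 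Finally, the same separation yields the ``in particular'' assertion: a common indecomposable summand of $e_c^+$ and $e_c^-$ would be non-negative (the summands of $e_c^+$ being so by Remark \ref{r:Ec}.(2)), forcing the matching summand of $e_c^-$ to be non-negative with cokernel in $\Gen(E)$, which contradicts $E_i^+\notin\Gen(E)$; therefore $\op{add}(e_c^-)\cap\op{add}(e_c^+)=\emptyset$.
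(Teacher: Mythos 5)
Your proposal is correct and follows the paper's own argument: the paper likewise establishes $\L_e(e_i^-)=P_{i,e^\perp}[1]$ by definition and $\L_e(e_i^+)=P_{i,e^\perp}$ via Corollary \ref{C:indP}, then concludes from Lemma \ref{L:ereg}.(2) and $\ep_i^{\pm}\C_{e^\perp}=\pm\eu_i$. Your extra verifications (that $E_i^+\notin\op{Ind}(E_c^-)$ so the right branch of Definition \ref{D:Lext} applies, and the $\Gen(E)$-separation for the ``in particular'' clause) are sound elaborations of steps the paper leaves implicit.
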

\begin{proof} Recall that $\L_e(e_i^-) = P_{i,e^\perp}[1]$ by definition and $\L_e(e_i^+) = P_{i,e^\perp}$ by Corollary \ref{C:indP}. The claim follows from Lemmas \ref{L:ereg}.(2) because $\ep_i^{\pm} \C_{e^\perp} = \pm\eu_i$.
\end{proof}

\section{The Bijections} \label{S:bijection}
Recall the functors $\eor{e}$ and $\L_e$. We will restrict the functor $\eor{e}$ and $\L_e$ to $\drep(e^\perp)$ and $\drep(\comp{e})$ respectively.
When we write $d\in \drep(e^\perp)$ or $d\in \drep(\comp{e})$, we mean that the presentation $d$ corresponds to a decorated representation in $\drep(e^\perp)$ or $\drep(\comp{e})$.
The projective modules occurred in $d\in \drep(e^\perp)$ are always projectives in $\rep(e^\perp)$ rather than in $\rep(A)$.
In particular, the weight vector of $d$ are counted with respect to $P_{i,e^\perp}\in \rep(e^\perp)$.
\begin{theorem}\label{T:bijection}  $\L_e: \drep(\comp{e}) \to \drep(e^\perp)$ is bijective and its inverse is $\eor{e}$.
\end{theorem}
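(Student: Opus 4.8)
The plan is to exploit that both $\L_e$ and $\eor{e}$ are additive and defined summand-by-summand, so by Krull--Schmidt it suffices to set up a bijection on indecomposable objects and to verify that the two composites act as the identity there. First I would classify the indecomposables of $\drep(\comp{e})$. By Lemma \ref{L:decmnn} every indecomposable presentation is either the minimal presentation $d_M$ of an indecomposable representation $M$ or a negative presentation $P_u[1]$. A short computation with Definition \ref{D:HomE} gives $\E(e,P_u[1])=0$ automatically and $\E(P_u[1],e)=\Hom(P_u,E)=E_u$, so a negative indecomposable lies in $\drep(\comp{e})$ exactly when $E_u=0$, which (as in the proof of Lemma \ref{L:Lext}.(1)) is precisely the condition for $P_u[1]$ to be a summand of $e_c^-$. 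Since $e\oplus e_c^-$ is maximal rigid, every $e_i^-$ is compatible with $e$ and avoids $\op{add}(e)$; hence the indecomposables of $\drep(\comp{e})$ split into \emph{Type N}, the summands $e_i^-$ of $e_c^-$, and \emph{Type M}, the genuine indecomposable $M\in\rep(\comp{e})$ with $M\notin\op{add}(E\oplus E_c^-)$, i.e.\ those $M$ outside $\Gen(E)=L_e^{-1}(0)\cap\rep(\comp{e})$ of Lemma \ref{L:0fibre}.

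Next I would treat Type N, which is the easy half. By Definition \ref{D:Lext} we have $\L_e(e_i^-)=P_{i,e^\perp}[1]$ and $\eor{e}(P_{i,e^\perp}[1])=e_i^-$, so these are mutually inverse bijections between $\op{Ind}(e_c^-)$ and the shifted projectives of $\rep(e^\perp)$; both families have cardinality $|Q_0|-|\op{Ind}(e)|$ and are matched up by Corollary \ref{C:indP} together with the ordering Convention \ref{c:OC}.

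The substance is Type M. For the forward direction, if $M\in\rep(\comp{e})$ then $\E(e,M)=0$ forces $\eot{e}(d_M)=d_M$, so $L_e(M)=f_{\ep}(M)$; moreover $M\in\mc{R}=\{M\notin\Gen(E)\mid\E(M,E)=0\}$, whence Lemma \ref{L:fep} shows $f_{\ep}$ is injective on Type M and preserves indecomposability. Thus $\L_e(d_M)=d_{f_{\ep}(M)}$ is a genuine indecomposable of $\rep(e^\perp)$, distinct $M$ give distinct images, and since the Type N images are shifted projectives the two ranges are disjoint; this proves $\L_e$ injective. For the reverse direction I would run $\eor{e}$ on a genuine indecomposable $N\in\rep(e^\perp)$: applying $\Hom(e,-)$ to the defining triangle $h_1'e\to\eort{e}(d_N)\to d_N\to h_1'e[1]$ and using $\Hom(e,N)=\E(e,N)=0$ with the rigidity of $e$ gives $\E(e,\eor{e}(d_N))=0$, while $\E(\eor{e}(d_N),e)=0$ holds by construction, so $\eor{e}(N)\in\drep(\comp{e})$. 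Writing $M=\coker(\eor{e}(d_N))$, the cohomology long exact sequence of that triangle yields $0\to T\to M\to N\to 0$ with $T$ a quotient of $h_1'E$, hence $T\in\Gen(E)=\mc{T}(\ep)$ and $N\in\rep(e^\perp)\subseteq\mc{F}(\ep)$ by \eqref{eq:F}; uniqueness of the torsion pair then forces $t_{\ep}(M)=T$ and $f_{\ep}(M)=N$. The injectivity of $f_{\ep}$ from Lemma \ref{L:fep} now pins down the Type M part of $\eor{e}(N)$ uniquely as $d_M$, giving the existence of preimages and both composite identities on the genuine objects.

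I expect the main obstacle to be the last step: controlling $\coker(\eort{e}(d_N))$ and its decomposition precisely enough to conclude that the left $\E$-truncation re-installs \emph{exactly} the $\Gen(E)$-torsion that $f_{\ep}$ stripped off and introduces \emph{no stray} Type~N summands $e_k^-$ (so that $\L_e\circ\eor{e}=\id$ on the nose rather than up to extra shifted projectives). The two delicate points are that $T$ genuinely lands in $\Gen(E)=\mc{T}(\ep)$, not merely in $\check{\mc{T}}(\ep)$, and that $\eor{e}(N)$ is indecomposable of Type M. Here the $e$-regularity of objects in $\drep(\comp{e})$ (Lemma \ref{L:ereg}), which yields $\delta_{\L_e(d)}=\delta_d\,\C_{e^\perp}$ with $\C_{e^\perp}$ of full rank by Theorem \ref{T:maxrigid}, is the bookkeeping tool I would combine with the established injectivity of $\L_e$ to rule out the spurious summands and close the argument.
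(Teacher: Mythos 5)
Your argument is, in substance, the paper's own proof: the same splitting of $\drep(\comp{e})$ into the $e_c^-$-part (handled by the definitional assignments $\L_e(e_i^-)=P_{i,e^\perp}[1]$ and $\eor{e}(P_{i,e^\perp}[1])=e_i^-$ of Definition \ref{D:Lext}) and the genuine indecomposable representations; injectivity via Lemma \ref{L:fep} combined with Lemma \ref{L:0fibre}; and surjectivity via the homology sequence $h_1'E\to\coker(\eort{e}(d_N))\to N\to 0$ of the defining triangle together with the torsion pair $(\mc{T}(\ep),\mc{F}(\ep))$, which is exactly how the paper shows $\L_e\,\eor{e}=\op{id}$. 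Your worry that $T$ might land only in $\check{\mc{T}}(\ep)$ is already dispelled by your own observation that $T$ is a quotient of $h_1'E$ and hence lies in $\Gen(E)=\mc{T}(\ep)$.

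The one place you genuinely diverge is the proposed repair of the ``no stray Type N summands'' issue: you want to invoke the $e$-regularity of objects of $\drep(\comp{e})$ to get $\delta_{\L_e(d)}=\delta_d\,\C_{e^\perp}$ and then use invertibility of $\C_{e^\perp}$. But the statement that compatible presentations are $e$-regular is Lemma \ref{L:e0reg}, whose proof opens with ``$d=\eor{e}\L_e(d)$ by Theorem \ref{T:bijection}''; invoking it here is circular. (Lemma \ref{L:ereg} by itself only lists equivalent characterizations of $e$-regularity and does not assert that compatible presentations satisfy them.) The paper's proof sidesteps this by establishing injectivity of $\L_e$ and the identity $\L_e\,\eor{e}=\op{id}$ at the level of underlying representations, with the decorated/negative parts matched by Definition \ref{D:Lext}; if you want the absence of extra shifted-projective summands in $\eor{e}(d_N)$ made fully explicit, you need an argument independent of Lemma \ref{L:e0reg}, since a plain $\Hom$-count against the simples of $\rep(e^\perp)$ does not detect negative summands and the corresponding $\E$-count is precisely the $e$-regularity you may not yet assume.
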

\begin{proof} We first show that $\eor{e}(d)\in \drep(\comp{e})$ for any $d\in \drep(e^\perp)$.
Recall the triangle
\begin{equation} \label{eq:extiota'}  
	h_1 e \to \eort{e}(d) \to  d \xrightarrow{can} h_1e[1].\end{equation}
where $h_1 = \e(d, e)$.
It is easy to check that $\eort{e}(d) \in \drep(\wtd{\comp{e}})$. Thus $\eor{e}(d) \in \drep(\comp{e})$.	
	
By the extended definition of $\eor{e}$ and $\L_e$ (Definition \ref{D:Lext}), to show that $\L_e \eor{e}$ is the identity on $\drep(e^\perp)$, it suffices to show that $\L_e \eor{e}$ is the identity on $\rep(e^\perp)$. 
Since $L_e$ is the identity on $\rep(e^\perp)$,
it suffices to show $\eort{e}(N) / N \in \mc{T}(\ep)$ for any $N\in\rep(e^\perp)$.
But this follows from the induced homology sequence $h_1E\to \eort{e}(N) \to N\to 0$ from the triangle \eqref{eq:extiota'}.

We remain to show $\L_e$ is injective on $\drep(\comp{e})$.
By the extended definitions, it suffices to prove for $\L_e(d_M)$ for any indecomposable $M\notin \op{add}(E_c^-)$.
In this case, $\L_e(d_M) = d_{L_e(M)}$.
So it suffices to show $L_e$ is injective on $\rep(\comp{e}) \setminus \op{add}(E_c^-)$.
For $M\in\rep(\comp{e})$, we have that $L_e(M) = f_{\ep}(M)$.
But $f_{\ep}$ is injective by Lemmas \ref{L:fep} and \ref{L:0fibre}.
Therefore, $\L_e$ is bijective with inverse $\eor{e}$.
\end{proof}

The following Lemma and Theorem \ref{T:bijection_space} are motivated by Remark \ref{r:general}.
\begin{lemma}\label{L:e0reg} If $\e(d, e)=0$, then $d$ is $e$-regular.
	In this case, we have a surjection $\E(d,d') \twoheadrightarrow \E(\eo{e}(d),\eo{e}(d'))$ for any presentation $d'$.
\end{lemma}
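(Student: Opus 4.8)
The plan is to run both assertions off the defining triangle \eqref{eq:e0} of the right $\E$-truncation, applied once to $d$ and once to $d'$, using throughout the convention $\E(X,Y)=\Hom(X,Y[1])$ in the triangulated category that is already in force in Section \ref{ss:PNC}; each such triangle then yields an exact sequence interpolating between $\Hom$ in degree $0$ and $\E$ in degree $1$. The hypothesis is $\e(d,e)=0$, i.e. $\E(d,e)=0$.

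For the surjection I would argue in three steps. First, apply $\Hom(d,-)$ to the triangle $h''e[-1]\to d'\to\eot{e}(d')\to h''e$ defining $\eot{e}(d')$; the hypothesis $\E(d,e)=0$ kills the term $\Hom(d,h''e[1])=\E(d,e)^{h''}$, so that $\E(d,d')\twoheadrightarrow\E(d,\eot{e}(d'))$. Second, apply $\Hom(-,\eot{e}(d'))$ to the triangle $h_1e[-1]\to d\to\eot{e}(d)\to h_1e$; since $\E(e,\eot{e}(d'))=0$ by construction of the truncation, the term $\E(e,\eot{e}(d'))^{h_1}$ vanishes and $\E(d,\eot{e}(d'))\cong\E(\eot{e}(d),\eot{e}(d'))$. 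Third, pass to the reduced truncations, writing $\eot{e}(d)\simeq\eo{e}(d)\oplus m e$ and $\eot{e}(d')\simeq\eo{e}(d')\oplus m'e$: the two cross terms vanish, since $\E(e,\eo{e}(d'))$ is a summand of $\E(e,\eot{e}(d'))=0$, and $\E(\eo{e}(d),e)$ is a summand of $\E(\eot{e}(d),e)\cong\E(d,e)=0$, the latter isomorphism being the second step read with second argument $e$ (where $\E(e,e)=0$). Composing the three steps yields $\E(d,d')\twoheadrightarrow\E(\eo{e}(d),\eo{e}(d'))$.

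For $e$-regularity I would verify condition (4) of Lemma \ref{L:ereg}: $\e(d,\iota_e(N))=\e(\L_e(d),N)$ for all $N\in\rep(e^\perp)$; the containment $\E(\L_e(d),N)\subseteq\E(d,\iota_e(N))$ noted before Definition \ref{D:ereg} reduces this to an equality of dimensions. Specializing the second and third steps above to $d'=d_N$ (where $\eo{e}(d_N)=d_N$ since $N\in\rep(e^\perp)$) gives, with no hypothesis, $\E_A(\eo{e}(d),N)\cong\E_A(d,N)=\E_A(d,\iota_e(N))$. Since the negative and $\op{add}(e_c^-)$-summands of $d$ are already $e$-regular (Corollary \ref{C:e^-reg}), I may assume $d=d_M$ is minimal, whereupon the claim becomes the change-of-algebra comparison $\E_{{_e}A}(\L_e(d),N)\cong\E_A(d,N)$.

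This comparison is the crux, and it is where $\e(d,e)=0$ must enter. By the dimension count against the simples $S_{i,e^\perp}$ it is equivalent to condition (3) of Lemma \ref{L:ereg}, namely $L_e(d_M)\simeq d_{L_e(M)}$: applying the right-exact functor $L_e$ to the minimal presentation $d_M$ must produce no spurious negative summand. I would prove this using the representation-level form $\E(M,E)=0$ of the hypothesis (as $\E(d_M,e)=\E(M,E)$): tracking $\coker(\eo{e}(d))$ and the kernel of $L_e(P_-)\to L_e(P_+)$ through the homology long exact sequence of \eqref{eq:e0}, and invoking Lemma \ref{L:fep}—whose hypothesis $\E(M,E)=0$ is precisely ours—to control the $\ep$-torsion of $\coker(\eo{e}(d))$ and hence exclude a projective summand in that kernel. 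The delicate point throughout is that $\E(-,N)$ of a presentation is strictly finer than $\Ext^1$ of its cokernel, so the entire comparison has to be carried out at the level of presentations and homotopy classes rather than of modules.
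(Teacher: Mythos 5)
Your argument for the surjection is correct and is essentially the paper's: apply $\Hom(d,-)$ to the truncation triangle of $d'$ (using $\E(d,e)=0$) to get $\E(d,d')\twoheadrightarrow \E(d,\eo{e}(d'))$, then apply $\Hom(-,\eo{e}(d'))$ to the truncation triangle of $d$ (using $\E(e,\eo{e}(d'))=0$) to upgrade the target to $\E(\eo{e}(d),\eo{e}(d'))$; your extra bookkeeping between $\eot{e}$ and $\eo{e}$ is harmless. You also correctly isolate where the real difficulty of the regularity statement lives: since $\Hom(L_e(P_\pm),N)\cong\Hom(P_\pm,N)$ by adjunction, one always has $\E(d,N)\cong\E(L_e(d),N)$, and the only issue is whether $L_e(d_M)$ acquires a spurious negative summand, i.e.\ whether $L_e(d_M)\simeq d_{L_e(M)}$ (condition (3) of Lemma \ref{L:ereg}).

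The gap is that this crux is asserted rather than proved. Your plan --- ``tracking $\coker(\eo{e}(d))$ and the kernel of $L_e(P_-)\to L_e(P_+)$ through the homology long exact sequence of \eqref{eq:e0}, and invoking Lemma \ref{L:fep} to control the $\ep$-torsion and exclude a projective summand in that kernel'' --- is a statement of intent, not an argument, and it is not clear it can be completed as described: Lemma \ref{L:fep} gives injectivity of $f_\ep$ on objects and preservation of indecomposability, which controls neither the minimality of the induced presentation $L_e(P_-)\to L_e(P_+)$ over ${_e}A$ nor the comparison $\e_A(d_M,S_{j,e^\perp})=\ext^1_{{_e}A}(L_e(M),S_{j,e^\perp})$ that is actually needed. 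The paper's proof supplies exactly the missing input from Theorem \ref{T:bijection}: for $d$ \emph{compatible} with $e$ one has $d=\eor{e}\L_e(d)$, hence a triangle $h_1e\to d\oplus ae\to \L_e(d)\to h_1e[1]$ over $A$ to which one applies $\Hom(-,N)$, killing both end terms since $N\in\rep(e^\perp)$; the general case $\e(d,e)=0$, $\e(e,d)>0$ is then reduced to the compatible one by replacing $d$ with $\eot{e}(d)$ and using $\L_e(\eot{e}(d))=\L_e(d)$ together with $\E(\eot{e}(d),N)\cong\E(d,N)$. Your proposal never invokes the identity $\eor{e}\L_e=\op{id}$ on $\drep(\comp{e})$ and never performs the reduction to the compatible case, so the regularity half remains unestablished. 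To repair it, either route the argument through Theorem \ref{T:bijection} as above, or give a genuine proof that $\delta_{L_e(d_M)}=\delta_{L_e(M)}$ under the hypothesis $\E(M,E)=0$.
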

\begin{proof} We have checked the statement for $d={e_c^-}$ in Corollary \ref{C:e^-reg}.
So we can assume $d=d_M$ for some $M$ with no summands in $\op{add}(E_c^-)$. In this case, $\L_e(d) = {L_e(M)}$. 
	
	We first show that Lemma \ref{L:ereg}.(4) holds for $d$ compatible with $e$.
	In this case, we have that $d=\eor{e} \L_e(d)$ by Theorem \ref{T:bijection}, so we have the triangle
	$h_1 e \to d\oplus ae \to \L_{{e}}(d) \xrightarrow{can} h_1e[1]$ for some $a\in\mb{Z}_{\geq 0}$.
	From the long exact sequence
	$$0=\Hom(h_1 e, N) \to \E(\L_e(d), N) \to \E(d\oplus ae, N) \to \E(h_1e, N) =0,$$
	we conclude $\E(\L_e(d), N) \cong \E(d, N)$ for any $N\in \rep(e^\perp)$.
	
	Now suppose that $d$ is not compatible with $e$, that is, $\e(e, d)>0$.
	Then consider the triangle
	$h_1e[-1]\xrightarrow{can} d \to \eot{e}(d) \to h_1 e$.
	It is easy to check that $\eot{e}(d)$ is compatible with $e$.
	From the sequence 
	$$0=\E(h_1e, N) \to \E(\eot{e}(d), N) \to \E(d, N) \to 0,$$
	we get that $\E(\eot{e}(d), N) \cong \E(d, N)$  for any $N\in \rep(e^\perp)$.
	But $\L_e(\eot{e}(d)) = \L_e(d)$ by the definition of the functor $\L_e$.
Hence, 
$$\E(\L_e(d), N) \cong \E(\L_e(\eot{e}(d)), N) \cong \E(\eot{e}(d), N) \cong\E(d, N),$$
and therefore $d$ is $e$-regular by Lemma \ref{L:ereg}.(4).
	
For the surjection, from the triangle $h_1e[-1] \xrightarrow{can} d' \to \eot{e}(d')\to h_1e$, we get
$\E(d,d') \to \E(d,\eo{e}(d')) \to \E(d,e)=0$.
From the triangle $h_1e[-1] \xrightarrow{can} d \to \eot{e}(d)\to h_1e$, we get
$$0=\E(h_1e,\eo{e}(d')) \to \E(\eo{e}(d),\eo{e}(d')) \to \E(d,\eo{e}(d')) \to \E(h_1e[-1],\eo{e}(d'))=0.$$
We thus get the claimed surjection $\E(d,d') \twoheadrightarrow \E(d,\eo{e}(d')) \cong \E(\eo{e}(d),\eo{e}(d'))$.
\end{proof}

\begin{lemma}\label{L:isoE} If $\e(d, e)=0$ and $\e(e,d')=0$, then $\E(d,d') \cong \E(\L_e(d),\L_e(d'))$.
In particular, if $\e(d, e)=0$, then we have a surjection $\E(d,d') \twoheadrightarrow \E(\L_e(d),\L_e(d'))$ for any presentation $d'$.
\end{lemma}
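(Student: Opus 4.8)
Let me look at what's being claimed in Lemma~\ref{L:isoE}. We have two hypotheses, $\e(d,e)=0$ (right $\E$-vanishing of $d$ against $e$) and $\e(e,d')=0$ (left $\E$-vanishing, i.e., $d'$ compatible with $e$ on the appropriate side), and we want the isomorphism $\E(d,d') \cong \E(\L_e(d),\L_e(d'))$. The companion claim is a surjection under only the first hypothesis.

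Key tools I'd use:
- Lemma L:e0reg says if $\e(d,e)=0$ then $d$ is $e$-regular, AND gives a surjection $\E(d,d') \twoheadrightarrow \E(\eo{e}(d), \eo{e}(d'))$ for ANY $d'$.
- The relation between $\E(\L_e(d), N)$ and $\E(d, N)$ via $e$-regularity (Lemma L:ereg part 4): $\e(d, \iota_e(N)) = \e(\L_e(d), N)$ for $N \in \rep(e^\perp)$.
- Remark r:general and the characterizations of $e$-regularity.

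The claim compares $\E(d,d')$ (in $\rep A$) with $\E(\L_e(d), \L_e(d'))$ (in $\rep(e^\perp)$). The key connection: $\L_e(d')$ represents $L_e(\coker d')$ which is in $\rep(e^\perp)$, and by $e$-regularity (which holds because $\e(d,e)=0$), $\L_e(d) \simeq L_e(d)$ (they're homotopy equivalent). So $\E(\L_e(d), \L_e(d')) = \E(L_e(d), L_e(\coker d'))$.

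Now here's the strategy:
1. Use L:ereg.(4) applied with $N = L_e(\coker d') = \coker \L_e(d')$: since $d$ is $e$-regular, $\E(d, \iota_e(N)) = \E(\L_e(d), N)$.
2. So $\E(\L_e(d), \L_e(d')) = \E(\L_e(d), N) = \E(d, \iota_e(N)) = \E(d, L_e(\coker d'))$.
3. Now I need $\E(d, L_e(\coker d')) \cong \E(d, d')$. This is where the second hypothesis $\e(e,d')=0$ (compatibility of $d'$) comes in.

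For step 3: use the L:e0reg surjection. We have $\E(d,d') \twoheadrightarrow \E(\eo{e}(d), \eo{e}(d'))$ from L:e0reg. But I want to relate to $L_e$ not $\eo{e}$. The connection: $\L_e(d') = f_\ep \eo{e}(d')$, and when $\e(e,d')=0$, $d'$ is compatible so $\eo{e}(d') = d'$ essentially (the truncation does nothing on the relevant part), and then the torsion-free quotient gives $L_e$. Actually, I'd compare $\E(d, d')$ with $\E(d, \L_e(d'))$ directly using the canonical sequence defining $L_e$.

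Plan for step 3 in detail: There's a canonical triangle relating $d'$ and $\eo{e}(d')$ (the $\E$-truncation triangle). Apply $\Hom(d, -)$. Since $\e(d,e)=0$, the terms involving $e$ vanish, giving $\E(d, d') \cong \E(d, \eo{e}(d'))$. Then there's the torsion sequence $0 \to t_\ep(\br{M'}) \to \br{M'} \to f_\ep(\br{M'}) \to 0$ where $\br{M'} = \eo{e}(\coker d')$. Apply $\Hom(d, -)$; the torsion part $t_\ep(\br{M'}) \in \Gen(E)$, and if $\E(d, \Gen(E)) = 0$ (which would follow from $\e(e, d') = 0$... need to check the direction carefully), then $\E(d, \eo{e}(d')) \cong \E(d, L_e(\coker d'))$.

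**The main obstacle.** The delicate point is step 3: controlling $\E(d, t_\ep(\br{M'}))$ where $\br{M'} = L_e(\coker d')$'s torsion part. The hypothesis $\e(e, d') = 0$ must be exactly what kills this. I suspect the cleaner route is to use the already-established bijection from Theorem T:bijection: since $\e(e,d')=0$ means $d'$ is compatible with $e$ (lies in $\drep(\comp{e})$ after removing $\op{add}(e)$ summands), we have $d' = \eor{e} \L_e(d')$, giving a triangle $h_1 e \to d' \oplus a e \to \L_e(d') \to h_1 e[1]$ for some $a \geq 0$. Applying $\Hom(d, -)$ and using $\e(d, e) = 0$ to kill both boundary terms $\Hom(d, h_1 e) = \E(d, h_1 e) = 0$, we get $\E(d, d') \cong \E(d, \L_e(d'))$. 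Combined with step 1-2, this closes the isomorphism. The surjection statement (without the second hypothesis) then follows from the L:e0reg surjection $\E(d,d') \twoheadrightarrow \E(\eo{e}(d), \eo{e}(d'))$ together with the torsion-free quotient map $\eo{e}(d') \twoheadrightarrow \L_e(d')$ inducing a surjection on $\E(\L_e(d), -)$ via the adjunction. The hardest part is making sure the compatibility direction ($\e(e,d')$ vs $\e(d',e)$) lines up correctly with which boundary terms vanish — a sign/orientation bookkeeping that I'd verify carefully against the conventions in Section~\ref{ss:PNC}.

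$$\E(d,d') \cong \E(d, \L_e(d')) \cong \E(\L_e(d), \L_e(d')).$$
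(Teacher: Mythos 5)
Your main line of attack is the paper's: use Lemma~\ref{L:e0reg} to get $e$-regularity of $d$ from $\e(d,e)=0$, apply Lemma~\ref{L:ereg}.(4) with $N=\coker\L_e(d')$ to obtain $\E(\L_e(d),\L_e(d'))\cong\E(d,\L_e(d'))$, and then identify $\E(d,\L_e(d'))$ with $\E(d,d')$ via the torsion sequence $0\to t_\ep(N)\to N\to f_\ep(N)\to 0$ for $N=\coker(d')$. Your route (a) is exactly this and closes correctly once you sort out the bookkeeping you flagged --- but note you currently have the roles of the two hypotheses swapped. The hypothesis $\e(e,d')=0$ is what makes the $\E$-truncation of $d'$ trivial, so that $L_e(N)=f_\ep(N)$ and the torsion sequence is the only thing separating $N$ from $L_e(N)$. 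The vanishing of $\E(d,t_\ep(N))$ comes instead from $\e(d,e)=0$: since $t_\ep(N)\in\Gen(E)$ and $\E(d,-)=\Hom(-,\tau d)^*$ by \eqref{eq:H2E}, a surjection $hE\twoheadrightarrow t_\ep(N)$ gives $\Hom(t_\ep(N),\tau d)\hookrightarrow h\Hom(E,\tau d)=h\,\E(d,E)^*=0$.

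The route you call ``cleaner'' (route (b)) does not work as stated. The hypothesis $\e(e,d')=0$ does \emph{not} put $d'$ in $\drep(\comp{e})$: compatibility requires both $\e(e,d')=0$ and $\e(d',e)=0$, and the lemma only assumes the former. So Theorem~\ref{T:bijection} does not give $d'=\eor{e}\L_e(d')$, and the triangle $h_1e\to d'\oplus ae\to\L_e(d')\to h_1e[1]$ need not exist. Even if you added the missing hypothesis $\e(d',e)=0$, you would only be proving the lemma for compatible $d'$, which is strictly weaker than the stated result. Your sketch of the ``in particular'' part is the paper's in outline: apply the isomorphism to the pair $(\eo{e}(d),\eo{e}(d'))$, which does satisfy both hypotheses, use $\L_e\eo{e}=\L_e$, and compose with the surjection of Lemma~\ref{L:e0reg}; it is cleaner to invoke the already-proved isomorphism for that pair than to construct a further surjection out of the torsion-free quotient as you propose.
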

\begin{proof} We have from Lemma \ref{L:e0reg} and Lemma \ref{L:ereg}.(4) that 
	$\E(\L_e(d), \L_e(d')) = \E(d, \L_e(d')).$
Let $N=\coker(d')$. From the exact sequence $0\to t_{\ep}(N) \to N \to L_e(N)\to 0$,
	we get \begin{align*} 0=\E(d,t_{\ep}(N))\to \E(d,N)\to \E(d,L_e(N))\to 0.
	\end{align*}
It follows that $\E(d,d') \cong \E(d, L_e(d')) \cong \E(d, \L_e(d')) \cong \E(\L_e(d), \L_e(d'))$ where the second isomorphism is due to Lemma \ref{L:Lext}.
For the ``in particular" part, let $\eo{e}(d)$ and $\eo{e}(d')$ play the role of $d$ and $d$',
and we get $\E(\eo{e}(d),\eo{e}(d')) \cong \E(\L_e(\eo{e}(d)),\L_e(\eo{e}(d'))) \cong\E(\L_e(d),\L_e(d'))$.
Finally, we compose the isomorphism with the surjection in Lemma \ref{L:e0reg}.
\end{proof}


Let $\delta$ be an $e$-regular weight vector, and $U$ be an open subset of all $e$-regular presentations of $\PHom(\delta)$.
By definition $\L_e(d) \simeq L_e(d)$ for all $d\in U$.
By the functoriality of $L_e$, we have a linear map $\Hom(P_i, P_j) \to \Hom(L_e(P_i),L_e(P_j))$ for each pair $(i,j)$.
Choose a basis for $\Hom(P_i, P_j)$ and $\Hom(L_e(P_i),L_e(P_j))$ so that a matrix representation for the above linear map is fixed.
We thus obtained an algebraic map 
$$U \to \PHom_{e^\perp}\left( [-\delta]_+\C_{e^\perp},\ [\delta]_+\C_{e^\perp} \right):=\Hom \left(P_{e^\perp}([-\delta]_+\C_{e^\perp}),\ P_{e^\perp}([\delta]_+\C_{e^\perp}) \right).$$

A theorem of Rosenlicht \cite{R} says that there is an open subset $U'$ of $\PHom_{e^\perp}\left( [-\delta]_+\C_{e^\perp},\ [\delta]_+\C_{e^\perp} \right)$ which admits a geometric quotient by $\Aut(\delta'):=\Aut(P_{e^\perp}([-\delta]_+\C_{e^\perp}))\times\Aut(P_{e^\perp}([\delta]_+\C_{e^\perp}))$.
By possibly shrinking $U$, we may assume $U$ admits a geometric quotient by $\Aut(\delta)$ and is mapped into $U'$.
We thus get a morphism $U  \to U' / \Aut(\delta')$, which respects the orbits.
As a geometric quotient, $U/\Aut(\delta)$ is also a categorical quotient \cite{Do}.
So it induces a morphism $\lambda_e: U/\Aut(\delta)  \to U' / \Aut(\delta')$.
In the following lemma, we denote $\E(d,d)$ by $\E(d)$.

\begin{lemma}\label{L:morphismGIT} Let $\delta$ be a weight vector satisfying $\e(\delta, e)=0$. The functor $\L_e$ induces an algebraic morphism $\lambda_e: U/\Aut(\delta)  \to U' / \Aut(\delta')$ described above such that its induced tangent map at $d$ can be identified with $\E(d)\twoheadrightarrow \E(\L_e(d))$.
\end{lemma}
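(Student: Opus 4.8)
The plan is to compute $d\lambda_e$ directly from the normal-space interpretation of $\E$ and then match the resulting linear map with the surjection of Lemma~\ref{L:isoE}. Write $P_-=P([-\delta]_+)$ and $P_+=P([\delta]_+)$, so that $U\subseteq \PHom(\delta)=\Hom(P_-,P_+)$ and $\Aut(\delta)=\Aut(P_-)\times\Aut(P_+)$ acts by $(g_-,g_+)d=g_+dg_-^{-1}$. Differentiating this action at $d$, the tangent space to the orbit is $\{\xi_+d-d\xi_-:\xi_\pm\in\End(P_\pm)\}$, and by the interpretation of $\E(d)=\E(d,d)$ as the normal space to the orbit (noted right after the action is introduced), the quotient $\Hom(P_-,P_+)/\{\xi_+d-d\xi_-\}$ is $\E(d)$. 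Since $\Aut(\delta)=\GL\times\GL$ is reductive and $\mathrm{char}\,K=0$, after shrinking $U$ so that the orbits through its points are closed I would invoke the slice theorem to identify $T_{[d]}(U/\Aut(\delta))\cong\E(d)$, and likewise $T_{[\L_e d]}(U'/\Aut(\delta'))\cong\E(\L_e d)$.

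Next I would exhibit $\lambda_e$ as the descent of a linear equivariant map and read off its differential. The map $U\to\PHom_{e^\perp}([-\delta]_+\C_{e^\perp},[\delta]_+\C_{e^\perp})$ underlying $\lambda_e$ is $\Phi(\eta):=L_e(\eta)$, i.e. the $K$-linear functor $L_e$ applied to morphisms $P_-\to P_+$; it is equivariant along the homomorphism $\Aut(\delta)\to\Aut(\delta')$, $(g_-,g_+)\mapsto(L_e g_-,L_e g_+)$, because $L_e(g_+dg_-^{-1})=L_e(g_+)L_e(d)L_e(g_-)^{-1}$. As $\Phi$ is linear its differential is $\Phi$ itself, and equivariance gives $\Phi(\{\xi_+d-d\xi_-\})\subseteq\{\zeta_+\L_e d-\L_e d\,\zeta_-\}$; hence $\Phi$ descends to $\bar\Phi\colon\E(d)\to\E(\L_e d)$, $[\eta]\mapsto[L_e\eta]$. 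Because $\lambda_e$ is the morphism of quotients induced by $\Phi$, the standard compatibility of the differential of such a morphism with the induced map on normal slices identifies $d\lambda_e$ at $[d]$ with $\bar\Phi$.

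It remains to identify $\bar\Phi$ with the surjection $\E(d)\twoheadrightarrow\E(\L_e d)$ of Lemma~\ref{L:isoE}. I would use the description $\E(d,d')=\Hom_{K^b(\proj A)}(d,d'[1])$: a class in $\E(d)$ is represented by $\eta\colon P_-\to P_+$ regarded as a chain map $d\to d[1]$, and $\bar\Phi$ sends it to $L_e(\eta)$ regarded as a chain map $\L_e d\to\L_e d[1]$, using the homotopy equivalence $\L_e d\simeq L_e d$ granted by $e$-regularity. On the other hand, the surjection of Lemma~\ref{L:isoE} is assembled from the triangle-induced maps of Lemmas~\ref{L:e0reg} and~\ref{L:isoE}, namely $\E(d,d)\twoheadrightarrow\E(d,\eo{e}d)\xleftarrow{\cong}\E(\eo{e}d,\eo{e}d)\xrightarrow{\cong}\E(\L_e d,\L_e d)$, every arrow of which is itself induced by a morphism of presentations (the truncation $d\to\eo{e}d$, and the quotient $\coker(d')\to L_e\coker(d')$). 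Tracking the cochain $\eta$ through this zigzag then shows the composite is again $[\eta]\mapsto[L_e\eta]$, whence $\bar\Phi$ equals the Lemma~\ref{L:isoE} surjection and is in particular surjective.

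I expect the last step to be the main obstacle. The zigzag of Lemma~\ref{L:isoE} mixes the truncation $\eo{e}$ with the torsion-free quotient $f_\ep$, and one must check by the naturality of the defining triangles that the connecting isomorphisms compose to the plain functorial map $[\eta]\mapsto[L_e\eta]$ with no intervening sign or twist. A convenient safeguard is to verify the normalization first on the distinguished presentations $d=e_i^\pm$ and $d=P_i$, where both $\bar\Phi$ and the Lemma~\ref{L:isoE} map are computed directly from Corollary~\ref{C:indP} and Definition~\ref{D:Lext}, and then use functoriality in the presentation $d$ to conclude. By contrast, the GIT input — closedness of the orbits and the slice identification of the quotient's tangent space with $\E$ — is comparatively routine in characteristic zero and is arranged by shrinking $U$.
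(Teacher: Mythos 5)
Your proposal follows the paper's own route: identify $\E(d)$ with the normal space to the $\Aut(\delta)$-orbit (hence the tangent space of the geometric quotient), observe that the underlying map $U\to U'$ is linear so its differential descends to $\E(d)\to\E(\L_e(d))$, and obtain surjectivity from Lemma~\ref{L:isoE}. You supply more detail than the paper does on the one point it glosses over — checking that the map induced by the linear functor $L_e$ on morphism spaces agrees with the zigzag surjection of Lemmas~\ref{L:e0reg} and~\ref{L:isoE} — and your plan for that verification is sound.
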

\begin{proof} It is known \cite{DF} that $\E(d)$ can be identified with the normal space to the orbit $\Aut(\delta)\cdot d$ in $\PHom(\delta)$, which is the tangent space of $d$ in the geometric quotient.
The morphism $\lambda_e$ (at the level $U\to U'$) is essentially a linear map,
from which the linear map $\E(d) \to \E(\L_e(d))$ is induced.
Hence, the linear map $\E(d) \to \E(\L_e(d))$ can be identified with the tangent map of $\lambda_e$. It is surjective by Lemma \ref{L:isoE}.
\end{proof}

We believe the following lemma is well-known but we cannot find a direct reference. 
\begin{lemma}\label{L:localflat} Let $f: Z\to X$ be a morphism of algebraic varieties, and $Z$ is nonsingular. Suppose that the tangent map $d f: T_{z}Z \to T_{f(z)}X$ is surjective for some $z$. Then $f$ is flat on some neighborhood of $z$.
\end{lemma}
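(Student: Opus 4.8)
The plan is to reduce the statement to a question about the induced local homomorphism of local rings and then invoke the ``miracle flatness'' criterion, the only genuinely nontrivial point being that the hypotheses secretly force $X$ to be nonsingular at $f(z)$ as well.

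First I would pass to local rings. Flatness of $f$ at $z$ is the flatness of the induced local homomorphism, so set $A=\mc{O}_{X,x}$ and $B=\mc{O}_{Z,z}$ with $x=f(z)$, and let $\phi\colon A\to B$ be the corresponding local homomorphism of Noetherian local $K$-algebras with residue field $K$. Since $Z$ is nonsingular, $B$ is regular, say of dimension $d=\dim_z Z$. Dualizing, the hypothesis that $df_z\colon T_zZ\to T_xX$ is surjective is exactly the assertion that the cotangent map $\mf{n}/\mf{n}^2\to\mf{m}/\mf{m}^2$ is injective, where $\mf{n},\mf{m}$ denote the maximal ideals of $A,B$.

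The crux, and the step I expect to be the main obstacle, is to prove that $A$ is regular, since a priori $X$ may be singular at $x$. Here I would complete: write $\hat B\cong K[[t_1,\dots,t_d]]$ and choose a minimal generating set $y_1,\dots,y_s$ of $\mf{n}$, so $s$ is the embedding dimension of $A$ and, by the injectivity above, the images $\phi(y_1),\dots,\phi(y_s)$ are linearly independent in $\mf{m}/\mf{m}^2$, hence form part of a regular system of parameters of $\hat B$. Extending them to a full regular system of parameters identifies $\hat B\cong K[[u_1,\dots,u_d]]$ with $u_i=\phi(y_i)$ for $i\le s$. By the Cohen structure theorem $\hat A$ is topologically generated over $K$ by $y_1,\dots,y_s$, so $\hat\phi$ factors as a surjection $\hat A\twoheadrightarrow C:=K[[u_1,\dots,u_s]]\subseteq\hat B$ that induces an isomorphism on cotangent spaces. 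A surjection of complete Noetherian local rings onto a regular ring whose dimension equals the embedding dimension of the source must be an isomorphism, whence $\hat A\cong K[[u_1,\dots,u_s]]$ is regular, and therefore so is $A$.

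Finally I would conclude flatness. With this identification $\hat\phi$ becomes the inclusion $K[[u_1,\dots,u_s]]\hookrightarrow K[[u_1,\dots,u_d]]$, which is free and hence flat; since $A\to\hat A$ and $B\to\hat B$ are faithfully flat, flatness of $\hat\phi$ descends to $\phi$. Equivalently, now that $A$ is regular, $B$ is Cohen--Macaulay, and the fibre $B/\mf{n}B=B/(\phi(y_1),\dots,\phi(y_s))$ is regular of dimension $d-s$, so $\dim B=\dim A+\dim B/\mf{n}B$ and miracle flatness applies directly. Thus $f$ is flat at $z$, and since the flat locus of a finite-type morphism is open, $f$ is flat on a neighborhood of $z$. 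The only delicate point in the whole argument is the completion step establishing regularity of $A$; everything after it is formal.
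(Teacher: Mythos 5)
Your proof is correct, and it opens the same way as the paper's: dualize the surjective tangent map to an injective cotangent map $\mf{m}_x/\mf{m}_x^2 \to \mf{m}_z/\mf{m}_z^2$, lift a basis to generators $f_1,\dots,f_s$ of $\mf{m}_x$ (Nakayama), and observe that their images form part of a regular system of parameters of the regular local ring $\mc{O}_z$. After that the two arguments diverge. The paper finishes in one stroke: $\mc{O}_z/(g_1,\dots,g_s)$ is trivially flat over $\mc{O}_x/(f_1,\dots,f_s)=K$, and an inductive application of the local criterion of flatness (Hartshorne III.10.3.A) then gives flatness of $\mc{O}_z$ over $\mc{O}_x$. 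You instead insert the intermediate step of proving that $\mc{O}_{X,x}$ is itself regular, via completion and the Cohen structure theorem, and then conclude either by miracle flatness or by descending flatness from the inclusion of power series rings $K[[u_1,\dots,u_s]]\hookrightarrow K[[u_1,\dots,u_d]]$. This is a genuine difference in emphasis, and your route buys something real: the criterion III.10.3.A requires at each stage that $f_i$ be a nonzerodivisor in $\mc{O}_x/(f_1,\dots,f_{i-1})$, i.e.\ that $(f_1,\dots,f_s)$ be a regular sequence in $\mc{O}_x$, which is exactly the regularity of $\mc{O}_{X,x}$ that you establish and that the paper uses silently. So your identification of this as ``the main obstacle'' is apt, and your proof is, if anything, the more complete of the two; the cost is the detour through completions, which the paper avoids.
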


\begin{proof} 
	The morphism $f$ induces a local ring homomorphism $\mc{O}_{x} \to \mc{O}_{z}$ where $x = f(z)$. 
	Because $Z$ is nonsingular, $\mc{O}_{z}$ is a regular local ring. The surjectivity of the tangent map $df_z$ implies that the dual map on Zariski cotangent spaces $\mathfrak{m}_{x}/\mathfrak{m}_{x}^2 \to \mathfrak{m}_{z}/\mathfrak{m}_{z}^2$ is injective. 
	
	Consequently, a minimal set of generators for $\mathfrak{m}_{x}$ maps to linearly independent elements in $\mathfrak{m}_{z}/\mathfrak{m}_{z}^2$, which means they form a part of a regular system of parameters in $\mc{O}_{z}$. This ensures that the fiber ring $\mc{O}_{z}/\mathfrak{m}_{x}\mc{O}_{z}$ is a regular local ring. By \cite[III.10.3.A]{Ha}, this implies that $\mc{O}_{x}$ is regular and the homomorphism $\mc{O}_{x} \to \mc{O}_{z}$ is flat. Because flatness is an open condition, $f$ is flat on some open neighborhood of $z$.
\end{proof}


\begin{theorem}\label{T:bijection_space} $\L_e$ sends a general (resp. rigid) presentation of weight $\delta$ in $\drep({^{\,\lperp\!}}e)$ to a general (resp. rigid) presentation of weight $\delta \C_{e^\perp}$ in $\drep(e^\perp)$; and
$\eor{e}$ sends a general (resp. rigid) presentation in $\drep(e^\perp)$ to a general (resp. rigid) presentation in $\drep(\comp{e})$.
\end{theorem}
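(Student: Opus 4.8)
The plan is to prove the two assertions separately, and within each to handle the rigid and the general case by different mechanisms. The rigid statements I would dispatch immediately from Lemma \ref{L:isoE}. If $d\in\drep({^{\,\lperp\!}}e)$ is rigid, then $\e(d,e)=0$, so the ``in particular'' surjection of Lemma \ref{L:isoE} with $d'=d$ reads $\E(d,d)\twoheadrightarrow\E(\L_e(d),\L_e(d))$ and forces $\E(\L_e(d),\L_e(d))=0$. Conversely, for rigid $d\in\drep(e^\perp)$ set $c=\eor{e}(d)$, which lies in $\drep(\comp{e})$ by the first part of the proof of Theorem \ref{T:bijection}; since $c$ is compatible with $e$, the full isomorphism of Lemma \ref{L:isoE} gives $\E(c,c)\cong\E(\L_e(c),\L_e(c))=\E(d,d)=0$, so $\eor{e}(d)$ is rigid. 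The weight bookkeeping is free as well: a presentation in $\drep({^{\,\lperp\!}}e)$ is $e$-regular by Lemma \ref{L:e0reg}, so Lemma \ref{L:ereg}.(2) gives $\delta_{\L_e(d)}=\delta\,\C_{e^\perp}$, and dually $\eor{e}(d)$ is $e$-regular with $\L_e\eor{e}(d)=d$, which pins down its weight.

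The real content is preservation of generality, which I would establish with the package assembled in Lemmas \ref{L:morphismGIT} and \ref{L:localflat}. Fixing $\delta$ with $\e(\delta,e)=0$, let $\lambda_e\colon U/\Aut(\delta)\to U'/\Aut(\delta')$ be the morphism of Lemma \ref{L:morphismGIT}, whose differential at a general $d$ is the surjection $\E(d)\twoheadrightarrow\E(\L_e(d))$. Since $\PHom(\delta)$ is a smooth affine space and, in characteristic zero, a reduced variety is generically smooth, after shrinking $U$ I may assume $U/\Aut(\delta)$ is nonsingular. Lemma \ref{L:localflat} then shows $\lambda_e$ is flat near $[d]$, and a flat morphism of finite type is open. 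The crucial point is that Lemma \ref{L:localflat} does \emph{not} demand smoothness of the target at $[\L_e(d)]$---which is precisely what I am trying to prove---so there is no circularity. Hence $[\L_e(d)]$ is an interior point of $\op{Im}(\lambda_e)$; as $U'$ is open in a vector space, $U'/\Aut(\delta')$ is irreducible, and an irreducible constructible set with nonempty interior is dense, so $\lambda_e$ is dominant.

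With dominance secured, generality follows formally. Let $X^{\circ}\subseteq U'/\Aut(\delta')$ be the open dense stratum parametrizing general presentations of weight $\delta\,\C_{e^\perp}$. Then $\lambda_e^{-1}(X^{\circ})$ is open, and nonempty because the dense image meets the dense open $X^{\circ}$; by irreducibility it is dense, so a general $d$ satisfies $\L_e(d)\in X^{\circ}$, i.e.\ $\L_e(d)$ is a general presentation of weight $\delta\,\C_{e^\perp}$. This finishes the $\L_e$ half.

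For $\eor{e}$ I would run the identical argument with the symmetric morphism attached to $\eor{e}$. The universal extension defining $\eort{e}$ is algebraic in families, so exactly as in Lemma \ref{L:morphismGIT} it induces a morphism $\rho_e$ of orbit spaces whose differential at a general $d\in\drep(e^\perp)$ is identified with the map $\E(d)\to\E(\eor{e}(d))$; as $\eor{e}(d)$ is compatible with $e$, Lemma \ref{L:isoE} makes this map an \emph{isomorphism}, a fortiori surjective. Thus Lemma \ref{L:localflat} again yields flatness, openness, and dominance of $\rho_e$ onto $W/\Aut(\delta)$ with $W\subseteq\PHom(\delta)$ open, and dominance gives generality as before. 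Moreover, since every presentation in $\op{Im}(\rho_e)$ is compatible with $e$ and this image is dense, the general presentation of weight $\delta$ is itself compatible, which simultaneously confirms $\eor{e}(d)\in\drep(\comp{e})$. The main obstacle throughout is exactly the non-circular passage from ``surjective differential'' to ``dense image'': it is Lemma \ref{L:localflat}, invoked with no smoothness hypothesis on the target, together with the openness of flat morphisms and the irreducibility of the orbit spaces, that makes this step go through.
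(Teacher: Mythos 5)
Your treatment of the $\L_e$ half is essentially the paper's proof: generic smoothness to make $U/\Aut(\delta)$ nonsingular, Lemma \ref{L:morphismGIT} to identify the differential of $\lambda_e$ with the surjection $\E(d)\twoheadrightarrow\E(\L_e(d))$, Lemma \ref{L:localflat} to get local flatness, and flat $\Rightarrow$ open to conclude. Your explicit dispatch of the rigid case via Lemma \ref{L:isoE} and of the weight via Lemmas \ref{L:e0reg} and \ref{L:ereg}.(2) is correct and merely spells out what the paper leaves implicit.

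Where you genuinely diverge is the $\eor{e}$ half. The paper deduces it in one line from the bijection of Theorem \ref{T:bijection}: since $\L_e\colon\drep(\comp{e})\to\drep(e^\perp)$ is a bijection with inverse $\eor{e}$ and $\lambda_e$ is open, the image of the general locus of weight $\delta$ is open dense in weight $\delta\C_{e^\perp}$, so the inverse of a general presentation is general. You instead build a second morphism $\rho_e$ of orbit spaces attached to $\eort{e}$ and rerun the flatness argument. This is workable and has the virtue of being symmetric and self-contained, but it rests on a claim the paper never establishes and that is not as cheap as you suggest: Lemma \ref{L:morphismGIT} exploits the \emph{functoriality} of $L_e$, which hands you linear maps $\Hom(P_i,P_j)\to\Hom(L_e(P_i),L_e(P_j))$ and hence an essentially linear $\lambda_e$; the construction of $\eort{e}(d)$ goes through the canonical map $d\to h_1e[1]$ and a cone, so making it algebraic in a family over $d$, and then identifying the resulting differential with the natural map $\E(d)\to\E(\eor{e}(d))$, requires a separate argument (locally constant $\e(d,e)$, a family version of the universal extension, and a computation of the tangent map). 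If you want to keep your route you should either supply that, or shortcut it by noting that once $\rho_e$ is known to be an algebraic set-theoretic inverse of $\lambda_e$ on the compatible locus, $d\lambda_e\circ d\rho_e=\mathrm{id}$ forces $d\rho_e$ injective, hence bijective by the dimension count of Lemma \ref{L:isoE}. The paper's route via Theorem \ref{T:bijection} avoids all of this, at the cost of leaving implicit the (true, but worth stating) fact that a general presentation of the source weight $\delta$ is compatible with $e$, which your dominance argument makes explicit. Your observation that Lemma \ref{L:localflat} imposes no smoothness on the target, so there is no circularity, is correct and is indeed the point of that lemma.
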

\begin{proof} By the generic smoothness \cite{Ha}, we may shrink $U$ if necessary and assume that $U/\Aut(\delta)$ is smooth.	
By Lemmas \ref{L:morphismGIT} and \ref{L:localflat}, $\lambda_e$ is locally flat and hence open \cite[Exercise III.9.1]{Ha}.
The first statement about $\L_e$ follows, and then the next statement about $\eor{e}$ follows from the bijection (Theorem \ref{T:bijection}).
\end{proof}

We thus recover a result of Buan-Marsh \cite{BM1,BM2}, which says that
there is a bijection between $\E$-rigid objects in $\drep(\comp{e})$ and in $\drep(e^\perp)$.
\begin{remark}
	In \cite{BM3} the authors defined the following local mutation operation on {\em $\tau$-exceptional sequences}
	\begin{align*} (d,e) \mapsto (e'=\eo{d}(e),\ \L_{e'}(d)).
	\end{align*}
	\noindent The following diagram explains why the symmetric group acts on the $\tau$-exceptional sequences.
	$$\xymatrix{ [d,e] \ar[d] \ar[r] &[e, d] \ar[d]\\
		(e, \L_d(e)) \ar[r]  &(e, \L_e(d)) } $$
\end{remark}

By Lemma \ref{L:e0reg}, for $d$ in general position, the $\delta$-vector of $\L_e(d)$ is determined by Lemma \ref{L:ereg}.(2).
For the converse, to determine the $\delta$-vector of $\eor{e}(d)$,
we need to recall the definition of tropical $F$-polynomials. 
\begin{definition}[\cite{Ftf}]\label{D:Ftrop} The {\em tropical $F$-polynomial} $f_M$ of a representation $M$ is the function $(\mb{Z}^{Q_0})^* \to \mb{Z}_{\geq 0}$ defined by
	$$\delta \mapsto \max_{L\hookrightarrow M}{\delta(\dv L)}.$$
	The {\em dual} tropical $F$-polynomial $\fc_M$ of a representation $M$ is the function $(\mb{Z}^{Q_0})^* \to \mb{Z}_{\geq 0}$ defined by
	$$\delta \mapsto \max_{M\twoheadrightarrow N}{\delta(\dv N)}.$$
\end{definition}

\begin{proposition}\label{P:delta} For a general presentation $d$ of weight $\delta$ in $\drep(e^\perp)$, the $\delta$-vector of $\eor{e}(d)$ is equal to $\delta'=\pm\delta \Delta_{\ep_c^\pm} + a \ep$
	where $a$ is the least nonnegative integer such that $\fc_e(-\delta')=f_{\tau e}(\delta')=0$.
\end{proposition}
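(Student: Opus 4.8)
The plan is to compute the weight $\delta'$ of $\eor{e}(d)$ in two stages: first pin it down up to a multiple of $\ep$ by pure linear algebra, then fix that multiple using compatibility translated into the tropical $F$-polynomial conditions.

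First I would establish the linear equation $\delta'\C_{e^\perp}=\delta$. By Theorem \ref{T:bijection}, $\eor{e}(d)$ lies in $\drep(\comp{e})$, hence is compatible with $e$, hence $e$-regular by Lemma \ref{L:e0reg}; so Lemma \ref{L:ereg}.(2) gives that the weight of $\L_e(\eor{e}(d))=d$ is $\delta'\C_{e^\perp}$, which is $\delta$. Writing $\Delta_{e^\pm}$ in block form with last row $\ep$ and top $(n-1)\times n$ block $\Delta_{\ep_c^\pm}$ (here $n=|Q_0|$), and using that $\C_{e^\perp}$ consists of the first $n-1$ columns of $\pm\C_{e^\pm}$ (Lemma \ref{L:Cmat}.(1) and Convention \ref{c:OC}), the identity $\Delta_{e^\pm}\C_{e^\pm}=I$ yields $\Delta_{\ep_c^\pm}\C_{e^\perp}=\pm I$ and $\ep\C_{e^\perp}=0$. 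Thus $\pm\delta\Delta_{\ep_c^\pm}$ is a particular solution and $\ep$ spans the kernel of $\delta'\mapsto\delta'\C_{e^\perp}$, so $\delta'=\pm\delta\Delta_{\ep_c^\pm}+a\ep$ for a unique $a\in\mb{Z}$.

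Next I would translate compatibility into the two stated conditions. Since $d$ is general, $\eor{e}(d)$ is a general presentation (Theorem \ref{T:bijection_space}), so compatibility reads $\e(\delta',\ep)=\e(\ep,\delta')=0$. Using $\E(d',e)=\E(d',E)$ and $\E(e,d')=\E(E,\coker d')\cong\Hom(\coker d',\tau E)^*$ (Definition \ref{D:HomE} and \eqref{eq:H2E}), together with the identities $\e(\delta,M)=\fc_M(-\delta)$ and $\hom(\delta,M)=f_M(\delta)$ from \cite{Ftf}, I obtain $\e(\delta',\ep)=\fc_e(-\delta')$ and $\e(\ep,\delta')=f_{\tau e}(\delta')$. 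Hence $\delta'$ satisfies $\fc_e(-\delta')=f_{\tau e}(\delta')=0$, which is exactly the displayed pair of conditions.

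To identify $a$ among the solutions, set $\delta'_a:=\pm\delta\Delta_{\ep_c^\pm}+a\ep$ and $g(a):=\fc_e(-\delta'_a)$, $h(a):=f_{\tau e}(\delta'_a)$, each a maximum of affine functions of $a$. For a quotient $N$ of $E$ one has $\E(E,N)=0$ (as $E$ is $\E$-projective in $\Gen(E)$, Lemma \ref{L:Eproj}), so $\ep\cdot\dv N=\hom(e,N)\ge 0$ and $g$ is non-increasing; for a subrepresentation $L$ of $\tau E$ one has $\Hom(E,\tau E)\cong\E(E,E)^*=0$, so $\ep\cdot\dv L=-\e(E,L)\le 0$ and $h$ is non-increasing. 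Both are nonnegative (via $N=0$, $L=0$) and vanish for large $a$, since the only terms of zero $\ep$-slope come from $N=0$, resp. $L=0$. Therefore the compatible locus is an up-set $[a_-,\infty)\cap\mb{Z}$, and by Theorem \ref{T:CDPHom} a general presentation of weight $\delta'_a$ contains $e$ as a summand precisely when $a>a_-$, whereas $\delta'_{a_-}$ has none. As $\eor{e}(d)$ is general, compatible, and free of summands in $\op{add}(e)$, it sits at $a=a_-$.

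The remaining, and hardest, point is to show $a_-\ge 0$, so that the ``least nonnegative $a$'' in the statement returns $a_-$ rather than something smaller. This is exactly where the algebraic base point $\pm\delta\Delta_{\ep_c^\pm}$ must be reconciled with the homological construction of $\eor{e}$: the truncation $\eort{e}$ only \emph{adds} copies of $e$ (the term $h_1'e$ in \eqref{eq:e0r}), so $\eor{e}(d)$ should be obtained from the canonical lift by a nonnegative shift in the $\ep$-direction. I expect the cleanest route is to verify that the complement side is already saturated at $a=0$, i.e.\ $h(0)=f_{\tau e}\bigl(\pm\delta\Delta_{\ep_c^\pm}\bigr)=0$ from the defining vanishing of the complement ($\E(e,e_c^+)=0$ in the positive case, $\E(e_c^-,e)=0$ in the negative case), and then to show $g$ remains strictly positive for $a<a_-$; equivalently, that $\delta'_{-1}=\pm\delta\Delta_{\ep_c^\pm}-\ep$ fails compatibility. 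Making this bookkeeping precise is the main obstacle, since it requires matching the weight of the lift $\pm\delta\Delta_{\ep_c^\pm}$ against the weight produced by the truncation applied to the minimal $A$-presentation of $\coker(d)$.
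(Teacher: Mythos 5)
Your first two stages match the paper's proof almost exactly: the identity $\delta'\C_{e^\perp}=\delta$ via Theorem \ref{T:bijection}, Lemma \ref{L:e0reg} and Lemma \ref{L:ereg}.(2), the particular solution $\pm\delta\Delta_{\ep_c^\pm}$ with kernel spanned by $\ep$, the translation of compatibility into $\fc_e(-\delta')=f_{\tau e}(\delta')=0$ via \cite[Theorem 3.6]{Ftf}, and the identification of the correct $a$ by the fact that any larger value would force $e$-summands by Theorem \ref{T:CDPHom}. (Your monotonicity analysis of $g$ and $h$ is extra machinery the paper does not need, but it is harmless.)

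The genuine gap is the one you flag yourself: the inequality $a\geq 0$. Without it the ``least \emph{nonnegative} integer'' in the statement need not coincide with the actual weight of $\eor{e}(d)$, so the proposition is not proved. Your proposed route (showing $h(0)=0$ from the rigidity of $e\oplus e_c^\pm$) does not obviously work, because $\delta'_0=\pm\delta\Delta_{\ep_c^\pm}$ is an integer combination of the $\ep_i^\pm$ with possibly negative coefficients, so a general presentation of weight $\delta'_0$ need not decompose along the complement and $\e(\ep,\delta'_0)$ cannot be computed summand by summand. The paper closes this gap with the Schur reduction $\rho_+(e)$: by Lemma \ref{L:Cmat}.(2) (equivalently Proposition \ref{P:Schred}) the vector $\gamma_+(e)=\dv\rho_+(e)$ is the last column of $\C_{e^-}$, so $\ep_i^-\gamma_+(e)^\t=0$ and $\ep\,\gamma_+(e)^\t=1$, whence $\delta'\gamma_+(e)^\t=a$ on the nose. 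If $a<0$, then \eqref{eq:h-e} forces $\e(\eor{e}(d),\rho_+(e))>0$; but $\rho_+(e)$ is a quotient of $E$, so $\E(\eor{e}(d),\rho_+(e))$ embeds (dually, via $\Hom(-,\tau\,\eor{e}(d))$) into $\E(\eor{e}(d),E)=\E(\eor{e}(d),e)=0$, a contradiction. This one-line pairing argument is exactly the ``bookkeeping'' you were missing, and it is the reason the paper develops the stable/Schur reductions in Section \ref{S:SS} before this proposition.
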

\begin{proof} Let $\delta'$ be the $\delta$-vector of $\eor{e}(d)$.
	Since $\L_e \eor{e}$ is identity and $\eor{e}(d)$ is $e$-regular, we have that $\delta' \C_{e^\perp} = \delta$ by Lemma \ref{L:ereg}.(2). 
	By Lemma \ref{L:Cmat} we have that $\delta' = -\delta \Delta_{\ep_c^-} +a\ep$ for some $a\in\mb{Z}$.
	To see $a$ cannot be negative, we consider 
	$\delta'\gamma_+(\ep)^\t = -\delta (\Delta_{\ep_c^-})\gamma_+(\ep)^\t +a\ep \gamma_+(\ep)^\t = a$.
	If $a<0$, then $\e(\eor{e}(d), \rho_+(e))>0$ by \eqref{eq:h-e}, which would contradict that $\e(d', e)=0$.
	
	According to \cite[Theorem 3.6]{Ftf}, $\e(\delta',\ep)=0$ and $\e(\ep,\delta')=0$ are equivalent to
	that $\fc_e(-\delta')=0$ and $f_{\tau e}(\delta')=0$ respectively.
	Let $a$ be the least nonnegative integer such that $\e(\delta',\ep)=\e(\ep,\delta')=0$.
	Then by Theorem \ref{T:CDPHom} $\delta'+b\ep$ decomposes as $\delta' \oplus b\ep$ for any $b>0$, which contradicts the fact that $\eor{e}(d)$ has no summands isomorphic to $e$.
	Hence, $a$ is the least integer satisfying the equalities.
\end{proof}

\begin{corollary}[The Factorization of the Projection] \label{C:factor} Suppose that $e = e_1\oplus e_2$ is rigid and let $e_2'=\L_{e_1}(e_2)$.
	Then $\L_e = \L_{e_2'} \L_{e_1} $.
\end{corollary}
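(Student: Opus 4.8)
The plan is to exhibit the right-hand side as a left adjoint to the inclusion $\iota_e\colon\rep(e^\perp)\hookrightarrow\rep(A)$ and then appeal to uniqueness of adjoints via Theorem~\ref{T:L_e}. The first point to settle is that $e_2'=\L_{e_1}(e_2)$ is a rigid presentation in $\rep(e_1^\perp)$: because $e=e_1\oplus e_2$ is rigid we have $\e(e_2,e_1)=\e(e_1,e_2)=0$, so Lemma~\ref{L:isoE} yields $\E(e_2',e_2')\cong\E(e_2,e_2)=0$. Under the equivalence $\rep(e_1^\perp)\cong\rep({_{e_1}}A)$ of Corollary~\ref{C:equi}, Theorem~\ref{T:L_e} then applies to $e_2'$ and produces the right orthogonal projection $\L_{e_2'}\colon\rep(e_1^\perp)\to\rep((e_2')^\perp)$, left adjoint to $\iota_{e_2'}\colon\rep((e_2')^\perp)\hookrightarrow\rep(e_1^\perp)$, where $(e_2')^\perp$ is formed inside $\rep(e_1^\perp)$.

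The heart of the argument is the identification $\rep((e_2')^\perp)=\rep(e^\perp)$ as subcategories of $\rep(A)$. For $N\in\rep(e_1^\perp)$ the conditions on $e_1$ are automatic, so $N\in\rep(e^\perp)$ iff $\Hom(e_2,N)=\E(e_2,N)=0$. I would match the $\Hom$-part through the adjunction \eqref{eq:adjpr}, which gives $\Hom(e_2',N)\cong\Hom(e_2,N)$ for $N\in\rep(e_1^\perp)$; and the $\E$-part through the $e_1$-regularity of $e_2$ (Lemma~\ref{L:e0reg}, applicable since $\e(e_2,e_1)=0$) together with Lemma~\ref{L:ereg}.(4), giving $\e(e_2',N)=\e(e_2,\iota_{e_1}(N))$. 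Hence membership in the two subcategories is equivalent, and the composite inclusion $\iota_{e_1}\iota_{e_2'}$ is exactly $\iota_e$. Composing the two adjunctions shows $\L_{e_2'}\L_{e_1}\dashv\iota_e$, so by uniqueness of adjoints and Theorem~\ref{T:L_e} the two functors agree on $\rep(A)$ up to natural isomorphism.

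It remains to upgrade this to the stated equality of the modified functors on all of $\drep(A)$. As both functors are additive it suffices to compare them on indecomposables. On a decoration $P_i[1]$ the comparison collapses to the preservation of projective presentations: $\L_{e_2'}\L_{e_1}(P_i[1])=L_{e_2'}(L_{e_1}(P_i))[1]=L_e(P_i)[1]=\L_e(P_i[1])$, where the first and last equalities record that $\L$ agrees with the extension of $L$ on negative presentations (well-defined by Lemma~\ref{L:Lext}.(1)) and the middle one is the projective case of the natural isomorphism together with Corollary~\ref{C:equi}. On an indecomposable module $d_M$, Lemma~\ref{L:Lext}.(2) shows both $\L_e(d_M)$ and $\L_{e_2'}\L_{e_1}(d_M)$ present the isomorphic module $L_{e_2'}(L_{e_1}(M))\cong L_e(M)$; by Lemma~\ref{L:decmnn} two presentations of one module are homotopy equivalent exactly when their $\delta$-vectors coincide, so everything reduces to matching the negative (shifted-projective) parts.

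This last step is where I expect the real work to lie. The modification replaces each negative-complement summand by a shifted projective $P_{i,e^\perp}[1]$ rather than by $0$, so I must verify that the negative complement $E_c^-$ of $e$ is bookkept identically by the single projection $\L_e$ and by the two-stage process, in which some summands are consumed by $\L_{e_1}$ (via $(E_1)_c^-$) and the rest by $\L_{e_2'}$ (via $(E_2')_c^-$). For an indecomposable $M$ the value $\L_e(d_M)$ is, by Definition~\ref{D:Lext} and \eqref{eq:d}, either the minimal presentation $d_{L_e(M)}$ (when $M\notin\op{add}(E_c^-)$) or the pure negative term $P_{j,e^\perp}[1]$ (when $M\in\op{add}(E_c^-)$), so the task is to pin down which indecomposables fall into the second class and to check that the composite lands on the same shifted projective. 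I expect to resolve this either by a short case analysis recording whether $M$ and $L_{e_1}(M)$ lie in the relevant negative complements (using Lemma~\ref{L:0fibre} to recognize the zero-fibre and Lemma~\ref{L:Lext}.(1) for the negative summands), or more cleanly through the $\delta$-vector identity of Lemma~\ref{L:ereg}.(2) and the transitivity $\C_{e^\perp}=\C_{(e_1)^\perp}\,\C_{(e_2')^\perp}$, which follows from the column description of these $C$-matrices in Lemma~\ref{L:Cmat} and forces the two negative parts to agree.
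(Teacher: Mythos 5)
Your proposal is correct and its skeleton matches the paper's, but the two arguments diverge at a few points worth recording. For the rigidity of $e_2'$ you use Lemma \ref{L:isoE} (via $\e(e_1,e_2)=\e(e_2,e_1)=0$), where the paper cites Theorems \ref{T:CDPHom} and \ref{T:bijection_space}; both work. For the main case you establish the \emph{equality} $\rep(({e_2'})^\perp)=\rep(e^\perp)$ inside $\rep(e_1^\perp)$ and conclude by uniqueness of left adjoints, whereas the paper only needs the containment $\rep(e^\perp)=\rep(e_1^\perp)\cap\rep(e_2^\perp)\subset\rep({e_2'}^\perp)$ and concludes by Bongartz's theorem that the two presentations agree up to homotopy because $\Hom(\L_e(d),N)\cong\Hom(\L_{e_2'}\L_{e_1}(d),N)$ for all $N\in\rep(e^\perp)$. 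The equality is genuinely required for your adjoint-uniqueness route (otherwise $\L_{e_2'}$ need not land in, or be adjoint over, $\rep(e^\perp)$), and your verification of it via \eqref{eq:adjpr} together with Lemmas \ref{L:e0reg} and \ref{L:ereg}.(4) is sound, so this is a legitimate, slightly more functorial variant. The only soft spot is the final step on the negative-complement summands, which you leave as a sketch with two candidate resolutions; the second is exactly the paper's argument, and it does close the gap: the agreement already proved on the projectives $P_i$, read through Lemma \ref{L:ereg}.(2), yields $\C_{e^\perp}=\C_{e_1^\perp}\C_{{e_2'}^\perp}$ (this is the cleanest source of the transitivity, rather than the column description in Lemma \ref{L:Cmat}), and then the $\delta$-vector of $\L_{e_2'}\L_{e_1}(e_c^-)$ is $\ep_c^-\,\C_{e_1^\perp}\C_{{e_2'}^\perp}=\ep_c^-\,\C_{e^\perp}$, which is negative by Corollary \ref{C:e^-reg}; a negative presentation is determined by its $\delta$-vector, so $\L_{e_2'}\L_{e_1}(e_c^-)=\L_e(e_c^-)$. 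With that computation written out, your proof is complete.
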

\begin{proof} By Theorems \ref{T:CDPHom} and \ref{T:bijection_space} $e_2'$ is rigid as well.
Recall the decomposition $d$ as in \eqref{eq:d}.
We first prove the equality for $d=d_M$ where $M$ has no summands isomorphic to $e_c^-$.
By Bongartz's theorem \cite{Bo}, it suffices to verify that $\Hom(\L_e(d),N) \cong \Hom(\L_{e_2'}(\L_{e_1}(d)),N)$ for any $N\in \rep(e^\perp)$.
But $\rep(e^\perp) = \rep(e_1^\perp) \cap \rep(e_2^\perp) \subset \rep({e_2'}^\perp)$, so the desired isomorphism follows from the adjunction.
This also implies that $C_{e^\perp} = C_{e_1^\perp} C_{{e_2'}^\perp}$. 
We remain to deal with $d=e_c^-$. By Lemma \ref{L:ereg}.(2), the $\delta$-vector of $\L_{e_2'} \L_{e_1}(e_c^-)$ is given by $\ep_c^- C_{e_1^\perp} C_{{e_2'}^\perp} = \ep_c^- C_{e^\perp}$, which is negative.
Hence $\L_{e_2'} \L_{e_1}(e_c^-) = \L_e(e_c^-)$.
\end{proof}


\section{The Case of Quivers with Potentials}  \label{S:QP}
\subsection{Preliminary on Quivers with Potentials}  \label{ss:QP}
For unexplained terminology in this section, we refer readers to the original paper \cite{DWZ1}.
Let $Q$ be a finite quiver without loops and 2-cycles. Such a quiver corresponds to a unique skew-symmetric matrix $B(Q)$.
Let $\widehat{KQ}$ be the completed path algebra of $Q$, and $\S$ be some {\em potential} on $Q$. The {\em Jacobian algebra} $J=J(Q,\S)$ is the quotient algebra of $\widehat{KQ}$ by the Jacobian ideal $\partial\S$.
By abuse of notation we may write $\rep(Q,\S)$ for $\rep(J(Q,\S))$.
Throughout we assume all QPs have finite-dimensional Jacobian algebras.

A key notion introduced in \cite{DWZ1} is the mutation operation $\mu_k$, an involution associated to each vertex $k$ of $Q$. The mutation $\mu_k$ transforms $(Q,\S)$ to a new one $\mu_k(Q, \S)$, and each decorated representation $\M$ of $(Q,\S)$ to a decorated representation $\mu_k(\M)$ of $\mu_k(Q,\S)$.
Throughout all mutations are assumed to be {\em admissible}, meaning that they do not create 2-cycles in the quiver after the {\em reduction}.
We say a decorated representation $\M$ (or the corresponding presentation) is negative {\em reachable}, or just reachable, if it can be mutated to $P[1]$ by some sequence of (admissible) mutations: $\mub(\M)=P[1]$.
It is called {\em extended reachable} if we allow some $\mu_k$ in $\mub$ to be $\mu_{\pm}=\tau^{\pm}$.
In general $\tau^{\pm}$ are not involutive, but they commute with ordinary mutation $\mu_k$ (\cite[Proposition 7.10]{DF}).
If $\M$ is a cluster, such a sequence $\mub$ of mutations is called associated to $\M$, denoted by $\mu_{\M}$.
In this subsection we explain the relationship between the mutated QP $\mub(Q,\S)$ and the cluster $\mub^{-1}(J'[1])$ of $(Q,\S)$ where $J'=J(\mub(Q,\S))$.

\begin{definition} We write $\br{\Hom}(\M,\mc{N}) := \Hom(\M,\mc{N})\oplus \E(\M,{\tau}^{-1}\mc{N})$ and $\br{\End}(\M):=\br{\Hom}(\M,\M)$.
\end{definition}
\noindent By \eqref{eq:H2E}, $\E(\M,{\tau}^{-1}\mc{N})$ is also isomorphic to $\Ec(\tau \M, \mc{N})$.
Note that 
$$\br{\End}(J[1]) = 0 \oplus \E(J[1],J) \cong \End(J)\ \text{ and }\ \br{\End}(J) = \End(J) \oplus \E(J,{\tau}^{-1}{J}) \cong \End(J).$$

\begin{lemma} \label{L:Hombar} $\br{\Hom}(-,-)$ is extended-mutation-invariant. Namely
$$	\br{\Hom}(\M,\mc{N}) \cong \br{\Hom}(\mu_k\M,\mu_k\mc{N}).$$
\end{lemma}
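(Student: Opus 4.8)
The plan is to establish the isomorphism $\br{\Hom}(\M,\mc{N}) \cong \br{\Hom}(\mu_k\M,\mu_k\mc{N})$ by separately tracking how the two constituent pieces, $\Hom(\M,\mc{N})$ and $\E(\M,\tau^{-1}\mc{N})$, transform under a single mutation $\mu_k$, and then showing that although neither piece is individually mutation-invariant, their direct sum is. First I would recall the local structure of Derksen--Weyman--Zelevinsky mutation at the vertex $k$. For a decorated representation $\M = (M, M^-)$, the mutation $\mu_k$ replaces the vector space $M_k$ at vertex $k$ by a new space built from the kernel/image/cokernel data of the incoming and outgoing maps at $k$ (the triple $(\alpha_k, \beta_k, \gamma_k)$ in the DWZ notation), and correspondingly alters the decorated part. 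The key quantitative input will be the dimension-count formulas from \cite{DWZ1} describing how $\hom$, $\ext$, and the decoration dimensions change under $\mu_k$.

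Concretely, I would compute $\br{\hom}(\mu_k\M, \mu_k\mc{N}) - \br{\hom}(\M,\mc{N})$ and show it vanishes. The natural strategy is to use the $\E$--$\Hom$ duality \eqref{eq:H2E}, namely $\E(\M,-) = \Hom(-,\tau\M)^*$, to rewrite $\E(\M,\tau^{-1}\mc{N}) \cong \Hom(\tau^{-1}\mc{N}, \M)^*$, so that $\br{\Hom}(\M,\mc{N})$ becomes a sum of two ordinary $\Hom$-spaces, $\Hom(\M,\mc{N}) \oplus \Hom(\tau^{-1}\mc{N},\M)^*$. The advantage is that $\tau$ and $\mu_k$ interact in a controlled way; I would invoke the relation between $\tau$ on decorated representations and the Nakayama/AR structure (the extended $\tau$ of \eqref{eq:taunu}) together with the fact, recorded in the excerpt, that the extended translations $\tau^{\pm}$ commute with ordinary mutation $\mu_k$ \cite[Proposition 7.10]{DF}. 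This commutation is what lets me reduce the invariance of the $\E$-summand to a statement about ordinary $\Hom$ under mutation.

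The main obstacle I expect is that $\Hom$ alone is genuinely \emph{not} mutation-invariant: under $\mu_k$ there is an explicit correction term governed by the maps at vertex $k$, and one must verify that the correction appearing in the $\Hom(\M,\mc{N})$ term is exactly cancelled by the opposite correction in the $\E(\M,\tau^{-1}\mc{N})$ term. Pinning down these two correction terms and checking their cancellation is the technical heart of the argument; the cleanest route is probably to reduce everything to the rank-two (or local) situation at vertex $k$, where the mutation acts on a $2$-Calabi--Yau-type triple, and to exploit a symmetry swapping ``kernel-type'' and ``cokernel-type'' contributions. I anticipate that the bilinear form $\br{\hom}$ is, up to the DWZ sign conventions, the symmetrized Euler form or a half-Euler pairing that is manifestly invariant under the piecewise-linear $c$-vector/$g$-vector mutation, which would give a conceptual confirmation of the cancellation.

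An alternative, possibly shorter, approach worth keeping in reserve is to interpret $\br{\Hom}$ representation-theoretically as a $\Hom$-space in a triangulated or exact category (e.g. a cluster category or the relevant $2$-Calabi--Yau category) where $\mu_k$ is realized by an autoequivalence; in such a setting the invariance is automatic because equivalences preserve $\Hom$-spaces. If that categorical framework is available from the cited literature, I would phrase the proof as: identify $\br{\Hom}(\M,\mc{N})$ with $\Hom_{\mc{C}}(X_\M, X_{\mc{N}})$ for objects $X_\M, X_{\mc{N}}$ in the ambient category, observe that $\mu_k$ corresponds to an autoequivalence $\Phi_k$ with $\Phi_k X_\M = X_{\mu_k\M}$, and conclude invariance from $\Hom_{\mc{C}}(\Phi_k X_\M, \Phi_k X_{\mc{N}}) \cong \Hom_{\mc{C}}(X_\M, X_{\mc{N}})$. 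The direct dimension-counting proof is more self-contained, so I would present that as primary and remark on the categorical interpretation.
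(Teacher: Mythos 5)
Your primary argument has a genuine gap: you explicitly defer ``the technical heart'' --- the cancellation of the two correction terms at the vertex $k$ --- to an unspecified computation, and the heuristic you offer in its place is wrong. You anticipate that $\br{\hom}$ is ``the symmetrized Euler form or a half-Euler pairing that is manifestly invariant,'' but by \eqref{eq:h-e} the Euler form only controls the \emph{difference} $\hom(d,N)-\e(d,N)=\delta\cdot\dv N$; the quantity $\br{\hom}(\M,\mc{N})=\hom(\M,\mc{N})+\e(\M,\tau^{-1}\mc{N})$ is not determined by $\delta$-vectors and dimension vectors (already $\br{\End}(\M)$ is the endomorphism algebra of the lift of $\M$ to the cluster category, which varies as $\M$ varies with fixed numerical invariants). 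So the proposed conceptual confirmation cannot close the gap, and actually carrying out the vertex-$k$ dimension count would amount to reproving \cite[Theorem 7.1]{DWZ2} from scratch --- a substantial piece of work your proposal does not perform.

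The paper avoids any local computation by a short reduction you did not find: by \eqref{eq:H2E}, $\br{\Hom}(\M,\mc{N})\cong\E(\tau^{-1}\mc{N},\M)\oplus\E(\M,\tau^{-1}\mc{N})$, which is exactly the sum of the two cross terms in $\E(\mc{X},\mc{X})$ for $\mc{X}=\M\oplus\tau^{-1}\mc{N}$. Applying the known invariance of $\E(\mc{X},\mc{X})$ under $\mu_k$ \cite[Theorem 7.1]{DWZ2} and under $\tau$ \cite[Corollary 7.6]{DF} to $\mc{X}=\M$, to $\mc{X}=\tau^{-1}\mc{N}$, and to their direct sum, and subtracting the two diagonal summands, gives the invariance of the cross-term sum; the commutation of $\tau^{\pm}$ with $\mu_k$ then identifies $\mu_k(\tau^{-1}\mc{N})$ with $\tau^{-1}(\mu_k\mc{N})$. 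Your reserve cluster-category argument (realizing $\br{\Hom}$ as a $\Hom$-space in $\CQ$ where mutation is an autoequivalence) is sound and is essentially the paper's Remark \ref{r:Hombr}(2); had you made it primary and also treated the extended mutations $\mu_{\pm}=\tau^{\pm}$ there, the proof would be complete. As written, the proposal identifies where the difficulty lies but does not resolve it.
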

\begin{proof} By \eqref{eq:H2E}
	$$\br{\Hom}(\M,\mc{N}) \cong \E(\tau^{-1}\mc{N}, \M)\oplus \E(\M,{\tau}^{-1}\mc{N}).$$
For $k=+$, that is, $\mu_k=\tau$, by \cite[Corollary 7.6]{DF} $\E(\M,\M)$ is $\tau$-invariant. Then so is the above because
$$\E(\M\oplus\mc{N},\M\oplus\mc{N}) \cong \E(\M,\M)\oplus \E(\mc{N},\mc{N}) \oplus \E(\M,\mc{N}) \oplus \E(\mc{N},\M).$$	
If $k\in Q_0$, then the equality follows from the fact that $\E(\M,\M)$ is $\mu_k$-invariant \cite[Theorem 7.1]{DWZ2}.
\end{proof}

\begin{remark}\label{r:Hombr} Let $T_k=\mu_k(P_k)$, then $\tau(T_k) = \mu_k(\tau P_k) = S_k$.
Dually let $\check{T}_k=\mu_k(I_k)$, then $\tau^{-1}(\check{T}_k) = \mu_k(\tau^{-1} I_k) = S_k$.
Then Lemma \ref{L:Hombar} specializes to the following equality, which is roughly the mutation rule for modules.
	\begin{align*}
	\Hom_{J'}(P_k,\mu_k(M)) &=  \Hom_{}(T_k,M) \oplus \check{\E}_{}(S_k,M),\\
	\Hom_{J'}(\mu_k(M),I_k) &=  \Hom_{}(M,\check{T}_k) \oplus \E_{}(M, S_k).
\end{align*}
	
(2) Amiot \cite{Am} introduced cluster category $\CQ$ associated to a QP $(Q,\S)$.
Let $\b{T}$ be the canonical cluster tilting object in $\CQ$.
The functor $F_{Q,\S}:\CQ\to \rep (Q,\S)$ sending $\b{M}$ to $\mc{C}_{Q,\S}(\b{T},\b{M})$, induces an equivalence of categories:
$\CQ/(\Sigma \b{T}) \cong \rep (Q,\S).$
By \cite[Proposition 3.10]{P} the space $(\Sigma \b{T})(\b{M}, \b{N})$ can be identified with $\E(\mc{M},\tau^{-1} \mc{N})$
where $\M = F_{Q,\S}(\b{M})$.
Hence, the barred $\Hom$ here is isomorphic to the $\Hom$-space of lifted objects in $\CQ$.
\end{remark}

Let $R$ be the maximal semisimple subalgebra of $J(Q,\S)$ spanned by the vertex idempotents.
Then such a subalgebra for any other mutation $J(\mub(Q,\S))$ can be naturally identified with $R$.
If $\mc{T}= \mub(J[1])$ is mutated from the representation $J[1]$, then we will label the direct summands of $\mc{T}$ using the vertex of $Q$, namely, $\mc{T}_i = \mub(P_i[1])$. 
\begin{theorem} \label{T:mualg} Suppose that $\mc{T} = \mub^{-1}(J'[1])$ is extended-reachable where $J'=J(\mub(Q,\S))$.
We can assign an $R$-algebra structure on $\br{\End}(\mc{T})$ such that it is isomorphic to $J'$ and the $i$-th vertex idempotent of $J'$ corresponds to the identity in $\End(\mc{T}_i)$.
\end{theorem}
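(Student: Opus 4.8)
The plan is to transport the problem into Amiot's cluster category $\CQ$, where $\br\End(\mc{T})$ becomes a genuine endomorphism algebra and hence carries an evident associative $R$-algebra structure by composition. Concretely, by Remark \ref{r:Hombr}.(2) the lift $\wtd{\mc{M}}\in\CQ$ of any decorated representation $\mc{M}$ satisfies $\br\Hom(\mc{M},\mc{N})\cong\Hom_{\CQ}(\wtd{\mc{M}},\wtd{\mc{N}})$, the summand $\Hom(\mc{M},\mc{N})$ being the image in the quotient $\CQ/(\Sigma\b{T})\cong\rep(Q,\S)$ and $\E(\mc{M},\tau^{-1}\mc{N})\cong(\Sigma\b{T})(\wtd{\mc{M}},\wtd{\mc{N}})$ the kernel. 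Applying this with $\mc{M}=\mc{N}=\mc{T}$ identifies $\br\End(\mc{T})$ with $\End_{\CQ}(\wtd{\mc{T}})$ as vector spaces, and I would \emph{define} the $R$-algebra structure on $\br\End(\mc{T})$ by pulling back composition in $\CQ$ along this isomorphism, the idempotent attached to the $i$-th summand being $\op{id}_{\wtd{\mc{T}}_i}$, i.e. $\op{id}_{\mc{T}_i}\in\End(\mc{T}_i)$.

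It then remains to identify $\End_{\CQ}(\wtd{\mc{T}})$ with $J'$, for which I would first pin down $\wtd{\mc{T}}$. The negative cluster $J[1]=\bigoplus_i P_i[1]$ lifts to $\Sigma\b{T}$ (consistent with the isomorphism $\br\End(J[1])\cong\End(J)$ recorded after the definition of $\br\Hom$), which is cluster-tilting since $\Sigma$ is an autoequivalence. The crucial input, due to Plamondon \cite{P} and Amiot \cite{Am}, is that DWZ mutation of decorated representations does not move the underlying object of $\CQ$; it changes only the reference cluster-tilting object, the Keller--Yang equivalence $\mc{C}_{\mu_k(Q,\S)}\xrightarrow{\sim}\CQ$ carrying the canonical cluster-tilting object of the mutated category to $\mu_k\b{T}$. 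Iterating along $\mub$, and using that the extended mutations $\tau^{\pm}=\mu_{\pm}$ act on $\CQ$ as the autoequivalences $\Sigma^{\pm 1}$ (in the $2$-Calabi--Yau category $\CQ$ one has $\tau\cong\Sigma$), I obtain $\wtd{\mc{T}}\cong\Sigma(\mub\,\b{T})$, where $\mub\,\b{T}$ is the Iyama--Yoshino mutation of $\b{T}$ along $\mub$; in particular $\wtd{\mc{T}}$ is cluster-tilting and $\wtd{\mc{T}}_i\cong\Sigma(\mub\,\b{T})_i$.

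Finally I would invoke the categorification of QP mutation: $\End_{\CQ}(\b{T})\cong J$, and mutation preserves the property that the endomorphism algebra of a cluster-tilting object is the Jacobian algebra of the correspondingly mutated QP, so $\End_{\CQ}(\mub\,\b{T})\cong J(\mub(Q,\S))=J'$, with the vertex idempotent $e_i'$ of $J'$ corresponding to $\op{id}_{(\mub\,\b{T})_i}$. Since $\Sigma$ is an autoequivalence, $\End_{\CQ}(\wtd{\mc{T}})=\End_{\CQ}(\Sigma\,\mub\,\b{T})\cong\End_{\CQ}(\mub\,\b{T})\cong J'$, and $e_i'$ matches $\op{id}_{\wtd{\mc{T}}_i}=\op{id}_{\mc{T}_i}$. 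Combining with the first paragraph yields the desired $R$-algebra isomorphism $\br\End(\mc{T})\cong J'$ carrying $\op{id}_{\mc{T}_i}$ to the $i$-th vertex idempotent.

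The main obstacle will be the second step: making precise, with matched vertex labels, the compatibility between the combinatorial DWZ mutation $\mub$ on $\drep(Q,\S)$ and Iyama--Yoshino mutation of the lifts in $\CQ$, and checking that the extended mutations $\tau^{\pm}$ are harmless precisely because they act by the shift. One must also verify that the natural isomorphism of Remark \ref{r:Hombr}.(2) is compatible with composition, so that the transported product is genuinely associative with orthogonal idempotents, and that it is exactly extended-reachability which places $\wtd{\mc{T}}$ in the mutation class of $\b{T}$, so that the endomorphism-algebra-equals-Jacobian-algebra statement applies.
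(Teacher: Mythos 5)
Your proof is correct in outline, but it takes a genuinely different route from the paper. The paper's argument is a short, purely formal transport of structure: it starts from the observation that $J' = \br{\End}(J'[1])$ (recorded right after the definition of $\br{\Hom}$), applies Lemma \ref{L:Hombar} inductively along $\mub^{-1}$ to get a summand-respecting vector-space isomorphism $\br{\End}(J'[1]) \cong \br{\End}(\mc{T})$, and then \emph{defines} the $R$-algebra structure on $\br{\End}(\mc{T})$ by pulling back the multiplication of $J'$ along this isomorphism. No cluster category is needed, and the only input is the mutation- and $\tau$-invariance of $\E(\M,\M)$ from \cite{DF,DWZ2}. You instead pass to Amiot's category $\CQ$, identify $\br{\End}(\mc{T})$ with the honest endomorphism algebra $\End_{\CQ}(\Sigma\,\mub\,\b{T})$ via Remark \ref{r:Hombr}.(2) and the Keller--Yang lifted mutations, and invoke the categorification of QP mutation to identify that endomorphism algebra with $J'$. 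This costs more machinery (the compatibility \eqref{eq:Fmu} of lifted and DWZ mutations, which the paper only brings in later for Lemma \ref{L:relex}, plus the theorem that endomorphism algebras of mutated cluster-tilting objects are mutated Jacobian algebras), but it buys something the paper's proof does not: the algebra structure is exhibited as an intrinsic composition product rather than one imposed by transport, and the associativity and orthogonality of the idempotents come for free from composition in $\CQ$. The technical points you flag at the end (matching of vertex labels under iterated mutation, $\tau^{\pm}$ acting as $\Sigma^{\pm 1}$, and reachability placing $\wtd{\mc{T}}$ in the mutation class of $\b{T}$ up to shift) are exactly the right ones to check, and all hold under the paper's standing assumption that the Jacobian algebras are finite-dimensional.
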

\begin{proof} We have that $J(\mub(Q,\S)) = \br{\End}(J'[1])$.
By induction and Lemma \ref{L:Hombar}, we have that $\br{\End}(J'[1]) \cong \br{\End}(\mc{T})$.
Hence, $J'\cong \br{\End}(\mc{T})$. Since the isomorphism of Lemma \ref{L:Hombar} respects direct summands,
under this isomorphism the $i$-th vertex idempotent of $J'$ goes to the identity in $\End(\mc{T}_i)$.
Therefore, we can put an $R$-algebra structure on $\br{\End}(\mc{T})$ such that it is isomorphic to $J'$ with the desired property.
\end{proof}

\subsection{Projecting QPs}  \label{ss:OPinQP}
\begin{lemma}\label{L:factor1} All elements in $\E(e_c^+, d)$ can factor through $e$, that is,
we have a surjective map $\E(h e, d) \twoheadrightarrow \E(e_c^+, d)$ induced from some map $e_c^+\to h e$.
\end{lemma}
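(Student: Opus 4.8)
The plan is to run the long exact sequence attached to the triangle that defines the positive complement---exactly as in the proof of Lemma~\ref{L:eck0}---and to read off the asserted surjection from the vanishing of $\E(A[0],N)$. Throughout I would use that, by Definition~\ref{D:HomE}, $\E(-,d)$ depends only on $N:=\coker(d)$, so that I may replace the second argument $d$ by the representation $N$.

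First I would recall the defining triangle \eqref{eq:e0} of $\eot{e}$ applied to $d=A[0]$,
$$he[-1]\xrightarrow{can} A[0]\to \eot{e}(A[0])\xrightarrow{\alpha} he,\qquad h=\dim\E(e,A[0]),$$
and apply $\Hom(-,N)$ to it. This produces the same long exact sequence as in the proof of Lemma~\ref{L:eck0}, whose relevant segment is
$$\Hom(A,N)\to \E(he,N)\xrightarrow{\ \E(\alpha,N)\ } \E(\eot{e}(A[0]),N)\to \E(A[0],N)=0.$$
Because $A[0]\colon 0\to A$ has zero source, the last term vanishes, so the map $\E(\alpha,N)$ induced by the third morphism $\alpha$ of the triangle is surjective.

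Next I would pass from $\eot{e}(A[0])$ to $e_c^+$. Since $e_c^+$ is by definition the sum of the distinct indecomposable summands of $\eo{e}(A[0])$, and $\eo{e}(A[0])$ is a direct summand of $\eot{e}(A[0])$, there is a split inclusion $\iota\colon e_c^+\hookrightarrow\eot{e}(A[0])$; the induced map $\E(\iota,N)\colon\E(\eot{e}(A[0]),N)\twoheadrightarrow\E(e_c^+,N)$ is the corresponding summand projection, hence surjective. Setting $\beta:=\alpha\circ\iota\colon e_c^+\to he$, the composite $\E(\beta,N)=\E(\iota,N)\circ\E(\alpha,N)$ is a surjection $\E(he,d)\twoheadrightarrow\E(e_c^+,d)$ induced by the morphism $\beta\colon e_c^+\to he$, which is exactly the claimed factorization through $e$.

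I expect the only delicate point to be the identification, in the second step, of the relevant arrow of the long exact sequence with the map $\E(\alpha,N)$ genuinely induced by the structure morphism $\alpha$ of the triangle (rather than with a bare connecting homomorphism); this is what makes the factorization go through an honest morphism $e_c^+\to he$ and ultimately a multiple of $e$. Since the triangle and the resulting exact sequence are those already appearing in Lemma~\ref{L:eck0}, I would not expect any new computation to be needed there, and the remaining checks---the vanishing $\E(A[0],N)=0$ and the splitting of $\iota$---are routine.
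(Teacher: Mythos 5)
Your proposal is correct and follows essentially the same route as the paper: apply $\E(-,d)$ to the triangle $he[-1]\to A[0]\to \eot{e}(A[0])\to he$, use $\E(A[0],d)=0$ to get surjectivity onto $\E(\eot{e}(A[0]),d)$, and then pass to the direct summand $e_c^+$. The extra care you take in identifying the surjection with the map induced by the structure morphism $\alpha$ (and hence obtaining an honest morphism $e_c^+\to he$) is a welcome clarification of a point the paper leaves implicit.
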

\begin{proof} From the triangle $h_1e[-1] \xrightarrow{can} J\to \eot{e}(J) \to h_1e$, we get 
$$ \xymatrix{
	\E(h_1e, d) \ar[r]& \E(\eot{e}(J), d) \ar[r] &\E(J, d)=0. }
$$
Recall that $e_c^+ = \bigoplus\Ind(\eo{e}(J))$. Our claim follows.
\end{proof}

Let us recall the concept of restriction of a quiver with potential \cite[Definition 8.8]{DWZ1}.
For a subset $I$ of $Q_0$, we denote $Q|_I$ the full subquiver of $Q$ on $I$. 
We have a natural algebra homomorphism $\psi_I: \widehat{KQ} \to \widehat{KQ|_I}$
sending $a$ to itself if $a$ is an arrow in $Q|_I$, and to $0$ otherwise. 
\begin{definition} For a quiver with potential $(Q,\S)$ and a subset $I$ of the vertex set $Q_0$, the restriction of $(Q,\S)$ to $I$ is the QP $(Q,\S)|_I:=(Q|_I, \S|_I)$ where $\S|_I=\psi_I(\S)$.
\end{definition}

\begin{lemma}[{\cite[Proposition 8.9]{DWZ1}}] \label{L:epi} The homomorphism $\psi_I$ induces an epimorphism of Jacobian algebras $J(Q,\S)\twoheadrightarrow J(Q,\S)|_I$ and an epimorphism of their deformation spaces.
The kernel of the first epimorphism is spanned by all paths passing any vertex in $Q_0\setminus I$.
\end{lemma}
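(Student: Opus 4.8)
The plan is to reduce both assertions to a single computation, namely that the continuous $K$-algebra homomorphism $\psi_I$ carries the Jacobian ideal $\partial\S$ of $(Q,\S)$ exactly onto the Jacobian ideal $\partial(\S|_I)$ of the restriction, and then to read everything off. First I would record the behaviour of $\psi_I$ on the path algebra: it is a surjective continuous homomorphism, and it annihilates precisely those paths that use an arrow incident to a vertex of $Q_0\setminus I$, together with the trivial paths at the vertices of $Q_0\setminus I$. Equivalently, writing $\mf{m}_I\subseteq\widehat{KQ}$ for the closed $K$-span of all paths passing through some vertex of $Q_0\setminus I$, we have $\Ker(\psi_I)=\mf{m}_I$. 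Call an arrow \emph{good} if both its endpoints lie in $I$ and \emph{bad} otherwise; the good arrows are exactly the arrows of $Q|_I$, and the cyclic derivatives $\partial_a\S$ generate $\partial\S$ as a closed two-sided ideal.

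The heart of the argument is to compare $\psi_I$ with the cyclic derivatives $\partial_a$. I claim that for a good arrow $a$ one has $\psi_I\circ\partial_a=\partial_a\circ\psi_I$ on potentials, while for a bad arrow $a$ one has $\psi_I\circ\partial_a=0$. Both are checked term by term on a cycle $c$ occurring in $\S$, using that $\psi_I(c)\neq 0$ iff $c$ lies entirely in $Q|_I$. If $a$ is good and $c\subseteq Q|_I$, every rotation in $\partial_a c$ again lies in $Q|_I$, so $\psi_I$ fixes it; if $a$ is good and $c$ uses a bad arrow $b\neq a$, then removing an occurrence of $a$ leaves $b$ intact, so every term of $\partial_a c$ is killed, and likewise $\partial_a\psi_I(c)=0$. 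If $a$ is bad, each term of $\partial_a c$ is a path joining the two endpoints of $a$, at least one of which lies outside $I$; since $Q$ has no loops this path is nontrivial, hence uses an arrow incident to an outside vertex and is annihilated by $\psi_I$. Summing over $c$ gives $\psi_I(\partial_a\S)=\partial_a(\S|_I)$ for good $a$ and $\psi_I(\partial_a\S)=0$ for bad $a$.

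With this in hand the conclusions follow. As $\psi_I$ is a continuous surjection and $\partial\S$ is the closed ideal generated by the $\partial_a\S$, its image $\psi_I(\partial\S)$ is the closed ideal generated by the $\psi_I(\partial_a\S)$; by the previous paragraph these are the generators $\partial_a(\S|_I)$ together with zeros, whose closed ideal is exactly $\partial(\S|_I)$. Thus $\psi_I(\partial\S)=\partial(\S|_I)$, so $\psi_I$ descends to a surjection $\br{\psi}_I\colon J(Q,\S)\twoheadrightarrow J(Q,\S)|_I$. For its kernel, an element $\br{x}$ with $x\in\widehat{KQ}$ lies in $\Ker(\br{\psi}_I)$ iff $\psi_I(x)\in\partial(\S|_I)=\psi_I(\partial\S)$, i.e. iff $x\in\mf{m}_I+\partial\S$; hence $\Ker(\br{\psi}_I)$ is the image of $\mf{m}_I$ in $J(Q,\S)$, which is the span of the images of all paths passing through $Q_0\setminus I$, as claimed.

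Finally, for the deformation spaces one argues in the same spirit on the trace space $\widehat{KQ}/\overline{[\widehat{KQ},\widehat{KQ}]}$ in which potentials live: being an algebra homomorphism, $\psi_I$ sends commutators to commutators and so induces a surjection of trace spaces sending the class of $\S$ to that of $\S|_I$, and compatibility with the Jacobian relations just established upgrades this to the asserted epimorphism of deformation spaces. I expect the only genuine obstacle to be the topological bookkeeping---verifying that the image of the \emph{closed} ideal $\partial\S$ is again closed and equal to $\partial(\S|_I)$ rather than merely a dense subideal---which is handled by the degreewise finiteness of the completed path algebra; the combinatorics of which rotation terms survive under $\psi_I$ is the only other point requiring care.
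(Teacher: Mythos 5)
The paper offers no proof of this lemma: it is quoted verbatim as \cite[Proposition 8.9]{DWZ1}, so there is no internal argument to compare against. Your reconstruction is correct and is essentially the standard argument behind the cited result: the whole content is the commutation relation $\psi_I\circ\partial_a=\partial_a\circ\psi_I$ for arrows $a$ of $Q|_I$ and the vanishing $\psi_I\circ\partial_a=0$ for the remaining arrows, verified term by term on the cycles of $\S$ exactly as you do, from which $\psi_I(\partial\S)=\partial(\S|_I)$, the induced epimorphism, and the description of its kernel as the image of $\mf{m}_I=\Ker(\psi_I)$ all follow; the statement on deformation spaces is then immediate because the deformation space is a quotient of the trace space of the Jacobian algebra and $\psi_I$ carries commutators to commutators. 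Two small points could be tightened but do not affect correctness. First, in the bad-arrow case the appeal to the absence of loops is unnecessary: even when a term of $\partial_a c$ is a trivial path it sits at a vertex outside $I$ and is killed by $\psi_I$, since $\psi_I(e_v)=0$ for $v\notin I$. Second, the topological point you flag at the end is genuine but standard: $\widehat{KQ}$ is linearly compact (a product of finite-dimensional graded pieces), so the continuous image $\psi_I(\partial\S)$ of the closed ideal $\partial\S$ is automatically closed, and since it is an ideal containing the generators $\partial_a(\S|_I)$ and is contained in $\partial(\S|_I)$ by continuity, equality holds rather than mere density.
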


Recall that we denote the mutated QP corresponding to an extended reachable presentation $d$ by $\mu_d(Q,\S)$. 
In what follows, we will mainly work with $d=e^\pm$.
Let $\eQS$ be the restriction of $\mu_{e^\pm}(Q,\S)$ to the vertices corresponding to $e_c^\pm$, or equivalently $\eQS$ is obtained from $\mu_{e^\pm}(Q,\S)$ by forgetting the vertices corresponding to $e$.
It will turn out that $\eQS$ does not depend on the sign $\pm$.

\begin{theorem}\label{T:perpQP} Assume that $e_c^\pm$ is extended-reachable.
The category $\rep(e^\perp)$ is equivalent to the module category of $\eQS$ defined above.	
Moreover, if $(Q,\S)$ is nondegenerate (resp. rigid), then so is $\eQS$.
\end{theorem}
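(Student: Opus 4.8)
The plan is to reduce the equivalence to the two algebra identifications prepared just before the statement, and then to treat nondegeneracy and rigidity separately by combining mutation-invariance with the behaviour of restriction. I would work throughout with the positive complement $e_c^+$, the negative case being symmetric (this symmetry is also what gives the sign-independence of $\eQS$). By Corollary \ref{C:equi} it suffices to identify the Jacobian algebra $J(\eQS)$ with ${_e}A=\End(\bigoplus\Ind(L_e(A)))$. First, Theorem \ref{T:mualg} supplies an $R$-algebra isomorphism $J(\mu_{e^+}(Q,\S))\cong\br{\End}(e^+)$ under which the vertex idempotents correspond to the summands of $e^+=e\oplus e_c^+$. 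Since $\eQS$ is by definition the restriction of $\mu_{e^+}(Q,\S)$ to the vertices of $e_c^+$, Lemma \ref{L:epi} then identifies $J(\eQS)$ with the quotient $\br{\End}(e^+)/\langle\underline e\rangle$, where $\underline e$ is the idempotent of $e$ and the kernel is spanned by all paths passing through $e$.

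Next I would split $\br{\End}(e^+)=\Hom(e^+,e^+)\oplus\E(e^+,\tau^{-1}e^+)$. Because $\Hom$ of presentations is $\Hom$ of cokernels, $\Hom(e^+,e^+)=\End(E^+)$ with $E^+=E\oplus E_c^+$, $E=\coker(e)$ and $E_c^+=\coker(e_c^+)$; call this subalgebra $B$ and the complementary $\E$-summand $N$. Every component of $N$ lies in $\langle\underline e\rangle$: the components with source or target $e$ do so trivially, while $\E(e_c^+,\tau^{-1}e_c^+)$ factors through $e$ by Lemma \ref{L:factor1}. Hence $\br{\End}(e^+)=B+\langle\underline e\rangle$ and $J(\eQS)\cong B/(B\cap\langle\underline e\rangle)$. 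It then remains to show $B\cap\langle\underline e\rangle=\langle\underline e\rangle_B$, the ideal of $B$ generated by $\underline e$, i.e. the morphisms $E_c^+\to E_c^+$ factoring through $E$. Granting this, Lemma \ref{L:factor} (which is the Jasso identification expressed through $L_e$, recalling $L_e(e_c^+)=\bigoplus P_{i,e^\perp}$) gives $B/\langle\underline e\rangle_B\cong{_e}A$, and Corollary \ref{C:equi} completes the chain $\rep(\eQS)\cong\rep({_e}A)\cong\rep(e^\perp)$.

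The hard part will be exactly the inclusion $B\cap\langle\underline e\rangle\subseteq\langle\underline e\rangle_B$. The ideal $\langle\underline e\rangle$ in $\br{\End}(e^+)$ consists of products passing through $e$, and since a product $N\cdot N$ can wrap back into the $\Hom$-part $B$ (composition in the cluster category $\CQ$ is not concentrated in a single degree, cf. Remark \ref{r:Hombr}), a priori $B\cap\langle\underline e\rangle$ could be strictly larger than $\langle\underline e\rangle_B$. The route I expect to work is to apply Lemma \ref{L:factor1} at the level of compositions: each $\E$-factor through $e$ is rewritten, via the genuine map $e_c^+\to he$ that the lemma produces, as a $\Hom$-factor through $e$, so that any through-$e$ product landing in $B$ is exhibited as an honest module morphism factoring through $E$. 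Tracking the two composition pairings carefully—$\Hom\times\E\to\E$ and the wrap-around $\E\times\E\to\Hom$ in $\br{\End}(e^+)$—so as to see that no spurious relations survive is where the real work lies.

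For the final assertion I would argue in two steps. Mutation preserves both nondegeneracy and rigidity \cite{DWZ1}, so $\mu_{e^+}(Q,\S)$ inherits whichever property $(Q,\S)$ has. For rigidity, Lemma \ref{L:epi} gives an epimorphism on deformation spaces, so the trivial deformation space of the rigid $\mu_{e^+}(Q,\S)$ forces $\eQS$ to have trivial deformation space as well, i.e. $\eQS$ is rigid. For nondegeneracy, since every vertex of $\eQS$ is an $e_c^+$-vertex, any mutation sequence of $\eQS$ lifts to one of $\mu_{e^+}(Q,\S)$ at the same vertices; using that restriction at inner vertices commutes with mutation—so that the restricted sequence remains admissible—together with the nondegeneracy of $\mu_{e^+}(Q,\S)$, one concludes that $\eQS$ is nondegenerate. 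The commutation of restriction with mutation at inner vertices is the only delicate point in this last step, and it can be verified directly from the definitions in \cite{DWZ1}.
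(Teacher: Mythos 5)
Your route is the paper's own: Theorem \ref{T:mualg} to realize $J(\mu_{e^+}(Q,\S))$ as $\br{\End}(e^+)$, Lemma \ref{L:epi} to present $J(\eQS)$ as the quotient by the ideal $\langle\underline{e}\rangle$ of paths through the $e$-vertices, Lemma \ref{L:factor1} to dispose of the $\E$-part, and Lemma \ref{L:factor} together with Corollaries \ref{C:equi} and \ref{C:indP} to land in ${_e}A$; your deformation-space argument for rigidity also matches (and your lifting argument for nondegeneracy is more detailed than the paper's one-line appeal to Lemma \ref{L:epi}). But the step you defer --- $B\cap\langle\underline{e}\rangle=\langle\underline{e}\rangle_B$ --- is precisely the content of the theorem, and ``the route I expect to work'' is not a proof; as written the proposal has a genuine gap at its central point.

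The gap is, however, largely an artifact of your formulation. You fix a vector-space splitting $\br{\End}(e^+)=B\oplus N$ and then must intersect the ideal $\langle\underline{e}\rangle$ with the complement $B$, which is what creates the spectre of ``wrap-around'' compositions. In fact $N$ is itself an ideal: by Remark \ref{r:Hombr}(2) the $\E$-summand is the trace of the ideal $(\Sigma\b{T})$ of the cluster category, so $\E\cdot\E$, $\E\cdot\Hom$ and $\Hom\cdot\E$ all land back in the $\E$-part and never wrap into $B$. Consequently $N':=\E(e_c^+,\tau^{-1}e_c^+)$ is an ideal of the corner algebra $\br{\End}(e_c^+)$, and Lemma \ref{L:factor1} says $N'\subseteq\langle\underline{e}\rangle$. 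So quotient by $N'$ \emph{first}: this lands in the honest endomorphism algebra $\End(C)$ (here one needs that $e_c^+$ has no negative summands, Remark \ref{r:Ec}(2), so that all idempotents of the corner algebra lie in the $\Hom$-part), where the image of $\langle\underline{e}\rangle$ is generated by composites of genuine module maps through copies of $E$, i.e.\ is exactly $\End^e(C)$; Lemma \ref{L:factor} then finishes. Reorganized this way --- which is how the paper's proof proceeds --- your ``hard part'' reduces to containments you have already established, and no tracking of cross-terms between the two graded pieces is needed.
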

\begin{proof} We will prove the case for $e_c^+$; the argument for $e_c^-$ is analogous.
To show the equivalence of the categories, by Theorem \ref{C:bimod} it suffices to show that
the (ordinary) endomorphism algebra of a basic set of projective objects in $\rep(e^\perp)$ is isomorphic to the Jacobian algebra of $\eQS$.
Recall from Corollary \ref{C:indP} that $f_{\ep}(C)$ forms a basic set of projective objects in $\rep(e^\perp)$.
Here, we keep the notation $C:=\coker(e_c^+)$ as in Lemma \ref{L:factor}.
By Lemma \ref{L:factor},
we have that $\End(C)/\End^e(C) \cong \End(f_{\ep}(C))$ where $\End^e(C)$ denotes the space of endomorphisms of $C$ that can factor through $E$.
It remains to compare $\End(C)/\End^e(C)$ with $J\eQS$.

Firstly, $e_c^+$ has no negative summands by Lemma \ref{L:Bon} so $\End(C) = \End(e_c^+)$.
Let $(Q',\S'):=\mu_{e^+}(Q,\S)$. By Theorem \ref{T:mualg} we have that
$$J(Q',\S')\cong \Hom(e\oplus e_c^+, e\oplus e_c^+) \oplus \E(e\oplus e_c^+, \tau^{-1}(e\oplus e_c^+)).$$
Then by Lemma \ref{L:epi} we have that 
\begin{align*} J\eQS \cong  \End(e_c^+)/\End^e(e_c^+) \oplus \E(e_c^+, \tau^{-1}e_c^+)/\E^e(e_c^+, \tau^{-1} e_c^+),
\end{align*}
where $\E^e(e_c^+, \tau^{-1} e_c^+)$ consists of elements in $\E(e_c^+, \tau^{-1} e_c^+)$ that factor through $e$.
But by Lemma \ref{L:factor1}, the last quotient is zero.
Therefore, $\End(C)/\End^e(C)\cong J\eQS$.
The ``moreover" part follows from Lemma \ref{L:epi} on the deformation spaces.
\end{proof}

\section{Initial-Seed Mutations} \label{S:inmu}
\subsection{Mutation and Exchange of Simple Matrices}  \label{ss:muS}
Let $d=\bigoplus d_i$ be a cluster, and recall the matrix $\Delta_d$ as in Definition \ref{D:Delta}.
By the sign-coherence \cite{DWZ2, DF}, any column of $\Delta_d$ is either nonnegative or nonpositive.
We use $\sgn(k, d)$, or simply $\sgn_k$ to indicate the sign of the $k$-th column of $\Delta_d$.
Note its difference with $\sgnc_k$ defined below Theorem \ref{T:Cmat}.
The sign-coherence for the $C$-matrix in the setting of quivers with potentials (and thus for skew-symmetric cluster algebras) was proved earlier in \cite{DWZ2}. 

Let $\C_d =(\gamma_1,\gamma_2, \dots,\gamma_n)$ be its $C$-matrix.
By a mutation $\mu_k(\C_d)$ of $\C_d$, we mean the $C$-matrix for the mutated cluster $\mu_k(d)$.
Let $b_i$ be the $i$-th row of the matrix $B(Q)$.
The following mutation formula is an easy consequence of the tropical duality \cite{NZ} (see also Theorem \ref{T:Cmat}).
\begin{lemma}[{\cite[(5.9)]{FZ4}}]\label{L:Cmu} We have the following mutation formula for the $C$-matrix.
The matrix $\C_{\mu_k(d)}$ only changes at the $k$-th row, and
\begin{align}	\label{eq:Cmu} \mu_k(\gamma_{i})(k) &=  [\sgn(k,d) b_k]_+\gamma_{i} - \gamma_{i}(k). 
\end{align}
\end{lemma}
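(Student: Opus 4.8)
The plan is to read \eqref{eq:Cmu} as the $c$-vector shadow of tropical duality \cite{NZ}, rather than to recompute anything by hand. Recall from Definition \ref{D:Delta} and the pairing \eqref{eq:homecS} that $\Delta_d\C_d=I$, that the rows of $\Delta_d$ are the $\delta$-vectors of the cluster elements, and that by Theorem \ref{T:Cmat} the columns $\gamma_i$ of $\C_d$ are the (signed) dimension vectors of the simple objects---that is, the $c$-vectors. The mutation $\mu_k$ here mutates the ambient quiver at the vertex $k$ and transports $d$ along with it, so it re-coordinatizes only the $k$-th direction of the weight lattice $\mb{Z}^{Q_0}$ and leaves the coordinates $l\neq k$ untouched. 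First I would record the structural consequence of this: only the $k$-th coordinate of each vector can move, so $\Delta_d$ changes only in its $k$-th column while, dually, $\C_d$ changes only in its $k$-th row. These two facts are compatible because both changes are rank one and are linked through $\Delta_{\mu_k(d)}\C_{\mu_k(d)}=I$; this is exactly the assertion that $\C_{\mu_k(d)}$ differs from $\C_d$ only along the $k$-th row.

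Next I would compute the new $k$-th entry. The re-coordinatization is a piecewise-linear (tropical) map, and the sign-coherence of $\Delta_d$---any column of $\Delta_d$ is either nonnegative or nonpositive---supplies a single well-defined sign $\sgn(k,d)$ for the $k$-th column. This sign pins down on which linear piece $\gamma_i$ lies, and on that piece the tropical $c$-vector rule \cite[(5.9)]{FZ4} sends $\gamma_i(k)$ to $[\sgn(k,d)b_k]_+\gamma_i-\gamma_i(k)$, where $b_k$ is the $k$-th row of $B(Q)$. Together with the previous paragraph this gives \eqref{eq:Cmu}. To fix the signs unambiguously I would cross-check against $\Delta_{\mu_k(d)}\C_{\mu_k(d)}=I$, feeding in the companion $\delta$-vector ($g$-vector) transformation, which likewise alters only the $k$-th coordinate; demanding that the mutated matrices still multiply to the identity determines both the term $[\sgn(k,d)b_k]_+\gamma_i$ and the sign of the subtracted $\gamma_i(k)$.

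I expect the only real difficulty to be sign bookkeeping rather than structure. Two conventions must be reconciled: the paper's $\delta$-vector is the negative of the $g$-vector of \cite{DWZ2} underlying the formula in \cite{FZ4}, and one must check that the branch of $[\,\cdot\,]_+$ is governed by the sign of the $k$-th column of $\Delta_d$ (equivalently $\sgn(\ep(k))$ in the later applications) and not by the sign of $\gamma_i(k)$ itself. Once the $\delta=-g$ flip and the correct chamber are tracked, the statement that only the $k$-th row changes is immediate from the initial-seed nature of $\mu_k$, and the displayed formula is a direct transcription of the tropical mutation rule.
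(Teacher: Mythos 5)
Your approach is essentially the paper's: the proof there simply recalls the initial-seed mutation formula \eqref{eq:Gmu} for $\delta$-vectors and checks that \eqref{eq:Cmu} is the inverse transformation, which is exactly your cross-check against $\Delta_{\mu_k(d)}\C_{\mu_k(d)}=I$. One intermediate claim in your write-up is false, however: mutation does \emph{not} touch only the $k$-th coordinate of the $\delta$-vectors --- by \eqref{eq:Gmu} every coordinate $i\neq k$ of $\delta$ is shifted by $[\sgn(-\delta(k))b_{i,k}]_+\delta(k)$ --- so $\Delta_d$ does not change only in its $k$-th column. What is true (and is the correct justification for ``only the $k$-th row of $\C_d$ changes'') is this: by sign-coherence the sign $\sgn(-\delta(k))=-\sgn(k,d)$ is common to all rows, so $\Delta_{\mu_k(d)}=\Delta_d E_k$ for the single matrix $E_k=J_k+[\sgn(k,d)B]_+^{k\bullet}$, which differs from $J_k$ only in its $k$-th row and satisfies $E_k^2=I$; hence $\C_{\mu_k(d)}=E_k^{-1}\C_d=E_k\C_d$ alters only the $k$-th row of $\C_d$, and reading off that row gives \eqref{eq:Cmu}. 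Since your final step determines all signs by demanding that the mutated matrices still multiply to the identity, the slip is self-correcting, but the ``re-coordinatizes only the $k$-th direction'' heuristic should be replaced by this matrix computation.
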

\begin{proof} Recall the initial-seed mutation formula for $\delta$-vectors from \cite{DWZ2} that 
\begin{equation} \label{eq:Gmu} \mu_k(\delta)(i) = \begin{cases} -\delta(k) & \text{if $i=k$} \\ \delta(i) + [\sgn(-\delta(k))b_{i,k}]_+\delta(k) & \text{if $i\neq k$}. \end{cases} \end{equation}
It is ready to check that the two transformations are inverse to each other.
\end{proof}

Following \cite{NZ}, it is convenient to introduce the matrix $J_k$, which is obtained from the identity matrix $I$ by replacing $I_{k,k}=1$ with $-1$.
Moreover, for a matrix $B$, we write $B^{k\bullet}$ (resp. $B^{\bullet k}$) for the matrix obtained from $B$ by replacing all entries outside of the $k$-th row (resp. column) with zeros.
In this matrix notation, the above formula reads:
$$\mu_k(\C_d) = (J_k + [\sgn(k,d) B]_+^{k \bullet}) \C_d = J(B)_k \C_d,$$
where we set $J(B)_k:=(J_k + [\sgn(k,d) B]_+^{k\bullet})$.

\begin{corollary}\label{C:Cmu} Let $\mub$ be an extended sequence of mutations associated to a cluster $d$, that is, $\mub(d)=J[1]$. Then 
	$$\mub(B) = \C_d^\top B \C_d.$$ 
In particular, the $B$-matrix for the quiver $_eQ$ is given by 
$\C_{e^\perp}^\top B \C_{e^\perp}$.
\end{corollary}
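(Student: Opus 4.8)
The plan is to derive the congruence $\mub(B)=\C_d^\top B\,\C_d$ from the single–step $C$-matrix rule of Lemma \ref{L:Cmu} together with the matrix form of quiver mutation, and then to read off the formula for $_eQ$ by specializing to $d=e^\pm$ and passing to a principal submatrix. Since $d$ is a cluster, $\Delta_d$ is unimodular (Theorem \ref{T:maxrigid}) and $\C_d=\Delta_d^{-1}$ is an integral invertible matrix, so both sides make sense. On the $B$-side I would first record the companion of Lemma \ref{L:Cmu}: with $J(B)_k:=J_k+[\sgn(k,d)B]_+^{k\bullet}$ as before, a direct check (the matrix form of quiver mutation, cf. \cite{NZ}) gives
$$\mu_k(B)=J(B)_k\,B\,J(B)_k^\top,$$
where $J(B)_k$ is an involution; because $B$ is skew-symmetric this identity is moreover independent of the sign chosen. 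Telescoping it along the steps of $\mub$ writes $\mub(B)$ as a congruence $P\,B\,P^\top$, where $P$ is the ordered product of the running step matrices.

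The main obstacle is that this congruence does \emph{not} match $\C_d^\top B\,\C_d$ termwise. The $B$-side product $P$ uses the running exchange matrix at each step, whereas the $C$-matrix rule of Lemma \ref{L:Cmu} is an \emph{initial-seed} mutation referring always back to $B=B(Q)$ (this is how iterating Lemma \ref{L:Cmu} from $\C_{J[1]}=-I$ produces $\C_d$). Already in rank two one sees that $P$ and $\C_d^\top$ need not agree as matrices: they differ by an element stabilizing $B$ under congruence. What rescues the statement is exactly tropical duality \cite{NZ}, which guarantees that the two resulting congruences of $B$ coincide; the common value is $\C_d^\top B\,\C_d$. Conceptually the identity says that $\mub(B)$ is the Gram matrix of the fixed skew form $B$ in the basis of $K_0$ given by the columns of $\C_d$ --- which, by Theorem \ref{T:Cmat} (and Lemma \ref{L:Cmat}), are the dimension vectors of the simple modules of the mutated algebra. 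I would organize this as an induction on the length of $\mub$, peeling off one mutation and invoking the duality to reconcile the two congruences at each step; this reconciliation is the one genuinely nontrivial point, and everything else is bookkeeping.

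For the ``in particular'' statement I would specialize $d=e^\pm$, so that $\mu_{e^\pm}(e^\pm)=J[1]$ and the congruence reads $\mu_{e^\pm}(B)=\C_{e^\pm}^\top B\,\C_{e^\pm}$. By the definition of the restriction of a quiver with potential (Section \ref{ss:OPinQP}), $\eQS$ is the restriction of $\mu_{e^\pm}(Q,\S)$ to the vertices of $e_c^\pm$, and restriction deletes only arrows leaving that vertex set $I$, so $B(_eQ)$ is the principal submatrix $\mu_{e^\pm}(B)_{I,I}$. Since a principal submatrix of a congruence satisfies $(\C^\top B\,\C)_{I,I}=(\C_{\bullet I})^\top B\,(\C_{\bullet I})$, and since by Convention \ref{c:OC} the row of $\ep$ is placed last so that the columns of $\C_{e^\pm}$ indexed by $I$ are precisely $\C_{e^\perp}$, this submatrix equals $\C_{e^\perp}^\top B\,\C_{e^\perp}$, which is the asserted $B$-matrix of $_eQ$.
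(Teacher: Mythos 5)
You have the right skeleton---telescope the one-step congruence $\mu_k(B')=(J_k+[\pm B']_+^{k\bullet})^\top B'\,(J_k+[\pm B']_+^{k\bullet})$ along $\mub$ and match the resulting product against $\C_d$---and you have correctly put your finger on the only delicate point, namely that Lemma \ref{L:Cmu} as stated is an initial-matrix recurrence (it refers to rows of $B(Q)$), while the $B$-side congruence uses the running exchange matrices. But your resolution of this tension is where the argument breaks down. The claim that the product $P$ of step matrices and $\C_d^\top$ ``differ by an element stabilizing $B$ under congruence'' is logically equivalent to the identity $\mub(B)=\C_d^\top B\,\C_d$ you are trying to prove, so invoking it (or an unspecified appeal to ``tropical duality'') at exactly that point is circular; and the proposed induction ``reconciling the two congruences at each step'' is never actually carried out.

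What closes the gap---and what the paper's proof does, if tersely---is the dual (running-matrix) form of the $C$-matrix recurrence from Nakanishi--Zelevinsky: $\C_{\mu_k(d)}=J(B_d)_k\,\C_d$ with the elementary factor built from the exchange matrix of the \emph{current} seed rather than from $B(Q)$. Iterating this from $\C_{J[1]}=-I$ exhibits $\C_d$ as exactly $\prod_{i=1}^{\ell}J(B_i)_{\b{k}(i)}\,(-I)$, i.e.\ $P=-\C_d$ on the nose, where the $J(B_i)_{\b{k}(i)}$ are the very same involutions that conjugate $B$ to $\mub(B)$; the telescoping is then immediate and no reconciliation is needed. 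You should prove or cite this recurrence instead of gesturing at duality. Two smaller points: the transpose in the one-step congruence must sit on the left factor (for skew-symmetric $B$ the form that telescopes to $\C_d^\top B\,\C_d$ is $F^\top B F$), and you do not treat the extended mutations $\mu_\pm=\tau^{\pm}$, which the paper disposes of first (there $\C_d=I$) before reducing to ordinary mutations. Your handling of the ``in particular'' statement---passing to the principal submatrix of $\mu_{e^\pm}(B)$ indexed by the vertices of $e_c^\pm$ and using Convention \ref{c:OC} to identify the corresponding columns of $\C_{e^\pm}$ with $\C_{e^\perp}$---is correct and agrees with the paper.
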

\begin{proof} For $\mu_+=\tau$ we have that $\Delta_d = I_n$ so $\C_d=I_n$. We have that $\mu_+(B)=I_nB I_n=B$.
Since $\tau$ commutes with any $\mu_k$, we may assume that $\mub$ is a sequence of ordinary mutations.
Recall that $\mu_k(B) = (J_k + [\pm B]_+^{k\bullet})^\top B (J_k + [\pm B]_+^{k\bullet})$.
By Lemma \ref{L:Cmu} we have that $\C_d = \prod_{i=1}^\ell J(B_i)_{\b{k}(i)} (-I)$ where $B_i =  \mu_{\b{k}(i)}\cdots \mu_{\b{k}(1)}(B)$. Then 
$$\C_d^\top B \C_d = \left(\prod_{i=1}^\ell J(B_i)_{\b{k}(i)} \right)^\top B  \left(\prod_{i=1}^\ell J(B_i)_{\b{k}(i)} \right) = \mub(B).$$
The last statement about $_eQ$ follows from the proof of Theorem \ref{T:perpQP} and Lemma \ref{L:Cmat}.
\end{proof}

\begin{lemma}[{\cite[Proposition 1.3]{NZ}}] The $k$-th exchange $\Delta_d'$ and $\C_d'$ of the $\Delta$-matrix and $C$-matrix of $d$ are given by
\begin{align} \label{eq:exG} \Delta_d' &= (J_k + [\check{\sgn}(k,d) \C_d^\t B \C_d]_+^{\bullet k})\Delta_d, \\
\label{eq:exC} 	\C_d' &= \C_d (J_k + [-\check{\sgn}(k,d) \C_d^\t B \C_d]_+^{k \bullet}).	
\end{align}
\end{lemma}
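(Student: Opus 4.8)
The plan is to read both identities off the tropical-duality mutation rule of \cite{NZ}, once the three matrices attached to the cluster $d$ are matched with the exchange-, $g$- and $c$-data of the corresponding seed. The key identification is Corollary \ref{C:Cmu}: as $\mub(B) = \C_d^\top B\,\C_d$ whenever $\mub(d) = J[1]$, the matrix $B_d := \C_d^\top B\,\C_d$ is the $B$-matrix at the seed $d$. In the same spirit $\Delta_d$ (rows $\ep_i$) and $\C_d = \Delta_d^{-1}$ are the $g$- and $c$-matrices at $d$, the column sign-coherence of $\C_d$ being recorded by $\sgnc(k,d)$ through Theorem \ref{T:Cmat}. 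Since the $k$-th exchange $\sigma_k(d)$ is one forward seed mutation, the two displayed equations are exactly the $g$- and $c$-matrix mutation formulas written in this notation.

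To prove \eqref{eq:exG} I would compute the weight vector of the exchanged complement directly. By Theorem \ref{T:+-}, the almost complete rigid presentation $\bigoplus_{i\ne k}d_i$ has precisely the complements $d_k$ and $d_k'$, joined by the exchange triangles; passing to $\delta$-vectors gives $\ep_k' = -\ep_k + \sum_{i\ne k}h_{ik}\,\ep_i$ with nonnegative $h_{ik}$ read off from the $\E$-dimensions in those triangles. The crux is to identify these multiplicities with the entries of $[\sgnc(k,d)\,B_d]_+$, i.e. with the seed-$d$ exchange matrix supplied by Corollary \ref{C:Cmu}; this is where tropical duality genuinely enters. Assembling the resulting expression for $\ep_k'$ into matrix form, with the row/column conventions fixed in Section \ref{ss:muS}, yields \eqref{eq:exG}. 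Equivalently, one may pass to the frame $(Q',\S') = \mub(Q,\S)$ with $\mub(d) = J[1]$, where $B(Q') = B_d$ by Corollary \ref{C:Cmu} and $d$ becomes the standard cluster with $\Delta = \mp I_n$; there the exchange is computed by \eqref{eq:Gmu}, and one transports the answer back by conjugating with $\C_d$.

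The companion identity \eqref{eq:exC} is the tropical-dual of \eqref{eq:exG}. A pleasant check is that the elementary factor in \eqref{eq:exC} is exactly the transpose of the one in \eqref{eq:exG}: transposing $J_k + [\sgnc(k,d)B_d]_+^{\bullet k}$ swaps the column support $\bullet k$ for the row support $k\bullet$ and replaces $B_d$ by $B_d^\top = -B_d$, producing $J_k + [-\sgnc(k,d)B_d]_+^{k\bullet}$. Both factors are involutions, $(J_k + [X]_+^{\bullet k})^2 = (J_k + [X]_+^{k\bullet})^2 = I_n$, so given $\C_d = \Delta_d^{-1}$ the content of \eqref{eq:exC} is precisely the tropical-duality compatibility of the $g$- and $c$-matrix mutations \cite{NZ}; alternatively one derives it from Lemma \ref{L:Cmu} by the frame transport of the previous paragraph.

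I expect the sign-and-support bookkeeping to be the real obstacle, rather than any of the structural steps. One must keep the distinction between $\sgn(k,d)$ (sign of the $k$-th column of $\Delta_d$) and $\sgnc(k,d)$ (sign of the $k$-th column of $\C_d$) flagged after Theorem \ref{T:Cmat}, carry the convention $\delta = -g$ throughout, and—most delicately—verify that \eqref{eq:exG} and \eqref{eq:exC} are genuinely tropical-dual so that $\C_d'$ is indeed the inverse of $\Delta_d'$. The placement of $k\bullet$ versus $\bullet k$ and the signs $\pm\sgnc(k,d)$ are dictated entirely by this duality, and the substantive point is the identification of the exchange-triangle multiplicities $h_{ik}$ with the positive parts of the entries of $B_d = \C_d^\top B\,\C_d$.
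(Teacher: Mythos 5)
The paper offers no proof of this lemma at all: it is stated as a bare citation of \cite[Proposition 1.3]{NZ}. So your overall strategy---matching $\Delta_d$, $\C_d$ and $B_d:=\C_d^\t B\C_d$ with the $g$-, $c$- and exchange matrices at the seed $d$ via Corollary \ref{C:Cmu}, and then reading the identities off Nakanishi--Zelevinsky's tropical duality---is exactly what the citation implicitly does, and your auxiliary matrix computations (the two elementary factors are transposes of one another, and each squares to $I$) are correct.

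The genuine gap is the step you defer twice: ``assembling $\ep_k'=-\ep_k+\sum_{i\neq k}h_{ik}\ep_i$ into matrix form yields \eqref{eq:exG}'' and ``\eqref{eq:exC} is the tropical-duality compatibility with \eqref{eq:exG}.'' Neither goes through for \eqref{eq:exG} as printed. Since $\Delta_d$ has the $\ep_i$ as \emph{rows}, replacing the $k$-th row by $-\ep_k+\sum_i h_{ik}\ep_i$ is \emph{left} multiplication by a row-$k$-supported elementary matrix, whereas \eqref{eq:exG} right-multiplies by a column-$k$-supported one, which instead alters the $k$-th coordinate of every row $\ep_i$. Concretely, for $A_2$ ($1\to 2$, $B=\sm{0&1\\-1&0}$) at the negative cluster one has $\Delta_d=\C_d=-I$ and $\sgnc(1,d)=-$; the $1$-st exchange replaces $\ep_1=(-1,0)$ by $(1,-1)$, so $\Delta_d'=\sm{1&-1\\0&-1}$. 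Formula \eqref{eq:exC} returns exactly this for $\C_d'$ (correct, as $\Delta_d'$ is self-inverse), but \eqref{eq:exG} returns $\sm{1&0\\-1&-1}=(\Delta_d')^{\t}$. Hence the identity $F\C_d=\C_dF^\t$ that your ``pleasant check'' silently requires is false: \eqref{eq:exG} as printed is the NZ column-convention ($G$-matrix) formula transplanted without transposing, and it computes $(\C_d')^{-\t}$ rather than $(\C_d')^{-1}$. The correct row-convention statement is $\Delta_d'=\bigl(J_k+[-\sgnc(k,d)\,\C_d^\t B\C_d]_+^{k\bullet}\bigr)\Delta_d$, i.e.\ the same involutive factor as in \eqref{eq:exC} acting on the left; your exchange-triangle derivation, if actually carried out, lands there and not on \eqref{eq:exG}. (Only \eqref{eq:exC} is used later in the paper, so nothing downstream is affected, but a proof of the lemma exactly as stated cannot close.)
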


\begin{lemma}\label{L:sm} The mutations commute with the exchanges on clusters: $\sigma_j \mu_k = \mu_k \sigma_j$.
	If $\mu_k(\br{e})$ is the negative cluster, then so is $\sigma_k(\br{e})$.
\end{lemma}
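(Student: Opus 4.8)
The plan is to deduce both assertions from a single structural fact: the QP-mutation $\mu_k$ is a label-preserving bijection of the set of clusters that is compatible with the almost-complete-rigid relation. First I would record that $\mu_k$ is additive on decorated representations and preserves $\E$-rigidity (the $\mu_k$-invariance of $\E(\M,\M)$, cf. \cite[Theorem 7.1]{DWZ2} and the proof of Lemma \ref{L:Hombar}); combined with Theorem \ref{T:maxrigid} this shows that $\mu_k$ sends clusters to clusters, is injective on indecomposable summands, and satisfies $\mu_k(d)_i=\mu_k(d_i)$, so it carries the $j$-th summand of an ordered cluster to the $j$-th summand.

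For the commutation $\sigma_j\mu_k=\mu_k\sigma_j$ I would fix a cluster $d=\bigoplus_i d_i$ and an index $j$, and invoke Theorem \ref{T:+-}: the almost complete rigid presentation $\bigoplus_{i\neq j}d_i$ has exactly the two completions $d$ and $\sigma_j(d)$. Since $\mu_k$ is an additive involution, a cluster completes $\bigoplus_{i\neq j}d_i$ if and only if its $\mu_k$-image completes $\bigoplus_{i\neq j}\mu_k(d_i)$; hence $\mu_k$ maps $\{d,\sigma_j(d)\}$ bijectively onto the set of completions of $\bigoplus_{i\neq j}\mu_k(d_i)$, which by Theorem \ref{T:+-} is $\{\mu_k(d),\sigma_j(\mu_k(d))\}$. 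As $\mu_k(d)$ appears in both descriptions and $\mu_k$ is injective, matching the remaining elements yields $\mu_k(\sigma_j(d))=\sigma_j(\mu_k(d))$.

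For the second assertion I would reduce to a base case using the first. If $\mu_k(\br{e})=J[1]$, then $\br{e}=\mu_k(J[1])$ by involutivity, so
\[
\sigma_k(\br{e})=\sigma_k(\mu_k(J[1]))=\mu_k(\sigma_k(J[1])),
\]
and it suffices to prove the base case $\sigma_k(J[1])=\mu_k(J[1])$, for then $\sigma_k(\br{e})=\mu_k(\mu_k(J[1]))=J[1]$. The base case I would verify through the initial-seed formula \eqref{eq:Gmu}: the $\delta$-vector of $P_i[1]$ is $-\eu_i$, and one computes $\mu_k(-\eu_i)=-\eu_i$ for $i\neq k$ while $\mu_k(-\eu_k)=\eu_k-\sum_{l\neq k}[b_{l,k}]_+\eu_l$. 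Thus $\mu_k(J[1])$ is a cluster containing $\bigoplus_{i\neq k}P_i[1]$ and differing from $J[1]$ only at the $k$-th summand, and $\sigma_k(J[1])$ has these same two properties by definition. Since $\bigoplus_{i\neq k}P_i[1]$ has exactly two completions (Theorem \ref{T:+-}), one of which is $J[1]$, the other is unique, and both $\mu_k(J[1])$ and $\sigma_k(J[1])$ equal it.

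The hard part will be the structural input of the first paragraph---that $\mu_k$ is a label-preserving bijection of clusters respecting the almost-complete-rigid relation---which rests on the additivity and $\E$-rigidity invariance of QP-mutation; once that is in place the uniqueness of complements in Theorem \ref{T:+-} and the short $\delta$-vector computation do all the remaining work.
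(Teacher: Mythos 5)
Your proposal is correct and follows essentially the same route as the paper: the commutation $\sigma_j\mu_k=\mu_k\sigma_j$ is obtained by observing that $\mu_k$ carries the two completions of $e_{\hatj}$ (Theorem \ref{T:+-}) to the two completions of $\mu_k(e_{\hatj})$, and the second assertion reduces via \eqref{eq:Gmu} to the fact that $\mu_k(J[1])=\bigoplus_{i\neq k}P_i[1]\oplus\mu_k(P_k[1])$ is the unique non-negative completion of $\bigoplus_{i\neq k}P_i[1]$. Your extra reduction of the second claim to the base case $\sigma_k(J[1])=\mu_k(J[1])$ via part one is harmless but not needed; the paper applies the same uniqueness argument directly to $\br{e}$.
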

\begin{proof} Let $\br{e}=\bigoplus_i e_i$ be a cluster. Note that $\mu_k(\br{e})$ and $\mu_k \sigma_j(\br{e})$ are  two different complements of $\mu_k(e_{\hatj})$. By Theorem \ref{T:+-}, $\sigma_j \mu_k(\br{e})$ must coincide with $\mu_k \sigma_j(\br{e})$.
For the second statement, if $\mu_k(\br{e})$ is the negative cluster, then
by \eqref{eq:Gmu} $\br{e}= \bigoplus_{i\neq k} P_{i}[1] \oplus \mu_k(P_k[1])$,
which is the unique cluster complementary to $\bigoplus_{i\neq k} P_{i}[1]$. 
Hence, $\sigma_k(\br{e})$ must be negative.
\end{proof}

We also invite readers to \cite[Theorem 7.6]{Ftf} for another perspective on Theorem \ref{T:Cmat} in the setting of quivers with potentials. For nondegenerate quivers with potentials, all exchange pairs are regular.
For such an exchange pair $(d_-, d_+)$ with $d_0$ their common complement, let $L=\coker(d_+)$, $N=\coker(\tau d_-)$,
and $f$ be the unique nonzero homomorphism $L\to N$.
According to \cite[Theorem 7.6]{Ftf}, the $c$-vector corresponding to $d_\pm$ in $d_\pm\oplus d_0$ can be realized as $\pm \dv(\Img(f))$.
Moreover, we have that $\Img(f)=\fc_{\delta_-}(L) = t_{\delta_+}(N)$.
It follows from \cite[Theorem 3.4 and Lemma 3.7]{Fcf} that $\dv(\Img(f))$ is a vertex on the Newton polytopes $\Nc(L)$ and $\N(N)$. Recall the {\em Newton polytope ${\N}(M)$ of a representation} $M$ is the convex hull of
	$\{ \dv L \mid L\hookrightarrow M \}$ in $\mb{R}^{Q_0}$.	
The {\em dual} Newton polytope $\check{\N}(M)$ of a representation $M$ is the convex hull of
$\{ \dv N \mid M\twoheadrightarrow N \}$ in $\mb{R}^{Q_0}$.	
We are curious if the converse is true.
\begin{question} \label{q:c1}
Must a vertex $\gamma$ of $\Nc(L)$ such that $\delta \cdot \gamma=1$ be a positive $c$-vector?
\end{question}

\subsection{Mutation of Positive Complements} \label{ss:muDelta}

\begin{lemma}\label{L:existj} If $\ep(k)=\Delta_{e^\pm}(n,k)=0$, then the $k$-th column of $\Delta_{e^\pm}$ must be the unit vector $\pm\eu_j$ for some $j$.
In this case, the $j$-th column of $\pm \C_{e^\pm}$ is $\eu_k$.
\end{lemma}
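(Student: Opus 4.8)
The plan is to deduce the second assertion formally from the first, and to prove the first by combining sign-coherence with a total-dimension count. Write $\Delta=\Delta_{e^\pm}$ and $\C=\C_{e^\pm}=\Delta^{-1}$ (Definition \ref{D:Delta}), let $\gamma_1,\dots,\gamma_n$ denote the columns of $\C$, and let $v$ be the $k$-th column of $\Delta$, so that $v=\Delta\eu_k$ and hence $\C v=\eu_k$. Since $\ep$ is the last row of $\Delta$ by convention, the hypothesis $\Delta(n,k)=\ep(k)=0$ is exactly the statement $v(n)=0$. Granting the first assertion $v=\pm\eu_j$, the identity $\eu_k=\C v=\pm\C\eu_j=\pm\gamma_j$ shows that the $j$-th column of $\pm\C$ equals $\eu_k$, which is the second assertion; so everything reduces to locating $v$.

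For this I would expand $\eu_k=\C v=\sum_{i=1}^n v(i)\gamma_i$ and read off the signs of the $\gamma_i$ from Lemma \ref{L:Cmat}. In the $+$-case the columns $\gamma_1,\dots,\gamma_{n-1}$ are the dimension vectors of the simples in $\rep(e^\perp)$, hence nonnegative and nonzero, whereas the last column $\gamma_n$ (a stable-reduction vector, up to sign) is nonpositive; in the $-$-case all these signs are reversed. The crucial observation is that the coefficient of the exceptional column $\gamma_n$ is precisely $v(n)=0$, so that term disappears and we are reduced to $\sum_{i=1}^{n-1}v(i)\gamma_i=\eu_k$, a relation involving only the nonnegative simple dimension vectors.

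I would then invoke sign-coherence of the cluster matrix $\Delta$ (Subsection \ref{ss:muS}): the column $v$ is entrywise of one sign, and $v\neq 0$ because $\Delta$ is invertible. In the $+$-case, $v\le 0$ would force $\sum_{i=1}^{n-1}v(i)\gamma_i\le 0$, contradicting its equality with $\eu_k$; hence $v\ge 0$. Summing the entries on both sides of $\sum_{i=1}^{n-1}v(i)\gamma_i=\eu_k$ now yields $\sum_{i=1}^{n-1}v(i)\,s_i=1$, where $s_i\ge 1$ is the total dimension of the $i$-th simple. As the $v(i)$ are nonnegative integers, exactly one coefficient $v(j)$ equals $1$ with $s_j=1$ and all others vanish; a nonnegative integer vector of total dimension $1$ is a unit vector, so $\gamma_j=\eu_l$, and comparison with $\eu_k$ gives $l=k$. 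Thus $v=\eu_j$; the $-$-case is identical once one replaces each $\gamma_i$ ($i\le n-1$) by the simple dimension vector $-\gamma_i$ and runs the same count, giving $v=-\eu_j$. In both cases $v=\pm\eu_j$ as claimed.

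The one delicate point is the sign bookkeeping: $\C_{e^\pm}$ has a single column whose sign opposes all the others (the stable-reduction column $\gamma_n$), and the whole argument works only because the hypothesis $\ep(k)=0$ is exactly the vanishing of its coefficient $v(n)$ in the expansion of $\eu_k$. Once that column is removed, the remaining step is a routine count, which also yields the uniqueness of $j$ and the value $\ep_j^\pm(k)=v(j)=\pm 1$ recorded in Theorem \ref{intro:T:mucomp}.
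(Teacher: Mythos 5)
Your proof is correct, and while it uses the same two ingredients as the paper's argument --- sign-coherence of the columns of $\Delta_{e^\pm}$ and the nonnegativity of the first $n-1$ columns of $\pm\C_{e^\pm}$ (Lemma \ref{L:Cmat}, equivalently Lemma \ref{L:Bon}) --- the way you close the argument is genuinely different. The paper fixes the $(k,k)$-entry of $\C_{e^\pm}\Delta_{e^\pm}=I$, locates one index $j$ with $\C_{e^\pm}(k,j)\Delta_{e^\pm}(j,k)>0$, forces both factors to be $1$, and then appeals to nondegeneracy of the $C$- and $\Delta$-matrices to kill the remaining entries of the column and of $\gamma_j$; that last appeal is the delicate step, since term-by-term vanishing of $\C_{e^\pm}(k,j')\Delta_{e^\pm}(j',k)$ only tells you that \emph{one} of the two factors vanishes. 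You instead expand $\eu_k=\sum_{i=1}^{n-1}v(i)\gamma_i$ and sum \emph{all} coordinates, using that each simple in $\rep(e^\perp)$ has total dimension at least $1$; this single integrality count forces $v(j)=1$ for exactly one $j$, $v(i)=0$ otherwise, and $\gamma_j=\eu_k$ simultaneously, so both assertions (and the uniqueness of $j$) drop out at once without any separate nondegeneracy argument. Your route is, if anything, the more airtight of the two; the only thing to make explicit is that you are working under the standing assumption of Section \ref{ss:simple} that $e$ is indecomposable, so that $\ep$ occupies a single row of $\Delta_{e^\pm}$ and the other $n-1$ columns of $\pm\C_{e^\pm}$ are indeed dimension vectors of simples.
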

\begin{proof} We treat the ``+" case only.
By definition $\sum_{j'=1}^{n-1} \C_{e^+}(i,j') \Delta_{e^+}(j',k) = \delta_{i,k}$, and by Lemma \ref{L:Bon} 
each integer $\C_{e^+}(i,j')\geq 0$ for $j'\neq n$.
It follows that there exists some $j$ such that $\Delta_{e^+}(j,k)>0$ and $\C_{e^+}(k,j)>0$.
Then $\Delta_{e^+}(j',k)\geq 0$ for each $j'$ by the sign coherence of $\Delta$-matrix.
Hence, both $\Delta_{e^+}(j,k)$ and $\C_{e^+}(k,j)$ must be $1$.
Then $\sum_{j'\neq j} \C_{e^+}(i,j') \Delta_{e^+}(j',k) =0$,
but any $C$-matrix is nondegenerate, we see that other $\Delta_{e^+}(j',k)$ must be $0$. 

Similarly, we have that $\sum_{i \neq k} \Delta_{e^+}(k,i) \C_{e^+}(i,j)  =0$,
which implies other $\C_{e^+}(i,j)$ must be $0$ by the nondegeneracy of $\Delta$-matrices.
Hence, the $j$-th column of $\C_{e^+}$ is $\eu_k$.
\end{proof}

\begin{theorem}[Mutation of $\pm$-Complements] \label{T:mucomp}
	We have the following mutation rule for the $i$-th component of the positive and negative complements $\ep_c^\pm$ of $\ep$:
	\begin{align}\label{eq:mucomp} 
		\mu_k(\ep)_i^\pm = 
		\begin{cases}
			\mu_k(\ep_i^\pm)' & \text{if } \ep(k)=0 \text{ and } i=j \\
			\mu_k(\ep_i^\pm) & \text{otherwise}   
		\end{cases}
	\end{align}
	where $j$ is the unique index such that $\ep_j^\pm(k)=\pm 1$ as in Lemma \ref{L:existj}, and $\mu_k(\ep_j^\pm)'$ is the $j$-th exchange of $\mu_k(\Delta_{e^\pm})$.
\end{theorem}


\begin{proof} We give a proof for the positive complement only.
	Since the space $\E(\M,\M)$ is mutation-invariant \cite{DWZ2}, the above defined $\delta$-vector is a complement of $\mu_k(\ep)$.
	To show it is the positive complement, we utilize Lemma \ref{L:Bon}.	
	If $\ep(k)\neq 0$, say $\ep(k)< 0$ ($\ep(k)> 0$ can be treated similarly),
	then by the sign-coherence the $k$-th column of $\Delta_{e^+}$ is nonpositive.	
	By Lemma \ref{L:Bon}, all columns except for the last one (corresponding to $\ep$) of $\C_{e^+}$ is nonnegative.
	Let $\gamma_i$ be such a column in $\C_{e^+}$.
	It suffices to show that $\mu_k(\gamma_i)$ remains nonnegative.
	By Lemma \ref{L:Cmu}, if $\gamma_i(k)\leq 0$, then we are done.
	But if $\gamma_i(k)> 0$, then $\ep \gamma_i=0$ implies $\sum_{j\neq k} \ep(j)\gamma_i(j) \neq 0$.
	So there must be a strictly positive entry outside position $k$, which is invariant under $\mu_k$.
	By the sign coherence, we conclude that this column is still nonnegative.
	
	Now if $\ep(k)=0$, then by Lemma \ref{L:existj} the $k$-th column of $\Delta_{e^+}$ must be the unit vector $\eu_j$ for some $j$. In this case, the $j$-th column of $\C_{e^+}$ is $\eu_k$.
	By \eqref{eq:Cmu}, the $j$-th column of $\mu_k(\C_{e^+})$ is $-\eu_k$.
	In particular, $\mu_k(\ep_c^+)$ is not the positive complement of $\mu_k(\ep)$.
	However, if we replace the $j$-th row of $\mu_k(\Delta_{e^+})$ by its (only) exchange, then by \eqref{eq:exC} the $j$-th column of $\mu_k(\C_{e^+})$ turns into $\eu_k$ and other columns do not change signs.
	By Lemma \ref{L:Bon}, what we get is the positive complement.
\end{proof}
\begin{corollary}\label{C:reachable-complement} If $e$ is $\pm$-reachable, then so is $e\oplus e_c^\pm$.
\end{corollary}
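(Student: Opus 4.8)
The plan is to induct on the length $\ell$ of a mutation sequence $\mub=\mu_{k_\ell}\cdots\mu_{k_1}$ witnessing that $e$ is $\pm$-reachable, that is, such that $\mub(\ep)$ is negative (in the $-$ case) or positive (in the $+$ case). The base case $\ell=0$ is when $\ep$ is already signed: then $\mub(e)=\pm P$ is a (shifted) projective, its $\pm$-complement is the sum of the remaining projectives at the same shift, and $e\oplus e_c^\pm$ is the reference cluster $A[1]$ (resp.\ $A[0]$), which is (extended-)reachable by definition.

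For the inductive step I would write $\mub=\mub'\circ\mu_{k_1}$, so that $\mu_{k_1}(e)$ is $\pm$-reachable via the shorter sequence $\mub'$; by the induction hypothesis the cluster $\mu_{k_1}(e)\oplus(\mu_{k_1}e)_c^\pm$ is reachable. I would then invoke Theorem \ref{T:mucomp} to compare this with the QP-mutation $\mu_{k_1}(e\oplus e_c^\pm)=\mu_{k_1}(e)\oplus\mu_{k_1}(e_c^\pm)$: when $\ep(k_1)\neq 0$ the two clusters coincide, while when $\ep(k_1)=0$ they differ exactly by the single exchange $\sigma_j$ at the index $j$ of Lemma \ref{L:existj}, which acts on a complement summand (since $e$ occupies the distinguished last position). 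Thus in either case $\mu_{k_1}(e\oplus e_c^\pm)$ is obtained from a reachable cluster by at most one exchange, $\sigma_j$ being an involution so its direction is immaterial.

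It therefore remains to establish two closure properties of the class of reachable clusters: (a) if $\M$ is reachable then so is $\mu_k(\M)$, and (b) if $\M$ is reachable then so is any exchange $\sigma_j(\M)$. Property (a) is immediate since $\mu_k$ is an involution: if $\mub(\M)=A[1]$ then $(\mub\circ\mu_k)(\mu_k\M)=A[1]$. Property (b) is where Lemma \ref{L:sm} does the work: commuting a witnessing sequence past the exchange gives $\mub(\sigma_j\M)=\sigma_j(\mub(\M))=\sigma_j(A[1])$, and the argument of Lemma \ref{L:sm}, together with the commutation $\sigma_j\mu_j=\mu_j\sigma_j$, identifies $\sigma_j(A[1])$ with $\mu_j(A[1])$, so that $\mu_j\circ\mub$ witnesses the reachability of $\sigma_j(\M)$. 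Combining (a) and (b) with the previous paragraph, $e\oplus e_c^\pm=\mu_{k_1}\big(\mu_{k_1}(e\oplus e_c^\pm)\big)$ is reachable, completing the induction.

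I expect the main obstacle to be the bookkeeping around the exchange term in Theorem \ref{T:mucomp}: one must verify that the extra exchange $\sigma_j$ really touches only a complement summand, and—most importantly—that Lemma \ref{L:sm} applies with the index $j$ tracked correctly through the commutation $\sigma_j\mu_k=\mu_k\sigma_j$, so that property (b) goes through. A secondary point to confirm is that all the mutations produced remain admissible, and that in the $+$ case ``reachable'' is read in the extended sense so that the positive reference cluster $A[0]$ indeed counts as reachable.
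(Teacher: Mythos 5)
Your proof is correct and follows essentially the same route as the paper: the paper's own argument is the one-line observation that $\mub(e)_c^\pm$ is signed and that, by Theorem \ref{T:mucomp}, each mutation step changes the complement by at most one exchange, which ``does not escape the connected component.'' Your induction on the length of $\mub$, with closure properties (a) and (b) justified via Lemma \ref{L:sm}, is exactly the detailed unpacking of that sentence, and the points you flag (tracking the exchange index through the commutation, and reading ``reachable'' in the extended sense for the $+$ case) are precisely the details the paper leaves implicit.
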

\begin{proof} Suppose that $e$ is negative reachable: $\mub(e)$ is negative for some $\mub$.
Then $\mub(e)_c^-$ is negative. We need to show that $e_c^-$ is $-$-reachable.
But this follows from Theorem \ref{T:mucomp} because no exchange escapes the connected component.
The proof for the ``$+$" case is similar.
\end{proof}

\begin{algorithm} \label{a:Qep}
In this algorithm we compute the quiver with potential of the orthogonal subcategory $\rep(e^\perp)$ for $e$ $\pm$-reachable.\\
Step 1: Find a sequence of mutations $\mub$ such that $\mub(\ep)$ is positive (resp. negative). Using Theorem \ref{T:mucomp} we find the $\pm$-complement $\ep_c^\pm$ of $\ep$.\\
Step 2: Find a sequence of mutations $\mub$ such that $\mub(\ep_c^\pm \oplus \ep)$ is positive (resp. negative).\\
Step 3: Mutate the original quiver with potential through this sequence, then delete the vertex $i$ if $\mub(\ep)=\pm\eu_i$.
We get our desired quiver with potential by Theorem \ref{T:perpQP}.
\end{algorithm}
At present, no definitive algorithm is known for finding the mutation sequence in Steps 1 and 2. However, one possible approach is to use the dimension vector mutation formula in a trial-and-error manner.
If only the quiver (and not the potential) is relevant---as is often the case in applications to cluster algebras---the algorithm can be simplified, as shown in Algorithm \ref{a:Qepsimple} below.

\begin{remark} Algorithm \ref{a:Qep} even makes sense for Jacobi-infinite QPs. Conjecturally it also computes the quiver with potential of $\rep(e^\perp)$.
\end{remark}

\subsection{Mutation of Simples in $\rep(e^\perp)$}  \label{ss:muSperp}
It is a little unexpected that the mutation formula for the extended simple matrix $\C_{e^\pm}$ can be simplified so that no exchange is needed explicitly.
We use the notation $\max_{+}=\max$ and $\max_{-}=\min$ below.
\begin{theorem}[Mutation of Simples] \label{T:musimple} We have the following mutation formula for the matrix $\C_{e^\pm}$. The matrix $\C_{\mu_k(e)^\pm}$ only changes at the $k$-th row, and
\begin{equation}\label{eq:musimple} \gamma_{i}'(k) = \begin{cases} 		
		[\sgn(\ep(k))b_k]_+\gamma_{i} - \gamma_{i}(k) &  \ep(k)\neq 0 \\
		\max_{\mp}([-b_k]_+\gamma_{i},\ [b_k]_+\gamma_{i} )-\gamma_{i}(k) &  \ep(k)=0 \text{ and } i\neq j\\
	\pm	1 & \ep(k)=0 \text{ and } i=j,
	\end{cases} \end{equation}
where $\gamma_i'$ is the $i$-th column of the matrix $\C_{\mu_k(e)^\pm}$ and $j$ is as in Lemma \ref{L:existj}.
\end{theorem}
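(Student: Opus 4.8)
The plan is to read off the formula \eqref{eq:musimple} from the complement-mutation rule of Theorem \ref{T:mucomp}, feeding it into the $C$-matrix mutation formula \eqref{eq:Cmu} and the exchange formula \eqref{eq:exC}. Write $\gamma_i$ for the $i$-th column of $\C_{e^\pm}$ and $\gamma_i'$ for that of $\C_{\mu_k(e)^\pm}$, and keep in mind that the latter is assembled in the mutated frame $\mu_k(Q,\S)$. Since $\mu_k$ changes only the $k$-th coordinate of every $\delta$- and $c$-vector, the entries of $\gamma_i'$ in rows $\neq k$ coincide with those of $\gamma_i$, which already accounts for the claim that only the $k$-th row changes; the whole problem is the single scalar $\gamma_i'(k)$.

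\emph{Case $\ep(k)\neq 0$.} By Theorem \ref{T:mucomp}, $\mu_k(e)^\pm=\mu_k(e^\pm)$, hence $\C_{\mu_k(e)^\pm}=\mu_k(\C_{e^\pm})$ and \eqref{eq:Cmu} applies verbatim. One only notes that the $k$-th column of $\Delta_{e^\pm}$ is sign-coherent with last entry $\Delta_{e^\pm}(n,k)=\ep(k)$, so $\sgn(k,e^\pm)=\sgn(\ep(k))$; this is precisely the first line of \eqref{eq:musimple}.

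\emph{Case $\ep(k)=0$.} Here Theorem \ref{T:mucomp} says $\C_{\mu_k(e)^\pm}$ is the $j$-th exchange of $\mu_k(\C_{e^\pm})$, with $j$ the index of Lemma \ref{L:existj}. From that lemma, the $j$-th column of $\C_{e^\pm}$ is $\pm\eu_k$ and $\sgn(k,e^\pm)=\pm$. I would first apply \eqref{eq:Cmu}: using $b_k(k)=0$, the column $\pm\eu_k$ is sent to $\mp\eu_k$, while for $i\neq j$ the $k$-th entry becomes $[\pm b_k]_+\gamma_i-\gamma_i(k)$. I then apply \eqref{eq:exC} at index $j$. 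Because the exchanged column $\mu_k(\gamma_j)=\mp\eu_k$ is supported only in row $k$, right-multiplication by $J_j+[\cdots]_+^{j\bullet}$ negates it --- giving $\pm\eu_k$, hence the value $\pm1$ of the third line --- and modifies every other column only in its $k$-th entry, by $\mp m_i$, where $m_i$ is the exchange multiplier.

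The heart of the argument, and the step I expect to be the main obstacle, is to evaluate $m_i$ and see the two contributions collapse to the single $\max_\mp$ expression. The subtlety is that the matrix entering \eqref{eq:exC} is the initial exchange matrix \emph{in the frame of} $\mu_k(\C_{e^\pm})$, namely $\mu_k(B(Q))$, whose $k$-th row is $-b_k$ by the ordinary matrix-mutation rule. Combining this with $\mu_k(\gamma_j)=\mp\eu_k$ and $b_k(k)=0$, the relevant entry of $\mu_k(\C_{e^\pm})^\top\mu_k(B(Q))\,\mu_k(\C_{e^\pm})$ evaluates to $\pm b_k\gamma_i$, and $\sgnc(j,\mu_k(e^\pm))=\mp$; feeding these into \eqref{eq:exC} gives $m_i=[b_k\gamma_i]_+$ in both signs. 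Therefore
$$\gamma_i'(k)=[\pm b_k]_+\gamma_i-\gamma_i(k)\mp[b_k\gamma_i]_+ .$$
Writing $p=[b_k]_+\gamma_i$ and $q=[-b_k]_+\gamma_i$, so that $b_k\gamma_i=p-q$, the identities $p-[p-q]_+=\min(p,q)$ and $q+[p-q]_+=\max(p,q)$ turn this into $\max_\mp([-b_k]_+\gamma_i,[b_k]_+\gamma_i)-\gamma_i(k)$, the upper sign yielding $\min$ and the lower $\max$, exactly as in \eqref{eq:musimple}. Keeping the frame-dependence of $B$ and the interlocking $\pm$/$\mp$ signs straight throughout is the only genuinely delicate point; the first case and the $i=j$ subcase are pure bookkeeping.
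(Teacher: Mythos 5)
Your proposal is correct and follows essentially the same route as the paper: both derive \eqref{eq:musimple} by transporting the complement-mutation rule of Theorem \ref{T:mucomp} to the $C$-matrix side via \eqref{eq:Cmu}, performing the $j$-th exchange \eqref{eq:exC} in the $\ep(k)=0$ case with the special column identified by Lemma \ref{L:existj}, and collapsing the two $[\,\cdot\,]_+$ contributions into the $\max_{\mp}$ expression by the same elementary identities. If anything, your version tracks the signs ($\sgn(k,e^\pm)=\pm$, $\sgnc(j,\mu_k(e^\pm))=\mp$, and the frame in which $B$ is taken) more uniformly across the two $\pm$ cases than the paper, which fixes one sign subcase and leaves the other to symmetry.
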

\begin{proof} We will prove for $\C_{e^-}$ (the proof for $\C_{e^+}$ is similar).
By definition it suffices to show the above piecewise linear transformation is inverse to \eqref{eq:mucomp}.
If $\ep(k)\neq 0$, then this follows from Lemma \ref{L:Cmu}.

If $\ep(k)= 0$, then the $j$-th column $\gamma_j$ of $\C_{e^-}$ is $-\eu_k$ by Lemma \ref{L:existj}. 
Moreover, the inverse of the second case of \eqref{eq:mucomp} says that
$\C_{\mu_k(e)^-}$ is obtained from $\mu_k(\C_{e^-})$ by a $j$-th exchange.
Here, we use the fact that the exchange commutes with the mutation.
By \eqref{eq:exC} $\gamma_j'$ is still $-\eu_k$, which corresponds to the third case in \eqref{eq:musimple}.
Note that we pick the sign $\sgn(k,e)=-1$.

Let us denote the $i$-th column of $\C^\circ = \mu_k(\C_{e^-})$ by $\gamma_i^\circ$.
Then (before the exchange) $\gamma_i^\circ(k) = [-b_k]_+\gamma_{i} - \gamma_{i}(k)$ by \eqref{eq:Cmu}.
Note that $\gamma_j = -\eu_k$ mutates to $\gamma_j^\circ = \eu_k$, so its sign is $\check{\sgn}(j, \C^\circ) = +1$. 
Furthermore, the exchange matrix of the cluster is invariant under initial-seed mutations, meaning $(\C^\circ)^\top \mu_k(B) \C^\circ = \C_{e^-}^\top B\C_{e^-}$. 
The exchange formula \eqref{eq:exC} gives for $i\neq j$:
\begin{align*} \gamma_i' &={\C^\circ}(-,j)[-\C_{e^-}^\top B\C_{e^-}]_+(j,i) + \gamma_i^\circ \\
	&= \eu_k[B\C_{e^-}]_+(k,i) + \gamma_i^\circ\\
\shortintertext{which implies that only the $k$-coordinate of $\gamma_i'$ changes:}
	\gamma_i'(k)&= [B\C_{e^-}]_+(k,i) + ([-b_k]_+\gamma_{i} - \gamma_{i}(k))\\
	&= \max(b_k\gamma_{i},\  0)+ [-b_k]_+\gamma_{i} -\gamma_{i}(k) \\
	&= \max([b_k]_+\gamma_{i},\ [-b_k]_+\gamma_{i} ) -\gamma_{i}(k).
\end{align*}
\end{proof}

\begin{algorithm}[simplified version of Algorithm \ref{a:Qep}] \label{a:Qepsimple}
Find a sequence of mutations $\mub$ such that $\mub(\ep)$ is negative or positive. Using Theorem \ref{T:musimple} to find the matrix $\C_{e^\perp}$.
Then the $B$-matrix of the projected quiver $_eQ$ is given by $\C_{e^\perp}^\top B \C_{e^\perp}$ as in Corollary \ref{C:Cmu}.
\end{algorithm}

\begin{example} Consider the following quiver with some nondegenerate potential:
$$\Markovext$$	 
Let us calculate $_eQ$ for $\ep=(0,0,1,-2)$.
The mutations $\mu_3\mu_1$ make $\ep$ negative $(=-\eu_1)$.
$$(-1,0,0,0) \xrightarrow{\mu_1} (1,0,-1,0) \xrightarrow{\mu_3} (0,0,1,-2)$$
We apply Theorem \ref{T:musimple} to get the matrix $-\C_{e^-}$
\begin{equation*} \sm{1&0&0&0\\0&1&0&0\\0&0&1&0\\0&0&0&1} \xrightarrow{\mu_1} \sm{-1&0&1&0\\0&1&0&0\\0&0&1&0\\0&0&0&1} \xrightarrow{\mu_3} \sm{-1&0&1&0\\0&1&0&0\\-1&0&0&2\\0&0&0&1}
\end{equation*}
So $\C_{e^\perp}^\top = \sm{0&1&0&0\\1&0&0&0\\0&0&2&1}$ and $\C_{e^\perp}^\top B  \C_{e^\perp} = \sm{0 & -2&2\\ 2 & 0&-2\\ -2 & 2&0}$. We get the Markov quiver.

We can compare this with Algorithm \ref{a:Qep}.
We find by Theorem \ref{T:mucomp} its negative complement $\ep_c^- = -\eu_4 \oplus -\eu_2 \oplus (-1,0,1,-2)$.
Then the mutation sequence $\mu_1\mu_3$ brings $e\oplus e_c^-$ to the negative ones,
and the original quiver to $B' = \sm{0 & 2&-2& 1\\ -2 & 0&2& -1\\ 2 & -2&0& 0\\ -1 & 1&0& 0}$.
The projected quiver is the full subquiver of the first three vertices.
\end{example}

\begin{corollary} \label{C:muSchred} Define $\gamma_\pm(-\eu_i)=-\eu_i$. Let $\gamma_{\pm}:=\gamma_{\pm}(e)$ and $\gamma_{\pm}':=\gamma_{\pm}(\mu_k(e))$. Then $\gamma_\pm'$ changes only at the $k$-th coordinate, and
the following relation holds.
\begin{equation}\label{eq:muSchred} \gamma_\pm'(k) = \begin{cases} [\sgn(\ep(k))b_k]_+\gamma_\pm - \gamma_\pm(k) &  \ep(k)\neq 0 \\
		\max_{\mp}([-b_k]_+\gamma_\pm,\ [b_k]_+\gamma_\pm )- \gamma_\pm(k) &  \ep(k)=0.
	\end{cases} \end{equation}
\end{corollary}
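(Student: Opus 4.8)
The plan is to deduce this corollary as the restriction of Theorem \ref{T:musimple} to a single, distinguished column, so that once the correct column is identified almost no new computation is needed. By Proposition \ref{P:Schred}(2) and Lemma \ref{L:Cmat}(2), together with Theorem \ref{T:Schred}, the vector $\gamma_+(e)$ is the $\ep$-indexed column of $\C_{e^-}$ and $\gamma_-(\tau e)$ is the $\ep$-indexed column of $-\C_{e^+}$; thus, up to an overall sign, $\gamma_\pm$ is exactly one column of the extended simple matrix $\C_{e^\mp}$. In the degenerate case $e=-\eu_i$ the cokernel $E$ vanishes, $\Delta_{e^\mp}$ is a signed permutation matrix, and the $\ep$-column of $\C_{e^\mp}$ is $-\eu_i$, which is consistent with the stated convention $\gamma_\pm(-\eu_i)=-\eu_i$; the degenerate cases (either $e$ or $\mu_k(e)$ equal to some $-\eu_i$) will be checked directly against this convention.

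First I would note that $\mu_k$ carries $\C_{e^\mp}$ to $\C_{\mu_k(e)^\mp}$, and by Theorem \ref{T:musimple} only the $k$-th row of this matrix changes. Restricting to the $\ep$-column immediately yields that $\gamma_\pm'$ differs from $\gamma_\pm$ only in the $k$-th coordinate, which is the first assertion of the corollary. The value of that coordinate is then read off from \eqref{eq:musimple}: for $\ep(k)\neq 0$ this is the first branch, which already matches the first branch of \eqref{eq:muSchred}.

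The one step with genuine content is to verify that the $\ep$-column never coincides with the exchanged column, that is, that its index differs from the index $j$ of Lemma \ref{L:existj}; otherwise the special branch $i=j$ of \eqref{eq:musimple} could be triggered and the clean formula would fail. When $\ep(k)=0$, Lemma \ref{L:existj} identifies the $j$-th column of $\pm\C_{e^\pm}$ with $\eu_k$, so pairing against $\ep$ gives $\ep\cdot(\pm\eu_k)=\pm\ep(k)=0$. On the other hand, since $\C_{e^\pm}=\Delta_{e^\pm}^{-1}$ and $\ep$ is a row of $\Delta_{e^\pm}$, the $\ep$-column $\gamma$ satisfies $\ep\cdot\gamma=1$, hence $\ep\cdot\gamma_\pm=\pm1\neq0$. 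Therefore the $\ep$-column is distinct from the $j$-th column, and for $\ep(k)=0$ only the middle (non-special) branch of \eqref{eq:musimple} governs it.

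It then remains to match signs and the $\max_{\pm}$ conventions. Here I would carry through the overall sign relating $\gamma_\pm$ to the corresponding column of $\C_{e^\mp}$ (present for one sign, absent for the other), using the elementary identity $-\max([-b_k]_+c,[b_k]_+c)+c(k)=\max([-b_k]_+(-c),[b_k]_+(-c))-(-c)(k)$, which exchanges $\max$ with $\min$. Propagating this through the middle branch of \eqref{eq:musimple} converts it into the $\max_{\mp}$ form displayed in \eqref{eq:muSchred}. I expect the index-avoidance argument of the third paragraph to be the conceptual heart of the proof, and the bookkeeping of signs among $\C_{e^-}$, $-\C_{e^+}$, and the $\max_{\pm}$ convention to be the only place where real care is required.
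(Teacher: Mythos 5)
Your proposal is correct and follows exactly the route of the paper, whose entire proof is the one-line citation ``This follows from Theorem \ref{T:musimple} and Proposition \ref{P:Schred}.(2)'': identify $\gamma_\pm$ with the $\ep$-indexed column of the appropriate extended $C$-matrix and read off the mutation rule column-wise. The one detail you supply that the paper leaves implicit --- that this column never has index $j$ because it pairs to $1$ against $\ep$ while Lemma \ref{L:existj} forces the $j$-th column to pair to $0$, so the exceptional branch of \eqref{eq:musimple} is never triggered --- is exactly the right justification for the two-branch form of \eqref{eq:muSchred}.
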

\begin{proof} This follows from Theorem \ref{T:musimple} and Proposition \ref{P:Schred}.(2).
\end{proof}

We also recall the {\em Schur rank} $\gamma_s(e)$ introduced in \cite{Fsc}, which plays an interesting role in the cluster algebra theory. 
For $e$ indecomposable and nonnegative, it is by definition the rank of a general morphism in $\Rcirc(E)$. 
In particular, we have the inequality $0\leq \gamma_\pm(e)  \leq  \gamma_s(e)$ for any indecomposable $e\neq P_i[1]$. 
It is interesting to append this inequality to the inequality \cite[(5.5)]{Fsc}.
\begin{warning} 
Unlike the Schur rank, different extended-reachable $\delta$-vectors might share the same $\gamma_+$ or $\gamma_-$.
\end{warning}

\begin{conjecture} Let $d$ (resp. $d'$) be a general presentation of weight $\delta$ (resp. $\mu_k(\delta)$). 
The vectors $\gamma_\pm(d)$ and $\gamma_\pm(d')$ are also related by \eqref{eq:muSchred}.
\end{conjecture}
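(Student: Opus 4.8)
The plan is to first extend $\gamma_\pm$ to an arbitrary (not necessarily rigid) general presentation and then to transport the reachable computation of Corollary~\ref{C:muSchred} to this setting through a genericity argument. Since $\rho_\pm$ in Definition~\ref{D:Schredbr} is defined for indecomposable representations, I would take $\delta$ indecomposable as the primary case, so that the cokernel $M$ of a general $d\in\PHom(\delta)$ is indecomposable; mutation preserves indecomposability of the canonical decomposition (as $\E(\M,\M)$ is mutation-invariant \cite{DWZ2}), so $\mu_k(\delta)$ is again indecomposable. Writing $\Rcirc=\Rcirc(\End M)$ and $\Rcirc M=\sum_{f\in\Rcirc}\Img f$, set $\gamma_+(d)=\dv(M/\Rcirc M)$ and define $\gamma_-(d)$ dually as $\dv\rho_-(M)$. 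Because $\dim\End(M)$, $\dim\Rcirc$, and $\dim\Rcirc M$ are all upper semi-continuous, each is constant on a dense open subset of $\PHom(\delta)$; intersecting these shows $\gamma_\pm(d)$ depends only on $\delta$, so the statement is well posed. The more delicate decomposable case I would postpone, since then radical homomorphisms between distinct canonical summands contribute to $\Rcirc M$ and $\gamma_\pm$ is no longer additive.

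The heart of the matter is to show that the generic Schur reduction is compatible with $\mu_k$. The mutation $\mu_k$ changes the dimension vector only at vertex $k$, so the content of \eqref{eq:muSchred} lies entirely in the $k$-th coordinate. The inputs I would use are the mutation-invariance of $\E(\M,\M)$ \cite{DWZ2}, the extended-mutation-invariance of $\br{\Hom}$ (Lemma~\ref{L:Hombar}), and the fact---which must be made precise---that $\mu_k$ carries a general presentation of weight $\delta$ to a general presentation of weight $\mu_k(\delta)$, identifying $\mu_k(M)$ with the generic representation underlying $d'$. With this, I would track the homological characterization of $\rho_+$ from Lemma~\ref{L:rhobr}, namely that it is the Schur top of $M$ with $\hom(M,\rho_+(M))=1$ and $\e(M,\rho_+(M))=0$, and argue that $\mu_k$ sends it to the generic Schur reduction of $\mu_k(M)$ up to the predicted change of $\dv$ at vertex $k$. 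Since in the reachable case $\gamma_\pm$ is literally the relevant column of $\C_{e^\pm}$ and \eqref{eq:muSchred} is then the $c$-vector rule of Theorem~\ref{T:musimple}, what remains is to verify that the same piecewise-linear change of the $k$-th coordinate computes $\dv\mu_k(\rho_\pm(M))$ for a non-reachable generic $M$.

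The main obstacle, I expect, is exactly this verification in the degenerate case $\delta(k)=0$. In the reachable situation the clean formula rests on the sign-coherence of the $C$-matrix and on the exchange correction built into Theorem~\ref{T:mucomp}; for a general, non-rigid (hence non-reachable) weight there is no ambient cluster nor $C$-matrix, the vector $\gamma_\pm(d)$ need not be sign-coherent, and the interpolating $\max_\mp$ in \eqref{eq:muSchred} must be justified directly from the geometry of $\Rcirc M$ rather than from cluster combinatorics. Concretely one must control how the total radical image $\Rcirc M$---and thus the $k$-th coordinate of the reduction---behaves under the non-exact operation $\mu_k$ when $\delta(k)=0$, where the naive mutation of $\gamma_\pm$ fails and a correction appears. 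I would attempt this by realizing $\rho_\pm(M)$ inside the cluster category $\CQ$ via Remark~\ref{r:Hombr}.(2), lifting it along the $\CQ/(\Sigma\b{T})\cong\rep(Q,\S)$ equivalence, and using the barred-$\Hom$ invariance to pin down the $k$-th coordinate of the lifted object; making this control precise for objects that are not rigid is, I believe, the crux of the conjecture.
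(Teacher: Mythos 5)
This statement is labelled a \emph{Conjecture} in the paper and the paper offers no proof of it, so there is nothing to compare your argument against: the question of whether \eqref{eq:muSchred} persists for non-rigid general presentations is left open by the author. What the paper does prove is only the rigid, extended-reachable case (Corollary \ref{C:muSchred}), and that proof is pure cluster combinatorics: it reads $\gamma_\pm(e)$ off as the last column of $\C_{e^\mp}$ via Proposition \ref{P:Schred}.(2) and then invokes the $C$-matrix mutation/exchange rule of Theorem \ref{T:musimple}. None of that machinery exists for a non-rigid weight, which is exactly why the general statement is conjectural.

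As for your proposal on its own terms: it is a reasonable research plan but not a proof, and you say as much yourself. The preparatory steps are sound --- restricting to indecomposable $\delta$ (consistent with Definition \ref{D:Schredbr}, which only defines $\rho_\pm$ for indecomposables), defining $\gamma_\pm(d)$ generically by semicontinuity of $\dim\End(M)$ and $\dim \Rcirc M$, and using that extended mutation preserves general presentations (this is already available via \cite[Theorem 3.11]{Fgr} as cited in the paper, so it does not need to be ``made precise'' from scratch). But the entire content of the conjecture sits in the step you defer: showing that the $k$-th coordinate of $\dv\rho_\pm(\mu_k(M))$ is computed by the piecewise-linear rule \eqref{eq:muSchred}, in particular the interpolating $\max_\mp$ when $\delta(k)=0$. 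Transporting the characterization of Lemma \ref{L:rhobr} ($\hom(M,\rho_+(M))=1$, $\e(M,\rho_+(M))=0$) across $\mu_k$ is delicate because $\mu_k$ is not exact and does not send quotients of $M$ to quotients of $\mu_k(M)$; and in the rigid case the clean formula crucially uses sign-coherence and the exchange correction of Theorem \ref{T:mucomp}, neither of which has a substitute here. Lifting $\rho_\pm(M)$ to the cluster category and using Lemma \ref{L:Hombar} is a plausible route, but until that computation is actually carried out for non-rigid $M$, the argument has a genuine gap at its central step, and the statement remains a conjecture.
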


The construction mentioned at the end of Section \ref{ss:muS} implies another interpretation for $\rho_{\pm}(e)$ in the setting of quivers with potentials.
For an indecomposable rigid $e$, let $e_\pm$ be the other complement to $e_c^\pm$. Then $\hom(e, \tau e_-)= \hom(e_+, \tau e) =1$ and we have that
$${\rho}_+(e) \cong \Img(E \to \tau E_-)\ \text{ and }\ {\rho}_-(\tau e)\cong \Img(E_+ \to \tau E).$$


\section{A Modified Projection for QPs}	 \label{S:OPmod}
\subsection{The Functors $\sqcup_e^\pm$}
Recall the two maps $\wtd{\sqcup}_e^\pm$ introduced in \cite[Section 7.1]{Fcf}.
In the following definition we do not require $e$ to be indecomposable.
Throughout this section, we assume that $e$ is extended reachable.
\begin{definition}\label{D:sqproj} For extended reachable $e = \mu_-^{-1}(P[1])$,
we define the map $\wtd{\sqcup}_e^- = \mu_-^{-1}\eot{P[1]}\mu_-$.
Similarly, if $e = \mu_+^{-1}(P)$, then define $\wtd{\sqcup}_e^+ = \mu_+^{-1}\eort{P}\mu_+$.
If we replace the $\E$-truncating functors by their reduced versions, we obtain the reduced versions $\sqcup_e^\pm$ of $\wtd{\sqcup}_e^\pm$.
\end{definition}

Lemma \ref{L:relex} below says that in particular, Definition \ref{D:sqproj} does not depend on the choices of mutation sequences. 
To prepare for the proof of this lemma, we need some basic constructions in the cluster category \cite{Am}.
Recall the functor $F_{Q,\S}:\CQ\to \rep (Q,\S)$ in Remark \ref{r:Hombr}.(2).
The functor can be further extended to incorporate decorated representations \cite{P0} 
$$\wtd{F}_{Q,\S}: \mc{D} \to \drep (Q,\S),$$
where the category $\mc{D}$ contains $\CQ$ is defined in \cite[Section 4.1]{P0}.
In \cite{KY} Keller and Yang lifted Derksen-Weyman-Zelevinsky's mutation $\mu_k$ to the category $\mc{D}\Gamma$, 
and showed that the lifted mutation descends to a triangle equivalence $\CQ \to \mc{C}_{\mu_k(Q,\mc{S})}$, denoted by $\bs{\mu}_k$.
Moreover, the lifted mutation is compatible with the ordinary one in the following sense (\cite[Theorem 4.8]{Fgr}):
\begin{equation}\label{eq:Fmu} \wtd{F}_{\mu_k(Q,\S)}(\bs{\mu}_k(\b{d})) = \mu_k(\wtd{F}_{Q,\S}(\b{d})). \end{equation}
\begin{lemma}\label{L:relex} $\wtd{\sqcup}_e^-(d)$ (resp. $\wtd{\sqcup}_e^+(d)$) can be alternatively described as $\mub^{-1}\eot{e'}\mub(d)$ (resp. $\mub^{-1}\eort{e'}\mub(d)$)
where $\mub$ is any sequence of mutations such that $\mub(e)=e'$ and $\e(\mub(d), e')=0$ (resp. $\e(e', \mub(d))=0$).
\end{lemma}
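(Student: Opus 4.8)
The plan is to derive the statement from the mutation-equivariance of the $\E$-truncating functors. Both the defining expression $\mu_-^{-1}\eot{P[1]}\mu_-$ and any candidate $\mub^{-1}\eot{e'}\mub$ are obtained by conjugating an $\E$-truncation by a mutation sequence, and for the negative $e'=P[1]$ one has $\E(\mub(d),P[1])=0$ automatically because $\coker(P[1])=0$; thus the defining $\mu_-$ already satisfies the hypotheses of the lemma. Writing $\nub=\mub_2\mub_1^{-1}$ for two sequences satisfying these hypotheses and relabelling, it therefore suffices to prove: if $\e(d,e)=0$ and $\e(\mub(d),\mub(e))=0$, then $\mub(\eot{e}(d))=\eot{\mub(e)}(\mub(d))$, together with the dual statement for $\eort{e}$ under $\e(e,d)=\e(\mub(e),\mub(d))=0$. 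Since $\tau^{\pm}$ commute with each $\mu_k$ (\cite[Proposition 7.10]{DF}), I may treat a single mutation and induct on the length of $\mub$.

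The first step is to match the multiplicity $h_1$ in \eqref{eq:e0}. By \eqref{eq:H2E} the hypothesis $\e(d,e)=0$ is equivalent to $\Hom(e,\tau d)=0$, so Lemma \ref{L:Hombar} and the commutation $\mub\tau=\tau\mub$ give
\[
\E(e,d)=\br{\Hom}(e,\tau d)=\br{\Hom}(\mub(e),\tau\mub(d))=\E(\mub(e),\mub(d)),
\]
where $\e(\mub(d),\mub(e))=0$ kills the $\Hom$-summand on the right. Next I would check that the mutated connecting map is again canonical. Applying $\E(-,e)$ to \eqref{eq:e0} and using $\E(d,e)=0$ and the rigidity $\E(e,e)=0$ gives $\E(\eot{e}(d),e)=0$; since also $\E(e,\eot{e}(d))=0$ by construction, $\eot{e}(d)$ is compatible with $e$, whence $\br{\Hom}(e,\tau\eot{e}(d))=0$ and invariance yields $\E(\mub(e),\mub(\eot{e}(d)))=0$. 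This vanishing is precisely the canonicity of the connecting map in the triangle \eqref{eq:e0} defining $\eot{\mub(e)}(\mub(d))$.

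The second step transports the triangle \eqref{eq:e0} itself. Using the constructions of Plamondon \cite{P,P0}, I would lift the objects and maps of \eqref{eq:e0} to a triangle in the cluster category $\CQ$ (extended to $\mc{D}$ to accommodate the shifted decorated summands) whose image under $\wtd{F}_{Q,\S}$ is \eqref{eq:e0}. Applying the Keller--Yang triangle equivalence $\bs{\mub}$ \cite{KY} and the compatibility \eqref{eq:Fmu} produces a genuine triangle
\[
h_1\,\mub(e)[-1]\longrightarrow \mub(d)\longrightarrow \mub(\eot{e}(d))\longrightarrow h_1\,\mub(e).
\]
By the multiplicity match $h_1=\dim\E(\mub(e),\mub(d))$ and the canonicity established above, this is the triangle defining $\eot{\mub(e)}(\mub(d))$, so $\mub(\eot{e}(d))=\eot{\mub(e)}(\mub(d))$. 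The ``$+$'' case is identical with \eqref{eq:e0} replaced by \eqref{eq:e0r}, $\eot{e}$ by $\eort{e}$, and the two $\E$-directions interchanged.

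The main obstacle is the cluster-category step: one must verify that the $\E$-truncation triangle \eqref{eq:e0}, which a priori lives in $K^{[-1,0]}(\proj A)$, is genuinely the image under the quotient functor $\wtd{F}$ of an honest triangle in $\CQ$---so that the triangle equivalence $\bs{\mub}$ applies---and that being a \emph{canonical} connecting map is an intrinsic property in $\CQ$, hence preserved by $\bs{\mub}$. Once this dictionary is in place, the two preceding paragraphs complete the argument.
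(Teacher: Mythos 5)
Your overall route coincides with the paper's: both arguments transport the defining triangle of the $\E$-truncation through the cluster category via the Keller--Yang equivalence and \eqref{eq:Fmu}, match the multiplicity $h_1$ using mutation-invariance of the symmetrized invariant (your $\br{\Hom}(e,\tau d)$ computation is exactly the paper's identity $\e(e',d')=\e(e',d')+\e(d',e')=\e(P[1],\mu_-\mub^{-1}(d'))$), and then identify the mutated triangle as the canonical truncation triangle. But the step you defer as ``the main obstacle'' is precisely the heart of the proof, and leaving it open leaves a genuine gap. The paper closes it with two specific inputs: \cite[Lemma 5.8]{Fgr}, which lifts a triangle of presentations whose third map is surjective on underlying complexes to a triangle of $\op{add}(\b{T})$-presentations in the cluster category (the surjectivity of $\eot{e'}(d')\to h_1e'$ is what makes the lift available), and \cite[Proposition 3.10]{P}, the functorial isomorphism $\E(e',d')\cong\Hom_{K^b(\op{add}(\b{T}'))}(\b{e}'[-1],\b{d}')$, which makes ``canonical connecting map'' an intrinsic notion preserved by the triangle equivalence. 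The paper tracks the canonical map itself through these functorial isomorphisms, which is slightly cleaner than your plan of re-characterizing the cone by the vanishing $\E(\mub(e),\mub(\eot{e}(d)))=0$: with your criterion one must additionally argue that a triangle with the correct $h_1$ and vanishing cone necessarily comes from a \emph{basis} of $\E(e',d')$, not merely a generating set over $\End(e')$.

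A second, smaller problem: your reduction ``to a single mutation by induction on the length of $\mub$'' does not work. The hypotheses $\e(d,e)=0$ and $\e(\mub(d),\mub(e))=0$ are imposed only at the two endpoints of the sequence; at an intermediate stage $\mub'$ one may have $\e(\mub'(d),\mub'(e))\neq 0$, and then the intermediate equivariance $\mub'(\eot{e}(d))=\eot{\mub'(e)}(\mub'(d))$ fails, since the right-hand truncation would attach a different number of copies of $\mub'(e)$. Fortunately you never use this reduction --- your multiplicity computation via $\br{\Hom}$ and the cluster-category transport both apply to the composite equivalence $\bs{\mu}$ in one step, which is also how the paper proceeds --- so that sentence should simply be deleted.
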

\begin{proof} Let $d'=\mub(d)$. 
Consider the triangle \eqref{eq:e0} for $d=d'$:
\begin{equation}\label{eq:uinvtri}  h_1 e'[-1] \xrightarrow{can} d' \to \eot{e'}(d') \to h_1 e' . \end{equation}
We need to show that $\mub^{-1}(\eot{e'}(d')) \cong \wtd{\sqcup}_e^-(d)$.
The map of the underlying complexes of $\eot{e'}(d') \to h_1 e'$ is surjective. 
By \cite[Lemma 5.8]{Fgr} this triangle of presentations can be lifted to a triangle of $\op{add}(\b{T})$-presentations in the cluster category $\mc{C}_{\mub(Q,\S)}$:
\begin{equation}\label{eq:univtriT}  h_1\b{e}'[-1] \xrightarrow{can} \b{d}' \to \br{\b{d}}' \to h_1\b{e}'   \end{equation}
such that their cones form a triangle in $\mc{C}_{\mub(Q,\S)}$:
\begin{equation}\label{eq:univtriC}  h_1\op{cone}(\b{e}')[-1] \xrightarrow{} \op{cone}(\b{d}') \to \op{cone}(\br{\b{d}}') \to h_1\op{cone}(\b{e}').   \end{equation}
The leftmost map in \eqref{eq:univtriT} is canonical because of the functorial isomorphism of \cite[Proposition 3.10]{P}: $\E(e', d')\cong \Hom_{K^b(\op{add}(\b{T}'))}(\b{e}'[-1], \b{d}')$.

Let $\bs{\mu}$ be the triangle equivalence $\mc{C}_{(Q',\S')} \to \mc{C}_{\mu_- \mub^{-1}(Q',\S')}= \mc{C}_{\mu_- (Q,\S)}$ corresponding to $\mu_- \mub^{-1}$.	
Apply $\bs{\mu}$ and the equivalence $\wtd{F}_{\mu_-(Q,\S)}$ to \eqref{eq:univtriT} and \eqref{eq:univtriC}, and we get by \eqref{eq:Fmu} that
\begin{equation}\label{eq:univtrimu} h_1P \xrightarrow{can} \mu_-\mub^{-1}(d') \to \mu_-\mub^{-1}(\eot{e'}(d')) \to h_1P[1].\end{equation}
The leftmost map in \eqref{eq:univtrimu} is still canonical due to the two functorial isomorphisms of \cite[Proposition 3.10]{P} and the triangle equivalence:
\begin{align*} \Hom_{K^b(\op{add}(\b{T}'))}(\b{e}'[-1],\ \b{d}') &\cong (\b{T}')(\op{cone}(\b{e}'[-1]),\  \op{cone}(\b{d}')) \\
&\cong (\bs{\mu}(\b{T}'))( \bs{\mu}(\op{cone}(\b{e}'))[-1],\  \bs{\mu}(\op{cone}(\b{d}')) )  \cong 
\E(P[1],\ \mu_-\mub^{-1}(d'))
\end{align*}
and that 
$$h_1 = \e(e',d') = \e(e',d')+\e(d',e') = \e(P[1],\ \mu_-\mub^{-1}(d'))+\e(\mu_-\mub^{-1}(d'),\ P[1])= \e(P[1],\ \mu_-\mub^{-1}(d')).$$
Recall that $\mub^{-1}(d')=d$ and $\mu_-(e)=P[1]$.
It follows from \eqref{eq:univtrimu} that $\mu_-\mub^{-1}(\eot{e'}(d')) \cong \eot{P[1]}(\mu_-(d))$.
Therefore, $\mub^{-1}(\eot{e'}(d')) \cong \mu_-^{-1}\eot{P[1]}(\mu_-(d)) \cong \wtd{\sqcup}_e^-(d)$ as desired.
\end{proof}

\begin{corollary} $\sqcup_e^\pm$ is a well-defined map from $\drep(Q,\S)$ to $\drep(\comp{e})$, and
	commutes with mutations: 
	\begin{equation}\label{eq:sqmu} \mu_k(\sqcup_{e}^\pm(d)) = \sqcup_{\mu_k(e)}^\pm (\mu_k(d)), \end{equation}
\end{corollary}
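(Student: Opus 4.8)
The plan is to dispatch the three assertions contained in the corollary---well-definedness, the fact that the image lands in $\drep(\comp{e})$, and mutation-equivariance---by reducing each to Lemma \ref{L:relex} together with the mutation-invariance of the space $\E$. I would treat the negative functor $\sqcup_e^-$ in detail and note that $\sqcup_e^+$ is entirely symmetric. First, well-definedness is an immediate consequence of Lemma \ref{L:relex}: Definition \ref{D:sqproj} fixes one sequence $\mu_-$ with $\mu_-(e)=P[1]$, and the lemma shows that the resulting presentation equals $\mub^{-1}\eot{e'}\mub(d)$ for \emph{every} admissible $\mub$ with $\mub(e)=e'$ and $\e(\mub(d),e')=0$. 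Any competing negative-making choice $\mu_-'$ with $\mu_-'(e)=P'[1]$ automatically satisfies $\e(\mu_-'(d),P'[1])=0$, since the cokernel of a negative presentation is $0$; hence Lemma \ref{L:relex} identifies the value computed from $\mu_-'$ with the one fixed by $\mu_-$, and the map does not depend on the chosen sequence.

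Second, to see $\sqcup_e^-(d)\in\drep(\comp{e})$ I would verify that $\wtd{\sqcup}_e^-(d)$ is compatible with $e$ directly from the truncation triangle. The defining property of $\eot{P[1]}$ gives $\E(P[1],\eot{P[1]}(\mu_-(d)))=0$, and by the mutation-invariance of $\E$ (as used in the proof of Lemma \ref{L:Hombar}) this transports back to $\E(e,\wtd{\sqcup}_e^-(d))=0$; the opposite vanishing $\E(\wtd{\sqcup}_e^-(d),e)=0$ is automatic because $\coker(P[1])=0$. The positive case is symmetric: the left-truncation property gives $\E(\eort{P}(\mu_+(d)),P)=0$, transported to $\E(\wtd{\sqcup}_e^+(d),e)=0$, while $\E(e,\wtd{\sqcup}_e^+(d))=\E(P,\eort{P}(\mu_+(d)))=0$ because the positive presentation $P$ has zero negative part. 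Passing to the reduced functor then deletes all $\op{add}(e)$-summands, so the image lies in $\drep(\comp{e})$.

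Third, for equivariance I would first prove it for the unreduced $\wtd{\sqcup}_e^-$ and then descend. Fix $\mub$ with $\mub(e)=P[1]$. Then $\mub\mu_k^{-1}$ carries $\mu_k(e)$ to $P[1]$ and satisfies $\e\bigl((\mub\mu_k^{-1})(\mu_k(d)),P[1]\bigr)=\e(\mub(d),P[1])=0$, so Lemma \ref{L:relex} applies to $\mu_k(e)$ and $\mu_k(d)$ and yields
\[
\wtd{\sqcup}_{\mu_k(e)}^-(\mu_k(d))=(\mub\mu_k^{-1})^{-1}\eot{P[1]}(\mub\mu_k^{-1})(\mu_k(d))=\mu_k\mub^{-1}\eot{P[1]}\mub(d)=\mu_k\bigl(\wtd{\sqcup}_e^-(d)\bigr),
\]
using $(\mub\mu_k^{-1})^{-1}=\mu_k\mub^{-1}$ and $\mu_k^{-1}\mu_k=\mathrm{id}$. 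To upgrade this to the reduced maps I would use that $\mu_k$ is a bijection on isomorphism classes of indecomposables carrying $\op{add}(e)$ onto $\op{add}(\mu_k(e))$. Applying $\mu_k$ to the Krull--Schmidt splitting of $\wtd{\sqcup}_e^-(d)$ into $\sqcup_e^-(d)$ and a summand in $\op{add}(e)$ therefore produces precisely the splitting of $\wtd{\sqcup}_{\mu_k(e)}^-(\mu_k(d))$ into its $\op{add}(\mu_k(e))$-free part and its $\op{add}(\mu_k(e))$-part, and uniqueness of the decomposition gives $\mu_k(\sqcup_e^-(d))=\sqcup_{\mu_k(e)}^-(\mu_k(d))$.

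The main obstacle I anticipate is exactly this last descent from $\wtd{\sqcup}$ to the reduced $\sqcup$: one must be certain that applying $\mu_k$ to the $\op{add}(e)$-free part $\sqcup_e^-(d)$ neither creates nor absorbs $\op{add}(\mu_k(e))$-summands. This is where the injectivity of $\mu_k$ on indecomposables is essential, and it is the only point that requires care beyond a formal invocation of Lemma \ref{L:relex}; everything else is bookkeeping around the truncation triangle and the invariance of $\E$.
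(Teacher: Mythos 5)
Your proof is correct and follows essentially the same route as the paper's (which simply invokes Lemma \ref{L:relex} for well-definedness and equivariance, checks compatibility at the level of $P[1]$ resp.\ $P$, and transports it back by the mutation-invariance of compatibility). The only difference is that you spell out the details the paper leaves implicit---in particular the Krull--Schmidt descent from $\wtd{\sqcup}_e^\pm$ to the reduced $\sqcup_e^\pm$---and these details are all sound.
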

\begin{proof} Lemma \ref{L:relex} implies that $\sqcup_e^\pm$ is a well-defined and	commutes with mutations.
To verify $\sqcup_{e}^\pm(d) \in \drep(\comp{e})$, we check that $\eo{P[1]\!}(\mu_-(d)) \in \drep(\comp{P[1]})$ and $\eor{P}(\mu_+(d)) \in \drep(\comp{P})$.
Then note that the compatibility is mutation-invariant.
\end{proof}
\noindent Note that $\sqcup_e^\pm$ is the identity if restricted to $\drep(\comp{e})$. The above are also true for $\wtd{\sqcup}_e^\pm$ if we replace $\drep(\comp{e})$ with $\drep(\wtd{\comp{e}})$.

\begin{corollary}\label{C:SchredQP} Assume that $e$ is indecomposable. We have the following equalities: \begin{align*}	
	\wtd{\sqcup}_e^-(P_i) = \check{\gamma}_-(\tau e, i) e\oplus \bigoplus_j \C_{\ep^\perp}(i,j) e_j^+  \ &\text{ and }\ \wtd{\sqcup}_e^+(P_i[1]) = \check{\gamma}_+(e, i) e\oplus \bigoplus_j \C_{\ep^\perp}(i,j) e_j^- .
	\end{align*}
where the vector $\check{\gamma}_\pm(e)$ is given by $\dv(e)-\gamma_\pm(e)$.
\end{corollary}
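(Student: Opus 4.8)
The plan is to reduce the statement to the two identities $\wtd{\sqcup}_e^-(P_i)=\eot{e}(P_i)$ and $\wtd{\sqcup}_e^+(P_i[1])=\eort{e}(P_i[1])$, and then to read off the direct-sum decompositions from results already established. The observation that makes the first reduction work is that Lemma \ref{L:relex} permits the \emph{trivial} mutation sequence: it describes $\wtd{\sqcup}_e^-(d)$ as $\mub^{-1}\eot{e'}\mub(d)$ for any $\mub$ with $\mub(e)=e'$ and $\e(\mub(d),e')=0$, so taking $\mub$ to be the identity (hence $e'=e$) is legitimate precisely when $\e(d,e)=0$. Since $P_i$ is projective we have $\E(P_i,-)=0$, whence $\e(P_i,e)=0$ and $\wtd{\sqcup}_e^-(P_i)=\eot{e}(P_i)$. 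Dually, $\coker(P_i[1])=0$ forces $\e(e,P_i[1])=0$, so the identity sequence is again admissible and $\wtd{\sqcup}_e^+(P_i[1])=\eort{e}(P_i[1])$. I expect the conceptual crux of the argument to lie in this elementary but clarifying step.

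Next I would expand $\eot{e}(P_i)$. Since $\E(e,\eot{e}(P_i))=0$ and deleting the $e$-summands yields $\eo{e}(P_i)$, which is a summand of $\eo{e}(A[0])$ and hence a sum of the indecomposables $e_j^+$, one may write $\eot{e}(P_i)=[\eot{e}(P_i):e]\,e\oplus\bigoplus_j[e_j^+:\eo{e}(P_i)]\,e_j^+$. Here Lemma \ref{L:Smulti} supplies $[e_j^+:\eo{e}(P_i)]=\C_{\ep^\perp}(i,j)$, while Proposition \ref{P:Schred}.(4) supplies $[\eot{e}(P_i):e]=\dv(\tau e)-\gamma_-(\tau e)=\check{\gamma}_-(\tau e,i)$; together these give the first formula. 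The injective-side expansion is parallel: $\eort{e}(P_i[1])=[\eort{e}(P_i[1]):e]\,e\oplus\bigoplus_j[e_j^-:\eor{e}(P_i[1])]\,e_j^-$, where Proposition \ref{P:Schred}.(4) identifies $[\eort{e}(P_i[1]):e]=\dv(e)-\gamma_+(e)=\check{\gamma}_+(e,i)$.

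The single ingredient not yet recorded is the negative-complement analogue of Lemma \ref{L:Smulti}, namely $[e_j^-:\eor{e}(P_i[1])]=\C_{\ep^\perp}(i,j)$, and I expect this to be the main technical point, since Lemma \ref{L:Smulti} is stated only for $e_c^+$. I would prove it by applying $\Hom(-,S_{j,e^\perp})$ to the defining triangle $h_1'e\to\eort{e}(P_i[1])\to P_i[1]\to h_1'e[1]$. Because $S_{j,e^\perp}\in\rep(e^\perp)$ annihilates both $\Hom(e,-)$ and $\E(e,-)$, and because $\Hom(P_i[1],S_{j,e^\perp})=0$ while $\E(P_i[1],S_{j,e^\perp})=S_{j,e^\perp}(i)$ has dimension $\C_{\ep^\perp}(i,j)$, the long exact sequence collapses to an isomorphism $\E(\eort{e}(P_i[1]),S_{j,e^\perp})\cong\E(P_i[1],S_{j,e^\perp})$ of dimension $\C_{\ep^\perp}(i,j)$. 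On the other hand, $\E(e,S_{j,e^\perp})=0$, and each $e_k^-$ is $e$-regular (Corollary \ref{C:e^-reg}) with $\L_e(e_k^-)=P_{k,e^\perp}[1]$, so Lemma \ref{L:ereg}.(4) together with the projective-cover relation in $\rep(e^\perp)$ gives $\dim\E(e_k^-,S_{j,e^\perp})=\dim\E(P_{k,e^\perp}[1],S_{j,e^\perp})=\delta_{kj}$. Expanding $\E(\eort{e}(P_i[1]),S_{j,e^\perp})$ over the summands of $\eort{e}(P_i[1])$ then isolates the multiplicity $[e_j^-:\eor{e}(P_i[1])]$, and comparing the two computations forces it to equal $\C_{\ep^\perp}(i,j)$, completing the second formula.
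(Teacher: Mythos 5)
Your proposal is correct and follows essentially the same route as the paper: the paper's (very terse) proof likewise identifies $\wtd{\sqcup}_e^{\pm}$ with the truncation functors $\eot{e}$ and $\eort{e}$ via Lemma~\ref{L:relex} applied with the trivial mutation sequence, and then reads off the multiplicities from Lemma~\ref{L:Smulti} and Proposition~\ref{P:Schred}.(4). The only place you go beyond what the paper writes down is your explicit verification of the negative analogue $[e_j^-:\eor{e}(P_i[1])]=\C_{e^\perp}(i,j)$ of Lemma~\ref{L:Smulti}, which the paper uses implicitly; your computation of it (via the defining triangle, the $e$-regularity of $e_k^-$, and Lemma~\ref{L:ereg}.(4)) is correct.
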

\begin{proof} We have that $\eot{e} = \wtd{\sqcup}_e^-$ on $\drep({^{\,\lperp\!}}e)$ and $\eort{e} = \wtd{\sqcup}_e^+$ on $\drep({e^{\rperp\,}})$ by Lemma \ref{L:relex}. Then the equalities follow from Lemma \ref{L:Smulti} and Proposition \ref{P:Schred}.(4).
\end{proof}

It is an easy exercise to show the following property, which we do not need
$$\Hom(\sqcup_{P[1]}^-(\delta),\ \eta ) \cong  \Hom(\delta,\ \sqcup_{P[1]}^+(\eta))\ \text{ and }\ 
\E(\sqcup_{P}^+(\delta),\ \eta ) \cong  \E(\delta,\ \sqcup_{P[1]}^-(\eta)) .$$

\begin{remark} $\sqcup_e^\pm$, $\eor{e} \L_e$, and $\eo{e} \mc{R}_e$ all send $\drep(Q,\S)$ to $\drep(\comp{e})$ but they are different.
There seems no analogous $\sqcup_e^\pm$ for algebras other than Jacobian algebras.

The following (noncommutative) diagram summarizes most functors we encountered so far.
Although $t_{\ep},f_{\ep},\eo{e},\eor{e}$ are all defined on $\rep(Q,\S)$, they behave particularly nice when restricted to these subcategories (see Theorem \ref{T:bijection}).
$$\xymatrix{	
	e^\perp \ar@<.5ex>[rr]^{\tau} \ar@<.5ex>[dd]^{\eor{e}} && {^\perp \nu e} \ar@<.5ex>[ll]^{\tau^{-1}} \ar@<.5ex>[dd]^{\eo{\nu e}}	\\
	& \drep(Q,\S) \ar[ul]^{\L_e}_{\heo{\ep}} \ar[ur]_{\mc{R}_{\nu e}}^{\perp_{\nu \ep}} \ar[dr]_{\sqcup_{\nu e}^+}^{\sqcup_{\nu e}^-} \ar[dl]^{\sqcup_e^+}_{\sqcup_e^-} && \\
	\comp{e} \ar@<.5ex>[uu]^{f_\ep} \ar@<.5ex>[rr]^{\tau} && \comp{\nu e} \ar@<.5ex>[ll]^{\tau^{-1}} \ar@<.5ex>[uu]^{\tc_{\ep}}	
	} 
$$
\end{remark}

\subsection{The Modified Projections $\L_e^\pm$}
\begin{definition} \label{D:Lpm}
The composition $\L_{e} \circ \sqcup_e^{\pm}$ is called the {\em modified projection} and is denoted by $\L_e^{\pm}$. We denote the modified projection $\L_{\eQS} \circ \sqcup_e^{\pm}$ to the QP $\eQS$ by $\L_{(e)}^\pm$.
Similarly we let $\mc{R}_{\ec}^{\pm} := \mc{R}_{\ec} \circ \sqcup_{\ec}^{\pm}$ and $\mc{R}_{(\ec)}^{\pm}:=\mc{R}_{(Q,\S)_{\ec}} \circ \sqcup_{\ec}^{\pm}$.
\end{definition}
\noindent We also define $L_e^\pm(M) := \coker \L_e^\pm(d_M)$ and $R_{\ec}^\pm(M) := \ker \mc{R}_{\ec}^\pm(\dc_M)$.
By Lemma \ref{L:Lext}.(2), we have that
\begin{equation}\label{eq:Lepm} L_e^\pm(M) = L_e \left(\coker(\sqcup_e^\pm(d_M)) \right)= f_{\ep} \left(\coker(\sqcup_e^\pm(d_M)) \right).\end{equation}
Recall the stabilization functor $\heo{\ep} = \tc_{\ep} f_{\ep} = f_{\ep}\tc_{\ep}$ and its dual $\perp_{\epc}\, = \heo{\nu^{-1} \epc}$.


\begin{theorem}\label{T:Lstab} We have that $L_e^+ = \heo{\ep}$ and ${R}_{\ec}^+ = \perp_{\epc}$.
\end{theorem}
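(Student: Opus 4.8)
The plan is to establish the left-hand identity $L_e^+ = \heo{\ep}$ and then to deduce $\mc{R}_\ec^+ = \perp_\epc$ formally by Nakayama duality: since $\perp_\epc = \heo{\nu^{-1}\epc}$ and $\mc{R}_\ec^+ = \mc{R}_\ec\circ\sqcup_\ec^+$ is assembled from $\sqcup_\ec^+$ and $\mc{R}_\ec$ in exactly the way $L_e^+$ is assembled from $\sqcup_e^+$ and $\L_e$, with projective and injective presentations interchanged under $\nu$, the injective statement is the $\nu$-image of the projective one. So I would dualize the argument below verbatim.

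For $L_e^+ = \heo{\ep}$, both sides are functors $\rep(A)\to\rep(e^\perp)$, so it suffices to produce a natural isomorphism on objects. By \eqref{eq:Lepm}, $L_e^+(M)=f_\ep(\coker(\sqcup_e^+(d_M)))$, whereas $\heo{\ep}(M)=f_\ep(\tc_\ep(M))$ by definition. Writing $N_M:=\coker(\sqcup_e^+(d_M))$, the map $\sqcup_e^+$ lands in $\drep(\comp{e})$, so $N_M\in\rep(\comp{e})\subseteq\check{\mc{T}}(\ep)$; hence $\tc_\ep(N_M)=N_M$ and $f_\ep(N_M)=\heo{\ep}(N_M)$. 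The theorem therefore reduces to the single identity $\heo{\ep}(N_M)=\heo{\ep}(M)$, i.e.\ that the compatibilization $N_M$ and $M$ have the same image under the stabilization functor.

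To prove this I would first dispose of the torsion-free part of $M$. Since $\heo{\ep}=f_\ep\tc_\ep$ factors through $\tc_\ep$, and $\tc_\ep$ annihilates the $\check{\mc{F}}(\ep)$-quotient $M/\tc_\ep(M)\in\op{Cogen}(\tau E)$, one has $\heo{\ep}(M)=\heo{\ep}(\tc_\ep(M))$. On the other hand $\tc_\ep(M)\in\check{\mc{T}}(\ep)=\rep(e^{\rperp\,})$, so on it the corollary to Lemma \ref{L:relex} identifies $\sqcup_e^+$ with the truncation $\eort{e}$; taking homology in the defining triangle \eqref{eq:e0r} gives a surjection $\coker(\eort{e}(d_{\tc_\ep(M)}))\twoheadrightarrow\tc_\ep(M)$ whose kernel is a quotient of some $h_1'E\in\Gen(E)=\mc{T}(\ep)$, and applying the right-exact functor $f_\ep$ (which kills $\mc{T}(\ep)$) yields $f_\ep(\coker(\eort{e}(d_{\tc_\ep(M)})))=f_\ep(\tc_\ep(M))=\heo{\ep}(M)$. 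Thus everything comes down to the comparison $\heo{\ep}(N_M)=\heo{\ep}(\tc_\ep(M))$, equivalently that $\sqcup_e^+$ depends on $M$ only through its torsion part $\tc_\ep(M)$ up to summands annihilated by $f_\ep$.

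The hard part will be exactly this last comparison, because for $M\notin\check{\mc{T}}(\ep)$ the compatibilization $\sqcup_e^+(d_M)$ is genuinely computed in a mutated frame, $\sqcup_e^+(d_M)=\mub^{-1}\eort{e'}\mub(d_M)$ with $\mub(e)=e'$ and $\mub(d_M)\in\drep((e')^{\rperp\,})$, rather than by a single truncation applied to $M$ itself. I would prove the comparison by tracking the triangle \eqref{eq:e0r} through $\mub^{-1}$ via the cluster-categorical lift used in Lemma \ref{L:relex}, and by invoking the invariance of the torsion pairs $(\mc{T}(\ep),\mc{F}(\ep))$ and $(\check{\mc{T}}(\ep),\check{\mc{F}}(\ep))$ under the mutations entering $\sqcup_e^+$; concretely, this amounts to showing that $\heo{}$ intertwines with those mutations, so that the torsion-class computation above descends to the frame of $M$ and forces $N_M$ to coincide with $\tc_\ep(M)$ modulo $\Gen(E)$. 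This torsion-pair bookkeeping across mutation, rather than any formal torsion-theoretic manipulation, is the real obstacle.
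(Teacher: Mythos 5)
Your reductions are correct as far as they go: the identity $\heo{\ep}(M)=\heo{\ep}(\tc_\ep(M))$, the identification of $\sqcup_e^+$ with $\eort{e}$ on $\drep(e^{\rperp\,})$ via Lemma \ref{L:relex} with trivial $\mub$, and the computation $f_\ep\bigl(\coker(\eort{e}(d_{\tc_\ep(M)}))\bigr)=f_\ep(\tc_\ep(M))$ using that the kernel of the homology surjection lies in $\Gen(E)=\mc{T}(\ep)$. This settles the theorem for $M\in\check{\mc{T}}(\ep)$. But the step you label ``the hard part'' --- comparing $f_\ep(N_M)$ with $f_\ep(\tc_\ep(M))$ for general $M$ --- is the entire content of the theorem, and your plan for it (``show that $\heo{}$ intertwines with the mutations entering $\sqcup_e^+$'') is not a proof and is not how the difficulty is actually resolved; it is also unclear how such an intertwining statement would even typecheck, since $\mu_k(M)$ lives over a different algebra.

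What closes the gap in the paper is a direct mechanism that bypasses your torsion/torsion-free splitting of $M$ altogether. One lifts the truncation triangle $h_1e'\to\eort{e'}(d')\to d'\to h_1e'[1]$ to a triangle of $\op{add}(\b{T}')$-presentations in the cluster category, passes to cones, applies the inverse mutation equivalence and $F_{Q,\S}$, and obtains a long exact sequence $\cdots\to h_1E\to M'\xrightarrow{f} M\xrightarrow{g} h_1\tau E\to\cdots$ entirely in the frame of $M$, where $M'=\coker(\mub^{-1}\eort{e'}(d'))$. The decisive point is the functorial identification $\E(d',e')\cong\E(\b{d},\b{e})\oplus\Hom(d,\tau e)$, so that $h_1=\e(d',e')=\e(e,d)+\e(d,e)$ and the components of $g$ landing in $\Hom(M,\tau E)$ contain a basis of that space. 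Hence $g$ is a universal map to $\op{add}(\tau E)$, its image is $\fc_\ep(M)$ (since $\check{\mc{F}}(\ep)=\op{Cogen}(\tau E)$), and therefore $\op{im}(f)=\tc_\ep(M)$ with $\ker$-defect absorbed by $h_1E\in\Gen(E)$; applying the right-exact $L_e$ to $h_1E\to M'\to\tc_\ep(M)\to 0$ gives $L_e^+(M)=L_e(M')\cong L_e\tc_\ep(M)=\heo{\ep}(M)$. Without this computation of the connecting map (or an equivalent substitute), your argument does not establish that $N_M$ agrees with $\tc_\ep(M)$ modulo $\Gen(E)$, so the proposal as written is incomplete. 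Your dualization of the second identity via $\nu$ is fine and matches the paper, which likewise proves only the projective half explicitly.
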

\begin{proof} We only prove that $L_e^+ = \heo{\ep}$. The statement is trivial to verify if $e=P_i[0]$ is positive. Let us assume that $e$ has no positive summands.
Recall from Lemma \ref{L:relex} that $\L_e^+ = \L_e \mub^{-1}\eort{e'}\mub$. Let $d' = \mub(d)$ and $h_1=\e(d', e')$.
Then we have the triangle 
$h_1 e' \to \eort{e'}(d') \to d' \xrightarrow{can} h_1e'[1].$
The same argument as in the proof of Lemma \ref{L:relex} shows that this triangle can be lifted to a triangle of $\op{add}(\b{T}')$-presentations such that their cones form a triangle in the cluster category $\mc{C}_{\mub(Q,\S)}$:
\begin{equation}\label{eq:conelift} h_1 \op{cone}(\b{e}') \to \op{cone}(\br{\b{d}}') \to \op{cone}(\b{d}') \to h_1\op{cone}(\b{e}')[1].\end{equation}
Then we apply the triangle equivalence $\bs{\mu}_{\b{k}}^{-1}: \mc{C}_{\mub(Q,\S)} \to \CQ$ to \eqref{eq:conelift} followed by $F_{Q,\S} = \CQ(\b{T},-)$, and get the following long exact sequence (see \cite[Corollary 4.7]{Fgr})
\begin{equation}\label{eq:long}  \cdots\to h_1E \to M' \xrightarrow{f} M \xrightarrow{g} h_1{\tau} E \to \cdots.
\end{equation}
By \eqref{eq:Fmu} we have that $E=\coker(e)$, $M=\coker(d)$, and $M'=\coker(\mub^{-1}\eort{e'}(d'))$.
A similar argument as in Lemma \ref{L:relex} shows that there are functorial isomorphisms:
$$\E(d', e') \cong \E(\b{d}', \b{e}') \cong \E(\b{d}, \b{e})\oplus \E(\b{e}, \b{d}) \cong \E(\b{d}, \b{e})\oplus \E(e, d) \cong \E(\b{d}, \b{e})\oplus \Hom(d, \tau e),$$
where we write $\E(\b{d}, \b{e}):=\Hom_{K^b(\op{add}(\b{T}))}(\b{d},\ \b{e}[1])$.
This implies that the components of $g$ in $\Hom(M, {\tau}E)$ contain a basis of $\Hom(M, {\tau}E)$.
Hence, the image of $g$ is equal to $\fc_{\ep}(M)$, and thus the image of $f$ is exactly $\tc_{\ep}(M)$.
Now we apply the (right-exact) functor $L_e$ to $h_1E \to M' \xrightarrow{f} \tc_{\ep}(M) \to 0$, and get
$0 \to L_e(M') \xrightarrow{} L_e\tc_{\ep}(M) \to 0$.
Note that $L_e\tc_{\ep}(M)=f_{\ep}\tc_{\ep}(M)=\heo{\ep}(M)$ and $L_e(M') = L_e^+(M)$ by \eqref{eq:Lepm}.
Therefore, $L_e^+ = \heo{\ep}$.
\end{proof}

Let $\hat{Q}$ be the full subquiver of $Q$ obtained by forgetting some vertex set $I$ of $Q_0$.
We say a projective presentation $\hat{d}$ is obtained from $d$ by restricting to $\hat{Q}$
if we remove all $P_i\ (i\in I)$ in $d$.
\begin{lemma}\label{L:LP[1]}  Let $P=\bigoplus_{i\in I}P_i$. Then $\L_{(P[1])}^-(d)$ and $\L_{(P)}^+(d)$ can be obtained from $d$ by
restricting it to the full subquiver of $Q_0\setminus I$.
\end{lemma}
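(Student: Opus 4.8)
The plan is to treat the negative case $e=P[1]$ in full; the positive case $e=P$ is entirely dual, replacing $\sqcup_{P[1]}^-=\eo{P[1]}$ by $\sqcup_{P}^+=\eor{P}$ throughout. Write $J=J(Q,\S)$ and $J'=J\eQS$. First I would pin down the target QP together with the equivalence of Corollary \ref{C:equi}. Since $P[1]$ is already negative we have $\mu_{e^-}=\mathrm{id}$; moreover $\E(P_u[1],P[1])=\E(P_u[1],\coker(P[1]))=0$ for every $u$, so the left truncation \eqref{eq:e0r} is trivial and $e_c^-=\eor{P[1]}(A[1])=\bigoplus_{u\notin I}P_u[1]$ after removing the $P[1]$-summands, whence $e\oplus e_c^-=A[1]$. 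By the construction preceding Theorem \ref{T:perpQP} the projected QP is the restriction of $\mu_{e^-}(Q,\S)=(Q,\S)$ to $Q_0\setminus I$, i.e. $\eQS=(Q,\S)|_{Q_0\setminus I}$. On the module side $\Hom(P[1],M)=0$ and $\E(P[1],M)=\bigoplus_{i\in I}M_i$, so $\rep((P[1])^\perp)$ is exactly the full subcategory of modules with no support on $I$; this is $\rep(J/J\varepsilon J)$ for the idempotent $\varepsilon$ attached to $I$, and by Lemma \ref{L:epi} the algebra $J/J\varepsilon J$ is $J'$. Under this identification the inclusion $\iota_e\colon\rep((P[1])^\perp)\hookrightarrow\rep(J)$ is restriction of scalars along the quotient map $J\twoheadrightarrow J'$.

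The heart of the argument is to identify the projection $L_{P[1]}$ with the base-change functor $-\otimes_J J'$. By Theorem \ref{T:L_e}, $L_{P[1]}$ is left adjoint to $\iota_e$; by the previous paragraph $\iota_e$ is restriction of scalars along $J\twoheadrightarrow J'$, whose left adjoint is $-\otimes_J J'$. By uniqueness of adjoints the two functors are naturally isomorphic. Consequently $L_{P[1]}$ sends the indecomposable projective $P_u^J$ to $P_u^{J'}$ for $u\notin I$ and to $0$ for $u\in I$, and acts on morphisms of projectives through $\psi_{Q_0\setminus I}$. Using that $L_{P[1]}$ preserves projective presentations (Corollary \ref{C:equi}), together with $\L_{P[1]}(e_i^-)=P_{i,e^\perp}[1]=P_i^{J'}[1]$ (Definition \ref{D:Lext}) for the surviving negatives $e_i^-=P_u[1]$, $u\notin I$, I would conclude that for every presentation $d'$ the image $\L_{P[1]}(d')$ is exactly the restriction $\widehat{d'}$, obtained by deleting the summands $P_i$ ($i\in I$) and applying $\psi_{Q_0\setminus I}$ to the remaining matrix entries.

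It then remains to insert the compatibilization $\sqcup_{P[1]}^-$. As $\mu_-=\mathrm{id}$, Definition \ref{D:sqproj} gives $\sqcup_{P[1]}^-=\eo{P[1]}$. Reading off the truncation triangle \eqref{eq:e0} with $e=P[1]$, the presentation $\eo{P[1]}(d)$ is obtained from $d$ by adjoining copies of $P$ (a projective supported entirely at $I$) to the degree $-1$ term and then deleting summands isomorphic to $P[1]$ (again supported at $I$); both modifications vanish after restriction, so $\widehat{\sqcup_{P[1]}^-(d)}=\widehat{d}$. Combining with the previous paragraph yields $\L_{(P[1])}^-(d)=\L_{P[1]}\bigl(\sqcup_{P[1]}^-(d)\bigr)=\widehat{\sqcup_{P[1]}^-(d)}=\widehat{d}$, as claimed. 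I expect the main obstacle to be the bookkeeping of these last two paragraphs at the level of decorated representations rather than cokernels: one must make precise that ``restriction of a presentation'' coincides with $-\otimes_J J'$, and track negative summands carefully, noting that the deleted $P_i[1]$ ($i\in I$) never occur in $\sqcup_{P[1]}^-(d)\in\drep(\comp{P[1]})$ while the surviving $P_u[1]$ ($u\notin I$) are governed by $\L_{P[1]}(e_i^-)=P_{i,e^\perp}[1]$. Everything beyond this is formal.
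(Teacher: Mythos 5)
Your argument is correct in substance but takes a genuinely more structural route than the paper's. The paper reduces to $P=P_i$ by induction, observes that $\sqcup_{P_i[1]}^-=\eo{P_i[1]}$ only adds copies of $P_i$ to the degree $-1$ term, and then quotes Remark \ref{r:Lneg} together with Lemma \ref{L:ereg}.(2) for the fact that $\L_{(P_i[1])}$ amounts, up to homotopy, to deleting all $P_i$'s. You instead identify $\iota_e$ with restriction of scalars along $J\twoheadrightarrow J|_{Q_0\setminus I}$ (using Lemma \ref{L:epi}) and then pin down $L_{P[1]}$ as $-\otimes_J J'$ by uniqueness of left adjoints. This buys an induction-free, basis-free description of $L_{P[1]}$ on projectives and on morphisms between them, and it makes the two cases transparent: for $e=P[1]$ and $e=P$ the orthogonal category and the adjunction are literally the same, so the ``dual'' case is in fact identical. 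Your preliminary computations ($e_c^-=\bigoplus_{u\notin I}P_u[1]$, $e\oplus e_c^-=A[1]$, the identification of the projected QP with $(Q,\S)|_{Q_0\setminus I}$, and $\widehat{\sqcup_{P[1]}^-(d)}=\widehat{d}$) are all correct.

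The one step you should not dismiss as formal is the assertion that $\L_{P[1]}(d')$ equals the restriction $\widehat{d'}$ \emph{for every} presentation $d'$. As stated this is false: $\L_{P[1]}$ is defined through minimal presentations of cokernels (Definition \ref{D:Lext}), so it silently discards the shifted projective summands that restriction can create. For instance, on $1\to 2\to 3$ with $I=\{2\}$, the minimal presentation $P_3\to P_1$ of the length-two module supported at $\{1,2\}$ restricts to $P_3^{J'}[1]\oplus P_1^{J'}[0]$, whereas $\L_{P_2[1]}$ of it is $P_1^{J'}[0]$. The identity $\L_{P[1]}(d')\simeq\widehat{d'}$ is precisely the statement that $d'$ is $P[1]$-regular in the sense of Definition \ref{D:ereg} (under your identification, $L_{P[1]}(d')$ \emph{is} $\widehat{d'}$), and it does hold for the only input you use, namely $d'=\sqcup_{P[1]}^-(d)\in\drep(\comp{P[1]})$: since $\e(d',P[1])=0$, Lemma \ref{L:e0reg} gives $P[1]$-regularity, and Lemma \ref{L:ereg}.(3) then says the restriction of the minimal part of $d'$ is again minimal up to homotopy, so no spurious negative summands appear. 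Insert that citation (or an equivalent direct minimality check) and your proof closes; note the paper's own proof leans on the same regularity input via Lemma \ref{L:ereg}.(2).
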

\begin{proof} By induction we may assume that $P=P_i$.
What $\sqcup_{P_i[1]}^- = \eo{P_i[1]}$ does is just adding $h_1P_i$'s to the negative part of $d_{\M}$ together with some homomorphism to the positive part of $d_{\M}$ (see \eqref{eq:e0}).
In the meanwhile, (up to homotopy) $\L_{(P_i[1])}$ is simply to remove all $P_i$'s (see Remark \ref{r:Lneg} or by Lemma \ref{L:ereg}.(2)).
The description of $\L_{(P_i[1])}^-$ follows immediately.
\end{proof}

\begin{theorem}\label{T:L_e^-} Assume that $\mu_{e^\pm}(e\oplus e_c^\pm) = \pm(P\oplus P_c)$.
Then $\L_{(e)}^\pm = \L_{(\pm P)}^\pm\mu_{e^\pm}$.
In particular, $\L_{(e)}^\pm$ preserves general (resp. rigid) presentations.
\end{theorem}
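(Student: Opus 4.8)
The plan is to reduce the asserted identity $\L_{(e)}^\pm=\L_{(\pm P)}^\pm\mu_{e^\pm}$ to a statement about compatible presentations only, by exploiting the mutation-equivariance of the compatibilization maps. First I would iterate the commutation rule \eqref{eq:sqmu} along the whole sequence $\mu_{e^\pm}$. Since the hypothesis $\mu_{e^\pm}(e\oplus e_c^\pm)=\pm(P\oplus P_c)$ gives in particular $\mu_{e^\pm}(e)=\pm P$, this yields $\mu_{e^\pm}\circ\sqcup_e^\pm=\sqcup_{\pm P}^\pm\circ\mu_{e^\pm}$. Writing out the definition of $\L_{(\pm P)}^\pm$ as $\sqcup_{\pm P}^\pm$ followed by the projection to the orthogonal QP of $\pm P$, and recalling that $\sqcup_{\pm P}^\pm$ is the identity on $\drep(\comp{\pm P})$ while, by Lemma~\ref{L:LP[1]}, $\L_{(\pm P)}^\pm$ itself is nothing but restriction to the full subquiver on $Q_0\setminus I$ (where $I$ indexes the vertices of $\pm P$), the right-hand side becomes restriction composed with $\mu_{e^\pm}$. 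Because $\sqcup_e^\pm$ always lands in $\drep(\comp{e})$, the whole theorem collapses to the single claim
$$\L_{\eQS}|_{\drep(\comp{e})}=\big(\text{restriction to }Q_0\setminus I\big)\circ\mu_{e^\pm}|_{\drep(\comp{e})}.$$

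The heart of the proof is this last claim. For it I would appeal directly to how the equivalence $\rep(e^\perp)\cong\rep(\eQS)$ is built in the proof of Theorem~\ref{T:perpQP}, rather than treating it abstractly: there it is obtained by identifying $J(\mu_{e^\pm}(Q,\S))$ with $\br{\End}(e\oplus e_c^\pm)$ through Theorem~\ref{T:mualg} and then passing to $J\eQS$ via the restriction epimorphism of Lemma~\ref{L:epi}. In other words the equivalence is, by construction, mutation by $\mu_{e^\pm}$ followed by restriction to $Q_0\setminus I$, which is exactly the displayed identity read on projective generators: $\L_{\eQS}$ sends the complement $e_j^+$ to the indecomposable projective $P_{j,\eQS}$ by Corollary~\ref{C:indP}, and, by the extended rule of Definition~\ref{D:Lext}, sends $e_j^-$ to its shift $P_{j,\eQS}[1]$; on the other side $\mu_{e^\pm}$ carries the sign-matching complement to the signed projective $\pm(P_c)_j=\pm P_{j,\mu_{e^\pm}(Q,\S)}$, whose restriction to $Q_0\setminus I$ is $P_{j,\eQS}$ (resp. $P_{j,\eQS}[1]$) by Lemma~\ref{L:epi}. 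Since every object of $\drep(\comp{e})$ is presented by these (shifted) projectives and both maps are bijections $\drep(\comp{e})\to\drep(\eQS)$ (the left by Theorem~\ref{T:bijection} together with the equivalence, the right because $\mu_{e^\pm}$ and restriction are bijective on the relevant classes), right-exactness propagates the agreement from the generators to all compatible presentations.

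Assembling the two reductions then gives $\L_{(e)}^\pm(d)=\L_{\eQS}(\sqcup_e^\pm(d))=\big(\text{restriction}\big)\big(\mu_{e^\pm}(\sqcup_e^\pm(d))\big)=\big(\text{restriction}\big)\big(\sqcup_{\pm P}^\pm(\mu_{e^\pm}(d))\big)=\L_{(\pm P)}^\pm(\mu_{e^\pm}(d))$, which is the desired identity. The final sentence, that $\L_{(e)}^\pm$ preserves general presentations, is then immediate: a general presentation stays general under the admissible mutation sequence $\mu_{e^\pm}$, and restriction to a subquiver sends a general presentation to a general one (as one sees from the epimorphism of deformation spaces in Lemma~\ref{L:epi}), so the composite on the right-hand side preserves generality.

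I expect the main obstacle to be the middle step: pinning down rigorously that the equivalence of Theorem~\ref{T:perpQP} is genuinely realized by the particular mutation sequence $\mu_{e^\pm}$ of the hypothesis together with restriction. Two points demand care. First, Theorem~\ref{T:perpQP} was phrased with $e_c^+$ and a mutation to positive projectives, so for the lower sign one must invoke the sign-independence of $\eQS$ and check that the same equivalence is produced by $\mu_{e^-}$. Second, in each sign only the sign-matching family of complements maps to a projective under a single application of $\mu_{e^\pm}$, so the identification on the opposite, shifted generators cannot be seen by mutating one complement alone; it must be extracted from the full algebra isomorphism of Theorem~\ref{T:mualg} at the level of the decorated (in particular the negative) part of $\drep(\comp{e})$, which is precisely where the extended definition of $\L_e$ in Definition~\ref{D:Lext} and the bookkeeping between $e_c^+$ and $e_c^-$ enter.
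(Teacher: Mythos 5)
Your outer reduction is sound and matches the paper's final step: iterating \eqref{eq:sqmu} along the sequence $\mu_{e^\pm}$ gives $\mu_{e^\pm}\circ\sqcup_e^\pm=\sqcup_{\pm P}^\pm\circ\mu_{e^\pm}$, and Lemma \ref{L:LP[1]} identifies $\L_{(\pm P)}^\pm$ with restriction to $Q_0\setminus I$, so everything hinges on the claim that $\L_{\eQS}$ agrees with $\bigl(\text{restriction}\bigr)\circ\mu_{e^\pm}$ on $\drep(\comp{e})$. But that claim \emph{is} the theorem, and your argument for it does not close. Theorem \ref{T:perpQP} only produces an abstract isomorphism of algebras $\End(f_\ep(C))\cong J\eQS$; it does not assert that the induced equivalence, precomposed with $\L_e$, is computed on an arbitrary presentation by mutating and then restricting. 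Checking that both maps send $e_j^+$ to $P_{j,e^\perp}$ and $e_j^-$ to $P_{j,e^\perp}[1]$ pins them down only on the \emph{objects} of $\op{add}(e_c^\pm)$, not on the morphism data carried by an arbitrary $d\in\drep(\comp{e})$; and ``right-exactness propagates the agreement'' presupposes that $d\mapsto \mu_{e^\pm}(d)$ followed by restriction is a functor naturally isomorphic to $\L_{\eQS}$ on a full additive subcategory, which is precisely what has to be proved (Derksen--Weyman--Zelevinsky mutation is not an honest functor on $\drep$, so an Eilenberg--Watts-type propagation is not available off the shelf). You yourself flag this middle step as the main obstacle; it is not a technicality but the entire content.

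The paper closes exactly this gap with a $\Hom$-space computation that is absent from your proposal. By Corollary \ref{C:Le=fe} the projection of $d\in\drep({^{\,\lperp\!}}e)$ is $\Hom(d,(\nu e)_c^+)^*$; Lemma \ref{L:tau_comp} gives $\mu_{e^-}((\nu e)_c^+)=\nu P_c[1]=:I_c$; and the key isomorphism $\Hom(d,(\nu e)_c^+)\cong\Hom(\mu_{e^-}(d),I_c)$ then follows from the mutation-invariance of $\br{\Hom}$ (Lemma \ref{L:Hombar}), once one checks that the correction terms vanish on both sides --- on the left because $\ker((\nu e)_c^+)$ is relatively injective in $\rep(e^{\,\lperp})$ by Remark \ref{r:Ec}.(1), so $\Ec(\tau d,(\nu e)_c^+)=0$. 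Since $\Hom(-,I_c)^*$ literally reads off the vector spaces of $\mu_{e^-}(d)$ at the vertices of $P_c$, this identifies the projection with the restriction on all of $\drep(\comp{e})$, morphisms included. Without this (or an equivalent functorial identification) the asserted identity is unestablished. A minor further point: your justification that restriction preserves general presentations via ``the epimorphism of deformation spaces in Lemma \ref{L:epi}'' is off target --- that epimorphism concerns deformations of the potential, not representation spaces; the paper instead cites \cite{GLFSa} for this fact.
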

\noindent In view of Theorem \ref{T:L_e^-} and Lemma \ref{L:LP[1]}, the modified projection $\L_{(e)}^\pm$ can be concretely realized as follows. 
$\L_{(e)}^\pm(d)$ is the restriction of $\mu_{e^\pm}(d)$ on the subquiver corresponding to $P_c$ (resp. $P_c[1]$). 
\begin{proof} We only prove for the ``$-$" case. 
We apply $\nu$ on both sides of $\mu_{e^-}(e_c^-) = P_c[1]$ and get
$$\nu \mu_{e^-}(e_c^-) = \mu_{e^-}(\nu e_c^-) \xlongequal{\text{Lemma \ref{L:tau_comp}}} \mu_{e^-}((\nu e)_c^+) = \nu P_c[1] :=I_c.$$
Recall Corollary \ref{C:Le=fe}. We claim that for any $d\in \drep({^{\,\lperp\!}}e)$ (i.e., $\E(d, e)=0$) we have that
\begin{equation}\label{eq:adjvar} \Hom(d, (\nu e)_c^+) \cong \Hom(\mu_{e^-}(d), I_c). \end{equation}
By Remark \ref{r:Ec}.(1), $\ker((\nu e)_c^+)$ is relative injective in $\mc{F}(\ep)=\rep(e^{\,\lperp})$.
Since $\coker(\tau d)\in \mc{F}(\ep)$, we have $\Ec(\tau d, (\nu e)_c^+)=0$.
Hence $\Hom(d, (\nu e)_c^+)=\br{\Hom}(d, (\nu e)_c^+)$. Note that trivially we have that $\Hom(\mu_{e^-}(d), I_c)=\br{\Hom}(\mu_{e^-}(d), I_c)$.
By Lemma \ref{L:Hombar} we get the isomorphism \eqref{eq:adjvar}.

Finally, for the general case, we apply the above isomorphism to $\sqcup_e^-(\mc{M})\in \rep(\comp{e})$ and get by \eqref{eq:adjvar} and \eqref{eq:sqmu} that
$$\Hom(\sqcup_e^-(d), (\nu e)_c^+)\cong \Hom(\mu_{e^-}(\sqcup_e^-(d)), I_c) \cong \Hom(\sqcup_{P[1]}^- \mu_{e^-}(d), I_c).$$ 
Algorithm \ref{a:Qep} says that the restriction of $\mu_{e^-}(Q,\S)$ corresponds to $P_c[1]$ is $\eQS$.
Then our description of $\L_{(e)}^-$ follows from Lemma \ref{L:LP[1]}. 
It preserves general presentations because the restriction and the extended mutations do (\cite{GLFSa}, \cite[Theorem 3.11]{Fgr}). It preserves rigid presentations because the restriction and the extended mutations do.
\end{proof}
\begin{question} (1). By Theorems \ref{T:Lstab} and \ref{T:L_e^-}, the stabilization functor $\heo{\ep}$ preserve general representations for Jacobian algebras. What about other algebras?\\
(2). For non-rigid $\ep$, it is unclear how to define $\eort{\ep}$ or $L_{\ep}^\pm$. But $\heo{\ep}$ is defined in \cite{Fcf}. Does $\heo{\ep}$ still preserve general representations?
\end{question}

What we just proved is essentially a special case of the following corollary.
\begin{corollary}\label{C:mulift} For any sequence of mutations $\mub$, let $(Q,\S)'=\mub(Q,\S)$ and $e'=\mub(e)$. There exists a sequence of mutations $\mub^e$ making the following diagram commute
$$\begin{xymatrix}{  \drep(Q,\S)  \ar[r]^{\mub} \ar[d]^{\sqcup_e^\pm} & \drep(Q,\S)' \ar[d]^{\sqcup_{e'}^\pm} \\
	\drep(\comp{e})  \ar[r]^{\mub} \ar[d]^{\L_{\eQS} } &  \drep(\comp{e'}) \ar[d]^{\L_{_{e'}(Q,\S)'}} \\ 
	\drep{_e(Q,\S)} \ar[r]^{\mub^e} & \drep{_{e'}(Q,\S)'} }
\end{xymatrix}$$
Conversely, given any sequence of mutations $\mub^e$, there is some $\mub$ together with $e'$ lifting $\mub^e$.
\end{corollary}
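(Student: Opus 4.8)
The plan is to verify the two squares of the diagram in turn and to read off $\mub^e$ from the way $\mub$ interacts with the positive/negative complement. The upper square is nothing but the commutation relation \eqref{eq:sqmu} iterated along $\mub$: since $\E(\M,\M)$ is mutation invariant (Lemma \ref{L:Hombar}), compatibility with $e$ is preserved, so the middle arrow $\mub\colon\drep(\comp{e})\to\drep(\comp{e'})$ is a well-defined bijection and $\mub\circ\sqcup_e^\pm=\sqcup_{e'}^\pm\circ\mub$. Because $\L_{\eQS}$ and $\L_{_{e'}(Q,\S)'}$ are bijections (Theorems \ref{T:bijection} and \ref{T:perpQP}), the bottom arrow is forced to be $\mub^e:=\L_{_{e'}(Q,\S)'}\circ\mub\circ(\L_{\eQS})^{-1}$, and the whole content is to realize this bijection by an admissible mutation sequence between the projected QPs.

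To do so I would induct along $\mub=\mu_{k_r}\cdots\mu_{k_1}$. Writing $e^{(i)}=\mu_{k_i}\cdots\mu_{k_1}(e)$ inside $(Q,\S)^{(i)}$, introduce the fully mutated QP $W^{(i)}:=\mu_{(e^{(i)})^\pm}((Q,\S)^{(i)})$, in which the cluster $(e^{(i)})^\pm=e^{(i)}\oplus(e^{(i)})_c^\pm$ has become all-negative (all-positive); by definition (Theorem \ref{T:perpQP}) the restriction of $W^{(i)}$ to the complement vertices is exactly ${}_{e^{(i)}}(Q,\S)^{(i)}$. I then compare $W^{(i)}$ with $W^{(i+1)}$ using Theorem \ref{T:mucomp}. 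If $\ep^{(i)}(k_{i+1})\neq 0$ then $(e^{(i+1)})^\pm=\mu_{k_{i+1}}((e^{(i)})^\pm)$, so one may take $\mu_{(e^{(i+1)})^\pm}=\mu_{(e^{(i)})^\pm}\circ\mu_{k_{i+1}}$ and hence $W^{(i+1)}=W^{(i)}$: the projected QP does not change. If $\ep^{(i)}(k_{i+1})=0$, Theorem \ref{T:mucomp} inserts a $j$-th exchange, $(e^{(i+1)})_c^\pm=\mu_{k_{i+1}}((e^{(i)})_c^\pm)'$; using that exchanges commute with mutations (Lemma \ref{L:sm}) one computes $\mu_{(e^{(i)})^\pm}\circ\mu_{k_{i+1}}\big((e^{(i+1)})^\pm\big)=\sigma_j(\text{all-negative})=\mu_{v_j}(\text{all-negative})$, where $v_j$ is the complement vertex occupied by the $j$-th summand, so that $\mu_{(e^{(i+1)})^\pm}=\mu_{v_j}\circ\mu_{(e^{(i)})^\pm}\circ\mu_{k_{i+1}}$ and $W^{(i+1)}=\mu_{v_j}(W^{(i)})$ with $v_j$ a complement vertex. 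Declaring $\mub^e$ to be the ordered product of these single mutations $\mu_j$ (one for each step with $\ep^{(i)}(k_{i+1})=0$) produces a genuine mutation sequence carrying $\eQS$ to the complement restriction of $W^{(r)}=\mu_{(e')^\pm}((Q,\S)')$, which is ${}_{e'}(Q,\S)'$.

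It remains to see that this $\mub^e$ induces the correct map on presentations. For $d\in\drep(\comp{e})$ I would track the lifts $\wtd{d}_i:=\mu_{(e^{(i)})^\pm}(\mu_{k_i}\cdots\mu_{k_1}(d))\in\drep(W^{(i)})$; the same factorizations of $\mu_{(e^{(i+1)})^\pm}$ give $\wtd{d}_{i+1}=\wtd{d}_i$ in the first case and $\wtd{d}_{i+1}=\mu_{v_j}(\wtd{d}_i)$ in the second, while by Theorem \ref{T:L_e^-} the restriction of $\wtd{d}_i$ to the complement subquiver is $\L_{_{e^{(i)}}(Q,\S)^{(i)}}$ of the current presentation. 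Thus the claim reduces to the compatibility of mutation at a complement vertex with restriction to the complement subquiver. This is the main obstacle, since restriction and mutation do not commute in general; the way through is that $\wtd{d}_i$ is compatible with the all-negative $P^{(i)}[1]$, so $\coker\wtd{d}_i$ is supported off the forgotten $P$-vertices (as $\E(P^{(i)}[1],\wtd{d}_i)=\Hom(P^{(i)},\coker\wtd{d}_i)=0$) and its negative part is likewise disjoint from them. Consequently the mutation data at $v_j$ only involves the complement part and is insensitive to the $P$-vertices, exactly as in the proof of Theorem \ref{T:L_e^-} and Lemma \ref{L:LP[1]}; granting this, chaining the two cases over $i$ gives the bottom square, which together with the upper square proves commutativity.

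For the converse, each factor $\mu_j$ of a prescribed $\mub^e$ is, by the construction above, the mutation $\mu_{v_j}$ of $W^{(0)}=\mu_{e^\pm}(Q,\S)$ at a complement vertex; conjugating by $\mu_{e^\pm}$ rewrites $\mu_{v_j}$ as a sequence of mutations of $(Q,\S)$ that fixes the weight vector $\ep$ (an exchange of the complement of $e$), and concatenating these over the factors of $\mub^e$ yields a sequence $\mub$ with $e'=\mub(e)$. Applying the forward direction to this $\mub$ recovers the prescribed $\mub^e$ along the bottom, which is the desired lift.
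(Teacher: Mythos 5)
Your proposal follows essentially the same route as the paper: reduce to a single mutation, use Theorem \ref{T:mucomp} to see that the induced bottom arrow is either void or a single mutation at a complement vertex (identified with the $j$-th exchange of the all-negative cluster), realize $\L_{(e)}^\pm$ via Theorem \ref{T:L_e^-} as restriction to the complement subquiver after $\mu_{e^\pm}$, and reduce the lower square to the commutation of a complement-vertex mutation with that restriction, with the converse handled by lifting $\mu_j$ through $\mu_{e^\pm}$. The only step you skip is the paper's initial reduction to indecomposable $e$ via Corollary \ref{C:factor}, which is needed because Theorem \ref{T:mucomp} and the uniqueness of $j$ in Lemma \ref{L:existj} are established for indecomposable $\ep$; otherwise your argument matches the paper's.
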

\begin{proof} We still treat the ``$-$" case only. We can reduce to the case when $e$ is indecomposable by Corollary \ref{C:factor}. We can also reduce to the case when $\mub$ is a single mutation $\mu_k$ by induction.
So let us assume that $e$ is indecomposable and $\mub=\mu_k$.
Due to \eqref{eq:sqmu} it is enough to focus on the lower square.


We claim that $\L_{_{e'}(Q,\S)'}(\mu_k(e^-))$ is either $J'[1]$ or some $j$-th exchange of $J'[1]$, where $J'=J{_{e'}}(Q,\S)'$.
By Theorem \ref{T:mucomp}, $\mu_k(e^-)$ is either $\mu_k(e)^-$ or its exchange $\sigma_j(\mu_k(e)^-)$.
For the former case, $\L_{_{e'}(Q,\S)'}(\mu_k(e)^-)=J'[1]$ so we can put $\mu_k^e$ as void.
For the latter, by Theorem \ref{T:bijection_space} it is rigid, and thus the presentation compatible with $\Ind(J'[1])\setminus P'_j[1]$, which must be $\sigma_j(J'[1])$.
Then $\mu_{e^-} \sigma_j \mu_k({e'}^-) = \mu_{e^-} ({e}^-)$ is negative, say $ \mu_{e^-} (e\oplus e_c^-) = (P_i\oplus P_c)[1]$, and so is $\mu_j\mu_{e^-} \mu_k({e'}^-)$ by Lemma \ref{L:sm}.
By Theorem \ref{T:L_e^-} we can factor $\L_{(e)}^-$ as $\L_{(P_i[1])}^-\mu_{e^-}$ and $\L_{(e')}^-$ as $\L_{(P_i[1])}^- \mu_j\mu_{e^-}\mu_k$.
Then if we set the bottom arrow to be $\mu_j$, then that the diagram commutes is equivalent to that
\begin{align*} \mu_j \L_{(P_i[1])}^-\mu_{e^-} = \L_{(P_i[1])}^- \mu_j\mu_{e^-}\mu_k \mu_k = \L_{(P_i[1])}^- \mu_j\mu_{e^-}.\end{align*}
But it is clear from our description of $\L_{(P_i[1])}^-$ that $\L_{(P_i[1])}^- \mu_j = \mu_j\L_{(P_i[1])}^-$.

Conversely, if $\mub^e = \mu_j$, then we try to construct $\mub$ lifting $\mu_j$.
Thanks to Theorem \ref{T:L_e^-} and Lemma \ref{L:LP[1]}, we have the following commutative diagram
$$\begin{xymatrix}{   \drep(Q,\S)  \ar[r]^{\mu_-} \ar[d]^{\L_{(e)}^-} & \drep \left(\mu_-(Q,\S)\right) \ar[d]^{\L_{(P_i[1])}^-}  \ar[r]^{\mu_{j'}} &  \drep \left(\mu_{j'}\mu_-(Q,\S)\right) \ar[d]^{\L_{(P_i[1])}^-} \\
	\drep\eQS \ar[r]^{\cong} & \drep\eQS \ar[r]^{\mu_j} & \drep(\mu_j(\eQS))   }
\end{xymatrix}$$
where $j'$ is the vertex corresponding to $j$ before the projection.
It follows that $\mub = \mu_{j'} \mu_- $ and $e'=P_i[1]$ will do the job of lifting.
\end{proof}

Finally we mention a consequence on the facets of the $F$-polynomial of a QP representation.
Recall from \cite{DWZ2} that the {\em $F$-polynomial} of a representation of $M$ is the generating series of the topological Euler characteristic of the {\em representation Grassmannian} of $M$:
\begin{equation*} F_M(\b{y}) = \sum_{d} \chi(\Gr_d(M)) \b{y}^d. \end{equation*}
Let $l_e$ be the equivalence $\rep(e^\perp) \cong \rep({_eA})$, and
$\iota_\ep$ be the monomial change of variables:
$$\iota_\ep: \b{y}_e^d \mapsto \b{y}^{d \C_{e^\perp}}. $$
\begin{corollary} \label{C:faces} Let $M\in\rep(Q,\S)$, and $\ep$ be the (indivisible) outer normal vector of some facet ${\sf \Lambda}$ of the Newton polytope $\N(M)$. If $\ep$ is extended reachable, then the restriction of $F_M$ to this facet is given by 
	$$ \b{y}^{\dv t_{\ep}(M)} \iota_{\ep} \left( F_{ l_e (\heo{\ep}(M)) }(\b{y}_e) \right).$$
If $M$ is general of weight $\delta$, then $l_e (\heo{\ep}(M))$ is general as well. That is,
up to a monomial change of variables and a shift, the restriction of a generic $F$-polynomial to such a facet is another generic $F$-polynomial.
\end{corollary}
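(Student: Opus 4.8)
The plan is to reduce the statement to a term-by-term comparison of quiver Grassmannians controlled by the torsion pair attached to $\ep$, and then to extract genericity from Theorems \ref{T:Lstab} and \ref{T:L_e^-}. Write $E=\coker(e)$ for a rigid presentation of weight $\ep$, set $m=\max_{L\hookrightarrow M}\ep\cdot\dv L$, so the facet is ${\sf \Lambda}=\{x\in\N(M)\mid\ep\cdot x=m\}$ and $F_M|_{{\sf \Lambda}}=\sum_{\ep\cdot d=m}\chi(\Gr_d(M))\,\b{y}^d$. The central claim I would prove is that a subrepresentation $L\subseteq M$ has $\dv L\in{\sf \Lambda}$ if and only if $t_\ep(M)\subseteq L\subseteq\tc_\ep(M)$ and $L/t_\ep(M)\in\rep(e^\perp)$, and that under $L\mapsto L/t_\ep(M)$ these are exactly the subobjects of $\heo{\ep}(M)=\tc_\ep(M)/t_\ep(M)$ in the abelian category $\rep(e^\perp)$.

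First I would establish this by a torsion-theoretic maximization built on \eqref{eq:h-e} and the containments \eqref{eq:T}, \eqref{eq:F}. For any $X\in\Gen(E)=\mc T(\ep)$ we have $\e(e,X)=0$ since $\mc T(\ep)\subseteq\check{\mc T}(\ep)$, hence $\ep\cdot\dv X=\hom(e,X)=\hom(E,X)\ge 0$, with equality only for $X=0$; applying this to $X=(L+t_\ep(M))/L\in\Gen(E)$ shows that enlarging $L$ to $L+t_\ep(M)$ never decreases $\ep\cdot\dv L$ and strictly increases it unless $t_\ep(M)\subseteq L$. For $L\supseteq t_\ep(M)$ the quotient $\bar L=L/t_\ep(M)\subseteq f_\ep(M)\in\mc F(\ep)$ satisfies $\hom(e,\bar L)=0$, so $\ep\cdot\dv\bar L=-\e(e,\bar L)\le 0$, with equality iff $\bar L\in\check{\mc T}(\ep)$, i.e. iff $\bar L\in\mc F(\ep)\cap\check{\mc T}(\ep)=\rep(e^\perp)$. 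This forces $m=\ep\cdot\dv t_\ep(M)$, identifies the maximizers, and shows such $\bar L$ lies in $\tc_\ep(f_\ep(M))=\heo{\ep}(M)$. Next I would turn this into the formula: since $\rep(e^\perp)$ is a wide subcategory, its subobjects of $\heo{\ep}(M)$ are the $A$-subrepresentations of $\heo{\ep}(M)$ contained in $\rep(e^\perp)$, which under the equivalence $l_e\colon\rep(e^\perp)\cong\rep({_eA})$ of Corollary \ref{C:equi} are the ${_eA}$-subrepresentations of $l_e(\heo{\ep}(M))$. For $d\in{\sf \Lambda}$ the map $L\mapsto l_e(L/t_\ep(M))$ is then an isomorphism of quiver Grassmannians $\Gr_d(M)\cong\Gr_{d_e}(l_e(\heo{\ep}(M)))$, so the Euler characteristics agree. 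Finally each simple $S_{j,e^\perp}$ has $\ep\cdot\dv S_{j,e^\perp}=0$ and $\dv L=\dv t_\ep(M)+\sum_j (d_e)_j\,\dv S_{j,e^\perp}$; as the columns of $\C_{e^\perp}$ are the $\dv S_{j,e^\perp}$ (Lemma \ref{L:Cmat}.(1)), this exponent is exactly $\dv t_\ep(M)+d_e\C_{e^\perp}$, i.e. the shift $\b{y}^{\dv t_\ep(M)}$ followed by $\iota_\ep$. Summing over $d_e$ yields $F_M|_{{\sf \Lambda}}=\b{y}^{\dv t_\ep(M)}\iota_\ep\big(F_{l_e(\heo{\ep}(M))}(\b{y}_e)\big)$.

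For the genericity statement I would invoke Theorem \ref{T:Lstab}, which gives $\heo{\ep}=L_e^+$, so that $l_e(\heo{\ep}(M))=\coker\L_{(e)}^+(d_M)$ in $\rep(\eQS)$; here $\ep$ extended reachable makes $e_c^\pm$ extended reachable (the corollary after Theorem \ref{T:mucomp}), so Theorem \ref{T:perpQP} legitimately identifies $\rep(e^\perp)$ with $\rep(\eQS)$. If $M$ is general of weight $\delta$ then $d_M$ is a general presentation, and Theorem \ref{T:L_e^-} asserts that $\L_{(e)}^+$ preserves general presentations; hence $\L_{(e)}^+(d_M)$ is general and its cokernel $l_e(\heo{\ep}(M))$ is a general representation of $\eQS$. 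Thus up to the monomial change $\iota_\ep$ and the shift $\b{y}^{\dv t_\ep(M)}$ the facet restriction is a generic $F$-polynomial of $\eQS$.

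The hard part will be the first step: not merely pinning down the facet subrepresentations via the torsion maximization (which is clean), but upgrading the set-theoretic correspondence $L\leftrightarrow L/t_\ep(M)$ to an honest isomorphism of quiver Grassmannians compatible with $l_e$, so that Euler characteristics transfer term by term. This requires care that $\rep(e^\perp)$ is genuinely wide (so $A$-subobjects lying in $\rep(e^\perp)$ are exactly the categorical subobjects, with quotient automatically in $\rep(e^\perp)$) and that the progenerator equivalence of Corollary \ref{C:equi} induces isomorphisms of the relevant Grassmannians; the dimension-vector translation through $\C_{e^\perp}$ and the check that every surviving term carries the correct exponent is where the bookkeeping concentrates. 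By contrast, once these two theorems are available the genericity conclusion is essentially immediate.
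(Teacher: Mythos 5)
Your proposal is correct, but it takes a genuinely different route from the paper for the first half. The paper disposes of the restriction formula in one line by citing \cite[Theorem 7.8]{Fcf} (where the facet restriction of $F_M$ was already expressed through a map $\pi_\ep$), so that the only new content there is the identification $\pi_\ep = l_e\,\heo{\ep}$; you instead re-derive the restriction formula from scratch by classifying the subrepresentations whose dimension vectors land on ${\sf\Lambda}$. Your torsion-theoretic maximization is sound: $\ep\cdot\dv X=\hom(E,X)>0$ for $0\neq X\in\Gen(E)$ forces $t_\ep(M)\subseteq L$ for any maximizer, and $\ep\cdot\dv\bar L=-\e(e,\bar L)\le 0$ for $\bar L\subseteq f_\ep(M)$ pins the maximizers down to subobjects of $\heo{\ep}(M)$ lying in $\rep(e^\perp)$. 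In fact the ``wideness'' worry you flag at the end is lighter than you fear: for $d$ with $\ep\cdot d=m$ \emph{every} point of $\Gr_d(M)$ automatically contains $t_\ep(M)$, and every $A$-subrepresentation of $\heo{\ep}(M)$ with $\ep\cdot\dv=0$ automatically lies in $\rep(e^\perp)$ (it sits in $\mc F(\ep)$, so $\hom(e,-)=0$ forces $\e(e,-)=0$ from \eqref{eq:h-e}), so no constructibility or subobject-matching issues arise; what does need a sentence is that $L\mapsto L/t_\ep(M)$ and the progenerator equivalence $l_e$ induce isomorphisms of the relevant Grassmannian varieties (standard, since $l_e$ is a $\Hom$-type functor), and that injectivity of $d_e\mapsto d_e\C_{e^\perp}$ (the columns being $n-1$ columns of an invertible matrix) makes the exponent bookkeeping term-by-term. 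The trade-off: your argument is self-contained and makes transparent \emph{why} the shift is $\dv t_\ep(M)$ and the change of variables is $\iota_\ep$, at the cost of redoing work the paper imports wholesale from \cite{Fcf}. The genericity half of your proof --- $\heo{\ep}=L_e^+$ via Theorem \ref{T:Lstab}, then preservation of general presentations via Theorem \ref{T:L_e^-} --- is exactly the paper's argument.
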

\begin{proof} The restriction formula is a direct consequence of \cite[Theorem 7.8]{Fcf}, in which the map $\pi_\ep$ can now be made explicit as $l_e\,\heo{\ep}$.
If $M$ is general of weight $\delta$, then by Theorem \ref{T:Lstab}
$l_e (\heo{\ep}(M)) = l_e (L_e^+(M)) = l_e(\coker \L_e^+(d_M)) = \coker(\L_{(e)}^+(d_M)).$
Since $\ep$ is extended reachable, Corollary \ref{C:reachable-complement} implies that $e\oplus e_c^+$ is extended reachable; hence
the hypothesis of Theorem \ref{T:L_e^-} is satisfied for the $+$-projection. Therefore $\L_{(e)}^+(d_M)$ is general, as required.
\end{proof}
\begin{remark} Recall from \cite{P} the generic cluster character $C_{\op{gen}}(\dtc) := \b{x}^{-\dtc} F_{\dtc}(\b{y})$ where $F_{\dtc}$ is the $F$-polynomial of a general representation of coweight $\dtc$, and $y_i = \b{x}^{b_i}$.
So $C_{\op{gen}}(\dtc)$ can be obtained from the generic $F$-polynomial $F_{\dtc}$ by a monomial change of variables and a shift.
If the extended $B$-matrix is of full rank, then ${\sf \Lambda} B$ is a facet of the Newton polytope of $C_{\op{gen}}(\dtc)$.
Hence, up to a monomial change of variables and a shift, the restriction of $C_{\op{gen}}(\dtc)$ to the facet ${\sf \Lambda} B$ is $C_{\op{gen}}(\dtc')$ for some coweight $\dtc'$ of $\eQS$.
\end{remark}

\begin{question} Is the last statement of Corollary \ref{C:faces} still true if \begin{enumerate}
\item $\ep$ is not rigid?
\item we replace the $F$-polynomial of a general representation by the $F$-polynomial of a theta function \cite{GHKK}?
\end{enumerate}
\end{question}

\section*{Acknowledgment}
The author would like to thank Xiaoyue Lin for proof-reading the manuscript.

\printbibliography

\end{document}